\newcommand{\mylabel}[2]{(#2)\def\@currentlabel{#2}\label{#1}}
\theoremstyle{definition}
\newtheorem{theorem}{Theorem}[subsection]
\newtheorem{definition}[theorem]{Definition}
\newtheorem{lemma}[theorem]{Lemma}
\newtheorem{corollary}[theorem]{Corollary}
\newtheorem{proposition}[theorem]{Proposition}
\newtheorem{exm}[theorem]{Example}
\newtheorem{rem}[theorem]{Remark}
\newtheorem*{ack}{Acknowledgement}
\newtheorem{question}[theorem]{Question}
\newenvironment{example}%
{\pushQED{\qed}\begin{exm}}%
{\popQED\end{exm}}  
\newenvironment{remark}%
{\pushQED{\qed}\begin{rem}}%
{\popQED\end{rem}}  
\newcommand{\abs}[1]{\left|#1\right|}     
\newcommand{\set}[1]{\left\{#1\right\}}   
\newcommand{\angl}[1]{\left<#1\right>}    
\newcommand{\Z}{\mathbb{Z}}
\renewcommand{\P}{\mathbb{P}}
\newcommand{\Q}{\mathbb{Q}}
\newcommand{\C}{\mathbb{C}}
\renewcommand{\k}{\Q}  
\newcommand{\M}{\mathsf{M}}    
\newcommand{\LM}{\mathsf{L}}   
\newcommand{\G}{\mathcal{G}}   
\renewcommand{\H}{\mathcal{H}}   
\newcommand{\NS}{S}   
\renewcommand{\L}{\mathcal{L}}   
\newcommand{\CS}{\mathscr{C}}   
\newcommand{\QS}{\mathscr{Q}}   
\newcommand{\ZS}{\mathscr{Z}}   
\newcommand{\FF}{\mathscr{F}}   
\newcommand{\KS}{\mathscr{K}}   
\newcommand{\GS}{\mathscr{G}}   
\newcommand{\OSS}{\mathscr {OS}}  
\newcommand{\bottom}{\hat{0}}  
\newcommand{\onehat}{\hat{1}}
\def\clap#1{\hbox to 0pt{\hss#1}}
\def\mathclap{\mathpalette\mathclapinternal}
\def\mathclapinternal#1#2{%
\clap{$\mathsurround=0pt#1{#2}$}}
\newcommand*{\dotleq}{\mathbin{\kern0.4em\cdot%
\mathclap{\raisebox{-0.05em}{$\leqslant$}}}}
\DeclareMathOperator{\dd}{d}        
\DeclareMathOperator{\op}{op}      
\DeclareMathOperator{\In}{In}      
\DeclareMathOperator{\lk}{lk}      
\DeclareMathOperator{\st}{st}      
\DeclareMathOperator{\clst}{\overline{st}}      
\DeclareMathOperator{\atoms}{a}    
\DeclareMathOperator{\Ka}{{\mathbf{at}}}        
\DeclareMathOperator{\N}{{\mathbf{n}}}         
\DeclareMathOperator{\nbc}{{\mathbf{nbc}}}  
\DeclareMathOperator{\Bl}{Bl}       
\DeclareMathOperator{\Fl}{Fl}       
\DeclareMathOperator{\FS}{\mathscr{F}\kern -.5pt \ell}  
\DeclareMathOperator{\fl}{\Phi}       
\DeclareMathOperator{\supp}{supp}
\DeclareMathOperator{\Sh}{Sheaves}  
\DeclareMathOperator{\pr}{pr}      
\DeclareMathOperator{\rank}{rank}     
\DeclareMathOperator{\coker}{coker}
\DeclareMathOperator{\ann}{ann}
\DeclareMathOperator{\OS}{OS}  
\DeclareMathOperator{\DP}{\mathscr{DP}}  
\DeclareMathOperator{\Dp}{DP}  
\DeclareMathOperator{\POS}{\overline{OS}}  
\DeclareMathOperator{\ShExt}{\mathscr{E}\kern -0.5pt xt}
\DeclareMathOperator{\Hom}{Hom}
\DeclareMathOperator{\ShHom}{\mathscr{H}\kern -0.5pt om}
\DeclareMathOperator{\im}{im} 
\DeclareMathOperator{\irr}{irr} 
\DeclareMathOperator{\divs}{div} 
\newcommand{\hB}{\hat{B}} 
\newcommand{\A}{{\mathcal{A}}}
\newcommand{\cdga}{\textsc{cdga}}
\DeclareMathOperator{\Int}{{I}}        
\keywords{hyperplane arrangement, wonderful compactification, nested set
complex, combinatorial blowups}
\subjclass[2010]{Primary 
05B35; 
Secondary
52C35, 
06A07, 
14F05.
}
\begin{document}

\begin{abstract}
We construct a combinatorial generalization of the Leray models for 
hyperplane arrangement complements.  Given a matroid and some combinatorial
blowup data, we give a presentation for a bigraded 
(commutative) differential graded algebra.  If the matroid is realizable
over $\mathbb{C}$, this is the familiar Morgan model for a hyperplane arrangement 
complement, embedded in a blowup of projective space.  In general, we
obtain a \textsc{cdga} that interpolates between the Chow ring of a matroid
and the Orlik--Solomon algebra.  Our construction can also be expressed
in terms of sheaves on combinatorial blowups of geometric lattices.
As a key technical device, we construct a monomial basis via
a Gr\"obner basis for the ideal of relations.  Combining these ingredients,
we show that our algebra is quasi-isomorphic to the classical Orlik--Solomon
algebra of the matroid.
\end{abstract}

\title[A model for the Orlik--Solomon algebra]%
{A Leray model for the Orlik--Solomon algebra}

\author[C. Bibby]{Christin Bibby}
\address{Department of Mathematics, Louisiana State University,
Baton Rouge, LA 70803, USA}
\email{\href{mailto:bibby@math.lsu.edu}{bibby@math.lsu.edu}}
\author[G. Denham]{Graham Denham$^1$} 
\address{Department of Mathematics, University of Western Ontario,
London, ON  N6A 5B7, Canada}  
\email{\href{mailto:gdenham@uwo.ca}{gdenham@uwo.ca}}
\urladdr{\href{http://gdenham.math.uwo.ca/}%
{http://www.math.uwo.ca/\~{}gdenham}}
\thanks{$^1$Partially supported by an NSERC Discovery Grant (Canada)}

\author[E.\ M.\ Feichtner]{Eva Maria Feichtner}
\address{
ALTA, Department of Mathematics and Computer Science, University of Bremen, 
28359 Bremen, Germany}
\email{\href{mailto:emf@math.uni-bremen.de}{emf@math.uni-bremen.de}}

\setcounter{tocdepth}{2}
\maketitle
\tableofcontents

\section{Introduction}

For a given arrangement of complex linear hyperplanes, a number of 
complex algebraic varieties have been defined and extensively studied
over the years. Notably, there is the {\em projective complement\/} of $\A$, 
$U(\A):=\P^{\ell}\,{\setminus}\,\bigcup_{H\in\A}H$, and the {\em wonderful 
compactification\/} $Y(\A,\G)$ obtained by blowing up $\P^{\ell}$ along
proper transforms of suitably chosen subspaces~$\G$. The cohomology
algebras of both 
have been studied and have been described in a purely combinatorial
manner in terms of the matroid associated with the arrangement. These are  
the \emph{projective Orlik--Solomon algebra}, $\POS(\A)\,{\cong}\,
H^*(U(\A))$~\cite{Br73,OS80,Ka04},
and the \emph{De Concini--Procesi algebra}, 
$\Dp(\A,\G)\,{\cong}\, H^*(Y(\A,\G))$~\cite{DP95}, respectively. 

Though they seem to be rather disparate algebraic invariants of arrangements, 
we place them here in one and the same scene. Moreover, in the spirit of 
recent
work of Adiprasito, Huh, and Katz \cite{AHK15} as well as
\cite{ADH20,BHMPW20}, we lift objects of geometric origin to the purely
combinatorial context of matroid theory, where phenomena that are based on the
geometry of spaces miraculously persist. Combinatorial blowups, Gr\"obner bases,
and sheaves on posets thereby replace heavy geometric machinery.

In the geometric setting,
the wonderful compactification $Y(\A,\G)$ is constructed through a sequence of
blowups dictated by a special set $\G$, called a \textit{building set}.
The projective complement $U(\A)$ is a dense open subset of $Y(\A,\G)$, in which
it is realized as the complement of a normal crossings divisor.
The cohomology algebras
of $Y(\A,\G)$ and $U(\A)$ are linked through the Leray spectral sequence
\[
E_2^{pq}=H^p(Y(\A,\G), R^q j_*\Q) \Rightarrow H^{p+q}(U(\A), \Q)
\]
given by the inclusion $j\colon U(\A)\hookrightarrow Y(\A,\G)$: 
the spectral sequence degenerates at $E_3$, so the cohomology of $(E_2,d_2)$
agrees with that of $U(\A)$.   Computing with differential forms, this is the
Morgan \cdga\ model for $U(\A)$ and coincides with the model constructed by De
Concini and Procesi \cite[\S5.3]{DP95}.  Arrangement complements are
well-known to be rationally formal \cite{Br73}, and the mixed Hodge 
structure on their cohomology is pure \cite{Sh93}.  Work of Dupont~\cite{du15}
explains how these observations together are equivalent to the fact that
the edge map $H^\cdot(U(\A),\Q)\to(E_2,d_2)$ is a quasi-isomorphism.

In the more general
combinatorial setting, we introduce the notion of a Leray or Morgan
model of a matroid.  We start with a matroid $\M$, its lattice of
flats $\LM$, and a \textit{combinatorial building set} $\G\subset \LM$.
We study the effect of the blowup of one element of $\G$ at a
time, and we build objects indexed by \textit{partial building sets}
$\H\subseteq\G$. We obtain semilattices $\L(\LM,\H)$ that interpolate between
the geometric lattice $\LM$ and a simplicial poset $\L(\LM,\G)$.
Geometrically, this interpolates between the combinatorics of the original
arrangement and that of the normal crossings divisor obtained through the
sequence of blowups.
We combine elements of the Orlik--Solomon algebras and the De Concini--Procesi
algebras to  define differential graded algebras $B(\LM,\H)$,
which play the role of $(E_2,d_2)$ above.
Indeed, our main result (Theorem \ref{thm:model}) states that each $B(\LM,\H)$,
and in particular $B(\LM,\G)$, is quasi-isomorphic to $\POS(\LM)$.

The bookkeeping that goes along with the sequence of combinatorial blowups is
notationally intensive.  Some of the arguments in the paper are, we believe, 
unavoidably quite technical. In order to help the reader navigate the paper 
without getting lost in the details we provide a roadmap.

\subsection*{Outline of the paper}

In \S\ref{sec:basics} we provide some combinatorial basics tailored to our use.
We summarize the background on sheaves on posets and the poset of intervals.
We elaborate on combinatorial blowups and the notions of building sets 
and nested sets, which form 
the combinatorial core of De Concini--Procesi arrangement models.   

In \S\ref{sec:OS}, we construct an Orlik--Solomon algebra $\OS(\L)$ from 
what we call a locally geometric semilattice $\L$.
Such semilattices include the ones that appear by iteratively blowing up
a geometric lattice. In the realizable case, these algebras model the left 
edge of the Leray spectral sequence: that is, they are the 
global sections of the cohomology sheaf obtained by restricting the constant 
sheaf from a partial blowup to the hyperplane arrangement complement.  
The algebra $\OS(\L)$ has a well-known monomial basis called the $\nbc$ basis.

In \S\ref{sec:DP}, we construct a De Concini--Procesi algebra $\Dp(\LM,\H)$ from
a geometric lattice $\LM$ and partial building set $\H$.
In the realizable case, it is isomorphic to the cohomology of the wonderful 
De~Concini--Procesi model of an arrangement complement obtained by blowing 
up $\P^l$ along the subspaces $\H$. The algebra is also isomorphic to 
the bottom edge of the Leray spectral sequence.
Regardless of realizability, it is also the Chow ring of a smooth toric 
variety associated with a subfan of the Bergman fan~\cite{FY04}. 

Our main object of study is introduced in \S\ref{sec:B}, the 
commutative differential graded algebra $B(\LM,\H)$ associated with a
geometric lattice $\LM$ and a partial building set $\H$.  We define it
by means of a presentation that combines the relations from the Orlik--Solomon
and De Concini--Procesi algebras.  
Using Gr\"obner basis theory, we show that $B(\LM,\H)$ has a monomial basis
that specializes in one direction to the $\nbc$ basis for the Orlik--Solomon
algebra, and in the other to the basis for the De Concini--Procesi algebra of
\cite{FY04}.
This basis is essential for obtaining injective maps between the algebras
$B(\LM,\H)$ (Theorem \ref{thm:injective}).

We show (Proposition~\ref{prop:decomp}) that $B(\LM,\H)$ 
has a bigraded direct-sum 
decomposition, indexed over the semilattice $\L(\LM,\H)$, where the 
summands are tensor products of ``local'' Orlik--Solomon algebras with
``local'' De Concini--Procesi algebras.  In the geometric setting, this
is a familiar picture: the compactification is stratified by intersections of
hypersurfaces.  Each stratum contributes to the Leray spectral sequence
a tensor product of the
cohomology of the hypersurface near the stratum (Orlik--Solomon), 
and the cohomology of the stratum itself (De Concini--Procesi): see
\cite{Loo93,du15,Bi16}.  
In general, though, there is no Leray spectral sequence, and no geometric
reason why there should be such a direct sum decomposition.  Instead, we 
make use of our Gr\"obner basis to show that expected decomposition 
exists in all cases.  

Similarly, we show that the ``local'' De Concini--Procesi algebras
$\Dp_y(\LM,\H)$, for elements $y\in \L(\LM,\H)$, can be decomposed as 
tensor products (Theorem~\ref{thm:D(X)}).  This reflects the fact that
the strata in De Concini and Procesi's compactification are themselves
products of De Concini--Procesi compactifications of arrangements of
lower dimension \cite[\S4.3]{DP95}.

In an effort to arrive at a combinatorial explanation of phenomena like this, 
we use the classical notion of sheaves on posets, inspired by work
of Yuzvinsky~\cite{Yuz95}.  We topologize a finite poset $P$ with the
order topology, in which basic open sets are principal order ideals.
We consider sheaves of Orlik--Solomon algebras, which model the cohomology
sheaf $j_*\Q$ in the realizable case.  For technical reasons, it turns out to 
be more convenient to work with the homology version of the Orlik--Solomon
algebra, the \emph{flag complex} introduced by
Schechtman and Varchenko~\cite{SV91}.
Flag complexes
assemble into a graded sheaf $\FS(\L)$ on the poset $\L:=\L(\LM,\H)$.
A standard differential makes $\FS(\L)$ a cochain complex which is,
in fact, a flasque resolution of a skyscraper sheaf.
Similarly, we define a \emph{De Concini--Procesi sheaf} $\DP$ of
algebras on $\L^{\op}$,
by letting $\DP(\L_{\geq y})=\Dp_y(\LM,\H)$.  In the realizable case, this
is the sheaf of cohomology algebras of strata.

For any poset $P$, the \emph{poset of intervals} $\Int(P)$ is the poset on
pairs $(x,y)\in P\times P^{\op}$ with the order relation given by 
containment.  The poset $\Int(\L)$ turns out to be a key organizational 
device for understanding our combinatorial Leray model.  We define a
graded sheaf
$\CS(\LM,\H)$ on $\Int(\L)$ as the tensor product of the pullbacks of
$\FS$ and $\DP$ along the projections to $\L$ and $\L^{\op}$, respectively.
This is, in fact, a cochain complex of coherent sheaves of $\DP$-modules.
We show (Theorem~\ref{thm:global sections}) that the vector space of
global sections of
$\CS(\LM,\H)$ is the $\Q$-dual of our \cdga\ $B(\LM,\H)$.  Moreover,
this complex turns out to be a $\Gamma$-acyclic
resolution of a sheaf on $\Int(\L)$ obtained from $\DP$ 
(Theorem~\ref{thm:resolution}).

Section \S\ref{ss:injective} is devoted to showing that our algebras 
$B(\LM,\H)$ are quasi-isomorphic (Theorem \ref{thm:model}) to $\POS(\LM)$,
regarded as a \cdga\ with zero differential.  In the sense of rational
homotopy theory, then, these are rational models.  This quasi-isomorphism is
established by induction, by showing that each blowup gives an injective map of 
\cdga s (Theorem~\ref{thm:injective}).  Arguing with sheaves on 
the poset of intervals shows that the map at each step is a quasi-isomorphism
(Theorem~\ref{thm:quasiiso}).


\section{Combinatorial foundations}
\label{sec:basics}
In this section, we review the basic combinatorial background, as well as build
new tools to be used in our setting. Most of this section is technical.
Before we turn our attention to the main objects of study (a sequence of blowups
of semilattices) in \S\ref{ss:blowups}, we quickly discuss some
generalities on posets (sheaves on posets and the poset of intervals) which will
not be used until \S\ref{ss:sheaves}. For more general background
references, we refer to the book of Oxley \cite{Oxbook} on matroid theory and
the book of Orlik and Terao \cite{OTbook} on hyperplane arrangements.

\subsection{Sheaves on posets}
\label{ss:basic sheaves}
The notion of sheaves on posets and their cohomology turns out to be
convenient for analyzing the algebras that we study in this paper.
Our approach is inspired by that of Yuzvinsky~\cite{Yuz95}, and we
generalize his work on the (classical) Orlik--Solomon algebra here.

Let $P$ be a finite, partially ordered set.
For $x\in P$, we will let $P_{\geq x}=\set{y\in P\colon y\geq x}$, and define
the obvious variations analogously.  We will denote
closed intervals by $[x,y]$ for $x,y\in P$.  If $P$ is a ranked poset,
we let $P_q$ denote the subset of elements of rank $q$, for $q\in \Z$.
We give $P$ the topology in which downward-closed sets (order ideals) are 
open.  (We note that this is opposite to Yuzvinsky's convention.)  
A basis for the topology is given by the principal
order ideals, $\set{P_{\leq x}\colon x\in P}$.  We will also let $\abs{P}$
denote the \emph{order complex} on $P$, the abstract simplicial complex
whose simplices are the totally ordered subsets of $P$.

Then a sheaf $\FF$ on $P$ in an abelian category $\CS$ is 
just a diagram of objects of $\CS$ over $P$: more precisely, regarding
$P$ as a category with morphisms $y\rightarrow x$ whenever $y\geq x$, 
a functor $\FF\colon P\to \CS$ is equivalent to a (topological) sheaf on 
$P$ for which the sections over $P_{\leq x}$ equal $\FF(x)$.  
The restriction maps on principal open sets 
are the morphisms $\FF(x\leq y)$.  Since each $x\in P$ is contained in
a unique minimal open set ($P_{\leq x}$), the stalk of $\FF$ at $x$
also equals $\FF(x)$.  

We will take \cite{deH62,Ba75} as fundamental references,
but prove here some basic facts about sheaves and maps between posets.
First, suppose $f\colon P\to Q$ is a map of posets with the property that,
for all $x,y\in Q$ with $x\leq y$, there exist $u,v\in P$ with $u\leq v$
satisfying
$f(u)=x$ and $f(v)=y$.  Then $f$ is called a \emph{quotient}.

If $f\colon P\to Q$ is a map of posets for which the order complex
$\abs{f^{-1}(y)}$ is nonempty and connected, for each $y\in Q$, we will
say \emph{$f$ has connected fibres} for short.

\begin{example}\label{ex:closure}
  A quotient $f\colon P\to Q$ is a surjective map of underlying sets, but
  not conversely.  However, if $f$ admits a splitting, it is easily seen
  to be a quotient.  An
  example of a split surjection is a poset $P$ with a closure operation
  $x\mapsto \overline{x}$.  In this case, if $x=\overline{x}=\overline{y}$
  for some $x,y\in P$, then $y\leq \overline{y}=x$, so the fibres of the
  quotient each have a cone vertex; in particular, they are connected and
  contractible.
\end{example}

\begin{lemma}\label{lem:pointy fibres}
Let $f\colon P\to Q$ be a map of posets for which the preimage of
each principal order ideal is again a principal order ideal.  Then 
$f_*\colon \Sh(P)\to\Sh(Q)$ is an exact functor.
\end{lemma}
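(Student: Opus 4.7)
The plan is to reduce exactness to a stalkwise check, exploiting the fact that on a finite poset with the order topology, every point has a minimal open neighborhood.

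First I would unpack the definition: for a sheaf $\FF$ on $P$ and an open subset $V\subseteq Q$, we have $(f_*\FF)(V)=\FF(f^{-1}(V))$. The principal order ideals $Q_{\leq q}$ form both a basis for the topology on $Q$ and, individually, the minimal open neighborhoods of each $q\in Q$. By hypothesis, for each $q\in Q$ there exists some $p=p(q)\in P$ with $f^{-1}(Q_{\leq q})=P_{\leq p}$. Therefore
\[
(f_*\FF)(Q_{\leq q})=\FF(P_{\leq p(q)})=\FF(p(q)),
\]
where the last equality uses the same observation on the side of $P$.

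Next I would identify this with the stalk. Since $Q_{\leq q}$ is the smallest open set containing $q$, the stalk of any sheaf on $Q$ at $q$ is its value on $Q_{\leq q}$. Hence $(f_*\FF)_q=\FF_{p(q)}$, naturally in $\FF$. In other words, taking the stalk of $f_*\FF$ at $q$ is the same functor as taking the stalk of $\FF$ at the uniquely determined point $p(q)\in P$.

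To conclude, observe that on a finite poset with the order topology, a morphism of sheaves is surjective (resp.\ injective) if and only if it is so on every stalk, again because every point has a minimal open neighborhood. Pushforward is always left exact, so I only need to check that $f_*$ preserves epimorphisms. If $\FF\twoheadrightarrow\FF''$ is surjective, then $\FF_p\to\FF''_p$ is surjective for every $p\in P$; in particular for every $p$ of the form $p(q)$, which by the identification above means $(f_*\FF)_q\to(f_*\FF'')_q$ is surjective for every $q\in Q$. Hence $f_*\FF\to f_*\FF''$ is surjective, and $f_*$ is exact.

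There is no serious obstacle here: the hypothesis on $f$ is tailor-made so that pushforward along $f$ amounts, stalkwise, to a mere relabelling of points in $P$ by points in $Q$ via $q\mapsto p(q)$, and exactness is preserved by any such relabelling.
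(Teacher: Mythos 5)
Your proposal is correct and follows essentially the same route as the paper: identify the stalk of $f_*\FF$ at $q$ with the stalk of $\FF$ at the maximum element of $f^{-1}(Q_{\leq q})$, then check surjectivity stalkwise, with left-exactness of pushforward handling the rest.
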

\begin{proof}
The direct image is left-exact, so this amounts to checking $f_*$ is also
right-exact.  Let
us suppose $\phi\colon \FF\to\GS$ is a surjective map of sheaves on $P$.
For any $y\in Q$, consider the stalk at $y$ of the map $f_*\phi\colon
f_*\FF\to f_*\GS$.
By hypothesis, $f^{-1}(Q_{\leq y})$ has
a unique maximum element $x\in P$, so 
$(f_*\FF)(y)\to (f_*\GS)(y)$ equals $\phi(x)\colon \FF(x)\to \GS(x)$.
This shows $f_*\phi$ is surjective on stalks, 
so $f_*\phi$ is surjective.
\end{proof}

\begin{lemma}\label{lem:adjiso}
  Suppose $f\colon P\to Q$ is a quotient map with connected fibres.
  Then, for any sheaf $\FF$ on $Q$, the adjunction
  $\eta\colon \FF\to f_*f^*\FF$ is an isomorphism.
\end{lemma}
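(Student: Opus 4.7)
The plan is to verify that $\eta$ is an isomorphism stalkwise.  Since principal order ideals form a basis for the topology on $Q$ and the stalk of any sheaf at $y\in Q$ coincides with its sections over $Q_{\leq y}$, it suffices to show that for every $y\in Q$ the map $\eta(y)\colon\FF(y)\to(f_*f^*\FF)(y)$ is an isomorphism.  Unpacking the definition, $(f_*f^*\FF)(y)=(f^*\FF)(U)$ where $U:=f^{-1}(Q_{\leq y})$, and by the initiality hypothesis $U=\bigcup_{x\in f^{-1}(y)}P_{\leq x}$.  The stalks of $f^*\FF$ are $(f^*\FF)(x)=\FF(f(x))$, with restriction maps induced by those of $\FF$.

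A section over $U$ is thus a compatible family $(s_x)_{x\in U}$ with $s_x\in\FF(f(x))$.  For $x$ in the fibre $f^{-1}(y)$ itself we have $s_x\in\FF(y)$, and for any two elements $x_1\leq x_2$ of $f^{-1}(y)$ the compatibility condition forces $s_{x_1}=s_{x_2}$, because the relevant restriction map is $\FF(y\geq y)=\id$.  Since $\abs{f^{-1}(y)}$ is connected, propagating this equality along zigzags of comparable pairs within the fibre shows that all $s_x$ for $x\in f^{-1}(y)$ agree with a single element $s\in\FF(y)$.  For an arbitrary $x\in U$ with $f(x)=y'\leq y$, initiality furnishes some $x_0\in f^{-1}(y)$ with $x\leq x_0$, and compatibility then forces $s_x$ to equal the image of $s$ under $\FF(y\geq y')$.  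Conversely, any $s\in\FF(y)$ produces such a family; tracing the construction identifies $\eta(y)$ with this assignment, hence an isomorphism.

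The main technical subtlety is well-definedness: one must check that if $x\leq x_0$ and $x\leq x_0'$ with $x_0,x_0'\in f^{-1}(y)$, the prescribed value of $s_x$ is independent of the choice.  This is exactly where the connectedness hypothesis is doing work beyond initiality --- initiality guarantees the existence of some $x_0\geq x$, while connectedness ensures that the values prescribed at any two such $x_0$'s agree in $\FF(y)$ and hence have the same restriction in $\FF(y')$.  Everything else is routine bookkeeping with the basis of principal open sets.
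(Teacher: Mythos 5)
Your proof is correct and follows essentially the same route as the paper's: both unpack $(f_*f^*\FF)(y)$ as the compatible families (equivalently, the inverse limit) over $f^{-1}(Q_{\leq y})$, use connectedness of the fibre's order complex to collapse the fibre values to a single element of $\FF(y)$, and use initiality to see that this element determines the whole section. The paper packages this as "the fibre summands are pathwise-connected by isomorphisms, so they may be replaced by a single object which is initial in the diagram," which is exactly your element-chasing argument in categorical language; your only cosmetic slip is that the final "well-definedness" worry is really where connectedness enters for surjectivity (forcing a given section to be constant on the fibre), not for the converse construction, where agreement is automatic.
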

\begin{proof}
  For any $y\in Q$ and sheaf $\FF$ on $Q$, we compute
  \begin{align}
    f_*f^*\FF(Q_{\leq y})&=(f^*\FF)(f^{-1}(Q_{\leq y}))
\nonumber\\
    &=\varprojlim_{\substack{x\in P\colon \\ f(x)\leq y}} (f^*\FF)(P_{\leq x})
\nonumber\\
    &=\varprojlim_{\substack{x\in P\colon \\ f(x)\leq y}} \FF(Q_{\leq f(x)}).
\label{eq:adjiso}
  \end{align}
  The adjunction map $\eta_y\colon \FF(Q_{\leq y})\to f_*f^*\FF(Q_{\leq y})$ is 
  induced by the restriction maps from $\FF(y)=\FF(Q_{\leq y})$ to terms in the
  limit \eqref{eq:adjiso}.  For any $x\in P$ with $f(x)\leq y$, because
  $f$ is a quotient, there exist elements $x',x''\in P$ for which
\[
  x'\leq x'',\qquad f(x')=f(x), \quad\text{and}\quad
  f(x'')=y.
  \]
  Since the fibres of $f$ are connected, the diagram \eqref{eq:adjiso} contains
  maps which compose to an isomorphism $\FF(f(x'))\to \FF(f(x))$.
  Pre-composing with
  the restriction $\FF(y)=\FF(f(x''))\to \FF(f(x'))$, we obtain
  a map $\FF(y)\to \FF(f(x))$. Then $\FF(y)$ is
  initial in the diagram, so $\eta_y$ is an isomorphism to the diagram's limit.
\end{proof}

\begin{proposition}\label{prop:Quillen}
  Let $f\colon P\to Q$ be a poset quotient.  Suppose that,
  for each $y\in Q$, the order complex $\abs{f^{-1}(Q_{\geq y})}$ is
  contractible, and the fibre $f^{-1}(y)$ is connected.
  Then, for any sheaf $\FF$ on $Q$, the cohomology pullback
  \[
  f^*\colon H^\cdot(Q,\FF)\to H^\cdot(P,f^*\FF)
  \]
  is an isomorphism.
\end{proposition}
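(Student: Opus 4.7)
My plan is to prove the isomorphism via the Leray (Grothendieck) spectral sequence
\[
E_2^{p,q} = H^p(Q,\, R^q f_*(f^*\FF)) \Longrightarrow H^{p+q}(P,\, f^*\FF),
\]
in which the pullback $f^*$ is exactly the edge homomorphism from the bottom row. It therefore suffices to verify (i) the adjunction $\FF \to f_*(f^*\FF)$ is an isomorphism, and (ii) $R^q f_*(f^*\FF) = 0$ for every $q > 0$.

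Step (i) is immediate: the hypotheses that each fibre $f^{-1}(y)$ is connected and initial are exactly those of Lemma~\ref{lem:adjiso}. This already gives $E_2^{p,0} = H^p(Q,\FF)$ with edge homomorphism equal to $f^*$.

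Step (ii) is the technical heart. Working at a fixed $y\in Q$, I would compute the stalk of $R^q f_*(f^*\FF)$ using the Alexandrov / functor description of sheaves from \S\ref{ss:basic sheaves} and the identification of $f_*$ with a right Kan extension. The canonical natural transformation $f^*\FF \to \underline{\FF(y)}$ on $f^{-1}(Q_{\geq y})$ given by the structural maps $\FF(f(x)) \to \FF(y)$ (well defined since $f(x) \geq y$) reduces, via a standard comparison, the higher direct-image stalk to the cohomology of the order complex $\abs{f^{-1}(Q_{\geq y})}$ with constant coefficients $\FF(y)$. Contractibility of this order complex then forces vanishing in positive degrees.

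Once (i) and (ii) are established the spectral sequence degenerates onto its bottom row and the edge map $f^*$ delivers the desired isomorphism. The main obstacle lies entirely in step (ii): the contractibility hypothesis is phrased in terms of the closed subset $f^{-1}(Q_{\geq y})$ of $P$, whereas the most naive stalk formula for $R^q f_*$ in the down-set Alexandrov topology would instead involve the open preimage $f^{-1}(Q_{\leq y})$. Reconciling these --- through the correct Kan-extension description of $f_*$ under the paper's ``arrows from large to small'' convention, or via an order-complex / \v{C}ech computation exploiting the initial-fibre hypothesis --- is where the combinatorial substance of the proposition actually lives.
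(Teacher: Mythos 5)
Your Step (i) is correct and matches the paper, which invokes Lemma~\ref{lem:adjiso} for exactly this purpose. The problem is Step (ii): the claim that $R^q f_*(f^*\FF)=0$ for $q>0$ is not just ``technically delicate'' --- it is \emph{false} under the hypotheses of the proposition. In the downward Alexandrov topology, the stalk is $(R^q f_*f^*\FF)_y = H^q\bigl(f^{-1}(Q_{\leq y}), f^*\FF\bigr)$, and there is no ``standard comparison'' carrying this over to $f^{-1}(Q_{\geq y})$; those are genuinely different subposets of $P$. Concretely, take $Q=\{y_0<y_1\}$ with $\FF$ the constant sheaf $\Q$; let $A=\{a,b,c,d\}$ with $a<c,\ a<d,\ b<c,\ b<d$ (so $\abs{A}\simeq S^1$), and $P=A\cup\{e\}$ with $e$ above everything. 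Set $f(A)=y_0$, $f(e)=y_1$. Then $\abs{f^{-1}(Q_{\geq y_0})}=\abs{P}$ is a cone (contractible), $\abs{f^{-1}(Q_{\geq y_1})}$ is a point, and both fibres are connected and initial, so every hypothesis of Proposition~\ref{prop:Quillen} holds. Yet $f^{-1}(Q_{\leq y_0})=A$ and $(R^1 f_* f^*\FF)_{y_0}=H^1(A,\underline{\Q})=H^1(S^1,\Q)\neq 0$. (The conclusion of the proposition still holds here because $H^p\bigl(Q,R^q f_* f^*\FF\bigr)$ vanishes for $q>0$, which is strictly weaker than your Step (ii).)

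The paper sidesteps this entirely by a different mechanism. Rather than trying to make the Leray spectral sequence collapse via vanishing of $R^qf_*$, it takes the Godement resolution $\CS^\cdot$ of $\FF$ on $Q$, whose terms are products of skyscraper-type sheaves $\FF_y$ supported on $Q_{\geq y}$. The inverse image $f^*\FF_y$ is then supported precisely on $f^{-1}(Q_{\geq y})$ and constant there, so $H^p(P,f^*\FF_y)$ is identified with simplicial cohomology of $\abs{f^{-1}(Q_{\geq y})}$, and this is where the contractibility hypothesis is used. This shows $f^*\CS^\cdot$ is a $\Gamma$-acyclic resolution of $f^*\FF$; combining with Lemma~\ref{lem:adjiso} and $\Gamma\circ f_*=\Gamma$ gives the isomorphism directly, with no claim about $R^q f_*$. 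The geometry of the $\geq$-direction enters naturally through the support of the Godement factors, which is the mechanism your proposal is missing. If you want to salvage the Leray-spectral-sequence framing, you would need to show $H^p(Q,R^qf_*f^*\FF)=0$ for $q>0$, which is a genuinely different (and less local) statement than $R^qf_*f^*\FF=0$, and I do not see how to extract it from the hypotheses without essentially redoing the paper's resolution argument.
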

\begin{proof}
  The cohomology pullback is induced by applying (derived) global sections to
  the composition
  $\FF\stackrel{\eta}{\longrightarrow} f_*f^*\FF\hookrightarrow Rf_*f^*\FF$,
  identifying $R\Gamma(Q,Rf_*f^*\FF)\cong R\Gamma(P,f^*\FF)$ by means of the
  Leray spectral sequence.

Let 
\[
\begin{tikzcd}
0\ar[r] & \FF\ar[r] & \CS^0\ar[r] & \CS^1\ar[r] & \cdots
\end{tikzcd}
\]
be the Godement resolution of $\FF$.  Explicitly, 
for $y\in Q$, let
\[
\FF_y(x) = 
\begin{cases}
\FF(y) & \text{ for } x\geq y\\
0 & \text{ otherwise}
\end{cases}
\]
so that each $\CS^p$ is a product of copies of the skyscraper sheaves
$\FF_y$, indexed by chains in the poset of length $p+1$.

We claim that
\begin{equation}\label{eq:Quillen}
\begin{tikzcd}
0\ar[r] & f^*\FF\ar[r] & f^*\CS^0\ar[r] & f^*\CS^1\ar[r] & \cdots
\end{tikzcd}
\end{equation}
is a $\Gamma$-acyclic resolution of $f^*\FF$.  Exactness is a property of
inverse image; for acyclicity, it is enough to check that each factor
$H^p(P,f^*(\FF_y))$ is zero for $p>0$, for $y\in Q$.  

For this, we use the Godement resolution again.  Note $f^*(\FF_y)(x)=
\FF(y)$ if $f(x)\geq y$, and zero otherwise.
Global sections of the Godement resolution may be identified with
the simplicial cochain complex on the order complex
of $f^{-1}(Q_{\geq y})$, with constant coefficients, so
\[
H^p(P,f^*(\FF_y))\cong H^p(\abs{f^{-1}(Q_{\geq y})},\Z)\otimes_{\Z} \FF(y),
\]
which is zero for $p>0$ by hypothesis.

We complete the proof as follows.  Applying $H^\cdot(\eta)$ to $\CS^\cdot$,
\begin{align*}
  H^\cdot(Q,\FF) \cong H^\cdot\Gamma \CS^\cdot
  &\stackrel{\cong}{\longrightarrow} H^\cdot\Gamma f_*f^*\CS^\cdot\quad
  \text{by Lemma~\ref{lem:adjiso};}\\
  &= H^\cdot\Gamma f^*\CS^\cdot\cong H^\cdot(P,f^*\FF),\quad
  \text{using the resolution \eqref{eq:Quillen}.}
\end{align*}
\end{proof}

Everitt and Turner~\cite[Cor.\ 2]{ET19} establish a similar result with
slightly different conventions: the novelty here is
the criterion that makes $\eta$ an isomorphism.

\subsection{The poset of intervals}\label{ss:I(L)}

Here is a construction which will be of central importance for us in
\S\ref{sec:model}.  We recall its definition and refer to
\cite{Wal88} and \cite[Ch.\ 10.4]{Koz08} for more details.

\begin{definition}[The poset of intervals]\label{def:I(L)}
  For any poset $P$, let \[\Int(P)=\set{(x,y)\colon x,y\in P, x\leq y},\]
ordered by $(x_0,y_0)\leq (x_1,y_1)$ if and only if $x_0\leq x_1\leq y_1\leq 
y_0$.  Since the maximal elements of $\Int(P)$ are of the form $(x,x)$ for
$x\in P$, $\Int(P)$ has a (unique) minimal open cover 
consisting of the principal
order ideals $\set{\Int(P)_{\leq(x,x)}\colon x\in P}$.
\end{definition}

The construction is easily seen to be functorial: that is, if
$f\colon P\to Q$ is a map of posets, so is the induced map
\[
\Int(f)\colon \Int(P)\to\Int(Q)
\]
defined by letting $\Int(f)(x,y)=(f(x),f(y))$.  Translating definitions,
we see that $f\colon P\to Q$ is a quotient if and only if
the map $\Int(f)\colon \Int(P)\to\Int(Q)$ is surjective.
\begin{definition}\label{def:initial-quotient}
  Say a poset quotient $f\colon P\to Q$ is \emph{initial} if, whenever
  $f(x)=u\leq v$ for $x\in P$ and $u,v\in Q$, there exists some $y\geq x$
  with $f(y)=v$.
\end{definition}
\begin{proposition}\label{prop:initial-quotient}
  If $f\colon P\to Q$ is an initial quotient, then $\Int(f)\colon \Int(P)\to
  \Int(Q)$ is a quotient.  
\end{proposition}
\begin{proof}
  Suppose $f$ is an initial quotient.
  If $(u,v)\leq (u',v')$ in $\Int(Q)$, we have $u\leq u'\leq v'\leq v$.
  Since $f$ is surjective, there exists some $x\in P$ with $f(x)=u$.
  Because $f$
  is initial, we may find $x'$, $y'$, and $y\in P$, successively,
  with $x\leq x'\leq y'\leq y$ mapping to $u'$, $v'$, and $v$, respectively.
  That is,
  \[
  (x,y)\leq (x',y')\quad\text{and}\quad \Int(f)(x,y)=(u,v),\quad\Int(f)(x',y')=
  (u',v').
  \]
\end{proof}

Now let $\pr_1\colon \Int(P)\to P$ and $\pr_2\colon \Int(P)\to P^{\op}$ denote the
two coordinate projections.  Suppose further that $P$ has a minimum element
$\bottom$.  Then there is also a natural inclusion $\iota\colon P^{\op}
\to \Int(P)$, given by letting $\iota(y)=(\bottom,y)$ for $y\in P^{\op}$.
Then $\pr_2\circ\iota=1_{P^{\op}}$.

The maps $\pr_i$ and $\iota$ are, in fact, natural transformations.  In the
case of $\iota$, this means, explicitly, that:
\begin{proposition}\label{prop:iota_natural}
  If $f\colon P\to Q$ is an order-preserving map of posets with minimum
  elements and $f(\bottom)=\bottom$, then the following diagram commutes:
  \[
  \begin{tikzcd}
    P^{\op}\ar[r,"f"]\ar[d,"\iota"] & Q^{\op}\ar[d,"\iota"] \\
    \Int(P)\ar[r,"\Int(f)"] & \Int(Q)
  \end{tikzcd}
  \]
  \end{proposition}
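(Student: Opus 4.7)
The plan is to verify commutativity by chasing an arbitrary element $y\in P^{\op}$ around both sides of the square and checking that the two images in $\Int(Q)$ coincide. Along the top-right route, $y\mapsto f(y)\mapsto \iota(f(y))=(\bottom,f(y))$, where $\bottom$ denotes the minimum of $Q$. Along the bottom-left route, $y\mapsto \iota(y)=(\bottom,y)\mapsto \Int(f)(\bottom,y)=(f(\bottom),f(y))$. The hypothesis $f(\bottom)=\bottom$ makes the two outputs agree, which is the entire content of the square.

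Beyond this pointwise computation, I would note for completeness that the four arrows are actually morphisms of posets in the orientations displayed. That $\Int(f)$ is order-preserving is Proposition \ref{prop:I(L)}, and that $f\colon P^{\op}\to Q^{\op}$ is order-preserving follows from monotonicity of $f\colon P\to Q$ after reversing both orders. For the vertical map $\iota\colon P^{\op}\to \Int(P)$, a relation $y_1\leq y_2$ in $P^{\op}$ means $y_2\leq y_1$ in $P$, so $(\bottom,y_1)\leq (\bottom,y_2)$ in $\Int(P)$ by the defining relation $(x_0,y_0)\leq (x_1,y_1)\iff x_0\leq x_1\leq y_1\leq y_0$ with $x_0=x_1=\bottom$.

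There is no substantial obstacle to overcome; the proposition simply records that naturality of $\iota$ is equivalent to the requirement $f(\bottom)=\bottom$, and the verification is immediate from the definitions of $\iota$ and $\Int(f)$.
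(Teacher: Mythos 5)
Your proof is correct and is exactly the routine verification the paper intends (the paper states this proposition without proof, as an immediate consequence of the definitions of $\iota$ and $\Int(f)$). The element chase showing both routes send $y$ to $(\bottom,f(y))$, together with the check that the arrows are order-preserving, is complete.
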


Using a result from the previous section, we obtain cohomology isomorphisms:
\begin{lemma}\label{lem:pr1_iso}
  For any sheaf $\FF$ on $P$, the cohomology pullback 
\[
H^\cdot(P,\FF)\to H^\cdot(\Int(P),\pr_1^*\FF)
\]
is an isomorphism.
\end{lemma}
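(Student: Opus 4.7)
The plan is to mirror the strategy of Proposition~\ref{prop:Quillen}, applied to the projection $f=\pr_1\colon \Int(P)\to P$.  Two technical inputs are needed: (i) contractibility of the order complex $\abs{\pr_1^{-1}(P_{\geq y})}$ for every $y\in P$, and (ii) the adjunction $\eta\colon \FF\to (\pr_1)_*\pr_1^*\FF$ being an isomorphism.  Given these, the proof of Proposition~\ref{prop:Quillen} runs verbatim: the Godement resolution $\FF\to \CS^\cdot$ on $P$ pulls back to a $\Gamma$-acyclic resolution of $\pr_1^*\FF$ on $\Int(P)$ by (i), while (ii) identifies $\Gamma(P,\CS^\cdot)$ with $\Gamma(\Int(P),\pr_1^*\CS^\cdot)$ via $\eta$, yielding the cohomology isomorphism.

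For (i), I would use the self-map $r\colon \pr_1^{-1}(P_{\geq y})\to \pr_1^{-1}(P_{\geq y})$ defined by $r(x,z)=(y,z)$.  Since $(x,z)\in \pr_1^{-1}(P_{\geq y})$ satisfies $y\leq x\leq z$, this is well-defined, order-preserving, and idempotent, and a direct check shows $r(x,z)\leq_{\Int} (x,z)$.  The standard poset-topology argument then produces a deformation retraction of $\abs{\pr_1^{-1}(P_{\geq y})}$ onto the order complex of the image $\set{(y,z)\colon z\geq y}\cong (P_{\geq y})^{\op}$, which has maximum $(y,y)$ and is therefore contractible.

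For (ii), Lemma~\ref{lem:adjiso} does not apply directly.  The fibre $\pr_1^{-1}(y)=\set{(y,z)\colon z\geq y}$ is connected (with maximum $(y,y)$), but the ``initial'' hypothesis fails whenever $P$ lacks suitable joins: the set $\pr_1^{-1}(P_{\leq y})=\set{(x,z)\colon x\leq y}$ typically contains pairs with $z$ incomparable to $y$, whereas $\bigcup_{(y,z')\in \pr_1^{-1}(y)}\Int(P)_{\leq (y,z')}$ equals $\Int(P)_{\leq(y,y)}$ and therefore forces $z\geq y$.  I would instead verify $\eta$ by hand.  Writing $\Sigma_y:=\pr_1^{-1}(P_{\leq y})$, the canonical map $\FF(y)\to\varprojlim_{(x,z)\in\Sigma_y}\FF(x)$, given by $s\mapsto (\rho^y_x(s))_{(x,z)}$, is injective by evaluating at $(y,y)\in\Sigma_y$.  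For surjectivity, given a compatible family $(t_{(x,z)})$, the elements $(x,y)$ and $(x,x)$ also lie in $\Sigma_y$ whenever $(x,z)\in\Sigma_y$, and the zigzag
\[
(y,y)\geq_{\Int}(x,y)\leq_{\Int}(x,x)\geq_{\Int}(x,z)
\]
sits inside $\Sigma_y$.  The leftmost comparison yields $t_{(x,y)}=\rho^y_x(t_{(y,y)})$, while the two rightmost comparisons involve pullbacks of identity maps on $\FF(x)$, forcing $t_{(x,z)}=t_{(x,x)}=t_{(x,y)}=\rho^y_x(t_{(y,y)})$.

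The main obstacle is precisely this adjunction step, where the zigzag replaces the missing ``initial'' condition.  Naturality of $\eta$ in $y$ is automatic from the restriction $\Sigma_{y_0}\subseteq \Sigma_{y_1}$ for $y_0\leq y_1$, after which the conclusion follows as described.
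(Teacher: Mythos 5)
Your proposal is correct, and at the decisive step it is more careful than the paper's own argument. The paper proves the lemma by checking the hypotheses of Proposition~\ref{prop:Quillen} for $f=\pr_1$: it quotes the homeomorphism $\abs{\Int(Q)}\cong\abs{Q}$ of \cite{Wal88} to get contractibility of $\abs{\pr_1^{-1}(P_{\geq x})}=\abs{\Int(P_{\geq x})}$, and then asserts that each fibre of $\pr_1$ is connected and \emph{initial}, so that Lemma~\ref{lem:adjiso} applies. As you observe, the initial condition genuinely fails for $\pr_1$ in general: $\pr_1^{-1}(P_{\leq y})$ contains every $(x,z)$ with $x\leq y$, while $\bigcup_{w\in\pr_1^{-1}(y)}\Int(P)_{\leq w}=\Int(P)_{\leq(y,y)}$ only captures those with $z\geq y$, and these differ as soon as $P$ has elements $x\leq y$, $x\leq z$ with $z\not\geq y$. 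Your zigzag $(y,y)\geq(x,y)\leq(x,x)\geq(x,z)$ inside $\pr_1^{-1}(P_{\leq y})$, whose two right-hand comparisons carry identity transition maps, correctly shows that the adjunction $\eta\colon\FF\to(\pr_1)_*\pr_1^*\FF$ is nonetheless an isomorphism --- and an inspection of the proof of Proposition~\ref{prop:Quillen} confirms that this isomorphism (together with contractibility of the $\abs{\pr_1^{-1}(P_{\geq y})}$) is all that is actually used, so the remainder does run verbatim. Your descending closure operator $r(x,z)=(y,z)$, retracting onto $\set{(y,z)\colon z\geq y}\cong(P_{\geq y})^{\op}$ with maximum $(y,y)$, is a clean self-contained substitute for the citation of Walker's theorem. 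In short, the two arguments share the same overall strategy, but yours identifies and repairs a gap at the adjunction step rather than inheriting it.
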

\begin{proof}
  For each $x\in P$, 
\begin{align*}
\pr_1^{-1}(P_{\geq x}) &= \set{(x',y')\colon x\leq x'\leq y'}\\
&=\Int(P_{\geq x}).
\end{align*}
By \cite[Thm.\ 4.1]{Wal88}, the order complex of $\Int(P_{\geq x})$ is 
homeomorphic to
that of $P_{\geq x}$.  Since the latter has a cone vertex, both order
complexes are contractible.

Likewise, the map $(x,y)\mapsto (x,x)$ is easily seen to be
a closure operation on $\Int(P)$, so if we identify its image with $P$,
we see $\pr_1$ is a quotient with contractible fibres
(Example~\ref{ex:closure}).  The result then follows by
Proposition~\ref{prop:Quillen}.  
\end{proof}

\subsection{Combinatorial blowups}\label{ss:blowups}
The notion of a combinatorial blowup was introduced in \cite{FK04}.
An expository account appears in \cite{Fe05}, and we recall here
some definitions.

\begin{definition}[Locally geometric semilattice]
A finite poset $\L$ is a {\em meet-semilattice} if every
pair of elements has a unique maximal lower bound.  Let $\bottom$
denote the (unique) least element, and let $\L_+ = \L_{>\bottom}$.
We will say a meet-semilattice $\L$ is {\em locally geometric} if, for every
$x\in \L$, the order interval $\L_{\leq x}=[\bottom, x]$ is a 
geometric lattice. This implies that for every $x\leq y$ in $\L$, the interval
$[x,y]$ is a geometric lattice, and we let
\[ d(x,y):= \rank([x,y]).\]
\end{definition}

In what follows, we will often just write ``semilattice'' for a finite 
meet-semilattice.
These posets are ranked by $\Z_{\geq 0}$. We also note that, in a finite
meet-semilattice, if a set of elements $S$ of $\L$ has an upper bound,
that upper bound is unique, in which case we will denote it by
$\bigvee S$.

\begin{definition}[The atomic complex]
\label{def:AtComplex}
For a semilattice $\L$, let $\atoms(\L)$ denote its set of atoms (elements which
cover $\bottom$).
Let $\Ka(\L)$ be the abstract simplicial complex on the set
$\atoms(\L)$ consisting of subsets $S\subseteq\atoms(\L)$ for which
the elements of $S$ have a common upper bound in $\L$.

The join of a simplex defines a natural map $\bigvee:\Ka(\L)\to\L$. Moreover,
there is a natural map $\supp:\L\to\Ka(\L)$ given by 
\begin{equation}\label{def:supp}
\supp(y) = \{a\in\atoms(\L)\colon a\leq y\}.
\end{equation}
\end{definition}

Observe that for $S\in\Ka(\L)$, $\supp\left(\bigvee S\right)\supseteq S$ but
equality does not in general hold. However, for $x\in\L$, we have
$\bigvee(\supp(x)) = x$.
\begin{example}
\label{ex:Ka of lattice}
In a geometric lattice $\LM$, every set of atoms has a common upper bound.
This implies that the atomic complex $\Ka(\LM)$ of a geometric lattice $\LM$ is
just a simplex of dimension $\abs{\atoms(\LM)}-1$.
\end{example}

Any geometric lattice $\LM$ is the lattice of flats of a matroid $\M(\LM)$
defined on its set of atoms.  
By construction, $\M(\LM)$ has no loops or parallel edges.  
We note that $\M(\LM_{\leq x})=\M(\LM)\vert_x$, the restriction matroid.
When $\L$ is a locally geometric semilattice, there is not necessarily a matroid
associated to $\L$, even though we have a matroid $\M(\L_{\leq x})$ associated
to each $x\in \L$.

At this point, we note that the locally geometric semilattices considered
here include the geometric semilattices studied by Wachs and Walker~\cite{WW86},
and we briefly compare the two notions.  We will say a
simplex $S\in \Ka(\L)$ is an {\em independent set} if $\abs{S}=d(\bottom,
\bigvee S)$. 
That is, $S$ is independent when it is a basis in the matroid
$\M(\L_{\leq\bigvee S})$ of the geometric lattice $\L_{\leq\bigvee S}$. A
geometric semilattice $\L$ is characterized by properties ``(G3)'' and ``(G4)''
in \cite{WW86}: the first is that its intervals are geometric, and the
second is that the independent sets in $\Ka(\L)$, taken all together, 
are the independent sets of a matroid on all
of $\atoms(\L)$.  We are led to relax this second condition, because
the poset of boundary strata in the wonderful compactification is, in
general, locally geometric but not geometric.  However, we will show
that some of the combinatorics of geometric semilattices (e.g., $\nbc$-bases)
can be generalized without difficulty.

\begin{definition}[A combinatorial blowup]
\label{def:blowup}
For a semilattice $\L$ and an element $p\in \L_+$, we define a poset
$\Bl_p(\L)$ together with subposets
$\L_{(p)}$ and $\L'_{(p)}$ as follows.  
Let
\begin{align*}
\L_{(p)}&=\set{x\in\L\colon x\not\geq p, \text{~and $p\vee x$ exists in
$\L$}},\\
\L'_{(p)}&= \L_{(p)}\cup\set{(p,x)\colon x\in \L_{(p)}},\\
\intertext{and}
\Bl_p(\L)&=\set{x\in \L\colon x\not\geq p}\cup 
\set{(p,x)\colon x\in \L_{(p)}},
\end{align*}
with order relations
\begin{enumerate}
\item[(i)] $y>x$ in $\Bl_p(\L)$ if $y>x$ in $\L$, 
\item[(ii)] $(p,y)>(p,x)$ in $\Bl_p(\L)$ if $y>x$ in $\L$, 
\item[(iii)] $(p,y)>x$ in $\Bl_p(\L)$ if $y\geq x$ in $\L$, 
\end{enumerate}
where in all three cases $y,x \ngeq p$.

The poset $\Bl_p(\L)$ is called the combinatorial blowup of $\L$ at $p$.
Note that the atoms in $\Bl_p(\L)$ are the atoms in $\L$ together 
with the new element $(p,\bottom)$ that can be thought of as the 
result of blowing up~$p$.  

The construction comes with an order-preserving map $\pi\colon 
\Bl_p(\L)\to\L$ given
by $\pi(x)=x$ and $\pi(p,x)=p\vee x$ for all elements $x$ and $(p,x)$
in $\Bl_p(\L)$.
\end{definition}
Here we record some basic properties.
\begin{lemma}\label{lem:alpha}
Let $\L$ be a semilattice and $p\in\L_+$. Let $\alpha$ denote the poset map
\begin{equation}\label{eq:def_alpha}
\alpha\colon
\L_{(p)}\times\{0<1\}\cong\L'_{(p)}\hookrightarrow\Bl_p(\L),
\end{equation}
Then $\alpha$ is an open embedding, and
\[
\Bl_p(\L)-\im(\alpha)=\set{x\in\L\colon p\vee x\text{~does not exist}}.
\]
\end{lemma}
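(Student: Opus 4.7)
The plan is to verify the two assertions of the lemma in turn: that $\alpha$ is an open embedding, and that the complement of its image has the claimed description. Both are essentially direct from the definitions, but require care with the three kinds of order relations in Definition~\ref{def:blowup}. Under the identification $\L_{(p)}\times\{0<1\}\cong\L'_{(p)}$ sending $(x,0)\mapsto x$ and $(x,1)\mapsto(p,x)$, the map $\alpha$ is manifestly injective.

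For the order-embedding property, I would check each of the four possible comparisons of pairs $(x,i),(y,j)\in \L_{(p)}\times\{0<1\}$ against the defining relations of $\Bl_p(\L)$: the comparison $(x,0)\leq(y,0)$ corresponds via (i) to $x\leq y$ in $\L$; $(x,1)\leq(y,1)$ corresponds via (ii) to $x\leq y$ in $\L$; $(x,0)\leq(y,1)$ corresponds via (iii) to $x\leq y$ in $\L$; and no relation of the form $(p,x)\leq y$ appears among (i)--(iii), which matches $(x,1)\not\leq(y,0)$. This shows $\alpha$ preserves and reflects the order.

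To see that $\im(\alpha)=\L_{(p)}\cup\{(p,x):x\in\L_{(p)}\}$ is open in the order topology, I would verify it is downward closed in $\Bl_p(\L)$. Take $z\leq y$ with $y\in\im(\alpha)$. If $y\in\L_{(p)}$, then any $z\leq y$ in $\Bl_p(\L)$ must come from relation (i), so $z\in\L$ with $z\leq y$ and $z\ngeq p$; since $p\vee y$ exists and dominates both $p$ and $z$, the join $p\vee z$ exists, so $z\in\L_{(p)}$. If $y=(p,x)$ with $x\in\L_{(p)}$, then $z$ arises via relation (ii) as $(p,w)$ with $w\leq x$ in $\L$, or via relation (iii) as $w\in\L$ with $w\leq x$ and $w\ngeq p$; in either case $w\leq x$ guarantees $p\vee w$ exists (being bounded by $p\vee x$), so $w\in\L_{(p)}$ and therefore $z\in\im(\alpha)$. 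This is the only substantive bookkeeping step in the proof.

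Finally, for the complement I would just set-theoretically subtract:
\[
\Bl_p(\L)-\im(\alpha) = \bigl(\{x\in\L:x\ngeq p\}\cup\{(p,x):x\in\L_{(p)}\}\bigr)-\bigl(\L_{(p)}\cup\{(p,x):x\in\L_{(p)}\}\bigr),
\]
which collapses to $\{x\in\L:x\ngeq p\}\setminus\L_{(p)}=\{x\in\L:x\ngeq p,\ p\vee x\text{ does not exist}\}$. The condition $x\ngeq p$ is then redundant, since $x\geq p$ would force $p\vee x=x$ to exist, so the set equals $\{x\in\L:p\vee x\text{ does not exist}\}$, as claimed. There is no real obstacle beyond keeping the three kinds of order relations straight throughout the case analysis.
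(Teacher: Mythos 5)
Your proof is correct and follows essentially the same route as the paper: the paper first reads off the complement from the definition and then checks that this complement is upward-closed (i.e., if $p\vee x$ does not exist and $y\geq x$, then $p\vee y$ does not exist), which is just the contrapositive of your verification that $\im(\alpha)$ is downward-closed. Your additional check that $\alpha$ preserves and reflects the order relations (i)--(iii) is a detail the paper leaves implicit, but it is harmless and correct.
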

\begin{proof}
  The second statement follows from the definition.
  For the first, we check that the complement of the image is a closed set,
  which is to say that if $y\geq x$ for some $x\not\in\im(\alpha)$, then
  $y\not\in\im(\alpha)$.  For this we just observe that, if $p\vee x$ does
  not exist and $y\geq x$, then $p\vee y$ does not exist either.
\end{proof}

\begin{proposition}\label{prop:semilattice}
If $\L$ is a locally geometric semilattice, so is $\Bl_p(\L)$.
\end{proposition}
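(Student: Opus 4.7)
Partition the elements of $\Bl_p(\L)$ into \emph{type A}, meaning $x\in\L$ with $x\not\geq p$, and \emph{type B}, meaning pairs $(p,x)$ with $x\in\L_{(p)}$. A glance at~(i)--(iii) shows that no type A element ever lies above a type B element. With this in hand, the plan is to verify the two defining conditions of ``locally geometric'' separately: first that $\Bl_p(\L)$ is a meet-semilattice, and then that each principal order ideal $[\bottom, w]_{\Bl_p(\L)}$ is a geometric lattice.

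For the meet-semilattice property, I would run through the three cases of pairings. In the type A/A and type A/B cases, the observation above forces any common lower bound to be type A, and then~(i) and~(iii) directly identify the meet as $x\wedge y$ computed in $\L$; this remains in $\Bl_p(\L)$ because $x\wedge y\leq x\not\geq p$. In the type B/B case the meet is $(p,x\wedge y)$, and the only delicate point is checking that $x\wedge y\in\L_{(p)}$: non-comparability with $p$ is immediate, while existence of $p\vee(x\wedge y)$ in $\L$ follows by computing this join inside the geometric lattice $[\bottom,p\vee x]\subseteq\L$, which is available since $x\in\L_{(p)}$.

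For the geometric-lattice condition on principal order ideals, split on the type of $w$. The type A case is immediate: no type B element lies below a type A element, so $[\bottom,w]_{\Bl_p(\L)}=[\bottom,w]_\L$ as posets, which is geometric by hypothesis on $\L$. The type B case $w=(p,x)$ is the conceptual heart of the proposition. My claim is that $z\mapsto(z,0)$ on type A elements and $(p,z)\mapsto(z,1)$ on type B elements defines a poset isomorphism
\[
[\bottom,(p,x)]_{\Bl_p(\L)} \;\cong\; [\bottom,x]_\L \times \{0<1\},
\]
the order-preservation being a line-by-line check against~(i),~(ii), and~(iii). Granting this, the product on the right is a geometric lattice because a product of geometric lattices is geometric---at the level of matroids this corresponds to a direct sum, so $(p,\bottom)$ enters as a coloop adjoined to $\M([\bottom,x]_\L)$. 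The main obstacle, and the only genuine technical content, is establishing this interval isomorphism together with the ensuing rank and atomistic checks; everything else amounts to bookkeeping against the three order relations defining $\Bl_p(\L)$.
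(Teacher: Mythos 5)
Your proof is correct, and it is in fact a bit more careful than the paper's two-sentence argument. The overall strategy is the same: reduce to the structure of the principal order ideals. But the details differ. The paper cites \cite[Lem.~3.2]{FK04} for the meet-semilattice step, whereas you verify it directly with the case analysis you describe (the B/B case resolving exactly as you indicate, since $x\wedge y$ and $p$ have the common upper bound $p\vee x$ in the meet-semilattice $\L$, so $p\vee(x\wedge y)$ exists). More significantly, the paper's treatment of the locally geometric property asserts that $[\bottom,(p,x)]$ in $\Bl_p(\L)$ is isomorphic to $[\bottom,x]$ in $\L$; taken literally this cannot be right, since the former has twice as many elements (each $z\leq x$ appears both as a type A element and as $(p,z)$), and indeed the ranks differ by one. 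Your identification $[\bottom,(p,x)]_{\Bl_p(\L)}\cong[\bottom,x]_\L\times\{0<1\}$ is the precise statement the argument actually needs, and your observation that a product of geometric lattices is geometric (matroid direct sum, with $(p,\bottom)$ entering as a coloop) closes the argument cleanly. In short, you take the same route as the paper but supply the correct interval isomorphism where the paper's wording is off.
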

\begin{proof}
The fact that $\Bl_p(\L)$ is a semilattice
appears as \cite[Lem.\ 3.2]{FK04}.  
To see that it is locally geometric, we make the following two observations.
When $x\in\L_{\not\geq p}$, the intervals $[\bottom,x]$ in $\L$ and
$[\bottom,x]$ in $\L'$ are isomorphic. When $x\in\L_{(p)}$, the interval
$[\bottom,(p,x)]$ in $\L'$ is isomorphic to $[\bottom,x]\times\{0<1\}$. 
\end{proof}
\begin{proposition}\label{prop:pi surjective}
If $\L$ is a locally geometric semilattice, 
the map $\pi\colon \Bl_p(\L)\to\L$ is surjective.
\end{proposition}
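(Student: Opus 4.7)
The plan is to split into two cases based on whether $y\in\L$ satisfies $y\geq p$ or not, and use the defining local-geometricity of $\L$ to produce a preimage in the harder case.

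First, if $y\not\geq p$, then by construction $y$ itself is an element of $\Bl_p(\L)$ and $\pi(y)=y$, so there is nothing to do. So assume $y\geq p$. I want to exhibit an element $x\in\L$ with $x\not\geq p$ and $p\vee x=y$; such an $x$ then lies in $\L_{(p)}$ (the join $p\vee x=y$ exists by assumption), and $\pi(p,x)=p\vee x=y$.

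The key input is that, since $\L$ is locally geometric, the interval $[\bottom,y]$ is a geometric lattice and hence (relatively) complemented. Applied to $p\in[\bottom,y]$, this yields an element $x\in[\bottom,y]$ with $p\vee x=y$ and $p\wedge x=\bottom$. Since $p\in\L_+$, i.e., $p\neq\bottom$, the condition $p\wedge x=\bottom$ forces $x\not\geq p$ (otherwise $p\wedge x=p\neq\bottom$). Thus $x\in\L_{(p)}$ and $(p,x)\in\Bl_p(\L)$ is the desired preimage.

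The only non-routine step, and therefore the main obstacle such as it is, is invoking the fact that geometric lattices are complemented; everything else is a direct unpacking of Definition~\ref{def:blowup} and the definition of $\pi$. I would cite the standard reference (e.g., Oxley \cite{Oxbook}) for complementation in geometric lattices rather than reprove it.
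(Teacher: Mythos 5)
Your proof is correct and follows essentially the same route as the paper: both reduce to producing some $x\not\geq p$ with $p\vee x=y$ and then taking the preimage $(p,x)$. The only difference is in how that $x$ is produced — you invoke complementation in the geometric lattice $[\bottom,y]$, while the paper remarks that the weaker hypothesis of atomicity already suffices.
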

\begin{proof}
In fact, we only require $\L$ to be atomic.
If $x\not\geq p$, then $x=\pi(x)$.  If $x\geq p$, it follows easily from the
atomic property that $x=p\vee y$ for some $y\not\geq p$, 
in which case $x=\pi(y)$.
\end{proof}

\begin{example}\label{ex:one blowup}
Let $\LM$ be a geometric lattice with maximum element $\onehat$. Then
\[
\Bl_{\onehat}(\LM)=\im(\alpha)=\big(\LM_{<\onehat}\big)\cup 
\set{(\onehat,p)\colon p\in \LM_{<\onehat}},
\]
$\Bl_{\onehat}(\LM)$ is a geometric semilattice,
and $\Ka(\Bl_{\onehat}(\LM))\cong \set{\onehat}\star 
\Ka\big(\LM_{<\onehat}\big)$.  The simplices 
of $\Ka\big(\LM_{<\onehat}\big)$ are the non-spanning sets of 
the matroid $\M(\LM)$.
\end{example}


\subsection{Building sets}
Here we recall what we need from \cite{FK04} about combinatorial
building sets and nested sets in the special case of 
locally geometric semilattices.

Recall that an element $x$ in a semilattice $\L$ is said to be irreducible
if $[\bottom,x]$ is not isomorphic to the direct product of two (nontrivial)
posets.  The reduced Euler characteristic of the order complex
of the open interval $(\bottom,x)$ is known as the $\beta$-invariant of the
geometric lattice $\L_{\leq x}$, written $\beta(\L_{\leq x})$.
It is a classical fact that
$\beta(\L_{\leq x})\neq0$ if and only if $x$ is irreducible~\cite{Cr67}.

Let $\L_{\irr}$ denote the set of irreducible elements of $\L_+$.  We note
$\atoms(\L)\subseteq \L_{\irr}$ for any $\L$.  
For any $x\in \L_+$, the set of elementary divisors of $x$ is defined to be
\[
\divs(x):=\max(\L_{\irr}\cap [\bottom,x]):
\]
obviously, $\abs{\divs(x)}=1$ if and only if $x$ is irreducible.
In fact, for any $x\in \L_+$, the join map
\begin{equation}\label{eq:join decomp}
\bigvee\colon \prod_{y\in \divs(x)}[\bottom,y]\rightarrow [\bottom,x]
\end{equation}
is an isomorphism~\cite[Prop.\ 2.1]{FK04}.  
In the language of matroids, the set $\divs(x)$ indexes the connected components
of $\M(\L_{\leq x})$.
In particular, $\M(\L_{\leq x})$ is connected when $x$ is irreducible.

The isomorphism~(\ref{eq:join decomp}) inspires the definition of 
{\em combinatorial building sets\/} as stated in~\cite[Def.\ 2.2]{FK04}.
The geometric version of this definition goes back to work of De Concini and
Procesi~\cite{DP95}, where a building set indexes the irreducible components of
the normal crossings divisor in a wonderful compactification $Y(\A,\G)$ of an
arrangement complement $U(\A)$.

\begin{definition}[Combinatorial building sets]
\label{def:buildingsets}
Let $\L$ be a semilattice. A subset $\G$ in $\L_+$ is called a {\em (combinatorial) 
building set\/} if for any $x\in \L_+$ and max$\,\G_{\leq x}\,{=}\,\{x_1,\ldots,x_k\}$
there is an isomorphism of posets
\begin{equation}\label{eq:join decomp2}
    \varphi_x\,\colon \,\, \prod_{j=1}^k\,\, [\bottom,x_j] \,\, \longrightarrow \,\, [\bottom,x]
\end{equation}
with $\varphi_x(\bottom, \ldots,x_j,\ldots,\bottom)\, = \, x_j$ for $j=1,\ldots, k$.  We let $F(\L,\G;x)=\max\G_{\leq x}$, the \textit{set of factors} of $x$.
\end{definition}
It is always the case that, if $\G$ is a building set in $\L$, we have
 $\L_{\irr}\subseteq\G$, and in particular $\atoms(\L)\subseteq\G$. 

For expository simplicity, we will always assume that, if $\L$ has a 
maximum element $\onehat$, then
$\onehat\in \G$.  Having a maximum element means $\L$ is the lattice of flats
of a matroid, and $\onehat$ is irreducible exactly when the
matroid is connected, in which case the condition is satisfied automatically.

We highlight a useful property of building sets, 
which states that the set of factors of 
$x\in\L$ induce a partition of $\G_{\leq x}$:
\begin{proposition}[{\cite[Prop. 2.5(1)]{FK04}}]\label{prop:join decomp}
Suppose that $x\in \L$ and $p\in \G$ satisfies $p\leq x$.  Then there is a 
unique  $g\in F(\L,\G;x)$ for which $p\leq g$.
\end{proposition}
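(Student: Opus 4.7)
The plan is to prove existence by a standard finiteness argument and uniqueness by exploiting the product decomposition \eqref{eq:join decomp2} together with the fact that $\G\subseteq \L_+$.

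For existence, I would observe that since $p\in\G$ and $p\leq x$, the element $p$ lies in $\G_{\leq x}$, which is a finite set. Any element of a finite poset sits below a maximal one, so there exists $g\in\max\G_{\leq x}=F(\L,\G;x)$ with $p\leq g$. This part requires nothing beyond finiteness of $\L$.

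For uniqueness, suppose that $g,g'\in F(\L,\G;x)$ both satisfy $p\leq g$ and $p\leq g'$. Labeling the factors so that $g=x_j$ and $g'=x_{j'}$ with $j\neq j'$, I would transport the inequalities through the poset isomorphism $\varphi_x\colon \prod_{i=1}^k [\bottom,x_i]\to[\bottom,x]$ of Definition~\ref{def:buildingsets}. Under $\varphi_x^{-1}$, the factor $x_j$ corresponds to the tuple with $x_j$ in coordinate $j$ and $\bottom$ in every other coordinate, and similarly for $x_{j'}$. Since the order on the product is coordinatewise, the tuple $\varphi_x^{-1}(p)$ must lie coordinatewise below both of these, forcing $\bottom$ in every coordinate. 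Hence $p=\bottom$.

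The final step is to rule this out: since $\G\subseteq \L_+$ by definition of a building set, every element of $\G$ is strictly above $\bottom$, so $p\neq\bottom$, a contradiction. This forces $j=j'$, proving uniqueness. The only subtle point is ensuring the coordinatewise description of $\varphi_x^{-1}$ on the distinguished elements $x_j$, which is immediate from the defining equation $\varphi_x(\bottom,\ldots,x_j,\ldots,\bottom)=x_j$ and injectivity of $\varphi_x$. No deeper structure of the building set, nor any matroid-theoretic input, is required; the argument is a formal consequence of the product decomposition.
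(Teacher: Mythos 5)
Your proof is correct and self-contained. The existence step (finiteness) and the uniqueness step (transporting $p\leq g$ and $p\leq g'$ through $\varphi_x^{-1}$ and using that $\varphi_x^{-1}(x_j)$ is $\bottom$ in every coordinate but the $j$-th, forcing $\varphi_x^{-1}(p)$ to be identically $\bottom$, hence $p=\bottom$, contradicting $\G\subseteq\L_+$) are both sound. Note that the paper does not prove this statement itself: it is cited directly from \cite[Prop.~2.5(1)]{FK04}. Your argument is the natural one that falls straight out of the defining isomorphism~\eqref{eq:join decomp2} of a building set, and is essentially the FK04 proof; there is no alternative route worth comparing here, since the product decomposition is the only structure in play. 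The one small point worth making explicit, which you handle correctly in passing, is that a poset isomorphism onto a product poset transports the order to the \emph{coordinatewise} order, so $p\leq x_j$ in $[\bottom,x]$ really does give coordinatewise domination by $(\bottom,\ldots,x_j,\ldots,\bottom)$.
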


For instance, this provides a tool to compare join decompositions of comparable
elements. 
\begin{proposition}
\label{prop:yveeg}
Let $y\in \L_+$ and $h\in\atoms(\L)$ for which $h\not\leq y$.
Write the factors of $y\vee h\in\L$ as $F(\L,\G;y\vee h) = \{x_1,\dots,x_t\}$
where $h\leq x_t$.
Then there is a unique $y'\in[\bottom, x_t]$ such that
\[F(\L,\G; y) = \{x_1,\dots,x_{t-1}\}\cup F(\L,\G; y').\]
\end{proposition}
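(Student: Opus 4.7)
My plan is to exploit the product decomposition of $[\bottom, y\vee h]$ furnished by the isomorphism $\varphi := \varphi_{y\vee h}\colon \prod_{i=1}^t [\bottom, x_i] \to [\bottom, y\vee h]$ from Definition~\ref{def:buildingsets}. Since $y\leq y\vee h$, we may write $y = \varphi(y_1,\dots,y_t)$ for a unique tuple with $y_i \in [\bottom, x_i]$. The natural candidate for $y'$ is the last coordinate $y_t$, and I will show that this element works and is the only one that does.

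The first step is to verify that $y_i = x_i$ for every $i<t$. Since $h$ is an atom with $h\leq x_t$, its preimage under $\varphi$ is $(\bottom,\dots,\bottom,h)$, so $\varphi^{-1}(y\vee h) = (y_1,\dots,y_{t-1},y_t\vee h)$. On the other hand $\varphi^{-1}(y\vee h) = (x_1,\dots,x_t)$ by the definition of the factors of $y\vee h$. Comparing coordinates forces $y_i=x_i$ for $i<t$ and $y_t\vee h = x_t$. In particular $y = x_1\vee\cdots\vee x_{t-1}\vee y'$ with $y' \in [\bottom,x_t]$.

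The second step is to classify $\G_{\leq y}$. Any $g\in\G$ with $g\leq y \leq y\vee h$ lies below a unique factor of $y\vee h$ by Proposition~\ref{prop:join decomp}; call it $x_i$. If $i<t$, then $g\leq x_i\leq y$. If $i=t$, then $g\leq x_t$ and $g\leq y$, and taking the componentwise meet in the product decomposition (which $\varphi$ preserves) forces $g\leq x_t\wedge y = y'$. With this dichotomy in hand, I would verify maximality in both directions. Each $x_i$ for $i<t$ is maximal in $\G_{\leq y}$: any $g\in\G$ with $x_i\leq g\leq y\vee h$ must itself be a factor of $y\vee h$, and so $g=x_i$ by incomparability of distinct factors. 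Each $g\in F(\L,\G;y')$ is maximal in $\G_{\leq y}$: if $g\leq g'\leq y$ with $g'\in\G$, then $g'$ cannot fall below any $x_i$ with $i<t$ (else the uniqueness in Proposition~\ref{prop:join decomp} applied to $g$ and $y\vee h$ would be violated, since $g\leq x_t$ through $y'$), so $g'\leq y'$ and maximality of $g$ in $\G_{\leq y'}$ yields $g'=g$. Combining both directions gives $F(\L,\G;y)=\{x_1,\dots,x_{t-1}\}\cup F(\L,\G;y')$.

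For uniqueness, I would invoke the identity $x=\bigvee F(\L,\G;x)$ valid for every $x\in\L_+$: any $y''\in[\bottom,x_t]$ with the claimed factorization satisfies $y''=\bigvee F(\L,\G;y'')=\bigvee\bigl(F(\L,\G;y)\setminus\{x_1,\dots,x_{t-1}\}\bigr)$, so $y''$ is determined by the data. The main technical point is the coordinate-wise argument underpinning the classification of $\G_{\leq y}$, where the product-structure axiom for building sets is really used; the remaining maximality checks are bookkeeping driven by Proposition~\ref{prop:join decomp}.
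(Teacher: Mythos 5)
Your proof is correct and opens in exactly the same way as the paper's: exploit the lattice isomorphism $\varphi_{y\vee h}$ to write $y$ with coordinates $(z_1,\dots,z_t)$, read off $z_i=x_i$ for $i<t$ from $\varphi^{-1}(y\vee h)=(x_1,\dots,x_t)$, and set $y'=z_t$. Where you diverge is at the final step: the paper simply cites \cite[p.~44]{FK04} for the equality $F(\L,\G;y)=\{x_1,\dots,x_{t-1}\}\cup F(\L,\G;y')$, whereas you give a self-contained argument using Proposition~\ref{prop:join decomp}. That argument is sound, though two spots could be tightened. First, for maximality of $x_i$ ($i<t$) in $\G_{\leq y}$, the quickest route is simply to observe that $x_i$ is already maximal in the larger set $\G_{\leq y\vee h}$ (it is a factor), hence \emph{a fortiori} maximal in $\G_{\leq y}$; your formulation that ``$g$ must itself be a factor'' is an intermediate claim that needs a line of justification via uniqueness of the factor above $x_i$. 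Second, in the uniqueness step you implicitly use that $F(\L,\G;y'')$ is disjoint from $\{x_1,\dots,x_{t-1}\}$ — this holds because $y''\leq x_t$ and distinct factors of $y\vee h$ are incomparable — and that observation should be stated before passing from the union to the set difference. With those small additions your argument is a complete replacement for the external citation.
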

\begin{proof}
By the join decomposition \eqref{eq:join decomp2} in Definition
\ref{def:buildingsets}, there exist unique $z_i\in[\bottom,x_i]$ such that
$y=z_1\vee\cdots\vee z_t$. Since $h\not\leq y$ and $h\leq x_t$, we must have
$z_t<x_t$ and $z_i=x_i$ for $i<t$. Thus, $y = x_1\vee\cdots\vee x_{t-1}\vee y'$
with $y'=z_t$. Write $F(\L,\G;y') = \{y_1,\dots,y_k\}$, so that our claim is
thus $F(\L,\G;y) = \{x_1,\dots,x_{t-1},y_1,\dots,y_k\}$. This fact was proven
in \cite[p.~44]{FK04}.
\end{proof}


\subsection{Sequences of combinatorial blowups}
De Concini and Procesi's wonderful compactification can be constructed
by iterated blowups of $\P^\ell$, 
along (proper transforms of) linear subspaces indexed by a building set.
The combinatorial version of this applies combinatorial blowups (as in 
\S\ref{ss:blowups}) sequentially.

From now on, our initial semilattice will be, in fact, a geometric 
lattice, which we will denote by $\LM$.  We will also restrict ourselves
to blowups of elements of $\LM$.  The intermediate semilattices in this 
process are indexed by partial building sets, which we define below.

\begin{definition}[Partial building sets]
\label{def:partialblowups}
Let $\LM$ be a geometric lattice, and let $\G\subseteq\LM_+$ be a building set.
A {\em partial building set\/} is a subset $\H\subseteq\G$ for which
$\atoms(\LM)\subseteq\H$ and $\H^\circ:=\H\setminus\atoms(\LM)$ is an order
filter (upward-closed subset) of $\G$. 

Fix a (reverse) linear extension $\prec$ of $\H$ so that $p\geq q$ implies
$p\prec q$. Write $\H^\circ=\{p_1,\dots,p_m\}$ where $p_1\prec\cdots\prec p_m$.
The partial blowup of $\LM$ is defined as the semilattice
\[\L(\LM,\H):=\Bl_{p_{m}}\circ\cdots \circ \Bl_{p_2}\circ \Bl_{p_1}(\LM).\]
\end{definition}

The semilattice $\L(\LM,\H)$ does not depend on the choice of linear
extension $\prec$. We point out that
even though $\atoms(\LM)\subseteq\H$, we are only blowing up the elements of
$\H^\circ=\H\setminus\atoms(\LM)$ in our sequence. It is, however,
straightforward to check that 
$\Bl_p(\L(\LM,\G))\cong \L(\LM,\G)$ when $p\in\atoms(\LM)$.

By construction, $\atoms(\L(\LM,\H)) = \H$, and if $\H'=\H\cup\{p\}$ is also a
partial building set, $\L(\LM,\H)=\Bl_p(\L(\LM,\H))$.
It will be convenient simply to write $p$ 
for the atom $(p,\bottom)$ in $\L(\LM,\H)$.  In order
to avoid confusion, we will only do so when it is clear that $p$ refers
to an atom, rather than an element of higher rank.

Recall that for a semilattice $\L$, we have a natural map $\pi\colon
\Bl_p(\L)\to \L$.
If $\H\subseteq\H'$ are partial building sets, then $\L(\LM,\H')$ 
is obtained from $\L(\LM,\H)$ through a sequence of combinatorial blowups, 
and we let
\begin{equation}
\pi^{\H'}_{\H}\colon \L(\LM,\H')\to\L(\LM,\H)
\end{equation}
denote the composite of these natural
maps.  If $\H=\atoms(\LM)$, then $\L(\LM,\H)=\LM$, and
we will simply write $\pi^{\H'}\colon \L(\LM,\H')\to\LM$.

\begin{remark}\label{rmk:G in L(H)}
Let $\H\subseteq\G$ be a partial building set of a geometric lattice $\LM$.
Since $\H$ is an order filter of $\G$, the elements of $\G\setminus\H$ are
elements of $\LM$ which survive the sequence of blowups prescribed by $\H$.
Identifying $\H$ as the set of atoms in $\L(\LM,\H)$, we may thus view $\G$ as a
subset of $\L(\LM,\H)$. In fact, by \cite[Prop.~3.3]{FK04}, the set $\G$ is a
building set for the semilattice $\L(\LM,\H)$.
\end{remark}

In contrast to arbitrary sequences of combinatorial blowups, our 
constructions have several useful properties.  The first follows from
\cite[Prop.\ 2.3]{FK04}, while the second is a straightforward
generalization of \cite[Thm.\ 2.3(3b')]{DP95}, and the third follows from the
fact that a partial building set is an order filter.
\begin{proposition}\label{prop:BICO}
Let $\H$ be a partial building set in a building set $\G$ in $\LM$.
\begin{enumerate}[(a)]
\item Suppose $\H'=\H\cup\set{p}$ is also a partial building set.
Then the (open) upper order interval $\L(\LM,\H)_{>p}$ is disjoint from
$\G$.  In particular, it contains no irreducible elements.
 \label{prop:BICO1}
\item Suppose $x,q\in \G$ for which $x\not\geq q$ and 
$x\wedge q\neq \bottom$.  If $q\in \H$, then $x\vee q\in \H$ too.
\label{prop:BICO2}
\item If $p\in\G-\H$ and $S\subseteq\H$, then $S_{<p}\subseteq\atoms(\LM)$.
\label{prop:BICO3} 
\end{enumerate}
\end{proposition}

We can keep track of how the factors of an element $y\in \L(\LM,\H')$
behave under a single blow-down, viewing $\G$ as a building set in both
$\L(\LM,\H)$ and $\L(\LM,\H')$ (via Remark \ref{rmk:G in L(H)}).

\begin{proposition}\label{prop:blowdown F}
In the notation above, let $y\in \L(\LM,\H')$.  Then
\[
F(\L(\LM,\H),\G;\pi^{\H'}_{\H}(y)) = 
\begin{cases}
F(\L(\LM,\H'),\G;y)_{\not<p} &  \text{if $\pi(y)\geq p$;}\\
F(\L(\LM,\H'),\G;y) & \text{otherwise.}
\end{cases}
\]
\end{proposition}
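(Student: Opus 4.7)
My plan is to split the proof along the form of $y \in \L(\LM,\H') = \Bl_p(\L(\LM,\H))$. By Definition~\ref{def:blowup}, either $y = x$ for some $x \in \L(\LM,\H)$ with $x \not\geq p$, in which case $\pi^{\H'}_\H(y) = x$, or $y = (p, x)$ for some $x \in \L(\LM,\H)_{(p)}$, in which case $\pi^{\H'}_\H(y) = p \vee x$, and this equals $p$ precisely when $x \leq p$. Three scenarios therefore arise: $\pi^{\H'}_\H(y)=p$, or $y = x$ with $x \not\geq p$, or $y = (p,x)$ with $x \not\leq p$.

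The case $\pi^{\H'}_\H(y)=p$ is immediate: since $p\in\G$ is the maximum of its own principal order ideal $[\bottom,p] \subseteq \L(\LM,\H)$, it is the unique maximal element of $\G_{\leq p}$, so $F(\L(\LM,\H),\G;p) = \{p\}$. For $y = x$ with $x \not\geq p$, the blowup relations (i)--(iii) of Definition~\ref{def:blowup} imply that $\L(\LM,\H')_{\leq x} = \L(\LM,\H)_{\leq x}$: no element of the form $(p,z)$ is below $x$, and the new atom $(p,\bottom)$ is likewise not below $x$ (since $p \not\leq x$). Therefore $\G_{\leq x}$ and its set of maximal elements agree in the two semilattices.

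The case $y = (p,x)$ with $x \not\leq p$ is the main obstacle, since $\pi^{\H'}_\H(y) = p \vee x \neq p$ and the two intervals $[\bottom,(p,x)]$ and $[\bottom,p\vee x]$ live in genuinely different posets. To compute $F(\L(\LM,\H'),\G;(p,x))$, I would observe that the new atom $p$ lies below $(p,x)$ by relation (ii), while for any other $g\in\G$ relation~(iii) yields $g \leq (p,x)$ iff $g \leq x$ in $\L(\LM,\H)$; since the atom $p$ is incomparable to every such $g$ in the blowup, this gives $F(\L(\LM,\H'),\G;(p,x)) = \{p\} \cup F(\L(\LM,\H),\G;x)$.

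To match this with $F(\L(\LM,\H),\G;p\vee x)$, I would apply the join-decomposition isomorphism~\eqref{eq:join decomp2} to the interval $[\bottom,p\vee x] \subseteq \L(\LM,\H)$, together with iterated applications of Proposition~\ref{prop:yveeg} at atoms $h \leq p$ in $\L(\LM,\H)$. The crucial input is that the unique factor of $p\vee x$ containing $p$ (guaranteed by Proposition~\ref{prop:join decomp} since $p \in \G$) is exactly $p$ itself, and not some strictly larger $\G$-element; this is precisely what Proposition~\ref{prop:BICO}(a) supplies, because the order-filter property of $\H^{\prime\circ}$ in $\G$ forbids any element of $\G$ strictly above $p$ in $\L(\LM,\H)$. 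Once $p$ is identified as a factor of $p\vee x$, the remaining factors are pinned down by the join decomposition and correspond to the factors of $x$ outside $[\bottom,p]$; comparing with the expression $\{p\} \cup F(\L(\LM,\H),\G;x)$ obtained above yields the desired equality.
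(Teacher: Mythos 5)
Your overall strategy is sound and close in spirit to the paper's: you compute $F(\L',\G;(p,x))$ directly from the blowup poset structure to get $\{p\}\cup F(\L,\G;x)$, then argue that $F(\L,\G;p\vee x)$ is the same set. The easy cases ($\pi(y)=p$ and $y=x\in\L_{\not\geq p}$) are handled correctly, and your identification of $p$ as a factor of $p\vee x$ via Proposition~\ref{prop:BICO}\eqref{prop:BICO1} is right.

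However, there is a genuine gap in the hard case. You assert that the remaining factors of $p\vee x$ in $\L$ are ``the factors of $x$ outside $[\bottom,p]$,'' and then compare this with $\{p\}\cup F(\L,\G;x)$ to conclude equality. But this comparison only works if \emph{no} factor of $x$ lies in $(\bottom,p]$ --- equivalently, if $p\wedge x=\bottom$ in $\L$. If some factor $h\in F(\L,\G;x)$ satisfied $\bottom< h\leq p$, then $h$ would appear in $\{p\}\cup F(\L,\G;x)$ but not in $F(\L,\G;p\vee x)$ (it would be dominated by the factor $p$), and the claimed equality would fail. You never establish $p\wedge x=\bottom$. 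This is exactly where Proposition~\ref{prop:BICO}\eqref{prop:BICO2} enters the paper's argument: since $x\not\leq p$ gives $p\vee x>p$, and Proposition~\ref{prop:BICO}\eqref{prop:BICO1} says $p\vee x\notin\G\supseteq\H$, Proposition~\ref{prop:BICO}\eqref{prop:BICO2} forces $p\wedge x=\bottom$. You invoke part \eqref{prop:BICO1} but never part \eqref{prop:BICO2}, so the central fact your conclusion depends on is left unproven. (As a side remark, the ``iterated applications of Proposition~\ref{prop:yveeg}'' you mention are not really the natural tool here; once $p\wedge x=\bottom$ is in hand, the join decomposition \eqref{eq:join decomp2} applied to $[\bottom,p\vee x]$, combined with Proposition~\ref{prop:join decomp}, pins down the remaining factors of $p\vee x$ as exactly $F(\L,\G;x)$.)
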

\begin{proof}
We abbreviate $\pi=\pi^{\H'}_{\H}$, $\L = \L(\LM,\H)$ and
$\L'=\L(\LM,\H')=\Bl_p(\L)$.
First, if $\pi(y)\not\geq p$, then the restriction of
$\pi\colon [\bottom,y]\to [\bottom,\pi(y)]$ is an isomorphism, and 
the conclusion follows.

Otherwise, $y=(p,x)$ for some $x\in\L_{(p)}$ and $p\in F(\L',\G;y)$.
The set  $F(\L',G;y)_{\not<p}=F(\L',\G;x)_{\not<p}\cup\{p\}$ contains pairwise
incomparable elements in $\L$ whose join is $\pi(y)=p\vee x$, so by
\cite[Prop.~2.8]{FK04}, it equals $F(\L,\G;\pi(y))$. 
\end{proof}

If the element $\onehat$ is contained in a partial building set $\H$,
clearly it must come first in any linear order.  This leads to the
following observation:
\begin{proposition}\label{prop:onehat}
If $\onehat\in \H$, then $\onehat$ is a cone vertex in the atomic
complex: that is,
\[
\Ka(\L(\LM,\H))\cong \set{\onehat}\star \lk_{\onehat}(\Ka(\L(\LM,\H))).
\]
\end{proposition}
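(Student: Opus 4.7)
The statement asserts that the atom $\onehat$ is a cone vertex of $\Ka(\L(\LM,\H))$. Since every simplex $S$ of $\Ka(\L(\LM,\H))$ has a join $\bigvee S\in\L(\LM,\H)$, the condition $S\cup\{\onehat\}\in\Ka(\L(\LM,\H))$ is equivalent to the existence of a common upper bound for $\onehat$ and $\bigvee S$. It therefore suffices to show that for every $y\in\L(\LM,\H)$ the join $\onehat\vee y$ exists in $\L(\LM,\H)$.

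The plan is to establish this by induction on $|\H^\circ|$. A direct induction on existence alone gets stuck in the inductive step, so I would strengthen the hypothesis to the conjunction of (a) $\onehat\vee y$ exists in $\L(\LM,\H)$ for every $y$, and (b) for every $p\in\G\setminus\H$ and every $y\in\L(\LM,\H)$ with $y\not\geq p$, the join satisfies $\onehat\vee y\not\geq p$. The base case $\H^\circ=\{\onehat\}$ is handled by Example~\ref{ex:one blowup}: in $\Bl_{\onehat}(\LM)$ one computes $\onehat\vee x=(\onehat,x)$ and $\onehat\vee(\onehat,x)=(\onehat,x)$, with (b) following from the rule $(\onehat,x)\geq p\iff x\geq p$ in $\LM$.

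For the inductive step, write $\H=\H'\cup\{q\}$ where $q$ is a minimal element of $\H^\circ$ in $\LM$. The order-filter hypothesis on $\H^\circ$ then forces $\H'^\circ$ still to be an order filter in $\G$, and it also forces $p\not\geq q$ in $\LM$ for every $p\in\G\setminus\H$; in particular $q\in\G\setminus\H'$, so hypothesis (b) for $\H'$ may be applied with $p:=q$. Let $\L'=\L(\LM,\H')$ and $\L=\Bl_q(\L')$. For $y\in\L$, Definition~\ref{def:blowup} gives two cases: either $y=y_0\in\L'$ with $y_0\not\geq q$, or $y=(q,y_0)$ with $y_0\in\L'_{(q)}$. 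In each case the inductive hypothesis produces $v:=\onehat\vee_{\L'}y_0$ with $v\not\geq q$. In the second case a further application of (a) shows that $v\vee q$ exists in $\L'$ (take $\onehat\vee_{\L'}(y_0\vee q)$ as upper bound). Hence $v\in\L$, respectively $(q,v)\in\L$, and routine verification against the blowup rules confirms this is the join $\onehat\vee y$ in $\L$. Statement (b) for $\L$ then reduces, via the blowup rules, to (b) for $\L'$, using $p\in\L'$.

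The main obstacle is precisely the necessity of carrying (b) alongside (a): without (b), the induction breaks because the $\L'$-join $v=\onehat\vee_{\L'}y_0$ could conceivably satisfy $v\geq q$, in which case it is removed by the blowup and no common upper bound of $\onehat$ and $y$ need exist in $\L$ at all. Statement (b) is the inductive invariant designed exactly to forbid this failure mode, and its proof proceeds by the same case analysis as (a).
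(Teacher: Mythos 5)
Your proof is correct and follows the same route as the paper, whose own proof consists only of citing Example~\ref{ex:one blowup} as the base case and asserting the rest "by induction on $\abs{\H}$" without details. Your strengthened invariant (b) is exactly the bookkeeping needed to make that unstated inductive step go through (without it, the join in $\L'$ could a priori lie above the next blowup centre and be destroyed), and the case analysis you outline checks out against the order relations of Definition~\ref{def:blowup}.
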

\begin{proof}
If $\H=\set{\onehat}\cup\atoms(\LM)$, this is Example \ref{ex:one blowup}.
In general, it follows by induction on $\abs{\H}$.
\end{proof}

We conclude this section with a simple example.
\begin{example}\label{ex:delA3}
Let $\LM$ be the lattice of the rank-$3$ matroid on $[5]$ with two $3$-element
flats, $\set{1,2,4}$ and $\set{1,3,5}$.  This is the intersection lattice of
the hyperplane arrangement defined, in order, by the linear forms
$(x,y,z,x-y,x-z)$.  Let $\G=\G_{\min}$ be the minimal building set,
ordered 
\[
\onehat\prec 124\prec 135\prec 1\prec \cdots \prec 5.
\]
$\G$ contains three
(nontrivial) partial building sets.  Their atomic complexes are cones over
the complexes drawn in Figure~\ref{fig:delA3}.
\end{example}

\begin{figure}
\subfloat[$\H^\circ=\set{\onehat}$]{
\begin{tikzpicture}[scale=0.88]
\tikzstyle{every node}=[circle,draw,fill=white,inner sep=1.7pt,scale=0.75]
\coordinate (P1) at (0,0);
\coordinate (P2) at (-1,-0.7);
\coordinate (P4) at (-1,0.7);
\coordinate (P3) at (1,-0.7);
\coordinate (P5) at (1,0.7);

\fill[pattern=north east lines, pattern color=green!50!blue!50!white] 
(P1) -- (P2) -- (P4);  
\fill[pattern=north east lines, pattern color=green!50!blue!50!white] 
(P1) -- (P3) -- (P5); 

\node[label=below:$1$] (V1) at (P1) {};
\node[label=left:$2$] (V2) at (P2) {}; 
\node[label=left:$4$] (V4) at (P4) {}; 
\node[label=right:$3$] (V3) at (P3) {}; 
\node[label=right:$5$] (V5) at (P5) {}; 


\draw[style=very thick] (V5) .. controls (-1,2) and (-2.5,0.7) .. (V2);
\draw[line width=4pt,color=white] (V4) .. controls (1,2) and (2.5,0.7) .. (V3);

\draw[style=very thick] (V4) -- (V1) -- (V2);
\draw[style=thin] (V2) -- (V4);

\draw[style=very thick] (V5) -- (V1) -- (V3);
\draw[style=thin] (V3) -- (V5);

\draw[style=very thick] (V4) -- (V5);
\draw[style=very thick] (V2) -- (V3);
\draw[style=very thick] (V4) .. controls (1,2) and (2.5,0.7) .. (V3);

\end{tikzpicture}
}
\subfloat[$\H^\circ=\set{\onehat,124}$]{
\begin{tikzpicture}[scale=0.88]
\tikzstyle{every node}=[circle,draw,fill=white,inner sep=1.7pt,scale=0.75]
\coordinate (P1) at (0,0);
\coordinate (P2) at (-1,-0.7);
\coordinate (P4) at (-1,0.7);
\coordinate (P124) at (-0.67, 0);
\coordinate (P3) at (1,-0.7);
\coordinate (P5) at (1,0.7);

\fill[pattern=north east lines, pattern color=green!50!blue!50!white] 
(P1) -- (P3) -- (P5); 

\node[label=below:$1$] (V1) at (P1) {};
\node[label=left:$2$] (V2) at (P2) {}; 
\node[label=left:$4$] (V4) at (P4) {}; 
\node[label=left:$124$] (V124) at (P124) {};
\node[label=right:$3$] (V3) at (P3) {}; 
\node[label=right:$5$] (V5) at (P5) {}; 


\draw[style=very thick] (V5) .. controls (-1,2) and (-2.5,0.7) .. (V2);
\draw[line width=4pt,color=white] (V4) .. controls (1,2) and (2.5,0.7) .. (V3);

\draw[style=very thick] (V5) -- (V1) -- (V3);
\draw[style=thin] (V3) -- (V5);
\draw[style=very thick] (V1) -- (V124) -- (V4);
\draw[style=very thick] (V124) -- (V2);
\draw[style=very thick] (V4) -- (V5);
\draw[style=very thick] (V2) -- (V3);
\draw[style=very thick] (V4) .. controls (1,2) and (2.5,0.7) .. (V3);

\end{tikzpicture}
}
\subfloat[$\H^\circ=\set{\onehat,124,135}$]{
\begin{tikzpicture}[scale=0.88]
\tikzstyle{every node}=[circle,draw,fill=white,inner sep=1.7pt,scale=.75]
\coordinate (P1) at (0,0);
\coordinate (P2) at (-1,-0.7);
\coordinate (P4) at (-1,0.7);
\coordinate (P124) at (-0.67, 0);
\coordinate (P3) at (1,-0.7);
\coordinate (P5) at (1,0.7);
\coordinate (P135) at (0.67, 0);

\node[label=below:$1$] (V1) at (P1) {};
\node[label=left:$2$] (V2) at (P2) {}; 
\node[label=left:$4$] (V4) at (P4) {}; 
\node[label=left:$124$] (V124) at (P124) {};
\node[label=right:$3$] (V3) at (P3) {}; 
\node[label=right:$5$] (V5) at (P5) {}; 
\node[label=right:$135$] (V135) at (P135) {};


\draw[style=very thick] (V5) .. controls (-1,2) and (-2.5,0.7) .. (V2);
\draw[line width=4pt,color=white] (V4) .. controls (1,2) and (2.5,0.7) .. (V3);

\draw[style=very thick] (V1) -- (V135) -- (V3);
\draw[style=very thick] (V135) -- (V5);
\draw[style=very thick] (V1) -- (V124) -- (V4);
\draw[style=very thick] (V124) -- (V2);
\draw[style=very thick] (V4) -- (V5);
\draw[style=very thick] (V2) -- (V3);
\draw[style=very thick] (V4) .. controls (1,2) and (2.5,0.7) .. (V3);

\end{tikzpicture}
}
\caption{The link of $\onehat$ in $\Ka(\L(\LM,\H))=\N(\LM,\H)$ from 
Example~\ref{ex:delA3}.
\label{fig:delA3}}
\end{figure}


\subsection{Nested sets}

An important concept related to building sets are nested sets, which form an
abstract simplicial complex. In the geometric setting, the faces of the nested
set complex index the non-trivial intersections of divisor components in the
wonderful compactification. 
Here, we extend the notion of nested sets to partial building sets. Once again
they 
form an abstract simplicial complex, and in Theorem~\ref{thm:nested} we
will show that this is none other than the atomic complex for the semilattice
$\L(\LM,\H) = 
\Bl_{p_{m}}\circ\cdots \circ \Bl_{p_2}\circ \Bl_{p_1}(\LM)$.
In the classical case, nested sets are independent sets in the semilattice
of the blowup; in our generalization, nested sets are no longer necessarily
independent, and the nested set complex is no longer pure in general (see
Figure~\ref{fig:delA3} for examples).

Throughout this section, let $\H$ be a partial building set in a geometric
lattice $\LM$.
\begin{definition}[Nested set complex]
\label{def:nestedsets}
A subset $\NS$  of $\H$  is called {\em $\H$-nested\/} if, for any set of 
incomparable elements $x_1,\ldots,x_t$ in $\NS$ of cardinality at least two, 
the join $x_1\vee \ldots \vee x_t$ is not contained in~$\H$. 
The $\H$-nested sets form an abstract simplicial complex $\N(\LM,\H)$, called
the {\em nested set complex\/} with respect to $\H$. 

A set of pairwise incomparable elements will be called an {\em antichain\/},
and an antichain with at least two elements will be called a {\em nontrivial
antichain\/}.
\end{definition}

In the definition of a nested set, we emphasize that we 
are comparing elements and using the join in the original lattice $\LM$, rather
than in $\L(\LM,\H)$.
Observe that if $\H$ is chosen to be the maximal building set $\LM_+$,
then the nested set complex coincides with the order complex of $\LM$.

First, we recall a useful relationship between nested sets and the join
decomposition of Definition \ref{def:buildingsets}\eqref{eq:join decomp2}.

\begin{proposition}[{\cite[Prop. 2.8]{FK04}}]\label{prop:nestedfactors}
Let $\G$ be a (full) building set, and let $S\in\N(\LM,\G)$. Then 
$\max S = F(\LM,\G;y)$ where $y=\bigvee S\in \LM$.  That is, the maximal
elements of $S$ in $\LM$ are the factors of $y$ in the join decomposition.
\end{proposition}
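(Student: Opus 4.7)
The plan is to realize $\max S$ and $F(\LM,\G;y)$ as two descriptions of the same factor set coming from the join decomposition isomorphism \eqref{eq:join decomp2}.

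First, I would note that $y = \bigvee S = \bigvee \max S$, since non-maximal elements of $S$ are dominated by maximal ones. Each $x \in \max S$ lies in $\G \cap \LM_{\leq y}$, so by Proposition~\ref{prop:join decomp} there is a unique factor $\sigma(x) \in F(\LM,\G;y) = \{z_1,\ldots,z_m\}$ with $x \leq \sigma(x)$. This defines a map $\sigma\colon \max S \to F(\LM,\G;y)$. Setting $I_j := \sigma^{-1}(z_j)$, the building-set isomorphism $\varphi_y\colon \prod_{j=1}^m [\bottom,z_j] \to [\bottom,y]$ of \eqref{eq:join decomp2}, applied to the equality $y = \bigvee \max S = \bigvee_j \bigl(\bigvee_{x \in I_j} x\bigr)$, forces the coordinate equations
\[
\bigvee_{x \in I_j} x = z_j, \qquad j=1,\ldots,m.
\]

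Next I would argue that each $I_j$ is a singleton. Nonemptiness is immediate from the coordinate equation, since otherwise the $j$-th coordinate of $y$ would be $\bottom \neq z_j$. If $|I_j| \geq 2$, then the elements of $I_j$ form a nontrivial antichain in $S$ (they are distinct members of $\max S$, hence pairwise incomparable), and their join $z_j$ lies in $\G$. This contradicts the $\G$-nestedness of $S$. Hence each $|I_j| = 1$, so $\sigma$ is a bijection and the coordinate equation collapses to $x = z_{\sigma(x)}$ for each $x \in \max S$, giving the claimed equality $\max S = F(\LM,\G;y)$.

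The hard part is really just the leap from the single identity $y = \bigvee \max S$ to the per-factor coordinate equations; once the building-set isomorphism \eqref{eq:join decomp2} is brought in to do that bookkeeping, the $\G$-nested hypothesis collapses each cluster $I_j$ to a singleton in a single line, and the rest is routine.
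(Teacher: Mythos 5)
The paper does not actually prove this statement; it is imported verbatim as \cite[Prop.~2.8]{FK04}, so there is no in-paper argument to compare against. Your proof is correct and is the natural one: Proposition~\ref{prop:join decomp} is used to assign to each $x\in\max S$ (a subset of $\G$) the unique factor of $y$ above it, the join decomposition isomorphism \eqref{eq:join decomp2} forces each cluster $I_j$ to join to $z_j$, and $\G$-nestedness then rules out $\abs{I_j}\geq 2$ since the elements of a cluster are distinct maximal elements of $S$, hence a nontrivial antichain whose join would land in $\G$. The one step you wave at — that $\varphi_y$ ``forces the coordinate equations'' — deserves a sentence, because \eqref{eq:join decomp2} is stated only as an abstract poset isomorphism pinning $\varphi_y(\bottom,\ldots,z_j,\ldots,\bottom)=z_j$, not as the literal join map; but it does go through: for $x\in I_j$ one has $x\leq z_j$, so $\varphi_y^{-1}(x)$ is supported in the $j$-th coordinate, hence $\varphi_y^{-1}\bigl(\bigvee_{x\in I_j}x\bigr)$ is too, and pulling $y=\bigvee_j\bigvee_{x\in I_j}x$ through $\varphi_y^{-1}$ and comparing with $\varphi_y^{-1}(y)=(z_1,\ldots,z_m)$ yields $\varphi_y^{-1}\bigl(\bigvee_{x\in I_j}x\bigr)=\varphi_y^{-1}(z_j)$, whence $\bigvee_{x\in I_j}x=z_j$ by injectivity.
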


Our immediate goal is to show that $\N(\LM,\H)=\Ka(\L(\LM,\H))$, which we do in
Theorem \ref{thm:nested}. The two extremes for choices of $\H$ will be
fundamental, and we state these here:
\begin{proposition}\label{prop:N=K}
If either $\H=\G$ is a building set or $\H=\atoms(\LM)$, then
$\Ka(\L(\LM,\H)) = \N(\LM,\H)$.
\end{proposition}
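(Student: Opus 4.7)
The plan is to treat the two cases separately. The case $\H = \atoms(\LM)$ is immediate: since $\H^\circ = \emptyset$, no combinatorial blowups are performed, so $\L(\LM,\H) = \LM$, and by Example \ref{ex:Ka of lattice} its atomic complex is the full simplex on $\atoms(\LM)$. On the nested-set side, any pairwise-incomparable subset of $\H = \atoms(\LM)$ of size at least two has $\LM$-join of rank at least two and so cannot lie in $\H$; hence every subset is vacuously $\H$-nested, and $\N(\LM,\H)$ is also the full simplex.

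For the substantive case $\H = \G$, I would identify the atoms of $\L(\LM,\G)$ with $\G$ itself via $g \mapsto (g,\bottom)$; the claim then reduces to showing that a subset $S \subseteq \G$ has a common upper bound in $\L(\LM,\G)$ if and only if $S$ is $\G$-nested. This is essentially the combinatorial-blowup interpretation of nested sets established by Feichtner--Kozlov \cite{FK04}. For a self-contained argument, I would induct on $\abs{\G^\circ}$: let $p$ be the element of $\G^\circ$ blown up last in the prescribed sequence, and set $\G' = \G \setminus \{p\}$, so that $\L(\LM,\G) = \Bl_p(\L(\LM,\G'))$. For $S \subseteq \G$ with $p \notin S$, a common upper bound in the blowup corresponds exactly to one in $\L(\LM,\G')$, and induction applies. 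For $p \in S$, write $S = \{p\} \cup S'$; any upper bound must take the form $(p,x)$ with $x \in \L(\LM,\G')_{(p)}$ satisfying $x \geq S'$, and Propositions \ref{prop:nestedfactors} and \ref{prop:join decomp} allow one to translate the existence of such an $x$ into the $\G$-nested condition on $S$.

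The main obstacle is the $p \in S$ subcase: one must verify carefully that a nontrivial antichain $T \subseteq S$ whose $\LM$-join lies in $\G$ is precisely what obstructs the existence of the upper bound $(p,x)$ in $\Bl_p$. The rest amounts to routine bookkeeping with combinatorial blowups and building-set factorizations, handled symmetrically in both directions.
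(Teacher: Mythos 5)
Your handling of the case $\H=\atoms(\LM)$ is correct and matches the paper: both complexes are the full simplex on $\atoms(\LM)$, the nestedness being vacuous because the join of a nontrivial antichain of atoms has rank at least two. For $\H=\G$, your primary route --- citing the Feichtner--Kozlov identification of $\L(\LM,\G)$ with the face poset of $\N(\LM,\G)$ --- is exactly what the paper does (it invokes \cite[Thm.~3.4]{FK04} and nothing more), so on that reading the proposal is correct and takes the same approach.

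The ``self-contained'' induction you sketch as a backup, however, does not work as stated, and you should be aware of two concrete problems. First, setting $\G'=\G\setminus\{p\}$ produces a \emph{partial} building set, not a building set (if $p$ is irreducible, $\G'$ omits an irreducible element), so the induction hypothesis you need is the statement $\Ka(\L(\LM,\H))=\N(\LM,\H)$ for arbitrary partial $\H$ --- which is Theorem~\ref{thm:nested}, proved in the paper \emph{using} Proposition~\ref{prop:N=K} as input. Moreover, even granting that hypothesis, being $\G'$-nested is not the same as being $\G$-nested, since the two notions test whether joins land in different sets. Second, the claim that for $p\notin S$ ``a common upper bound in the blowup corresponds exactly to one in $\L(\LM,\G')$'' is false in the direction you need: if $S$ is a set of atoms of $\L(\LM,\G')$ with $\bigvee S\geq p$ (e.g.\ two atoms of a three-point line $p$), then $\bigvee S$ exists in $\L(\LM,\G')$ but $S$ has no upper bound in $\Bl_p(\L(\LM,\G'))$, since every upper bound of $S$ lies above $p$ and is removed by the blowup. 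This is precisely the mechanism by which blowing up $p$ destroys simplices whose join is $p$, and it is the case your induction step silently skips. If you want a self-contained proof, the paper's route is to take $\H=\atoms(\LM)$ as the base case and build \emph{up} through partial building sets with a careful case analysis on $\bigvee S_{<p}$ (Theorem~\ref{thm:nested} and Lemmas~\ref{lem:Pi:a}--\ref{lem:Pi:b}); rederiving the $\H=\G$ case from scratch amounts to reproving \cite[Thm.~3.4]{FK04}.
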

\begin{proof}
When $\H=\G$, \cite[Thm.~3.4]{FK04} states that $\L(\LM,\G)$ is the face poset
of the simplicial complex $\N(\LM,\G)$, which means that $\Ka(\L(\LM,\G)) =
\N(\LM,\G)$.

On the other hand, when $\H=\atoms(\LM)$, both $\Ka(\L(\LM,\H))$ and
$\N(\LM,\H)$ consist of all subsets of $\H$, hence they are both full simplices
of dimension $\abs{\atoms(\LM)}-1$.
\end{proof}

In order to deduce Theorem~\ref{thm:nested} from these extremes, we will use the
following map to ``blow-down'' subsets of a partial building set.
Suppose that $\H\subseteq \H'$ are partial building sets obtained from a
building set $\G$ in a geometric lattice $\LM$. 
Recall that we may identify the atoms $\atoms(\L(\LM,\H'))$ with $\H'$ (and
similarly for $\H$): through this identification, define for $S\subseteq\H'$ a
set
\[
\Pi(\NS) = \Pi^{\H'}_{\H}(\NS)=
\left\{h\in\H \colon h\leq \pi^{\H'}_{\H}(g) \text{ for some } g\in \NS\right\}.
\]
That is, one first blows down the elements of $S$ from $\L(\LM,\H')$ to
$\L(\LM,\H)$ and then collects the atoms of $\L(\LM,\H)$ lying underneath.
We can view $\Pi$ as a function on the atomic complexes via
\[
\Pi: \Ka(\L(\LM,\H')) \xrightarrow{\ \vee\ } \L(\LM,\H')
\xrightarrow{\pi^{\H'}_\H} \L(\LM,\H) \xrightarrow{\supp} \Ka(\L(\LM,\H)),
\]
where $\supp$ was defined in Definition \ref{def:AtComplex}\eqref{def:supp}.
Clearly, in the case where $\H'=\H\cup\set{p}$, we have
\begin{equation}\label{eq:onestep Pi}
\Pi(S)=\begin{cases}
S & \text{if $p\not\in S$;}\\
(S-\set{p})\cup \supp(p)& \text{otherwise.}
\end{cases}
\end{equation}

In Lemma \ref{lem:Pi:a} below, we will see that $\Pi$ can also be viewed as a
function on the nested set complexes.
Note, however, that $\Pi$ is not a simplicial map; that is, an $i$-simplex is
not necessarily sent to an $i$-simplex.

\begin{example}
\label{ex:delA3 Pi}
Recall the lattice $\LM$ from Example \ref{ex:delA3}. The nested set complex
(equivalently, atomic complex) for each partial building set $\H\subseteq\G =
\{\onehat,124,135\}$ is a cone over the corresponding complex depicted in
Figure \ref{fig:delA3}. 

The function $\Pi$ sends the vertex $135$ in $\N(\LM,\G)$ to the
2-simplex $\{1,3,5\}$ in $\N(\LM,\H)$, where $\H$ is either
$\{\onehat\}$ or $\{\onehat,124\}$.
Observe that the set $\{1,3,5\}$ is $\H$-nested but not $\G$-nested.

As another example, the 1-simplex $S=\{2,124\}\in\N(\LM,\G)$ is preserved when
blowing down to $\N(\LM,\{\onehat,124\})$; that is, $\Pi(S)=S$. But then
applying $\Pi$ again, one obtains the 2-simplex $\{1,2,4\}$ in
$\N(\LM,\{\onehat\})$.
\end{example}

\begin{lemma}
\label{lem:Pi:a}
Let $\H\subseteq\H'$ be partial building sets in a geometric lattice $\LM$.
If $S$ is $\H'$-nested, then $\Pi^{\H'}_{\H}(S)$ is $\H$-nested.
That is, $\Pi^{\H'}_{\H}$ defines a function $\N(\LM,\H')\to\N(\LM,\H)$.
\end{lemma}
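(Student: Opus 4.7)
My plan is to reduce to a single-step blowdown and then, in the only nontrivial case, identify an antichain in $S$ to which $\H'$-nestedness can be applied. Since $\Pi^{\H'}_\H$ factors through any chain of partial building sets between $\H$ and $\H'$ differing by one element at a time, it suffices to treat the case $\H' = \H \cup \{p\}$ and iterate. Under this reduction, \eqref{eq:onestep Pi} gives $\Pi(S) = S$ when $p \notin S$, and in that situation any nontrivial antichain $A \subseteq S$ in $\LM$ satisfies $\bigvee A \notin \H' \supseteq \H$ by hypothesis, so $\Pi(S) \in \N(\LM,\H)$.

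Assume therefore that $p \in S$ and write $S_0 = S \setminus \{p\}$, so $\Pi(S) = S_0 \cup \supp(p)$. Suppose toward a contradiction that some nontrivial antichain $A \subseteq \Pi(S)$ in $\LM$ has $w := \bigvee A \in \H$. A preliminary remark, used throughout, is that $\supp(p) \subseteq \atoms(\LM)$: any $h \in \H^\circ$ with $h \leq p$ in $\LM$ would force $p \in \H^\circ$ by the order-filter property of $\H^\circ$ in $\G$, which is inconsistent with $p \notin \H$. Split $A = A_1 \sqcup A_2$ with $A_2 = A \cap \supp(p)$ and $A_1 = A \setminus \supp(p) \subseteq S_0$. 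If $A_2 = \emptyset$, then $A \subseteq S$ and $w \in \H \subseteq \H'$ violates $\H'$-nestedness of $S$. If $A_1 = \emptyset$, then $A \subseteq \supp(p)$ consists of distinct atoms of $\LM$, so $w$ has rank at least two and lies in $\H^\circ$; combined with $w \leq p$, the filter argument again forces $p \in \H^\circ$, a contradiction.

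The substantive case, which I expect to be the main obstacle, is when $A_1$ and $A_2$ are both nonempty. Here the idea is to replace $A_2$ by $\{p\}$ and show that the resulting set is still a nontrivial antichain in $S$. For $a \in A_1$: one cannot have $a > p$ in $\LM$, for then any $b \in A_2$ would satisfy $b \leq p < a$, contradicting the antichain condition on $A$; and one cannot have $a \leq p$ either, since atoms below $p$ lie in $\supp(p)$ (hence are excluded from $A_1$), while an element of $\H^\circ$ beneath $p$ is ruled out by the same filter argument as above. Hence $A_1 \cup \{p\} \subseteq S$ is a nontrivial antichain in $\LM$ with join $\bigvee A_1 \vee p = w \vee p$, using $\bigvee A_2 \leq p$. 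If $w \geq p$ this join equals $w \in \H \subseteq \H'$, contradicting $\H'$-nestedness of $S$ outright; otherwise $w \not\geq p$ and $w \wedge p \geq \bigvee A_2 \neq \bottom$, so Proposition~\ref{prop:BICO}\eqref{prop:BICO2} applied to the partial building set $\H'$ with $q = p \in \H'$ and $x = w$ yields $w \vee p \in \H'$, again contradicting $\H'$-nestedness on the antichain $A_1 \cup \{p\}$ in $S$.
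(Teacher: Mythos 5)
Your proof is correct and follows essentially the same route as the paper's: reduce to a one-element extension $\H'=\H\cup\{p\}$, and in the only substantive case replace the portion of the offending antichain lying below $p$ by $p$ itself, checking incomparability via the order-filter property and concluding with Proposition~\ref{prop:BICO}\eqref{prop:BICO2}. Your explicit treatment of the subcase $w\geq p$ (where the join is already in $\H$) is a detail the paper glosses over, but the argument is the same.
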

\begin{proof}
Let $\Pi=\Pi^{\H'}_{\H}$ for short.
It suffices to check the case that $\H'=\H\cup\set{p}$. Suppose we have a
set 
$S\subseteq\H'$ for which $\Pi(S)$ is not $\H$-nested: that is, there is a
nontrivial antichain $T$ contained in $\Pi(S)$ whose join $\bigvee T$ is an
element of $\H$. We will show that this implies $S$ is not $\H'$-nested.
We have two cases, depending on whether $T\subseteq S$.

First, suppose that $T\subseteq\NS$. Then $\NS$ contains a nontrivial antichain
$T$ whose join is an element of $\H$, hence also $\H'$ since $\H\subseteq\H'$.
Thus, $\NS$ is not $\H'$-nested.

Now suppose that $T\not\subseteq\NS$. Then $p\in\NS$ and $T_{<p}\neq\emptyset$
(by \eqref{eq:onestep Pi}), so $\bottom<\bigvee T_{<p}\leq(\bigvee T)\wedge p$.
Since $\bigvee T\in \H\subseteq \H'$ and $p\in\H'$, this implies that $(\bigvee
T)\vee p\in\H'$ by Proposition \ref{prop:BICO}\eqref{prop:BICO2}.
Now, since $\bigvee T\in\H$ and $p\in\H'\setminus\H$, we must have
$T_{\not<p}\neq \emptyset$. It follows that the set $T_{\not<p}\cup\{p\}$ is a
nontrivial antichain contained in $\NS$. But $(\bigvee T_{\not<p})\vee
p=(\bigvee T)\vee p\in\H'$, which means that $\NS$ cannot be $\H'$-nested.
\end{proof}

\begin{lemma}
\label{lem:Pi:b}
Let $\H\subseteq\H'$ be partial building sets in a geometric lattice $\LM$.
For any simplex $S\in \Ka(\L(\LM,\H))$, there is some simplex $T\in
\Ka(\L(\LM,\H'))$ such that $S\subseteq \Pi^{\H'}_\H(T)$. 
\end{lemma}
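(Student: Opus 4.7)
The plan is to induct on $|\H'\setminus\H|$ and reduce to the one-step case $\H'=\H\cup\{p\}$. For the inductive step, choose a maximal element $p\in\H'\setminus\H$ (in the order of $\LM$); the order-filter condition on $\H'^\circ$ then guarantees that $\H\cup\{p\}$ is again a partial building set. Apply the one-step case to produce $T_1\in\Ka(\L(\LM,\H\cup\{p\}))$ with $S\subseteq\Pi^{\H\cup\{p\}}_\H(T_1)$, and then the inductive hypothesis to promote $T_1$ to some $T\in\Ka(\L(\LM,\H'))$ with $T_1\subseteq\Pi^{\H'}_{\H\cup\{p\}}(T)$. The desired inclusion $S\subseteq\Pi^{\H'}_\H(T)$ follows by combining monotonicity of $\Pi$ with the containment $\Pi^{\H\cup\{p\}}_\H\circ\Pi^{\H'}_{\H\cup\{p\}}\subseteq\Pi^{\H'}_\H$, both of which fall directly out of the definition of $\Pi$ together with the factorization $\pi^{\H'}_\H=\pi^{\H\cup\{p\}}_\H\circ\pi^{\H'}_{\H\cup\{p\}}$ and the order-preservation of the blowdown maps.

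The heart of the argument is the one-step case $\H'=\H\cup\{p\}$. Set $x=\bigvee_{\L(\LM,\H)}S\in\L(\LM,\H)$ and partition $S=S_{\leq p}\sqcup S_{\not\leq p}$ by the order relative to $p$ in $\L(\LM,\H)$. If $x\not\geq p$, then $x$ survives in $\L(\LM,\H')=\Bl_p(\L(\LM,\H))$ and serves as a common upper bound of $S$; here $T=S$ works and $\Pi(T)=S$ by the first case of \eqref{eq:onestep Pi}. Otherwise $x\geq p$, and I propose $T=\{p\}\cup S_{\not\leq p}$. The containment $\Pi(T)\supseteq S$ is immediate from the second case of \eqref{eq:onestep Pi}, which gives $\Pi(T)=S_{\not\leq p}\cup\supp_{\L(\LM,\H)}(p)$, together with $S_{\leq p}\subseteq\supp_{\L(\LM,\H)}(p)$. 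Setting $u=\bigvee_{\L(\LM,\H)}S_{\not\leq p}$, the element $(p,u)\in\Bl_p(\L(\LM,\H))$ is a common upper bound of $T$ by relations (ii) and (iii) of Definition \ref{def:blowup}, using that $u,p\leq x$ and hence $p\vee u$ exists in the geometric lattice $[\bottom,x]$, provided one shows that $u\not\geq p$.

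The main obstacle is establishing this subclaim $u\not\geq p$. To do so, I would exploit the building set structure on $\L(\LM,\H)$ afforded by $\G$ (Remark \ref{rmk:G in L(H)}). Since $p\in\G$ and $p\leq x$, Proposition \ref{prop:join decomp} yields a unique factor $g\in F(\L(\LM,\H),\G;x)$ with $p\leq g$. A brief case analysis rules out $g>p$: an atom of $\L(\LM,\H)$ cannot strictly exceed the non-atom $p$ on rank grounds, whereas an original $\LM$-element $g\in\G\setminus\H$ with $g>p$ in $\L(\LM,\H)$ would satisfy $g>p$ in $\LM$ as well, and the order-filter condition defining $\H\cup\{p\}$ as a partial building set would then force $g\in\H^\circ$, contradicting $g\in\G\setminus\H$. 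Hence $g=p$, so $p\in F(\L(\LM,\H),\G;x)$, and Definition \ref{def:buildingsets} provides an isomorphism $[\bottom,x]\cong[\bottom,p]\times\prod_{g'\in F\setminus\{p\}}[\bottom,g']$. Under this product decomposition the atoms of $[\bottom,x]$ split by factor; those of $S_{\not\leq p}$ lie in the factors other than $[\bottom,p]$, so $u$ is supported in $\prod_{g'\neq p}[\bottom,g']$, and thus $u\wedge p=\bottom\neq p$, giving $u\not\geq p$ as required.
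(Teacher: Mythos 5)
Your proof is correct and follows essentially the same route as the paper: reduce to a single blowup $\H'=\H\cup\{p\}$, split $S$ according to comparability with $p$, and in the nontrivial case take $T=S_{\not\leq p}\cup\{(p,\bottom)\}$ with upper bound $(p,\bigvee S_{\not\leq p})$. The only differences are cosmetic: you phrase the case split as $\bigvee S\geq p$ versus $\bigvee S\not\geq p$ (equivalent to the paper's $\bigvee S_{<p}=p$ versus $\bigvee S_{<p}<p$, precisely by your factor argument), and you spell out, via Proposition~\ref{prop:join decomp} and the order-filter condition on $\H^\circ\cup\{p\}$, the key fact that $\bigvee S_{\not\leq p}\not\geq p$, which the paper asserts without further comment.
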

\begin{proof}
Let us abbreviate $\Pi=\Pi^{\H'}_\H$.
Again, it suffices to check the case that $\H'=\H\cup\{p\}$, where 
$\L(\LM,\H')=\Bl_p(\L(\LM,\H))$.
Let $S\in \Ka(\L(\LM,\H))$, and write $S=S_{\not<p}\cup S_{<p}$. 
We have two cases: either $\bigvee S_{< p} = p$ or $\bigvee S_{< p}<p$.

If $\bigvee S_{< p} =p$, then we claim $T= S_{\not<p}\cup
\{(p,\bottom)\}$ is a simplex in $\Ka(\L(\LM,\H'))$ with $S\subseteq \Pi(T)$.
It is clear that $S\subseteq\Pi(T)$. To show that $T$ is indeed a simplex,
we first note that $\bigvee S_{\not<p}$ exists in $\L(\LM,\H')$, since this join
exists and is not above $p$ in $\L(\LM,\H)$. Moreover, since $\bigvee S_{<p}=p$
the join $\bigvee S_{\not<p}\vee p = \bigvee S$ exists in $\L(\LM,\H)$, so
$\bigvee T = (p,\bigvee S_{\not\leq p})\in \L(\LM,\H')$. 

Otherwise, $\bigvee S_{<p}<p$, and we claim $T = S$ is
in $\Ka(\L(\LM,\H'))$.  This is
true because $\bigvee S$ exists and is not above $p$ in $\L(\LM,\H)$, 
hence the join exists in $\L(\LM,\H')$.
\end{proof}

We are now ready to prove that the nested set complex and atomic complex
of $\L(\LM,\H)$ agree, which we noted in Proposition \ref{prop:N=K}
is a fundamental fact in the case where $\H$ is a full building set.
\begin{theorem}
\label{thm:nested} 
Let $\LM$ be a geometric lattice
and $\H\subseteq\LM_+$ a partial building set. Then 
a set $\NS\subseteq \H$ is $\H$-nested if and only if $\NS\in\Ka(\L(\LM,\H))$. 
\end{theorem}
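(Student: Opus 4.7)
The plan is to prove the two inclusions of $\N(\LM,\H)$ and $\Ka(\L(\LM,\H))$ separately, leveraging Proposition~\ref{prop:N=K} (the extreme cases $\H=\G$ and $\H=\atoms(\LM)$) together with Lemmas~\ref{lem:Pi:a} and~\ref{lem:Pi:b}, which describe how the blow-down map $\Pi$ interacts with nested sets and atomic simplices, respectively.

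For the forward inclusion $\Ka(\L(\LM,\H))\subseteq\N(\LM,\H)$, I would extend $\H$ to a full building set $\G$ and chain Lemma~\ref{lem:Pi:b} through each intermediate blow-up: given $S\in\Ka(\L(\LM,\H))$, iterated application produces $T\in\Ka(\L(\LM,\G))$ with $S\subseteq\Pi^\G_\H(T)$. By Proposition~\ref{prop:N=K} the set $T$ is $\G$-nested, so Lemma~\ref{lem:Pi:a} gives that $\Pi^\G_\H(T)$ is $\H$-nested; as the collection of $\H$-nested sets is closed under subsets, $S$ is $\H$-nested.

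For the reverse inclusion, I would proceed by induction on $\abs{\H^\circ}$, with base case $\H=\atoms(\LM)$ handled by Proposition~\ref{prop:N=K}. For the inductive step, write $\H'=\H\cup\set{p}$ with $p$ minimal in $(\H')^\circ$, so that $\H^\circ\cap\G_{<p}=\emptyset$ while $\G_{>p}\subseteq\H^\circ$. Two observations about this setup are used repeatedly: the interval $[\bottom,p]$ in $\L(\LM,\H)$ coincides with $[\bottom,p]$ in $\LM$ (no blow-up touches anything strictly below $p$), and the atoms of $\L(\LM,\H)$ lying below $p$ are precisely $\atoms(\LM)_{\leq p}$ (by a rank argument using blow-up rule~(iii) of Definition~\ref{def:blowup}). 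Given $S\in\N(\LM,\H')$, split on whether $p\in S$. If $p\notin S$, then $S\subseteq\H$ is $\H$-nested, so by induction $\bigvee S$ exists in $\L(\LM,\H)$; I would argue $\bigvee S\not\geq p$ by contradiction: Proposition~\ref{prop:join decomp} yields a unique factor $g\in F(\L(\LM,\H),\G;\bigvee S)$ with $p\leq g$, and a rank count combined with $\G_{>p}\subseteq\H$ forces $g=p$, so that~\eqref{eq:join decomp2} produces a nontrivial antichain in $S\cap\atoms(\LM)_{\leq p}$ whose $\LM$-join equals $p\in\H'$, contradicting $\H'$-nestedness. Hence $\bigvee S$ survives into $\L(\LM,\H')=\Bl_p(\L(\LM,\H))$ as a common upper bound for $S$. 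If $p\in S$, write $S=S'\cup\set{p}$ with $S'\subseteq\H$; Lemma~\ref{lem:Pi:a} ensures $\Pi^{\H'}_\H(S)=S'\cup\atoms(\LM)_{\leq p}$ is $\H$-nested, and the induction hypothesis, together with the interval-preservation fact, gives that $p\vee\bigvee S'$ exists in $\L(\LM,\H)$. Applying the previous subcase to the $\H'$-nested subset $S'$ shows $\bigvee S'\not\geq p$, so $\bigvee S'\in\L(\LM,\H)_{(p)}$ and $(p,\bigvee S')\in\L(\LM,\H')$ is the required upper bound.

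The main obstacle is the subcase $p\notin S$ of the inductive step, which requires translating the join inequality $\bigvee S\geq p$ in the partial blow-up into a $\bigvee_\LM A=p$ statement for some nontrivial antichain $A\subseteq S$ in the original lattice. The hypothesis that $p$ is minimal in $(\H')^\circ$ is what makes this translation possible: it forces the factor $g$ in the $\G$-decomposition to equal $p$, and simultaneously ensures that the interval below $p$ is untouched by the blow-ups leading to $\L(\LM,\H)$.
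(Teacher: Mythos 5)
Your proposal matches the paper's proof in structure and essential content. The forward inclusion $\Ka(\L(\LM,\H))\subseteq\N(\LM,\H)$ proceeds exactly as in the paper: lift $S$ via Lemma~\ref{lem:Pi:b} to $T\in\Ka(\L(\LM,\G))$, invoke Proposition~\ref{prop:N=K} to see $T$ is $\G$-nested, push down via Lemma~\ref{lem:Pi:a}, and use closure under subsets. The reverse inclusion is the same induction on $\abs{\H}$ with the same base case and the same split into $p\in S$ versus $p\notin S$.

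The one place you diverge is in phrasing the key step of the $p\notin S$ subcase. The paper establishes $\bigvee S_{<p}\neq p$ from nestedness and then asserts, rather tersely, that ``by the construction of a blowup'' $\bigvee S$ survives into $\L(\LM,\H')$ — leaving implicit why $\bigvee S_{<p}<p$ forces $\bigvee S\not\geq p$. You make this implicit step explicit by arguing contrapositively: if $\bigvee S\geq p$, then Proposition~\ref{prop:join decomp} gives a unique factor $g\geq p$, the rank/order-filter observations force $g=p$, and then the uniqueness in the product decomposition~\eqref{eq:join decomp2} (each $s\in S$ lies under a unique factor, so the $p$-component of $\bigvee S$ is exactly $\bigvee S_{<p}$) forces $\bigvee S_{<p}=p$, producing a nontrivial antichain in $S\cap\atoms(\LM)_{<p}$ joining to $p\in\H'$. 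This is a correct and welcome elaboration of what the paper (and its Lemma~\ref{lem:Pi:b}) compress into one sentence; it does not change the method, but it does supply the justification the paper omits. Your handling of $p\in S$ by reducing to $S'=S\setminus\set{p}$ and building the upper bound $(p,\bigvee S')$ likewise mirrors the paper.
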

\begin{proof}
Let $\G$ be a building set that contains $\H$.

First, assume that $\NS\in \Ka(\L(\LM,\H))$. By Lemma \ref{lem:Pi:b}, there is
a simplex $T\in\Ka(\L(\LM,\G))$ for which $\Pi^\G_\H(T)\supseteq \NS$. 
Since $\G$ is a (full) building set, $T$ is $\G$-nested (see Proposition
\ref{prop:N=K}), which implies that $\Pi^\G_\H(T)$ is $\H$-nested by Lemma
\ref{lem:Pi:a}. The property of being $\H$-nested is inherited by subsets, so
$\NS$ must also be $\H$-nested.

For the converse, we use induction on $\abs{\H}$. The base case, when
$\H=\atoms(\LM)$, was mentioned in Proposition \ref{prop:N=K}. Now assume that every
$\H$-nested set is a simplex of $\Ka(\L(\LM,\H))$, and we will show that the
same holds for the next partial building set $\H'=\H\cup\set{p}\subseteq\G$.
Assume that $\NS$ is $\H'$-nested, and let $\Pi=\Pi^{\H'}_\H$. By Lemma
\ref{lem:Pi:a}, $\Pi(\NS)$ is $\H$-nested hence a simplex in
$\Ka(\L(\LM,\H))$. This means that $\bigvee \Pi(\NS)$ exists in $\L(\LM,\H)$.  
We have two cases, depending on whether $p\in \NS$.

If $p\in\NS$, then $\Pi(\NS) = \NS_{\neq p}\cup\{g\in\atoms(\LM)\colon g<p\}$.
Consider $\bigvee \NS_{\neq p}$ in $\L(\LM,\H)$, which exists because $\bigvee
\Pi(\NS)$ exists. Since $\NS$ is $\H'$-nested, we must have $\bigvee
\NS_{<p}\neq p$ and hence $\bigvee\NS_{\neq p}\not\geq p$. Thus, $\bigvee
\NS_{\neq p}$ exists in $\L(\LM,\H)$. Moreover, since $(\bigvee S_{\neq p})\vee
p = \bigvee\Pi(S)$ exists in $\L(\LM,\H)$, we have $\bigvee S = (p,\bigvee
S_{\neq p})\in\L(\LM,\H')$. Therefore, $S\in\Ka(\LM,\H')$.

Otherwise, $p\notin S$ and $\Pi(S)=S$. The order of blowups implies that
$\NS_{<p}\subseteq \atoms(\LM)$ (Proposition \ref{prop:BICO}\eqref{prop:BICO3});
in particular, $\NS_{<p}$ is an antichain.
We claim that $\bigvee\NS_{<p}\neq p$. This is immediate if it has less than two
elements, and otherwise $\NS_{<p}$ is a nontrivial antichain and the claim
follows from the assumption that $\NS$ is $\H'$-nested. 
Now since $\bigvee\NS_{<p}\neq p$, we have $\bigvee \NS_{<p}<p$; in particular,
$\bigvee\NS_{<p}$ exists and is not above $p$ in $\L(\LM,\H)$.  By the
construction of a blowup, then, $\bigvee\NS$ exists in $\L(\LM,\H')$, which is
to say $\NS\in\Ka(\L(\LM,\H'))$. 
\end{proof}

In the next section, we will study how blowups affect nested sets.  Before
doing so, though, we include one more easy property for later reference.

\begin{proposition}\label{prop:factors1}
Let $\H$ be a partial building set in a geometric lattice $\LM$, and let
$S\subseteq\H$ be a nontrivial antichain. 
If $S$ is $\H$-nested, then $\wedge S=\bottom$.
\end{proposition}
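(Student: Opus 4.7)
The plan is to argue by contrapositive: assume $\wedge S > \bottom$ and produce a nontrivial subantichain of $S$ whose join lies in $\H$, thereby violating $\H$-nestedness. The decisive observation is that $\H$-nestedness is a condition over \emph{all} nontrivial antichains in $S$, so a contradiction only requires exhibiting \emph{one} such bad antichain. The obvious place to look is at 2-element subsets, since the constraint $\wedge S > \bottom$ automatically transfers to the meet of any pair of elements of $S$.

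The main tool will be Proposition~\ref{prop:BICO}\eqref{prop:BICO2}, which says that whenever $x, q \in \LM$ satisfy $x \not\geq q$ and $x \wedge q \neq \bottom$, and $q \in \H$, then automatically $x \vee q \in \H$. The hypothesis is tailor-made for our situation: under the assumption $\wedge S > \bottom$, any two distinct elements $s,t \in S$ have $s \wedge t \geq \wedge S > \bottom$, and since $S$ is an antichain they are incomparable, so $t \not\geq s$. Both $s$ and $t$ lie in $\H$, so applying Proposition~\ref{prop:BICO}\eqref{prop:BICO2} with $x = t$ and $q = s$ yields $s \vee t \in \H$.

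To conclude, since $|S| \geq 2$, such a pair $\{s,t\}$ exists; it is a nontrivial antichain contained in $S$, and we have just shown its join lies in $\H$. This contradicts the assumption that $S$ is $\H$-nested, so $\wedge S = \bottom$ after all. There is no real obstacle here — the argument is essentially a direct unwinding of definitions, and the only subtle point is recognizing that the 2-element case of the nested condition, combined with Proposition~\ref{prop:BICO}\eqref{prop:BICO2}, suffices without ever having to examine antichains of size $\geq 3$ or invoke the full machinery of the join decomposition~\eqref{eq:join decomp2}.
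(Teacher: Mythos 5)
Your proof is correct and is essentially the same as the paper's: both take a pair of distinct elements of $S$, note that they are incomparable with meet above $\wedge S>\bottom$, and apply Proposition~\ref{prop:BICO}\eqref{prop:BICO2} to conclude their join lies in $\H$, contradicting nestedness of the two-element antichain. The only difference is cosmetic (contrapositive versus contradiction).
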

\begin{proof}
For a contradiction, assume $S$ is $\H$-nested and $\wedge S\neq\bottom$.
Let $g,g'\in S$. Since $S$ is an antichain with $\wedge S\neq\bottom$, $g$ and
$g'$ are incomparable with $g\wedge g'\neq\bottom.$ By
Proposition~\ref{prop:BICO}\eqref{prop:BICO2}, $g\vee g'\in\H$, which would
contradict nestedness of $\set{g,g'}\subseteq S$.
\end{proof}

\subsection{Blowing up nested sets}
There is another map on nested sets which we will use, for example, to deal with
the join decomposition in Definition \ref{def:buildingsets}\eqref{eq:join decomp2}
throughout our sequence of
blowups. This map will be of particular importance when working with
\textit{local} building sets in \S\ref{ss:H decomp}.
To define it, 
suppose that $\H$ and $\H'=\H\cup\{p\}$ are partial building sets in $\LM$. 
For a subset $\NS\subseteq\H$, define a subset $\eta(\NS)\subseteq\H'$ by
\[
\eta(\NS) = \eta^{\H'}_{\H}(\NS) = \begin{cases} 
\NS & \text{ if } \bigvee \NS_{<p} \neq p\\
(\NS_{\not<p})\cup\{p\} & \text{ if } \bigvee \NS_{<p} = p
\end{cases}
\]
If $\H$ and $\H'=\H\cup\{p_{m+1},\dots,p_n\}$ are partial building sets, 
we can define $\eta^{\H'}_\H$ as a composition of these maps, and it is
particularly useful to consider $\H'=\G$ (a full building set).
Note that just as with $\Pi$, while $\eta$ is a function on simplicial complexes
(via Lemma \ref{lem:eta:a} below), it is not a simplicial map.

\begin{example}
\label{ex:delA3 eta}
Recall the lattice $\LM$ from Example \ref{ex:delA3} and Figure \ref{fig:delA3}.
For the 2-simplex $S=\{1,2,4\}$ and the 1-simplex $T=\{2,4\}$ in
$\N(\LM,\atoms(\LM))$, we have $\eta(S)=\eta(T)=\{124\}$ in both
$\N(\LM,\H)$ (with $\H^\circ=\{\onehat,124\}$) and $\N(\LM,\G)$.

The 1-simplex $\{2,3\}$ is preserved by $\eta$.
\end{example}

\begin{proposition}
\label{prop:etaS=S}
For $S\subseteq\H$, we have
\[\eta_\H^\G(S) = \eta_\H^\G\left(S\cap \atoms(\LM)\right) \cup
\left(S\cap \H^\circ\right).\]
In particular, if $S\subseteq\H^\circ$, then $\eta_\H^\G(S)=S$.
\end{proposition}
\begin{proof}
The fact that $S\cap \H^\circ$ does not interact with the sequence of blowups
follows from the observation that $S_{<p}\subseteq\atoms(\LM)$ whenever
$p\in\G-\H$ (Proposition \ref{prop:BICO}\eqref{prop:BICO3}).
\end{proof}

\begin{remark}
\label{rmk:eta=factors}
Recall from Remark \ref{rmk:G in L(H)}, we may view $\G$ as a building set for
the semilattice $\L(\LM,\H)$.
For an $\H$-nested set $S$, the factors of $z = \bigvee S\in\L(\LM,\H)$ with
respect to the building set $\G\subseteq\L(\LM,\H)$ are given by $\eta^G_\H(S)$.
More explicitly, 
\[ F\left(\L(\LM,\H),\G;\bigvee S\right) = \eta^G_\H(S).\]
Indeed, by Proposition \ref{prop:join decomp},
the factors of $z$ index a partition of the atoms in $\L(\LM,\H)$ that
lie below $z$. As we apply $\eta$ is the order of blowups, we replace the atoms
below each not-yet-blown-up factor $g\in\G\setminus\H$ with $g$ itself, leaving
other atoms untouched.
\end{remark}

We will show in the next two lemmas that $\eta$ preserves nestedness as well as
incomparability, leading to a useful tool in Proposition \ref{prop:factors2}.

\begin{lemma}
\label{lem:eta:a}
Let $\H\subseteq \H'$ be partial building sets in a geometric lattice $\LM$.
If $S$ is also $\H$-nested, then $\eta^{\H'}_\H(S)$ is $\H'$-nested. That is,
$\eta^{\H'}_\H$ defines a
function $\N(\LM,\H)\to\N(\LM,\H')$.
\end{lemma}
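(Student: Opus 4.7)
The plan is to reduce immediately to a single blowup and proceed by contradiction, using the structure of partial building sets.  Since $\eta^{\H'}_{\H}$ is a composition when $\H'\setminus\H=\{p_{m+1},\dots,p_n\}$, it is enough to treat the one-step case $\H'=\H\cup\{p\}$.  A crucial preliminary observation is that $S_{<p}\subseteq\atoms(\LM)$: if some non-atom $h\in S_{<p}$ were in $\H^\circ$, then by the order-filter property of $\H^\circ$ in $\G$ we would get $p\in\H^\circ$, contradicting $p\notin\H$.  Consequently $S_{<p}$ is always an antichain, and $\bigvee S_{<p}\leq p$ in $\LM$.

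Next I would split on the definition of $\eta$.  In Case 1, $\bigvee S_{<p}\neq p$ and $\eta(S)=S$; assume for contradiction a nontrivial antichain $T\subseteq S$ with $\bigvee T\in \H'$.  If $\bigvee T\in\H$ we contradict $\H$-nestedness of $S$ immediately.  If $\bigvee T=p$, then each $t\in T$ lies in $\H$ and satisfies $t\leq p$ with $t\neq p$, so $T\subseteq S_{<p}$; but then $p=\bigvee T\leq\bigvee S_{<p}\leq p$ forces equality, contradicting the Case 1 hypothesis.  In Case 2, $\bigvee S_{<p}=p$, so in particular $|S_{<p}|\geq 2$, and $\eta(S)=S_{\not<p}\cup\{p\}$.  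Suppose $T\subseteq\eta(S)$ is a nontrivial antichain with $\bigvee T\in\H'$.  If $p\notin T$, then $T\subseteq S_{\not<p}\subseteq S$ and the argument of Case 1 applies verbatim (noting that $\bigvee T=p$ is impossible since elements of $S_{\not<p}\cap\H$ cannot be $\leq p$).

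The main obstacle is the remaining subcase, $p\in T$.  Set $T'=T\setminus\{p\}$, a nonempty antichain whose elements are incomparable to $p$ (since $T$ itself is an antichain).  I would first rule out $\bigvee T=p$: each $t\in T'$ satisfies $t\not\leq p$, so $(\bigvee T')\vee p>p$, forcing $\bigvee T\in\H$.  Then I would form the candidate antichain $A:=\max(T'\cup S_{<p})\subseteq S$.  Since elements of $T'$ are not below any atom $s<p$ (that would force $t<p$), we have $T'\subseteq A$, so $\bigvee A=\bigvee(T'\cup S_{<p})=(\bigvee T')\vee p=\bigvee T\in\H$, and $A$ would contradict the $\H$-nestedness of $S$ provided $|A|\geq 2$.

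When $|T'|\geq 2$ this is automatic.  The delicate case is $|T'|=1$, say $T'=\{t\}$ with $t\parallel p$; here I would argue that not every $s\in S_{<p}$ can satisfy $s\leq t$, for otherwise $p=\bigvee S_{<p}\leq t$, contradicting $t\parallel p$.  Pick such an $s$; since $s$ is an atom distinct from $t$ and $t\neq\bottom$, we also have $s\not\geq t$, so $s\parallel t$ and $\{s,t\}\subseteq A$.  This yields the desired contradiction and completes the proof.  The only genuinely subtle step is this final sub-argument; the rest is bookkeeping enabled by Proposition~\ref{prop:BICO} and the order-filter property of $\H^\circ$.
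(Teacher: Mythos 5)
Your proof is correct and follows essentially the same route as the paper's: reduce to a single blowup, argue by contradiction/contrapositive, split on whether $p$ lies in the offending antichain $T$, and in that case replace $p$ by atoms of $S_{<p}$ — indeed your antichain $\max(T'\cup S_{<p})$ coincides with the paper's $U\cup T_{\neq p}$. The only cosmetic difference is how the two arguments verify that the resulting antichain is nontrivial (you split on $|T'|$, the paper splits on whether $U=\emptyset$), and both handle that point correctly.
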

\begin{proof}
It suffices to check the case that $\H'=\H\cup\set{p}$, which we do by
contrapositive. Suppose that $\NS\subseteq\H$ with $\eta(\NS)\notin
\N(\LM,\H')$;  we will show that then $\NS$ is not $\H$-nested. Since
$\eta(\NS)$ is not $\H'$-nested, there is a nontrivial antichain
$T\subseteq\eta(\NS)$ for which $\bigvee T\in\H'$. We have two cases, depending
on whether $p\in T$.

If $p\notin T$, then by definition of $\eta$, $T\subseteq S$ and $\bigvee T\neq
p$. Thus, $\bigvee T\in\H'\setminus\{p\}=\H$ and hence $\NS$ is not $\H$-nested.

Otherwise, we have $p\in T$. Since $T$ is a nontrivial antichain, it contains at
least one element that is incomparable with $p$. This implies that $\bigvee T>p$
and hence $\bigvee T\in\H'\setminus\{p\}=\H$.
Consider the set
\[
U=\set{h\in \NS_{<p}\ \colon  h\not\leq y \text{ for any }y\in T_{\neq p}}
\]
Then $U\cup T_{\neq p}\subseteq \NS$ is an antichain
with $\bigvee(U\cup T_{\neq p}) = \bigvee T$ in $\H$.
It remains to show that $U\cup T_{\neq p}$ is a nontrivial antichain, proving
that $S$ is not $\H$-nested. Since $T_{\neq p}\neq\emptyset$, this is immediate
if $U\neq\emptyset$. If $U=\emptyset$, then we have $\bigvee T_{\neq p}=\bigvee
T>p$ which implies that $T_{\neq p}$ has more than  one element.
\end{proof}

\begin{lemma}
\label{lem:eta:b}
Let $\H\subseteq \H'$ be partial building sets in a geometric lattice $\LM$.
If $\NS\subseteq \H$ is an antichain, then so is $\eta^{\H'}_\H(\NS)$. 
\end{lemma}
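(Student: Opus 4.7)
My plan is to reduce to the single-blowup case and then analyze the two branches in the definition of $\eta$. Since $\eta^{\H'}_\H$ is built as a composition of one-step maps $\eta^{\H\cup\{p\}}_\H$, an inductive argument on $\abs{\H'\setminus\H}$ reduces the claim to the case $\H'=\H\cup\{p\}$ for a single new element $p\in\H^\circ$. In particular $p\notin\atoms(\LM)$, so $p\neq\bottom$. Fix an antichain $S\subseteq\H$.

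In the branch where $\bigvee S_{<p}\neq p$, one has $\eta(S)=S$ by definition and there is nothing to check. The substantive case is $\bigvee S_{<p}=p$, where $\eta(S)=S_{\not<p}\cup\{p\}$; here I must verify that $p$ is incomparable with every element of $S_{\not<p}$. The key observation is that the equality $\bigvee S_{<p}=p\neq\bottom$ forces $S_{<p}\neq\emptyset$: picking any $h\in S_{<p}$, if some $g\in S$ satisfied $g\geq p$ then $g>h$, contradicting that $S$ is an antichain. Hence no element of $S$ is $\geq p$, so the elements of $S_{\not<p}$ are genuinely incomparable with $p$ (and distinct from $p$, since $S\subseteq\H$ while $p\notin\H$). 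As $S_{\not<p}\subseteq S$ is itself an antichain, it follows that $\eta(S)=S_{\not<p}\cup\{p\}$ is an antichain.

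I do not anticipate any genuine obstacle here; the proof simply unwinds the definition of $\eta$ once the antichain property of $S$ is invoked to rule out $g\geq p$. The only subtlety worth flagging in the write-up is the need for the hypothesis $p\neq\bottom$ to ensure $S_{<p}$ is nonempty in the substantive case, which is why $p$ being drawn from $\H^\circ$ rather than $\H$ matters.
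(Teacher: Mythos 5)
Your proof is correct and follows essentially the same route as the paper's: reduce to the single-blowup case, dispose of the trivial branch $\bigvee S_{<p}\neq p$, and in the substantive branch show $p$ is incomparable to each $y\in S_{\not<p}$ by observing that $y\geq p$ would force $y>h$ for any $h\in S_{<p}$. Your explicit note that $\bigvee S_{<p}=p\neq\bottom$ forces $S_{<p}\neq\emptyset$ is a small clarification the paper leaves implicit, but it is not a different argument.
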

\begin{proof}
Again, it suffices to check the case that $\H'=\H\cup\set{p}$.
The statement is trivial if $\bigvee \NS_{<p}\neq p$. So we write
$\NS=\NS_{\not<p}\cup\NS_{<p}$ where $\bigvee\NS_{<p}=p$, and hence
$\eta(\NS) = \NS_{\not<p}\cup\{p\}$. 
The set $\NS_{\not<p}$ is an antichain by assumption; to show that $\eta(\NS)$
is an antichain we need only show that any $y\in \NS_{\not<p}$ is incomparable
with $p$. It is clear that $y\not\leq p$, and we also have $y\not\geq p$ since
$y$ is incomparable with every element of $\NS_{<p}$.
\end{proof}

\begin{proposition}
\label{prop:factors2}
Let $\H$ be a partial building set in a geometric lattice $\LM$.
Let $S\subseteq\H$ and $g\in\H\setminus S$. If $S$ is a nontrivial antichain
with $\bigvee S\geq g$, and $g>h$ for all but at most one element $h\in S$,
then $S$ is not $\H$-nested.
\end{proposition}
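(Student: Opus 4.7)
The plan is to split into cases by how many elements of $S$ fail to be strictly below $g$. If every $h \in S$ satisfies $g > h$, then $\bigvee S \leq g$, and together with the hypothesis $\bigvee S \geq g$ this forces $\bigvee S = g \in \H$; the nontrivial antichain $S$ then witnesses its own non-nestedness.

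Otherwise let $h_0 \in S$ be the single exception with $g \not> h_0$. The antichain axiom for $S$, combined with the presence of some $h \in S \setminus \{h_0\}$ satisfying $h < g$ (which exists because $|S| \geq 2$), rules out $h_0 > g$ (which would give $h < g < h_0$); and $h_0 = g$ is excluded by $g \notin S$, so $h_0$ is incomparable with $g$. Writing $S' = S \setminus \{h_0\}$ and $\sigma = \bigvee S' \leq g$, if $\sigma = g$ then $|S'| \geq 2$ (a singleton would force $g \in S$), so $S'$ itself is a nontrivial sub-antichain with join in $\H$, and $S$ is not $\H$-nested.

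The essential case is $\sigma < g$: I would show that $y := \bigvee S$ actually lies in $\H$. Fix a building set $\G$ with $\H \subseteq \G$, and consider the factor decomposition $F(\LM,\G;y) = \{y_1,\ldots,y_k\}$ from Definition~\ref{def:buildingsets}. Since $g, h_0 \in \G$ (atoms and $\H^\circ$ both sit in $\G$), Proposition~\ref{prop:join decomp} gives unique factors $y_1 \geq g$ and $y_j \geq h_0$. Under the product isomorphism $[\bottom,y] \cong \prod_i [\bottom,y_i]$, each $s \in S'$ satisfies $s < g \leq y_1$, hence $s$ has trivial projection onto every $y_i$ with $i \neq 1$; projecting the identity $y = \bigvee S$ to each coordinate therefore yields $y_i = h_0^{(i)}$ for $i \geq 2$. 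Since $h_0^{(i)} = \bottom$ for $i \neq j$, this forces $y_i = \bottom$ for all $i \notin \{1,j\}$, and as factors must be nonzero only $k = 1$ (with $j = 1$) or $k = 2$ with $j = 2$ can occur. In the latter case $y_1 = \sigma \vee h_0^{(1)} = \sigma < g$, contradicting $y_1 \geq g$. Hence $k = 1$, so $y = y_1 \in \G$; and because $g$ is not an atom (it strictly dominates some element of $\H \subseteq \LM_+$) one has $g \in \H^\circ$, so the order-filter property of $\H^\circ$ inside $\G$ gives $y \in \H^\circ \subseteq \H$.

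The main obstacle is this last sub-case, where the order-filter structure of $\H^\circ$ must be exploited via a careful tracking of the building-set factorization of $\bigvee S$ in order to upgrade the mere inequality $\bigvee S \geq g$ into the membership $\bigvee S \in \H$; the rest of the argument is essentially a direct antichain manipulation.
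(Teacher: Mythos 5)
Your proof is correct, and it takes a genuinely different route from the paper's. The paper argues by contradiction and passes $S$ through the map $\eta = \eta^\G_\H$: by Lemmas~\ref{lem:eta:a} and~\ref{lem:eta:b}, $\eta(S)$ would be a $\G$-nested antichain, hence (Proposition~\ref{prop:nestedfactors}) exactly the set of factors of $\bigvee S$; the paper then splits on whether $\eta(S)$ is a singleton and derives a contradiction either with the compatible linear order on $\G$ or with the maximality of factors. Your argument bypasses the $\eta$ machinery entirely and works directly with the product decomposition $[\bottom,\bigvee S]\cong\prod_i[\bottom,y_i]$ from Definition~\ref{def:buildingsets}: you use Proposition~\ref{prop:join decomp} to locate $g$ and $h_0$ in unique factors, project $\bigvee S = \sigma\vee h_0$ coordinatewise to force $k=1$, and then invoke the order-filter property of $\H^\circ$ to upgrade $\bigvee S\in\G$ to $\bigvee S\in\H$. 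What your approach buys is self-containment: it avoids the three auxiliary lemmas about $\eta$ and also handles the degenerate cases ($\bigvee S=g$, or $\sigma = g$) by producing an explicit nontrivial antichain with join in $\H$ rather than folding them into a contradiction; in particular it makes visible that when $S$ is nested the factorization collapses to a single factor, a fact the paper treats implicitly. What the paper's approach buys is conceptual coherence with the rest of the section, where the map $\eta$ is the organizing tool for moving between partial building sets and its properties are needed elsewhere anyway.
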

\begin{proof}
For a contradiction, assume that $S$ is an $\H$-nested nontrivial antichain with
$\bigvee S\geq g$ and $g>h$ for all but at most one element $h\in S$.
We will consider $\eta=\eta^\G_\H$, where $G\supseteq\H$ is a (full) building
set. By Lemma \ref{lem:eta:a}, $\eta(\NS)$ is $\G$-nested. 
Note that by the assumption $g>h$ for all but at most one element $h\in S$,
we must have $g\in\H^\circ$. We consider two cases, depending on the size of
$\eta(S)$. 

If $\eta(\NS)=\set{p}$ for some $p\in\G$, then (since $S$ has more than one
element) we must have $p\in \G\setminus\H$ and hence $g\prec p$.
But by assumption, $p=\bigvee \NS \geq g$ and $g\in\H^\circ$, contradicting  the
compatible order property.

So assume that $\eta(\NS)$ has more than one element. By Lemma
\ref{lem:eta:b}, the elements of $\eta(\NS)$ are pairwise incomparable. Then by
Proposition \ref{prop:nestedfactors}, the elements of $\eta(\NS)$ must be the
factors of their join $\bigvee\eta(\NS)$. But since $g>h$ for all but maybe one
$h\in\NS$ and
$|\eta(\NS)|>1$, there is some $y\in\eta(\NS)$ such that $g>y$. This contradicts
the factors being maximal under $\bigvee \eta(\NS)$, since 
$\bigvee\eta(\NS)=\bigvee S\geq g$.
\end{proof}

We conclude by remarking that while $\eta$ and $\Pi$ are extremely useful tools
for going between the simplicial complexes along a sequence of blowups, they are
not inverses of each other. 

\begin{example}
Recall the lattice $\LM$ from Examples \ref{ex:delA3}, \ref{ex:delA3 Pi}, and
\ref{ex:delA3 eta}, and Figure \ref{fig:delA3}.
Let $\H^\circ=\{\onehat,124\}$, and let $S=\{2,124\}$, a 2-simplex in $\N(\LM,\H)$.
Then $\Pi(S)=\{1,2,4\}\in\N(\LM,\atoms(\LM))$ but $\eta(\Pi(S))=\{124\}\neq S$.

On the other hand, consider the 1-simplex $T=\{2,4\}$ in 
$\N(\LM,\atoms(\LM))$.
Then $\eta(T) = \{124\}$ in $\N(\LM,\G)$ and $\Pi(\eta(T)) = \{1,2,4\}\neq T$ 
in $\N(\LM,\atoms(\LM))$.
\end{example}


\subsection{Building set decompositions}\label{ss:H decomp}

In the geometric case, a sequence of blowups produces an arrangement of
hypersurfaces whose intersections are themselves products of 
blowups of minors of the original arrangement.  In the case where $\H=\G$ 
is a building set, this is well-known \cite[p.\ 482]{DP95}.  In our
purely combinatorial and partially blown up setting, we will want to show that
the generalized cohomology algebras we construct have analogous tensor product
decompositions. Here, we establish the notation and basic results needed for
such decompositions that appear in \S\ref{ss:DP}.

\begin{definition}[Local intervals]\label{def:factors plus}
Let $\L(\LM,\H)$ be a partial blowup of a geometric lattice $\LM$. 
For each $y\in\L(\LM,\H)$, let
\[
F^+(\L(\LM,\H),\G;y):=F(\L(\LM,\H),\G;y)\cup\set{\onehat}\subseteq\G,
\]
where we recall the notation for the factors of $y$ from Definition
\ref{def:buildingsets}, regarding $\G$ as a building set in the semilattice
$\L(\LM,\H)$ via Remark \ref{rmk:G in L(H)}.
Since in our discussion the building set $\G$ is now regarded as a fixed choice,
we will often abbreviate $F^+(y)=F^+(\L(\LM,\H),\G;y)$.

The set  $F^+(y)$ is $\G$-nested and may alternatively be written as
\[
F^+(y)= \eta_\H^\G\left(\supp_{\N(\LM,\H)}(y)\right)\cup\set{\onehat},
\]
using the point of view of the previous section, 
where $\supp:\L(\LM,\H)\to\N(\LM,\H)$ is the map from 
Definition \ref{def:AtComplex}\eqref{def:supp}.

For each $g\in F^+(y)$, we define an interval in $\LM$ by
\begin{equation}\label{def:Ig}
\LM_{y,g}:=[z_y(g),g], \quad \text{ where } \quad z_y(g):=\bigvee_{\substack{
f\in F(y)\\ f<g}} f. 
\end{equation}
As usual, we will write $z(g)$ in place of $z_y(g)$ when $y$ is understood.
\end{definition}

Each closed interval is $\LM_{y,g}$ a geometric lattice, 
and clearly the half-open intervals $(\LM_{y,g})_+=(z_y(g),g]$ 
for $g\in F^+(y)$ are disjoint.
In this section, we will describe an induced partial building set $\H_{y,g}$ 
on $\LM_{y,g}$.

For a simplex $S$ of a simplicial complex $K$, let 
$\st_K(S):=\set{T\in K\colon S\subseteq T}$ denote the star of $S$ in $K$,
and $\clst_K(S)$ the smallest subcomplex of $K$ containing it.
Let $K_0$ denote the vertices of a simplicial complex $K$.
In our setting with $K = \N(\LM,\H)$, we give the explicit description
\[\clst_{\N(\LM,\H)}(S)_0 = \{p\in\H\colon S\cup\{p\}\in\N(\LM,\H)\}.\]

\begin{proposition}\label{prop:restrict H}
Suppose that $y\in\L(\LM,\H)$ and let $S=\supp(y)\in\N(\LM,\H)$.
For each $p\in\H$, the
set $\set{g\in F^+(y)\colon p\leq g}$ has a unique minimum element, which we
denote by
$\hat{p}$. Then $z_y(\hat{p})<p\vee z_y(\hat{p})$, and 
the assignment $p\mapsto p\vee z_y(\hat{p})\in(\LM_{y,\hat{p}})_+$ defines a map
\[
\zeta=\zeta_{y,\H}\colon \H\to
\bigsqcup_{g\in F^+(y)} (\LM_{y,g})_+.
\]
Furthermore, for every $q\in\im(\zeta)$, there exists
$f\in\clst_{\N(\LM,\H)}(S)_0\cup\atoms(\LM)$ such that $\zeta(f)=q$. 
If, in addition, $f\in\H^\circ$ and $p\neq f$ such that $\zeta(p)=q$, then
$p<f$ and $p\notin\clst_{\N(\LM,\H)}(S)_0$.
\end{proposition}
\begin{proof}
The set $T:=\{g\in F^+(y)\colon p\leq g\}$ is nonempty because $\onehat\in
F^+(y)$. Since $\min T$ is a $\G$-nested antichain with  $\wedge(\min T)\geq
p>\bottom$, Proposition \ref{prop:factors1} implies that $T$ has a unique
minimum. The assertion that $z(\hat{p})<p\vee
z(\hat{p})$ amounts to showing $p\not\leq z(\hat{p})$.  If instead $p\leq
z(\hat{p})= \bigvee (F^+(y))_{<\hat{p}}$, then $p\leq f$ for some $f\in
(F^+(y))_{<\hat{p}}$ by Proposition~\ref{prop:join decomp}, contradicting the
minimality of $\hat{p}$.
Thus, the map $\zeta$ is well-defined. 

Now let $q\in\im(\zeta)$. If $\zeta^{-1}(q)\subseteq\atoms(\LM)$, there is
nothing to show, so suppose $q=p\vee z_y(\hat{p})$ for some $p\in\H^\circ$.
Let $f\in F(\LM,\G;q)$ such that $p\leq f$ (guaranteed by Proposition
\ref{prop:join decomp}). It is clear that $f\in\H^\circ$ and $\zeta(f)=q$, and
we will show that $\{f\}\cup S$ is $\H$-nested.
Suppose that $A\subseteq S$ such that $\{f\}\cup A$ is an antichain with
$f\vee\bigvee A=h\in\H$. 
Since $S$ is $\H$-nested, we have $\bigvee A<h$. 
Note that for $a\in A\subseteq S$, $a\leq \hat{a}$ in $\L(\LM,\H)$, so
$\hat{a}\in\H$ implies $a=\hat{a}$. It follows that, for $a\in A$, if
$\hat{a}=\hat{h}$ then $a=\hat{a}=\hat{h}\geq h$, contradicting $a<h$.
So we must have $\hat{a}<\hat{h}$ for each $a\in A$, thus $\bigvee A\leq
z_y(\hat{h})$.
Then $z_y(\hat{h})<h\vee z_y(\hat{h}) = f\vee z_y(\hat{h})$, implying that
$\hat{f}=\hat{h}$ and $q=\zeta(h)$. By our choice of $f$, this means
that $h\leq f$, and thus $A=\emptyset$.

For our final claim, we still assume $\zeta^{-1}(q)\not\subseteq\atoms(\LM)$ and
let $f\in\H^\circ$ be as in the last paragraph. Suppose that $p\in\zeta^{-1}(q)$
such that $p\neq f$. Abbreviate $z=z_y(\hat{f})=z_y(\hat{p})$.
Then since $f$ is the only element of $F(\LM,\G;q)$ not
below $z$, we have $p<f$ and $f=p\vee(f\wedge z)$. 
But then $U=\{s\in S\colon \hat{s}<f\}$ has $\bigvee U = f\wedge z$, 
 which implies $\{p\}\cup U\subseteq\{p\}\cup S$ is not
nested, as desired.
\end{proof}

In fact, the above proposition implies that $\H$ has a 
partition with blocks indexed by $F^+(y)$, and we obtain a partial building set
for $\LM_{y,g}$ by letting $\H_{y,g}$ 
be the image of a block under the map $\zeta$:

\begin{definition}[Local partial building sets]\label{def:restricted H}
For $y\in\L(\LM,\H)$ and $g\in F^+(y)$, let
\[\H_{y,g}=\zeta_{y,\H}(\H)\cap (\LM_{y,g})_+\]
\end{definition}

The fact that $\H_{y,g}$ is indeed a partial building set will be proved in
Proposition \ref{prop:H restricts} below; first we provide an example.

\begin{example}\label{ex:local H}
Let $\LM$ be the lattice of set partitions of $[7]=\{1,2,\dots,7\}$, and
consider the building set $\G=\LM_{\irr}$ of irreducibles (partitions with one
nonsingleton block). Let us denote
$\onehat=1234567$, $p=123456|7$, and $q=123|4|5|6|7 \in \LM$, and consider the
partial building set $\H = \atoms(\LM)\cup\{\onehat,p\}$.

Let $y=(p,q)\in\L(\LM,\H)$, for which $S=\supp_{\N(\LM,\H)}(y)=\{12,13,23,p\}$.
Then $\clst_{\N(\LM,\H)}(S)_0=\{\onehat,p\}\cup\{ij\colon 1\leq i<j<7\}$
and $F^+(y) = \{q,p,\onehat\}$, and we obtain three intervals:
\begin{itemize}
\item $\LM_{y,q} = [\bottom,q]$, with $\H_{y,q} = \{12,13,23\}$ and $\G_{y,q} =
\{12,13,23,q\}$.
\item $\LM_{y,p} = [q,p]$, with $\H_{y,p} = \{p\}\cup\atoms(\LM_{y,p})$. Note
that the atoms include elements such as $1234|5|6|7$ (which is also in $\G$) and
$123|45|6|7 = q\vee 45$ (which is not itself in $\G$).
\item $\LM_{y,\onehat} = [p,\onehat]$ with $\H_{y,\onehat} = 
\G_{y,\onehat} = \{\onehat\}$.
\end{itemize}
\end{example}

\begin{lemma}\label{lem:G restricts}
Let $\G$ be a building set for a geometric lattice $\LM$, and let
$y\in\L(\LM,\G)$. For each $g\in F^+(y)$, the set $\G_{y,g}$ is a 
building set for $\LM_{y,g}$.
\end{lemma}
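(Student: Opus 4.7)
The goal is to verify axiom (\ref{eq:join decomp2}) for $\G_{y,g}$ in the geometric lattice $\LM_{y,g}=[z(g),g]$. My plan is to reduce this to the known building-set property of $\G$ in $\LM$ by carefully tracking how the factors of an arbitrary $x\in(z(g),g]$ in $\LM$ restrict to the sub-interval above $z(g)$.

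First, I would fix $x\in(z(g),g]$ and invoke the building-set property of $\G\subset\LM$ to write $F(\LM,\G;x)=\{q_1,\dots,q_s\}$ with an induced isomorphism $\prod_j[\bottom,q_j]\xrightarrow{\cong}[\bottom,x]$. Since $z(g)\leq x$, Proposition~\ref{prop:join decomp} distributes each element of $F^+(y)_{<g}$ uniquely under some $q_j$; let $z_j$ denote the join of those $F^+(y)_{<g}$-elements landing under $q_j$ (with $z_j=\bottom$ if none do). Then $z(g)=\bigvee_jz_j$, and under the product isomorphism, $z(g)$ corresponds to $(z_1,\dots,z_s)$.

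Second, I would show that $\max(\G_{y,g})_{\leq x}=\{q_j\vee z(g):q_j\not\leq z(g)\}$. For ``$\supseteq$'', every such $q_j$ lies in $\G$ with $q_j\leq g$, and I claim $q_j\in\clst_{\N(\LM,\G)}(S)_0$ with $\hat{q_j}=g$, so that $\zeta(q_j)=q_j\vee z(g)\in\G_{y,g}$; here $S=\supp(y)$. The nestedness $S\cup\{q_j\}\in\N(\LM,\G)$ I would establish by contradiction, using Propositions~\ref{prop:factors1} and~\ref{prop:factors2}: any putative offending antichain with join in $\G$ would contradict either the maximality of $q_j$ in $\G_{\leq x}$ or the $\G$-nestedness of $S$ itself. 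The equality $\hat{q_j}=g$ then follows from the fact that the only elements of $F^+(y)$ above $q_j$ in $\LM$ must contain all of $q_j$, and no smaller member of $F^+(y)$ can do so without being joined below $g$. For ``$\subseteq$'', any element $\zeta(p)=p\vee z(g)\leq x$ of $\G_{y,g}$ has $p\in\G$ with $p\leq x$, hence $p\leq q_j$ for some unique $j$ by Proposition~\ref{prop:join decomp}, giving $\zeta(p)\leq q_j\vee z(g)$.

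Finally, the product decomposition is obtained by restricting the $\LM$-isomorphism to the sub-interval above $z(g)$. Reindexing over the $q_j$ with $q_j\not\leq z(g)$ and using that $[z_j,q_j]\cong[z(g),q_j\vee z(g)]$ via joining with the other $z_i$'s yields the required isomorphism $\prod_j[z(g),q_j\vee z(g)]\xrightarrow{\cong}[z(g),x]$. The main obstacle is the second step: verifying that the maximal $\G$-elements under $x$ in $\LM$ really correspond (after joining with $z(g)$) to the maximal elements of $\G_{y,g}$ under $x$, which is precisely where the specific construction of $\zeta$ and $F^+(y)$ must interact correctly with the nested-set combinatorics.
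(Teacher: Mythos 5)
Your plan mirrors the paper's proof: decompose $x\in(z(g),g]$ via $F(\LM,\G;x)$ in $\LM$, identify $\max(\G_{y,g})_{\leq x}$ with the non-redundant $q_j\vee z(g)$'s, and restrict the product isomorphism to $[z(g),x]$. You are right to make explicit the ``$\supseteq$'' direction of the maximality claim --- that each $q_j\vee z(g)$ with $q_j\not\leq z(g)$ actually lies in $\G_{y,g}$, which requires $S\cup\{q_j\}\in\N(\LM,\G)$ and $\hat{q_j}=g$ --- a point the paper passes over without comment. However, your sketch of the nestedness check leans on Propositions~\ref{prop:factors1} and~\ref{prop:factors2}, which are not the tools that make the contradiction go through; the key ingredient is Proposition~\ref{prop:BICO}(\ref{prop:BICO2}). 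Concretely, if $T\cup\{q_j\}$ were an offending antichain with join $q\in\G$, then either every $t\in T$ lies below $g$, hence below $z(g)$, so $q\leq x$ and $q>q_j$, contradicting maximality of $q_j$ in $\G_{\leq x}$ as you correctly note; or $T_{\not<g}\ne\emptyset$, in which case $T_{\not<g}\cup\{g\}$ is a nontrivial antichain contained in $S$, and from $\bottom<q_j\leq q\wedge g$ Proposition~\ref{prop:BICO}(\ref{prop:BICO2}) gives $q\vee g=\bigvee\bigl(T_{\not<g}\cup\{g\}\bigr)\in\G$, contradicting $\G$-nestedness of $S$. With that substitution the argument is complete; the remaining steps of your plan (the distribution of $z(g)$ across factors and the restricted isomorphism $[z_j,q_j]\cong[z(g),q_j\vee z(g)]$) coincide with what the paper does.
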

\begin{proof}
Fix $g\in F^+(y)$, and let $x\in(z(g),g]$.  Write
$F(\LM,\G;x)=\set{h_1,\ldots,h_k}$, so that
\begin{equation}\label{eq:joindecompx}
[\bottom, x]\cong\prod_{i=1}^k[\bottom, h_i].
\end{equation}
Write $z(g)=\bigvee_{i=1}^k f_i$ for some unique $f_i\in[\bottom,h_i]$, via the
join decomposition in \eqref{eq:joindecompx}. Let $T=\{i\colon f_i\neq h_i\}$,
so that $h_i\vee z(g)=\zeta(h_i)\in\G_{y,g}$ for each $i\in T$.
We claim that $\{h_i\vee z(g)\colon i\in T\}$ is the set of maximal elements of
$\G_{y,g}$ which lie below $x$. Indeed, if $p\vee z(g)\in\G_{y,g}$ with $p\in\G$
and $z(g)<p\vee z(g)\leq x = \bigvee_{i=1}^k h_i$, then $p\leq h_i$ for some
unique $i\in T$ by Proposition \ref{prop:join decomp}, and hence $p\vee
z(g)\leq h_i\vee z(g)$. It remains to see that these index factors in a join
decomposition of $[z(g),x]$, as follows:
\[ [z(g),x] \cong \prod_{i=1}^k [f_i,h_i] \cong \prod_{i=1}^k [z(g),z(g)\vee
h_i]\cong\prod_{i\in T} [z(g),z(g)\vee h_i].\]
Therefore, $\G_{y,g}$ is a building set for $\LM_{y,g}=[z(g),g]$.
\end{proof}

\begin{proposition}\label{prop:H restricts}
Let $\H\subseteq\G$ be a partial building set for a geometric lattice $\LM$, and
let $y\in\L(\LM,\H)$. For each $g\in F^+(y)$, the set $\H_{y,g}$ is a partial
building set for $\LM_{y,g}$.
\end{proposition}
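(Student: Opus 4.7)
The plan is to verify the two defining conditions of a partial building set: that $\atoms(\LM_{y,g}) \subseteq \H_{y,g}$, and that $\H_{y,g}^\circ$ is an order filter in some full building set on $\LM_{y,g}$. The first is immediate from Definition \ref{def:restricted H}. For the second, the natural ambient is $\G_{y,g}$, which is a building set for $\LM_{y,g}$ by Lemma \ref{lem:G restricts}. The containment $\H_{y,g} \subseteq \G_{y,g}$ follows because $\H \subseteq \G$ and the map $\zeta$ has the formula $p \mapsto p \vee z(\hat p)$ in either case: the set $F^+(y)$ and the elements $z(g)$ depend only on $y$ and $\G$, so $\im(\zeta_{y,\H})\subseteq\im(\zeta_{y,\G})$.

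The main step is the order-filter property. Given $a = p \vee z(g) \in \H_{y,g}^\circ$ (with $p \in \H^\circ$ and $\hat p = g$) and $b = q \vee z(g) \in \G_{y,g}$ with $b \geq a$, I would first observe that one may replace $q$ by the unique factor of $b$ in $F(\LM,\G;b)$ lying above it, since that factor still satisfies $\hat{\,\cdot\,}=g$ and produces the same $b$. Since $a$ is not an atom of $\LM_{y,g}$, neither is $b$, so it suffices to produce $r\in\H^\circ$ with $\hat r=g$ and $r\vee z(g)=b$. If $p\leq q$, the order-filter property of $\H^\circ$ in $\G$ applied to the chain $p\leq q$ gives $q\in\H^\circ$, and we take $r:=q$.

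The case $p \not\leq q$ is the crux. Here I propose $r := p \vee q$ and invoke Proposition \ref{prop:BICO}\eqref{prop:BICO2} to force $r \in \H$, the hypothesis to check being $p \wedge q \neq \bottom$. For this, I use the join decomposition of $[\bottom,b]$ induced by its factors $\{q\} \cup F^+(y)_{<g}$, where $F^+(y)_{<g} = F(\LM,\G;z(g))$ by Proposition \ref{prop:nestedfactors} applied to the $\G$-nested antichain $F^+(y)_{<g}$. The element $p \leq b$ decomposes uniquely as $p = p_q \vee p_z$ with $p_q \leq q$ and $p_z \leq z(g)$; since $\hat p = g$ forces $p \not\leq z(g)$, the component $p_q$ is nonzero, giving $p \wedge q \geq p_q > \bottom$. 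Hence $r = p\vee q\in\H$, and then $r\vee z(g)=b$, $r\leq g$ with $r\not\leq z(g)$ (so $\hat r=g$), and $r\in\H^\circ$ because it sits above $p\in\H^\circ$.

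The principal obstacle is showing $p\wedge q\neq\bottom$ via the join decomposition of $[\bottom,b]$; this is where the identification of factors via Propositions \ref{prop:join decomp} and \ref{prop:nestedfactors} becomes essential, together with the normalization of $q$ to be a factor of $b$. A secondary check, that $r$ lies in $\clst_{\N(\LM,\H)}(\supp(y))_0$ so that $\zeta_{y,\H}(r)$ is genuinely defined, follows because $r$ is comparable with the supports of $y$ lying inside the factor $g$ and incomparable with the rest in a way controlled by Propositions \ref{prop:factors1} and \ref{prop:factors2}; I would handle this by a short nestedness verification, using that $p$ and $q$ are both localized at the factor $g$ of $y$.
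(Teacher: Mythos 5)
Your proposal has a genuine gap at the point you yourself flag as a ``secondary check.'' To conclude $b\in\H_{y,g}^\circ$ you need $b$ to lie in $\im(\zeta_{y,\H})$, which requires producing an element $r$ of the domain $\clst_{\N(\LM,\H)}(\supp(y))_0$ with $\zeta_{y,\H}(r)=b$. You propose $r=p\vee q$, but you never verify that $\supp(y)\cup\{r\}\in\N(\LM,\H)$, and this is not an easy nestedness check: enlarging $p$ to $r$ can create new comparabilities or new antichains with elements of $\supp(y)$, so membership in $\clst_{\N(\LM,\H)}(\supp(y))$ is not preserved in any obvious way.

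The deeper point, however, is that the case $p\not\leq q$ cannot occur, which makes the construction of $r$ (and the deferred check) unnecessary. Your own decomposition argument already contains the germ of this observation, but you stop too soon: since $p\in\G$, Proposition~\ref{prop:join decomp} says that $p$ lies \emph{entirely} below a unique factor of $b=q\vee z(g)$ --- not merely that its $q$-component is nonzero. In the decomposition $p=p_q\vee p_z$, one of $p_q,p_z$ must equal $p$ and the other must be $\bottom$. Since you correctly deduce $p_q\neq\bottom$ from $\hat{p}=g$, it follows that $p_z=\bottom$, i.e.\ $p\leq q$ outright. This is precisely how the paper argues: it shows directly that $\zeta_{y',\G}(p)\leq\zeta_{y',\G}(q)$ forces $p\leq q$ (after first passing to $y'=\bigvee F(\L(\LM,\H),\G;y)\in\L(\LM,\G)$, since $\zeta_{y,\G}$ is not literally defined for $y\in\L(\LM,\H)$ --- a bookkeeping step you also need in order to invoke Lemma~\ref{lem:G restricts}). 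Once $p\leq q$ is established, the order-filter claim reduces immediately to the hypothesis that $\H^\circ$ is an order filter in $\G^\circ$, and no auxiliary element $r$ or membership verification is required.
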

\begin{proof}
Let us abbreviate $F(y) := F(\L(\LM,\H),\G; y)$. 
Viewing this set of factors $F(y)\subseteq\G$ as a set of atoms in $\L(\LM,\G)$,
let $y' = \bigvee F(y)\in\L(\LM,\G)$. Then $F^+(y) = F^+(\L(\LM,\G), \G; y')$, and
for every $g\in F^+(y)$, $\G_{y',g}$ is a building set in $\LM_{y,g} =
\LM_{y',g}$ by Lemma \ref{lem:G restricts}. 
Our claim is that, for every $g\in F^+(y)$, $\H_{y,g}^\circ$ is an order
filter in $\G_{y',g}^\circ$.

For this, let $g\in F^+(y)$ and
$p,q\in \zeta_{y',\G}^{-1}(\LM_{y',g})\cap \G^\circ$.
Assume that $\zeta_{y',\G}(p)\leq \zeta_{y',\G}(q)$. That is, we assume
$\hat{p}=g=\hat{q}$ and $p\vee z(g)\leq q\vee z(g)$. Then $p\leq q\vee z(g)$
and, by Proposition \ref{prop:join decomp}, this implies that either $p\leq q$
or $p\leq f$ for some $f\in(F^+(y))_{<g}$. By minimality of $\hat{p}$ in
Proposition \ref{prop:restrict H} and $g=\hat{p}$, the latter case cannot
happen, thus  $p\leq q$. Since $\H^\circ$ is an order filter of $\G^\circ$, this
means that if $p\in\H^\circ$ then $q\in\H^\circ$. In particular, this means that
if $\zeta_{y',\G}(p)=\zeta_{y,\H}(p)\in\H^\circ_{y,g}$, then
$\zeta_{y',\G}(q)=\zeta_{y,\H}(q)\in\H^\circ_{y,g}$.
\end{proof}

Since $\H_{y,g}$ is a partial building set for $\LM_{y,g}$, 
we may also consider the complex of $\H_{y,g}$-nested sets,
$\N(\LM_{y,g},\H_{y,g})$. 
Just as before, this is isomorphic to the atomic complex 
$\Ka(\L(\LM_{y,g},\H_{y,g}))$.
The next statement relates these local building sets to $\H$-nested sets.

\begin{proposition}\label{prop:local N(H)}
Let $y\in\L(\LM,\H)$ and $S=\supp_{\N(\LM,\H)}(y)$, and recall
$\zeta=\zeta_{y,\H}$ from Proposition \ref{prop:restrict H}.
Given $T\subseteq\clst_{\N(\LM,\H)}(S)_0$, 
the set $T\cup S$ is $\H$-nested if and only if, for every $g\in F^+(y)$, the
set $\zeta(T)\cap\H_{y,g}$ is $\H_{y,g}$-nested.
\end{proposition}
\begin{proof}
Suppose that $g\in F^+(y)$ and $\zeta(T)\subseteq\H_{y,g}$ is a nontrivial
antichain with $\bigvee\zeta(T)\in\H_{y,g}$, and we argue that $T\cup S$ cannot
be $\H$-nested. Let $h\in\H^\circ$ such that $\zeta(h)=\bigvee\zeta(T)$. Then 
$(\bigvee T)\vee z_y(g) = \bigvee_{p\in T} p\vee z_y(g) = h\vee z_y(g) = h\vee
z'$ where $z'=\bigvee\{f\in F(y)_{<g}\colon f\not<h\}$.
Let $Z=\{s\in S\colon s\leq z_y(g)\}$, noting that $\bigvee Z = z_y(g)$, and 
let $A=\max(T\cup Z)\subseteq T\cup S$. Since
$p\not\leq z_y(g)$ for any $p\in T$, we have $T\subseteq A$ and hence $A$ is a
nontrivial antichain. We further note that $(\bigvee A)\vee z'=\bigvee(T\cup
Z)\vee z' = (\bigvee T)\vee z_y(g)=h\vee z'$ and
$\bigvee A\leq h$. Since $h\wedge z'=\bottom$, it follows that $\bigvee
A=h\in\H$, thus $T\cup S$ is not $\H$-nested. 

Conversely, suppose that $U\subseteq S$ such that $T\cup U$ is a nontrivial
antichain with $\bigvee(T\cup U)=h\in\H$, and we argue that for $g=\hat{h}$,
the set $\zeta(T)\cap\H_{y,g}$ is not $\H_{y,g}$-nested. 
Since $\{p\}\cup S\in\N(\LM,\H)$ for
every $p\in T$, the set $T$ is necessarily a nontrivial antichain.
For $p\in T\cup U$, we have $p<h$ and hence either $\hat{p}=\hat{h}$ or $p\leq
z_y(g)$. In the case $p\in U$, then $p\leq z_y(g)$ (as in the proof of
Proposition \ref{prop:restrict H}, since $U\subseteq S$).
 Now let $A=\{p\in T\colon \hat{p}=\hat{h}\}$.
If $A=\emptyset$, then $p\leq z_y(g)$ for all $p\in T\cup U$, leading to a
contradiction between $h=\bigvee(T\cup U)\leq z_y(g)$ and $z_y(g)<h\vee z_y(g)$.
If $A=\{p\}$, then $h\vee z_y(g)=p\vee z_y(g)$ while $p<h$, contradicting
Proposition \ref{prop:restrict H} since $p\in\clst(S)_0$.
It follows that $\zeta(A)\subseteq\zeta(T)\cap\H_{y,g}$ is a nontrivial
antichain with $\bigvee\zeta(A)=\zeta(h)\in\H_{y,g}$, completing the
proof.
\end{proof}

Finally, we examine how blowups affect the local building sets.

\begin{proposition}\label{prop:blowup local H}
Let $\H$ and 
$\H' = \H\cup\{p\}$ be partial building sets in a geometric lattice $\LM$,
let $y\in\L(\LM,\H')$, and let 
$\hat{p} = \min\{f\in F^+(y)\colon p\leq f\}$. 
Then for any $g\in F^+(\pi^{\H'}_\H(y))$, 
\[ \H^\circ_{\pi^{\H'}_\H (y), g} = 
\begin{cases}
(\H'_{y,g})^\circ & \text{ if } g\neq \hat{p}\\
(\H'_{y,\hat{p}})^\circ\setminus\{p\vee z(\hat{p})\} & \text{ if } g = \hat{p}.
\end{cases}\]
\end{proposition}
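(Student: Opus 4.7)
The plan is to compare the two local partial building sets by reducing to a comparison of the images of $\zeta_{y,\H'}$ and $\zeta_{\pi^{\H'}_\H(y),\H}$, and then isolating the contribution of the new atom $p\in\H'\setminus\H$. Proposition~\ref{prop:blowdown F} gives $F^+(y) = F^+(\pi^{\H'}_\H(y))$, with equality holding even in the exceptional case $\pi^{\H'}_\H(y)=p$ where both sides equal $\{p,\onehat\}$. Consequently $z_y(g)=z_{\pi^{\H'}_\H(y)}(g)$, the intervals $\LM_{y,g}$ and $\LM_{\pi^{\H'}_\H(y),g}$ coincide as intervals of $\LM$, and their atom sets agree. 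The claim thus reduces to comparing the two images $\im(\zeta_{y,\H'})\cap (\LM_{y,g})_+$ and $\im(\zeta_{\pi^{\H'}_\H(y),\H})\cap (\LM_{y,g})_+$.

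For each vertex $q\in \H$ that lies in the appropriate close-star domain, the value $\zeta(q) = q\vee z(\hat q)$ depends only on $F^+$ and $q$, so it coincides on the two sides; in particular, the value $\hat q$ is the same when computed relative to $F^+(y)$ and to $F^+(\pi^{\H'}_\H(y))$. To match the domains, one would use Theorem~\ref{thm:nested} to convert close-star membership into the existence of joins, and then invoke the factor decomposition of $[\bottom,\pi^{\H'}_\H(y)]$ from Definition~\ref{def:buildingsets} together with the blowup rules of Definition~\ref{def:blowup} to lift joins through $\Bl_p$. The only genuinely new ingredient on the $\H'$-side is the atom $p$ itself: Proposition~\ref{prop:restrict H} supplies $\zeta_{y,\H'}(p) = p\vee z(\hat p) \in (\LM_{y,\hat p})_+$. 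Hence for $g\neq\hat p$ the atom $p$ contributes nothing to $(\LM_{y,g})_+$ and the two images match, giving $\H'_{y,g}=\H_{\pi^{\H'}_\H(y),g}$; for $g=\hat p$, exactly the single element $p\vee z(\hat p)$ is appended on the $\H'$-side, which is the content of the second case.

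The main obstacle will be the close-star matching step, because $\pi^{\H'}_\H$ does not preserve existence of joins in full generality. One must verify, using the factor decomposition together with the explicit description of $\Bl_p$, that any discrepancy between close-star memberships for $q\in\H$ on the two sides nevertheless does not affect the image in $(\LM_{y,g})_+$, so that the sole new non-atom contribution in passing from $\H$ to $\H'$ is $p\vee z(\hat p)$, landing in the $\hat p$-component.
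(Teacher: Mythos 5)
Your proposal founders on a misreading of Proposition~\ref{prop:blowdown F}. You assert that $F^+(y)=F^+(\pi^{\H'}_\H(y))$, ``with equality holding even in the exceptional case $\pi^{\H'}_\H(y)=p$ where both sides equal $\{p,\onehat\}$.'' This is false. When $\pi(y)=p$, Proposition~\ref{prop:blowdown F} says only that $F(\L(\LM,\H),\G;\pi(y))=\{p\}$, i.e.\ $F^+(\pi(y))=\{p,\onehat\}$; on the other hand $y=(p,x)$ for some $x\leq p$, and $F^+(y)=F^+(x)\cup\{p\}$, which is strictly larger whenever $x\neq\bottom$. The paper's own example immediately after this Proposition makes this concrete: with $\LM$ the partition lattice of $[7]$, $p=123456|7$, $q=123|4|5|6|7$, $y=(p,q)$, one has $F^+(y)=\{q,p,\onehat\}$ but $F^+(\pi(y))=\{p,\onehat\}$.

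Everything downstream of that identification collapses: $z_y(g)$ and $z_{\pi(y)}(g)$ need not agree, and the intervals $\LM_{y,g}$ and $\LM_{\pi(y),g}$ need not coincide. In the same example, $\LM_{y,p}=[q,p]$ whereas $\LM_{\pi(y),p}=[\bottom,p]$, so your reduction ``to comparing the two images inside one and the same $(\LM_{y,g})_+$'' is unavailable. This is precisely why the paper's proof splits into two cases: when $\pi(y)\neq p$ the sets of factors agree and the statement follows easily; when $\pi(y)=p$ one must use the order-filter property of $\H'$ to see that every factor of $x$ lies below $p$, and then directly compute $\H^\circ_{\pi(y),\hat p}=\emptyset$, $(\H')^\circ_{y,\hat p}=\{p\vee z(\hat p)\}$, and $\H^\circ_{\pi(y),\onehat}=(\H')^\circ_{y,\onehat}$. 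Your single-case plan cannot produce the second conclusion, so the argument has a genuine gap that a more careful ``close-star matching'' cannot patch without first introducing this case distinction.
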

Note that $\hat{p}$ exists by Proposition \ref{prop:restrict H}, 
and $\H'_{y,g}$ is well-defined by Proposition \ref{prop:blowdown F}.
\begin{proof}
We abbreviate $\pi=\pi_\H^{\H'}$.
If $\pi(y)\not\geq p$, then $F^+(y)=F^+(\pi(y))$ (Proposition \ref{prop:blowdown
F}) and the statement follows from the simple fact that $\H'=\H\cup\{p\}$. So
assume $\pi(y)\geq p$, that is, $y=(p,x)$ for some $x\in\L_{(p)}$. In this case,
$F^+(\pi(y)) = F^+(x)_{\not<p}\cup\{p\}$ (Proposition \ref{prop:blowdown F})
and $\hat{p}=p$. Then
$\H^\circ_{\pi(y),p} = \emptyset$ and $(\H'_{y,p})^\circ=\{p\}$, while
$\H^\circ_{\pi(y),g}=(\H'_{y,g})^\circ$ for $g\in F^+(x)_{\not<p}$.
\end{proof}

\begin{example}
Let $\LM$ be the partition lattice for $[7]$, $\H=\atoms(\LM)\cup\{\onehat\}$,
$p=123456|7$, $q=123|4|5|6|7$, and $y=(p,q)\in\L(\LM,\H\cup\{p\})$.
Recall from Example \ref{ex:local H} that $F^+(y)=\{q,p,\onehat\}$. Now note that
$\pi(y)=p$ and $F^+(\pi(y))=\{p,\onehat\}$, so that
$\LM_{\pi(y),p}=[\bottom,p]$ with $\H^\circ_{\pi(y),p}=\emptyset$ while
$\LM_{y,p}=[q,p]$ with $p\in\H'_{y,p}$.

For a different flavor, consider the same $\LM$, $\H$, $p$, and $q$, but now
$y=q\vee 456|7 = 123|456|7$ with $\pi(y)=y$. Then
$F^+(y)=F^+(\pi(y))=\{123,456,\onehat\}$. The difference in local building sets
in passing from $\H$ to $\H'=\H\cup\{p\}$ is in the interval $\LM_{y,\onehat}$,
where $\H^\circ_{\pi(y),\onehat}=\emptyset$ while $\H'_{y,\onehat}\ni p$.
\end{example}


\section{A combinatorial model for open neighborhoods}
\label{sec:OS}
Let $\A$ be a collection of affine hyperplanes in $\C^\ell$, 
and let $M(\A):=\C^{\ell+1}-\bigcup_{H\in \A}H$,  its complement.
The Orlik--Solomon algebra of $\A$ is a combinatorial presentation 
of the (integral) cohomology ring $H^\cdot(M(\A),\Z)$ that appears in the
literature with various levels of generality.  It depends only on
the intersection poset $\L(\A)$, so we will denote it by $\OS(\L(\A))$.
If $\A$ is a central hyperplane arrangement, then $\L(\A)$ is a geometric
lattice, and $\OS(\L(\A))$ is defined by generators and relations
from the underlying matroid.
If $\A$ is an affine-linear arrangement, $\L(\A)$ is a geometric semilattice,
and the presentation of $\OS(\L(\A))$ acquires monomial relations from 
subsets of atoms with no upper bound: we refer to Yuzvinsky~\cite{Yu01} 
and Kawahara~\cite{Ka04} for details.  

Here, then, we define a yet more general Orlik--Solomon algebra, for any
locally geometric semilattice $\L$, which we denote $\OS(\L)$.

\subsection{The Orlik--Solomon algebra of a semilattice}\label{ss:defOS}
For a semilattice $\L$, let $E(\L)$ denote the exterior algebra over $\k$
on the generators 
$\set{e_g\colon g\in \atoms(\L)}$.  Define a derivation 
$\partial$ on $E(\L)$ of degree $-1$ by 
letting $\partial(e_i)=1$ for each $i$, then extending via the Leibniz
rule.  

If $\LM$ is a geometric lattice, define an ideal of $E(\LM)$ by 
\begin{equation}\label{eq:defOS}
I(\LM)=(\partial(e_C)\colon \text{$C$ is a circuit in $\M(\LM)$}),
\end{equation}
where $e_J:=e_{j_1}\cdots e_{j_k}$ for any subset
$J=\set{j_1,\ldots,j_k}$ with entries in increasing order.
More generally, though, suppose that $\L$ is a locally geometric semilattice.
For each $x\in \L$, the interval $[\bottom,x]$ is a geometric
lattice, and we let
\begin{align}\label{eq:OSrelations}
I_1(\L)&=\sum_{x\in\L}I([\bottom,x])\\
&=\big(\partial(e_C)\colon\text{$C$ is a circuit in $\M(\L_{\leq x})$ for some 
$x\in \L$}\big). \nonumber
\end{align}
Now let
\begin{equation}\label{eq:extSRrelns}
I_2(\L)=(e_J\colon \text{$J\subseteq\atoms(\L)$ but $J\not\in\Ka(\L)$).}
\end{equation}
\begin{definition}[The algebra $\OS$]\label{def:OS}
The {\em Orlik--Solomon algebra} of a locally geometric semilattice $\L$ is, by 
definition,
\[
\OS(\L):=E(\L)/(I_1(\L)+I_2(\L)).
\]
\end{definition}

\begin{definition}[The projective Orlik--Solomon algebra]\label{def:proj OS}
If $\LM$ is a geometric lattice, we let $\POS(\LM)$ denote the subalgebra of 
$\OS(\LM)$ generated in degree $1$ 
by $\ker\partial=\left<e_i-e_j\colon 1\leq i<j\leq n\right>$.
It is known (see \cite{Ka04}) that $\OS(\LM)\cong \POS(\LM)\oplus
\POS(\LM)[-1]$; this gives rise to a (non-canonical) isomorphism of algebras
$\OS(\LM)\cong \POS(\LM)\otimes_{\k}\k[e_0]$, where the latter is a 
$1$-dimensional exterior algebra.
\end{definition}

\begin{remark}
\label{rmk:proj OS}
The motivation comes from the realizable case.  If $\A$ is a complex 
hyperplane arrangement with intersection lattice $\LM$, the algebras
$\OS(\LM)$ and $\POS(\LM)$ are, respectively, the cohomology algebras
of the affine and projective arrangement complements, 
$M(\A)=\C^{\ell+1}\setminus\cup_{H\in\A}H$ and 
$U(\A)=\P^{\ell}\setminus\cup_{H\in\A}\P H$.  The isomorphism
$\OS(\LM)\to H^\cdot(M(\A),\Q)$ is realized on the level of forms by
$e_j\mapsto \frac{1}{2\pi i}\frac{\dd f_j}{f_j}$, where $f_j$
is a linear form for which $H_j=f_j^{-1}(0)$: see \cite{OTbook} for details.
The differential $\partial\colon \OS^\cdot(\LM)\to\OS^{\cdot+1}(\LM)$
is realized by contraction along the Euler vector field, and
$H^\cdot(U(\A),\Q)$ is the kernel.
\end{remark}

\begin{example}\label{ex:newOS2}
If $\L$ is obtained from a geometric lattice
$\LM$ by blowing up a building set in a compatible order, then $\Ka(\L)=
\N(\LM,\G)$ (Proposition \ref{prop:N=K}).  Since
the order intervals in the face poset of a simplicial complex
are Boolean, their matroids have no circuits, and $I_1(\L)=0$.  
This is the combinatorial
abstraction of the normal crossings property of the boundary divisor in 
the wonderful compactification.  We find that
$\OS(\L)=E(\L)/I_2(\L)$, the exterior face ring of $\N(\LM,\G)$.
\end{example}
Now we show that the following well-known properties of Orlik--Solomon algebras 
extend to our slightly more general context.
First, an additive basis for $\OS(\L)$ when $\L$ is geometric 
goes back to a result of Bj\"orner~\cite{Bj82}.
Recall that an independent set $J$ in a matroid 
$\M$ with a totally ordered ground set $E$ is a {\em broken
circuit} if there exists some $g<\min J$ for which $\set{g}\cup J$ is
a circuit.  
The collection of all subsets of $E$ which do not contain a broken circuit,
denoted $\nbc(\M)$, is a pure simplicial complex on $E$ of dimension
one less than the rank of $\M$.  

\begin{definition}[The no-broken-circuit complex]\label{def:general nbc}
Let $\L$ be a locally 
geometric semilattice and $\prec$ a total order on $\atoms(\L)$.
We say that a simplex $S\in\Ka(\L)$ is a {\em broken circuit}
if there exists some $g\prec \min S$ for which $\set{g}\cup S$ is a circuit
in $\M(\L_{\leq x})$, for some $x\geq g\vee\bigvee S$.

Let $\nbc_{\prec}(\L)$ denote the set of simplices 
in $\Ka(\L)$ which do not contain a broken circuit.
Clearly $\nbc_{\prec}(\L)$ is a subcomplex of $\Ka(\L)$.  We will simply
write $\nbc(\L)$ if the order on which it depends is understood.
\end{definition}

An order of the atoms $\atoms(\L)$ gives an order of the variables of
$E(\L)$.  We extend it lexicographically to a term order on $E(\L)$.
In particular, if $\H$ is a partial building set for a geometric lattice $\LM$,
a compatible order on $\H$ gives a lexicographic term order for $E(\L(\LM,\H))$.

\begin{example}[Example~\ref{ex:delA3}, continued]
The $\nbc$ complexes 
for the three semilattices in Example~\ref{ex:delA3} are shown 
in Figure~\ref{fig:delA3}: they are the cones at the vertex $\onehat$ of
the respective $1$-complexes shown in bold.
\end{example}

This leads to a monomial basis for the Orlik--Solomon algebra, given in the next
theorem. This theorem and the corollaries we state here are well-known for
geometric semilattices, but the arguments are ``local'' in nature and hence hold
without change for locally geometric semilattices.

\begin{theorem}[Basis of $\OS(\L)$, {\cite[Thm.~2.8]{Yu01}}]\label{thm:general nbc}
Let $\L$ be a locally geometric semilattice with a fixed linear order on
$\atoms(\L)$. The generators of the ideal $I_1(\L)+I_2(\L)$ form a Gr\"obner
basis with respect to the graded lexicographic order.  The
corresponding additive basis for $\OS^i(\L)$ consists of monomials
\[
\set{e_J\colon J\in \nbc(\L)\text{~and~}\abs{J}=i}.
\]
\end{theorem}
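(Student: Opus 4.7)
The plan is to prove both halves of the theorem together: I would show that the $\nbc$ monomials span $\OS(\L)$ via a leading-term analysis, then prove their linear independence by decomposing $\OS(\L)$ along a natural grading by $\L$ which reduces the problem pointwise to the classical geometric-lattice theorem of Bj\"orner. Together, spanning and independence give both the additive basis statement and, since the $\nbc$ monomials are exactly the complement of the leading-term monomials, the Gr\"obner basis property.

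For the spanning step, under graded lex with the chosen order on $\atoms(\L)$, a circuit $C=\{c_1\prec\cdots\prec c_k\}$ in some $\M(\L_{\leq x})$ produces $\partial(e_C)=\sum_i(-1)^{i-1}e_{C\setminus c_i}$, whose leading term is the broken-circuit monomial $e_{C\setminus c_1}$. Each generator $e_J$ with $J\notin\Ka(\L)$ is its own leading term. The initial ideal of $I_1(\L)+I_2(\L)$ therefore contains every $e_J$ for which $J$ contains a broken circuit or $J\notin\Ka(\L)$; equivalently, the standard monomials are contained in $\{e_J : J\in\nbc(\L)\}$, and these monomials span $\OS(\L)$.

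For linear independence, I would equip $E(\L)$ with a grading by $\L\cup\{\infty\}$, placing $e_J$ in degree $\bigvee J\in\L$ whenever $J\in\Ka(\L)$, and in degree $\infty$ otherwise. The ideal $I_2(\L)$ is precisely the sum of the $\infty$-components, so $E(\L)/I_2(\L)$ inherits an $\L$-grading. The key check is that $I_1(\L)$ is also homogeneous: for a circuit $C$ in $\M(\L_{\leq x})$, each element of $C$ lies in the matroid closure of $C\setminus\{c\}$, so $\bigvee(C\setminus c)=\bigvee C$ for every $c\in C$, making $\partial(e_C)$ homogeneous of degree $\bigvee C$; multiplication by any exterior monomial preserves the grading after passing to $E(\L)/I_2(\L)$. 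This yields $\OS(\L)=\bigoplus_{x\in\L}\OS(\L)_x$, where $\OS(\L)_x$ is generated by $\{e_J : \bigvee J=x\}$ modulo the broken-circuit relations coming from circuits in $\L_{\leq x}$, which is exactly the top-degree component of the Orlik--Solomon algebra of the geometric lattice $\L_{\leq x}$. Bj\"orner's theorem then supplies a top-degree $\nbc$ basis for each summand, and these assemble to a linearly independent $\nbc$ basis of $\OS(\L)$.

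The main obstacle I expect is the homogeneity verification for $I_1(\L)$ under the join-support grading, and in particular ensuring that no hidden cancellations across $\L$-graded pieces arise when one multiplies a circuit relation $\partial(e_C)$ by an arbitrary exterior monomial and reduces modulo $I_2(\L)$. Once that compatibility is in hand, the semilattice statement reduces cleanly to Bj\"orner's result applied interval-by-interval.
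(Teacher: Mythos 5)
Your proposal is correct, and it supplies a genuine proof where the paper gives only a citation: the paper's justification for Theorem~\ref{thm:general nbc} is to invoke Yuzvinsky's result for geometric semilattices and assert that the arguments, being local, carry over unchanged to locally geometric semilattices. Yuzvinsky's own proof is a direct Gr\"obner-basis computation (reducing the S-polynomials of the generators to zero), so your route --- spanning from a leading-term analysis, independence from a grading by $\L\cup\{\infty\}$ that identifies each graded piece with the top-degree component of $\OS(\L_{\leq x})$ and hence reduces to Bj\"orner's classical theorem --- is genuinely different. What it buys: it is self-contained modulo the geometric-lattice case, it avoids the syzygy bookkeeping, and it produces the Brieskorn decomposition (the paper's Corollary~\ref{cor:Brieskorn}) as an intermediate step rather than as a consequence of the basis. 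The one point that needs the care you already flag is the homogeneity of the image of $I_1(\L)$ in $E(\L)/I_2(\L)$: for a circuit $C$ in some $\M(\L_{\leq y})$ and a monomial $e_K$, the terms $e_{K\cup(C\setminus c_i)}$ of $e_K\,\partial(e_C)$ either all lie in $\Ka(\L)$ with common join $\bigvee(K\cup C)$, or all fail to have an upper bound, because $\bigvee(C\setminus c_i)=\bigvee C$ forces any upper bound of $K\cup(C\setminus c_i)$ to bound $K\cup C$ as well. This is exactly what rules out cross-degree cancellation and shows that the degree-$x$ component of the ideal coincides with the corresponding component of $I(\L_{\leq x})$, so Bj\"orner's theorem applies interval by interval and the standard monomials are precisely the $\nbc$ monomials, which yields both the basis and the Gr\"obner property.
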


In the case of hyperplane arrangements, the following is due to 
Brieskorn~\cite{Br73}.
\begin{corollary}[The Brieskorn decomposition]\label{cor:Brieskorn}
Let $\L$ be a locally geometric semilattice, and let $i\geq 0$. Then
\[
\OS^i(\L)=\bigoplus_{x\in \L_i} \OS^i(\L_{\leq x}),
\]
where $\L_i$ denotes the set of elements in $\L$ of rank $i$.
\end{corollary}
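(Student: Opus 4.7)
My plan is to deduce the Brieskorn decomposition directly from the $\nbc$-basis of Theorem~\ref{thm:general nbc}, by partitioning the basis according to the join of each basis set and then recognizing each summand as the top-degree $\nbc$-basis of the interval $\L_{\leq x}$.

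First I would establish the \emph{key independence property}: if $J\in\nbc(\L)$, then $J$ is independent in the matroid $\M(\L_{\leq\bigvee J})$. This is the standard contrapositive argument (if $J$ were dependent it would contain a circuit $C$, and then $C\setminus\{\min C\}\subseteq J$ would be a broken circuit). It uses only the locally geometric structure, since the only circuits relevant to nestedness of $J$ live in the geometric lattice $\L_{\leq\bigvee J}$. As a consequence, $|J|=\rank(\bigvee J)$, so any $\nbc$-simplex $J$ of cardinality $i$ satisfies $\bigvee J\in\L_i$.

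Next I would verify the local/global identification
\[
\bigl\{J\in\nbc(\L) : |J|=i,\ \tfrac{}{}\bigvee J=x\bigr\}
=\bigl\{J\in\nbc(\L_{\leq x}) : |J|=i\bigr\}
\quad\text{for each }x\in\L_i.
\]
The containment ``$\subseteq$'' is immediate (restricting the broken-circuit test to $x$ is a weaker condition). For ``$\supseteq$'', if $J$ were a broken circuit witnessed by $g\prec\min J$ with $\{g\}\cup J$ a circuit in some $\M(\L_{\leq x'})$ for $x'\geq g\vee x$, then rank considerations force $\rank(g\vee x)=|J|=\rank(x)$, whence $g\leq x$ and the circuit already lives in $\M(\L_{\leq x})$, contradicting $J\in\nbc(\L_{\leq x})$.

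Finally I would define the natural algebra map $\iota_x\colon \OS(\L_{\leq x})\to\OS(\L)$ by $e_a\mapsto e_a$ on atoms. It is well-defined because every circuit of $\M(\L_{\leq x})$ is also a circuit in some $\M(\L_{\leq y})$ for $\L$, and because every subset of $\atoms(\L_{\leq x})$ lies in $\Ka(\L)$ (having $x$ as an upper bound), so the relations $I_1+I_2$ are respected. Taking the direct sum of the $\iota_x$ in degree $i$ and comparing bases using the partition above yields the desired isomorphism $\bigoplus_{x\in\L_i}\OS^i(\L_{\leq x})\xrightarrow{\cong}\OS^i(\L)$. The only subtle point, and really the only place where local-geometricity enters nontrivially, is the verification that broken circuits in $\L$ restricted to $\L_{\leq x}$ agree with broken circuits of $\L_{\leq x}$; everything else is formal bookkeeping on the monomial basis.
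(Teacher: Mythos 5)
Your proposal is correct and follows essentially the same route as the paper, which likewise observes that broken circuits are locally defined so that $\nbc(\L_{\leq x})$ sits inside $\nbc(\L)$, and then compares the monomial bases from Theorem~\ref{thm:general nbc}; you have simply written out the details the paper leaves implicit. One small point: in the ``$\supseteq$'' step you should test an arbitrary broken circuit $S\subseteq J$ rather than only $J$ itself, but the identical rank argument ($g$ lies in the closure of $S$, hence $g\leq\bigvee S\leq x$) disposes of that case as well.
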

\begin{proof}
Broken circuits have a local definition, so $\nbc(\L_{\leq x})$ is the
full subcomplex of $\nbc(\L)$ on the vertices $\atoms(\L_{\leq x})$, 
for each $x\in \L$.
The decomposition follows by comparing the additive bases
provided by Theorem~\ref{thm:general nbc}.
\end{proof}

\begin{corollary}\label{cor:subarrangements split}
If $x,y\in\L$ and $x\leq y$, the inclusion $\atoms(\L_{\leq x})\subseteq
\atoms(\L_{\leq y})$ induces an injective algebra homomorphism
$\OS(\L_{\leq x})\to \OS(\L_{\leq y})$.
\end{corollary}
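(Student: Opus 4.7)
My plan is a two-step argument: first verify that the inclusion of atom sets extends to a well-defined algebra homomorphism of Orlik--Solomon algebras, and then use the $\nbc$ monomial basis provided by Theorem~\ref{thm:general nbc} to establish injectivity.

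For well-definedness, I would first note that, since $\L$ is locally geometric, both $[\bottom,x]$ and $[\bottom,y]$ are geometric lattices; their atomic complexes are therefore full simplices (Example~\ref{ex:Ka of lattice}), so $I_2(\L_{\leq x}) = I_2(\L_{\leq y}) = 0$ and only the circuit ideals $I_1$ matter. Since the matroid $\M(\L_{\leq x})$ coincides with the restriction of $\M(\L_{\leq y})$ to $\atoms(\L_{\leq x})$, every circuit of $\M(\L_{\leq x})$ is a circuit of $\M(\L_{\leq y})$. Hence the exterior-algebra inclusion $E(\L_{\leq x}) \hookrightarrow E(\L_{\leq y})$ sends $I_1(\L_{\leq x})$ into $I_1(\L_{\leq y})$, and descends to the desired homomorphism $\OS(\L_{\leq x}) \to \OS(\L_{\leq y})$.

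For injectivity, I would fix a linear order on $\atoms(\L_{\leq y})$ and restrict it to $\atoms(\L_{\leq x})$, so that Theorem~\ref{thm:general nbc} produces a monomial $\nbc$-basis on each side. The core step, already invoked in the proof of Corollary~\ref{cor:Brieskorn}, is the locality of the broken-circuit condition: for any $J\subseteq \atoms(\L_{\leq x})$, a potential broken circuit $\set{g}\cup J$ realized by a circuit in some $\M(\L_{\leq z})$ with $z\leq y$ satisfies $\bigvee(\set{g}\cup J)=\bigvee J\leq x$ by rank considerations (since $J$ is independent but $\set{g}\cup J$ is not, the two joins have the same rank), so $g\leq x$ and the broken circuit already occurs inside $\L_{\leq x}$. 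Thus $\nbc(\L_{\leq x})$ is the full subcomplex of $\nbc(\L_{\leq y})$ on the vertex set $\atoms(\L_{\leq x})$, the induced map sends the $\nbc$-basis of $\OS(\L_{\leq x})$ injectively into that of $\OS(\L_{\leq y})$, and injectivity follows.

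There is no serious obstacle here; the only point that warrants explicit verification is the locality of the broken-circuit condition under the embedding, which is why I spell it out rather than merely citing Corollary~\ref{cor:Brieskorn}.
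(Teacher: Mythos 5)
Your proposal is correct and follows exactly the route the paper intends: the corollary is left without an explicit proof precisely because it is the combination of the locality of broken circuits noted in the proof of Corollary~\ref{cor:Brieskorn} (so that $\nbc(\L_{\leq x})$ is the full subcomplex of $\nbc(\L_{\leq y})$ on $\atoms(\L_{\leq x})$) with the monomial bases of Theorem~\ref{thm:general nbc}. Your explicit verification of well-definedness via restriction of matroids and of the locality of the broken-circuit condition is accurate and fills in the details the paper omits.
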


\begin{remark}\label{rem:partial}
The derivations $\partial$ on $\OS(\L_{\leq x})$, for each $x$, are compatible
with inclusions $\OS(\L_{\leq x})\hookrightarrow \OS(\L_{\leq y})$ for 
$x\leq y$, but not with their left-inverses 
$\OS(\L_{\leq y})\twoheadrightarrow \OS(\L_{\leq x})$.
\end{remark}

Given its local nature, it is natural to consider a sheaf of
Orlik--Solomon algebras.
\begin{definition}[The sheaf $\OSS$]\label{def:OS sheaf}
Let $\L$ be a locally geometric semilattice, and define
a graded sheaf of vector spaces $\OSS^\cdot(\L)$ on $\L^{\op}$ as follows.
For each $i\geq 0$, define a sheaf of algebras
$\OSS^i(\L)$ for $x\in \L$ by
\begin{equation*}
\OSS^i(\L)(x)=\OS^i(\L_{\leq x}),
\end{equation*}
with restriction maps for $x\leq y$ given by $\OSS^i(\L)(x)\hookrightarrow
\OSS^i(\L)(y)$.
\end{definition}
By the remark above, the map $\partial$ extends to sheaves to make a 
chain complex
\begin{equation}\label{eq:OS complex}
\begin{tikzcd}
\OSS^0 & \OSS^1\ar[l,"\partial",swap]  \cdots
& \OSS^i\ar[l,"\partial",swap]  \cdots
& \OSS^r\ar[l,"\partial",swap].
\end{tikzcd}
\end{equation}
We note that the surjections $\OS(\L_{\leq y})\to \OS(\L_{\leq x})$
also allow us to define a graded sheaf on $\L$, a special case of which 
plays a key role in \cite{Yuz95}.  With this structure, though, $\partial$
is not a map of sheaves (Remark~\ref{rem:partial}).

\begin{proposition}\label{prop:OS exact}
The complex \eqref{eq:OS complex} is exact except in degree $0$.
\end{proposition}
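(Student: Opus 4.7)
The plan is to check exactness stalkwise. For sheaves on $\L^{\op}$ with the order topology introduced in \S\ref{ss:basic sheaves}, every point $x\in\L$ has a (unique) minimal open neighborhood and its stalk coincides with the sheaf value there; in our case the stalk of $\OSS^i(\L)$ at $x$ is just $\OS^i(\L_{\leq x})$. Since exactness of a complex of sheaves is a stalkwise condition, the proposition reduces to showing that for every $x\in \L$, the classical Orlik--Solomon complex $(\OS^\cdot(\L_{\leq x}),\partial)$ has vanishing homology in every degree $i\geq 1$.

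At that point I would dispose of the degenerate case $x=\bottom$ separately: then $\OS^i(\L_{\leq \bottom})=0$ for $i\geq 1$ and the claim is vacuous. Otherwise $\L_{\leq x}$ is a nontrivial geometric lattice, and I would invoke the standard contracting-homotopy trick for the Koszul-type complex $(\OS^\cdot(\L_{\leq x}),\partial)$: pick any atom $a\in \atoms(\L_{\leq x})$ and define a $\k$-linear map $h\colon \OS^i(\L_{\leq x})\to \OS^{i+1}(\L_{\leq x})$ by $h(\omega)=e_a\omega$. This is well-defined because $e_a\in \OS^1(\L_{\leq x})$. A one-line application of the Leibniz rule together with $\partial(e_a)=1$ gives
\[
\partial h(\omega)+h\partial(\omega)=\partial(e_a\omega)+e_a\partial(\omega)=\omega-e_a\partial(\omega)+e_a\partial(\omega)=\omega,
\]
so $h$ is a $\k$-linear chain contraction of the stalk complex, and in particular the stalk has no homology in positive degrees.

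This argument is routine and there is no real technical obstacle to it. What is worth flagging in the writeup are two small points that explain the exact form of the proposition. First, the contracting homotopy $h$ depends on the choice of the atom $a$ and on $x$, and hence does not lift to an endomorphism of the complex of sheaves; this is why the conclusion is mere exactness and not contractibility of $\OSS^\cdot(\L)$. Second, excluding degree $0$ from the claim is genuinely necessary: at the stalk $x=\bottom$ one has $\OS^0(\L_{\leq\bottom})=\k$ while $\OS^i(\L_{\leq\bottom})=0$ for $i\geq 1$, so the degree-zero homology sheaf is a nonzero skyscraper supported at $\bottom\in\L^{\op}$ rather than vanishing.
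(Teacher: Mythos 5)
Your proof is correct and follows the same route as the paper: reduce to stalks (where the stalk at $x$ is $(\OS^\cdot(\L_{\leq x}),\partial)$), handle $x=\bottom$ separately, and for $x\neq\bottom$ use exactness of the Orlik--Solomon complex of a nontrivial geometric lattice. The paper simply cites \cite[Lem.\ 3.13]{OTbook} for that last step, whereas you spell out the standard contracting homotopy $\omega\mapsto e_a\omega$; the content is identical.
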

\begin{proof}
We check the claim on stalks.  The stalk at $x$
is the complex $(\OS^\cdot(\L_{\leq x}),\partial)$
geometric lattice of rank $\geq1$.  For $x\neq \bottom$,
this is exact by
\cite[Lem.\ 3.13]{OTbook}.  For $x=\bottom$, the complex is concentrated 
in degree $0$.
\end{proof}
\subsection{The flag complex}\label{ss:flag complex}
Classically, the graded $\k$-dual of the Orlik--Solomon algebra is 
identified with a vector space spanned by flags in the intersection lattice.
The flags index explicit homology cycles in a hyperplane arrangement complement.
As usual, the combinatorics of the flag complex extends beyond its topological
origins, as we show here.
The following construction appears first for
hyperplane arrangements in \cite[\S 2]{SV91}.

\begin{definition}[The flag complex]\label{def:flag complex}
Let $\L$ be a locally geometric semilattice.  For each $i\geq0$, let
$\widetilde{\Fl}^i(\L)$ denote the $\k$-span of all chains 
$Y:=(y_0<y_1<\cdots<y_i)$, where
$y_j\in \L_j$ for $0\leq j\leq i$.  
Let $\widetilde{\Fl}^i(\L)^\vee$ be the $\k$-dual, and denote the 
dual basis vectors by $Y^\vee$.

If $Y\in\widetilde{\Fl}^i(\L)$ is a chain as above, $0<j<i$,
and $y\in \L_j$ satisfies $y_{j-1}<y<y_{j+1}$, let
$Y(\widehat{y_j}; y):=(y_0<y_1<\cdots<y_{j-1}<y<y_{j+1}<\cdots <y_i)$.
Define $\Fl^i(\L)$ to be the quotient of $\widetilde{\Fl}^i(\L)$ by the sums
$\sum_y Y(\widehat{y_j}; y)$,
for each chain $Y$ of length $i+1$ and each $0<j<i$.
\end{definition}

An ordered, independent set of atoms $(h_1,\ldots,h_i)$ of $\L$ defines 
a flag $Y(h_1,\ldots,h_i)$ 
by letting $y_j=h_1\vee \cdots \vee h_j$, for $0\leq j\leq i$, provided
the join exists.
This extends to a map $\tilde{\fl}\colon E(\L)\to \tilde{\Fl}(\L)^\vee$ by setting
\begin{equation}\label{eq:fl map}
\tilde{\fl}(e_S)=\sum_{\sigma\in \Sigma_i}(-1)^{\abs{\sigma}}Y(g_{\sigma(1)},
\ldots,g_{\sigma(i)})^\vee,
\end{equation}
where $e_S$ is the monomial indexed by an independent set $S=\set{g_1,\ldots,
g_i}$, provided $S\in\Ka(\L)$, and zero otherwise. 
Here, $\Sigma_i$ denotes the symmetric group on $\set{1,\ldots,i}$.

We will make use of the dual version of Brieskorn's decomposition.  
This appeared in the arrangement case as \cite[(2.1.2)]{SV91} and follows
directly from Corollary \ref{cor:Brieskorn}.
\begin{lemma}\label{lem:flag brieskorn}
If $\L$ is a locally geometric semilattice, for each $i\geq0$, 
\begin{equation}\label{eq:flag brieskorn}
\Fl^i(\L)\cong \bigoplus_{x\in \L_i}\Fl^i(\L_{\leq x}).
\end{equation}
\end{lemma}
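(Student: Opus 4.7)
The plan is to obtain the decomposition directly from the definition of $\Fl^i(\L)$, using the fact that the top element $y_i$ of a chain is preserved by the defining relations.

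First, I would observe that every generator $Y=(y_0<y_1<\cdots<y_i)$ of $\widetilde{\Fl}^i(\L)$ has a well-defined top element $y_i\in\L_i$. Grouping the basis of $\widetilde{\Fl}^i(\L)$ by $y_i$ yields a direct sum decomposition
\[
\widetilde{\Fl}^i(\L)=\bigoplus_{x\in\L_i}\widetilde{\Fl}^i(\L)_x,
\]
where $\widetilde{\Fl}^i(\L)_x$ is spanned by those chains with $y_i=x$. Since $y_0$ must lie in $\L_0=\{\bottom\}$, each such chain is a maximal chain in $[\bottom,x]$. Consequently, the restriction map sending a chain to itself identifies $\widetilde{\Fl}^i(\L)_x$ with $\widetilde{\Fl}^i(\L_{\leq x})$ tautologically.

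Next, I would check that this decomposition descends to the quotient $\Fl^i(\L)$. The relators in Definition~\ref{def:flag complex} are the sums $\sum_y Y(\widehat{y_j};y)$ for $0<j<i$: each such relator modifies only a middle entry $y_j$, leaving $y_0,\ldots,y_{j-1}$ and $y_{j+1},\ldots,y_i$ unchanged. In particular the top element $y_i$ is preserved. Hence each relator lies entirely inside a single summand $\widetilde{\Fl}^i(\L)_x$, and the relators for $\widetilde{\Fl}^i(\L)_x$ are precisely the relators defining $\Fl^i(\L_{\leq x})$ (any $y$ with $y_{j-1}<y<y_{j+1}\leq x$ automatically lies in $\L_{\leq x}$). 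Passing to quotients yields the desired isomorphism
\[
\Fl^i(\L)\cong\bigoplus_{x\in\L_i}\Fl^i(\L_{\leq x}).
\]

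There is no real obstacle here: the argument is a routine bookkeeping of generators and relations, and one could alternatively deduce the lemma by dualizing Corollary~\ref{cor:Brieskorn} via the map $\tilde{\fl}$ of \eqref{eq:fl map}, once one verifies $\tilde{\fl}$ induces a perfect pairing between $\OS^i(\L)$ and $\Fl^i(\L)$. The only mildly subtle point is confirming that the middle-element substitutions in the flag relations never leave the interval $[\bottom,x]$, which is immediate from $y_{j+1}\leq y_i=x$. The direct argument above therefore gives the result in a single short step.
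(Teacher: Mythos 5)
Your argument is correct, and it is genuinely more self-contained than what the paper does. The paper offers no proof beyond the assertion that the lemma ``follows directly from Corollary~\ref{cor:Brieskorn}'' (together with a citation to \cite{SV91}), i.e.\ it implicitly takes the dualization route that you mention only as an alternative. As you correctly flag, that route needs the identification of $\Fl^i(\L)^\vee$ with $\OS^i(\L)$ first; in the paper that identification is Lemma~\ref{lem:flag complex}, which is stated \emph{after} this lemma and whose proof invokes it, so your direct argument neatly sidesteps a circularity that the paper's terse justification glosses over. Your generators-and-relations bookkeeping is sound: grouping chains by their top element $y_i$ gives the splitting of $\widetilde{\Fl}^i(\L)$ (note that $y_i=x$ forces the whole chain into $[\bottom,x]$, and conversely a chain in $[\bottom,x]$ with $y_j\in\L_j$ for all $j$ must end at $x$ since $\rank(x)=i$); the relators only alter middle entries, so they respect the grouping; and the set of substitutes $y$ with $y_{j-1}<y<y_{j+1}$ is the same computed in $\L$ or in $\L_{\leq x}$ because $y<y_{j+1}\leq x$. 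Each approach buys something: yours is elementary and independent of the Orlik--Solomon side, while the paper's (once Lemma~\ref{lem:flag complex} is in hand) makes the compatibility of the two Brieskorn decompositions under the pairing $\fl$ automatic rather than something to check separately.
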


\begin{lemma}\label{lem:flag complex}
The map $\tilde{\fl}$ induces a map $\fl\colon\OS(\L)\to \tilde{\Fl}(\L)^\vee$
whose corestriction to $\Fl(\L)^\vee$ is an isomorphism.
\end{lemma}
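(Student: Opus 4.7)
My plan is to verify in three stages: first, that $\tilde{\fl}$ vanishes on the ideal $I_1(\L)+I_2(\L)$ and hence descends to $\fl\colon\OS(\L)\to\tilde{\Fl}(\L)^\vee$; next, that the image annihilates the defining flag relations and so lands in the subspace $\Fl(\L)^\vee$; and finally, that the resulting map is bijective by reducing to the classical geometric-lattice case. For the descent, vanishing on $I_2(\L)$ is immediate from~\eqref{eq:fl map}: if $J\not\in\Ka(\L)$ then $J$ has no common upper bound in $\L$, so no flag $Y(g_{\sigma(1)},\ldots,g_{\sigma(i)})$ is defined and $\tilde{\fl}(e_J)=0$. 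For $I_1(\L)$, every generator $\partial(e_C)$ is indexed by a circuit $C\subseteq\atoms(\L_{\leq x})$ for some $x\in\L$, and the flags attached to its summands all lie in the geometric lattice $\L_{\leq x}$; the vanishing therefore reduces to the classical Orlik--Solomon-to-flag identity of Schechtman--Varchenko~\cite[\S2]{SV91}, applied locally to $\L_{\leq x}$.

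For the corestriction to $\Fl(\L)^\vee$, I would evaluate $\fl(e_S)$ on a flag relation $\sum_y Y(\widehat{y_j};y)$ for a fixed nested independent $S=\set{g_1,\ldots,g_i}$. This produces an alternating sum over pairs $(\sigma,y)$ satisfying $y_k=g_{\sigma(1)}\vee\cdots\vee g_{\sigma(k)}$ for $k\neq j$ and $y=g_{\sigma(1)}\vee\cdots\vee g_{\sigma(j)}$. All elements in play are bounded above by $y_i$, so the cancellation can be carried out inside $\L_{\leq y_i}$, where it is the standard involution underlying the classical flag complex. I expect this to be the main technical obstacle: the pairing of contributing pairs $(\sigma,y)$ must be tracked carefully against the atomic structure of the interval, and one must take care that each reshuffle of $S$ remains in $\Ka(\L)$ so that the relevant summand of $\tilde{\fl}(e_S)$ is nonzero.

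Finally, to upgrade to an isomorphism, I would use matching Brieskorn decompositions: Corollary~\ref{cor:Brieskorn} gives $\OS^i(\L)=\bigoplus_{x\in\L_i}\OS^i(\L_{\leq x})$, and Lemma~\ref{lem:flag brieskorn} gives $\Fl^i(\L)\cong\bigoplus_{x\in\L_i}\Fl^i(\L_{\leq x})$. The map $\fl$ respects these splittings, since $\tilde{\fl}(e_S)$ is supported on flags with top element $\bigvee S$. It therefore suffices to check the isomorphism when $\L$ is itself a geometric lattice, which is the classical Schechtman--Varchenko theorem, and can also be read off directly by matching the $\nbc$ basis of Theorem~\ref{thm:general nbc} against a dual basis of flag vectors.
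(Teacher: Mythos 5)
Your proof is correct and follows essentially the same strategy as the paper: reduce the statement to the classical Schechtman--Varchenko result on geometric lattices and then glue via the Brieskorn decompositions of Corollary~\ref{cor:Brieskorn} and Lemma~\ref{lem:flag brieskorn}. The paper is more terse--it simply observes that the local map $\fl\colon \OS(\L_{\leq x})\to \Fl(\L_{\leq x})^\vee$ is the classical isomorphism for each $x\in\L$, which already subsumes your separate verifications of the descent through $I_1+I_2$ and the corestriction to $\Fl(\L)^\vee$--but the underlying reduction is identical, and the step you flag as the main obstacle (the cancellation against flag relations) is precisely what the appeal to \cite[\S2]{SV91} disposes of locally.
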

\begin{proof}
This was noted for hyperplane arrangements in \cite[\S2]{SV91}; however,
the argument there applies without change for any geometric lattice.  That is,
$\fl\colon \OS(\L_{\leq x})\to \Fl(\L_{\leq x})^\vee$ is an isomorphism
for each $x\in\L$.  The global result then
follows using the decompositions of Corollary~\ref{cor:Brieskorn} and
Lemma~\ref{lem:flag brieskorn}.
\end{proof}

The graded vector space $\Fl(\L)$ is a cochain complex with respect to
a differential
$\delta$ defined for chains $Y=(y_0<\cdots<y_i)$ by the formula
\begin{equation}\label{eq:def delta}
\delta(Y)=(-1)^i\sum_{y_{i+1}\in \L_{i+1}}(y_0<y_1<\cdots<y_{i+1}). 
\end{equation}

The next result was stated for arrangements as \cite[Thm.~2.4(b)]{SV91};
again, the proof is combinatorial and applies any geometric lattice.
\begin{proposition}\label{prop:flag complex}
For any $x\in \L$, 
the map $\fl\colon (\OS(\L_{\leq x}), \partial)\to (\Fl(\L_{\leq x}),
\delta)^\vee$ is an isomorphism of chain complexes.
\end{proposition}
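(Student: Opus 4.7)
The plan is to reduce to the case of a single geometric lattice and then verify the chain-map identity $\fl\circ\partial=\delta^\vee\circ\fl$ by a direct sign computation on monomial generators, essentially repeating the Schechtman--Varchenko argument but purely in terms of matroid data.

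Since $\L$ is locally geometric, the interval $\L_{\leq x}$ is a geometric lattice, and both $\OS(\L_{\leq x})$ and $\Fl(\L_{\leq x})$ depend only on this interval; so we may replace $\L$ by $\L_{\leq x}$ and assume throughout that $\L=\LM$ is a geometric lattice with maximum element, in which case $\Ka(\L)$ is a full simplex and $I_2(\L)=0$, leaving only the classical Orlik--Solomon relations. Lemma~\ref{lem:flag complex} already identifies $\fl$ as an isomorphism of graded vector spaces, so only the chain-map identity remains. Since both $\fl\circ\partial$ and $\delta^\vee\circ\fl$ are $\k$-linear and $\OS(\LM)$ is spanned by the classes $[e_S]$ for $S=\{g_1<\cdots<g_i\}$ independent in $\M(\LM)$, it suffices to check $\fl(\partial e_S)=\delta^\vee\fl(e_S)$ on such monomials.

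For such an $S$, the Leibniz rule and formula~\eqref{eq:fl map} give
\[
\fl(\partial e_S) = \sum_{j=1}^{i}(-1)^{j-1}\sum_{\tau}(-1)^{\abs{\tau}}Y(\tau)^\vee,
\]
where $\tau$ ranges over orderings of $S\setminus\{g_j\}$ and $Y(\tau)$ denotes the flag built from that ordering. On the other side, every flag $Y(g_{\sigma(1)},\ldots,g_{\sigma(i)})$ appearing in $\fl(e_S)$ has top element $\bigvee S$ at rank $i$ (by independence of $S$), so $\delta^\vee$ simply strips off this top element and multiplies by $(-1)^{i-1}$, giving
\[
\delta^\vee\fl(e_S)=(-1)^{i-1}\sum_{\sigma\in\Sigma_i}(-1)^{\abs{\sigma}}Y\bigl(g_{\sigma(1)},\ldots,g_{\sigma(i-1)}\bigr)^\vee.
\]

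Finally, match coefficients of a fixed dual flag $Y(h_1,\ldots,h_{i-1})^\vee$ on both sides. On the left, this flag arises from a unique pair $(j,\tau)$ with $S\setminus\{g_j\}=\{h_1,\ldots,h_{i-1}\}$ and $\tau(k)=h_k$, contributing $(-1)^{j-1}(-1)^{\abs{\tau}}$. On the right, it arises from each $\sigma\in\Sigma_i$ with $\sigma(i)=j$ and $\sigma|_{\{1,\ldots,i-1\}}=\tau$, contributing $(-1)^{i-1}(-1)^{\abs{\sigma}}$. A direct count of inversions involving position $i$ shows $\abs{\sigma}\equiv\abs{\tau}+(i-j)\pmod{2}$, and therefore $(-1)^{i-1+\abs{\sigma}}=(-1)^{j-1+\abs{\tau}}$, confirming the match. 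The main obstacle is this sign bookkeeping, which is routine but error-prone; the argument uses no input beyond the geometric lattice structure of $\L_{\leq x}$, which is why the classical Schechtman--Varchenko proof transfers verbatim.
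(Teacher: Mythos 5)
Your proof is correct and follows essentially the same route as the paper, which simply cites \cite[Thm.~2.4(b)]{SV91} and observes that the Schechtman--Varchenko argument is purely combinatorial and so applies verbatim to any geometric lattice $\L_{\leq x}$. You have in effect written out the sign bookkeeping that the paper leaves to the citation; the parity identity $\abs{\sigma}\equiv\abs{\tau}+(i-j)\pmod{2}$ (coming from the fact that $\sigma$ is the extension of $\tau$ composed with the cycle $(j,j+1,\ldots,i)$) is the right one, and the rest of the reduction to monomials $e_S$ for independent $S$ is standard.
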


Just as with the Orlik--Solomon algebra, the flag complex is a local
construction. Thus, we consider the following sheaf on $\L$:
\begin{definition}[Sheaf $\FS$]\label{def:flag sheaf}
Let $\L$ be a locally geometric semilattice, and define a graded sheaf
$\FS^\cdot$ on $\L$ by  $\FS^\cdot(x)=\Fl^\cdot(\L_{\leq x})$ for $x\in\L$,
using restriction maps dual to those of $\OSS$,
$\rho_{x_1,x_0}\colon \Fl^j(\L_{\leq x_1})\to \Fl^j(\L_{\leq x_0})$ for all 
$x_0\leq x_1$ and $j\geq0$.  We note that the maps $\rho_{x_1,x_0}$ are 
coordinate projections, with respect to the direct sum decomposition 
\eqref{eq:flag brieskorn}.
\end{definition}
\begin{proposition}\label{prop:flag sheaf complex}
The differential $\delta$ from 
\eqref{eq:def delta} makes $\FS^\cdot$ a cochain 
complex of sheaves.  The augmented complex 
\begin{equation}\label{eq:flag sheaf}
\begin{tikzcd}
0\ar[r] & \KS\ar[r] & \FS^0\ar[r,"\delta"] & \cdots\ar[r,"\delta"] & \FS^i
\ar[r,"\delta"] & \cdots\ar[r,"\delta"] & \FS^{r}\ar[r] & 0
\end{tikzcd}
\end{equation}
is flasque and 
exact, where $\KS(x):=\Q$ for $x=\bottom$ and $\KS(x):=0$ otherwise.
\end{proposition}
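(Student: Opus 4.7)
The plan is to verify three things in turn: $(\FS^\cdot,\delta)$ is a cochain complex of sheaves, each $\FS^i$ is flasque, and the augmented complex is exact. I read ``flasque and exact'' in the standard sense of a flasque resolution of the sheaf $\KS$. For the complex structure, Proposition~\ref{prop:flag complex} identifies each stalk $(\FS^\cdot(x),\delta)$ with the $\k$-dual of $(\OS^\cdot(\L_{\leq x}),\partial)$, so $\delta^2=0$ follows from $\partial^2=0$. That $\delta$ commutes with the restriction maps $\rho_{x_1,x_0}$ is a direct check on a basis chain $Y=(y_0<\cdots<y_i)$ with $y_i\leq x_1$: every summand of $\delta Y$ has top element $y_{i+1}>y_i$. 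If $y_i\leq x_0$, then $\rho\delta Y$ and $\delta\rho Y$ both equal $\sum_{y_{i+1}\leq x_0}\pm(y_0<\cdots<y_{i+1})$; if $y_i\not\leq x_0$, both vanish, since the top element of every summand of $\delta Y$ still fails to lie below $x_0$.

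For flasqueness I would lift the Brieskorn decomposition~\eqref{eq:flag brieskorn} to a splitting of sheaves on $\L$, writing $\FS^i\cong\bigoplus_{y\in\L_i}\FF_y$, where $\FF_y$ has stalk $\Fl^i(\L_{\leq y})$ at every $x\geq y$ and $0$ elsewhere, with structure maps that are identity or the unique map to $0$. The key point is to compute $\FF_y(U)=\Fl^i(\L_{\leq y})$ when $y\in U$ and $0$ otherwise, for any open order ideal $U\subseteq\L$: this uses that any two elements of $U\cap\L_{\geq y}$ have a meet which still lies in $U\cap\L_{\geq y}$ (by downward closure of $U$ and the meet-semilattice structure of $\L$), which forces a section to be constant on $U\cap\L_{\geq y}$. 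The restrictions $\FF_y(U)\to\FF_y(V)$ are then each surjective (identity, a surjection onto $0$, or $0\to 0$), so every $\FF_y$ is flasque, and a finite direct sum of flasque sheaves is flasque.

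For exactness I would argue stalkwise. At $x\neq\bottom$, $\KS(x)=0$ and the stalk $(\FS^\cdot(x),\delta)$ is exact by dualizing the stalkwise exactness of $(\OS^\cdot(\L_{\leq x}),\partial)$ established in the proof of Proposition~\ref{prop:OS exact}. At $x=\bottom$ we have $\Fl^0(\{\bottom\})=\Q\cdot(\bottom)$ and $\Fl^j(\{\bottom\})=0$ for $j\geq 1$, so the augmentation $\KS(\bottom)=\Q\to\FS^0(\bottom)=\Q$ given by $1\mapsto(\bottom)$ is an isomorphism and the augmented stalk $0\to\Q\to\Q\to 0\to\cdots$ is exact. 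The main obstacle will be the flasqueness computation: identifying sections of the summand sheaves $\FF_y$ over arbitrary opens (not just principal ones) needs the meet-semilattice structure in an essential way. Once that is in hand, both the complex structure and the exactness become routine dualizations of facts already established for the Orlik--Solomon sheaf.
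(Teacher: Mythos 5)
Your proposal is correct. The cochain-complex and exactness parts coincide with the paper's argument (compatibility of $\delta$ with restrictions is dual to Remark~\ref{rem:partial}, and exactness is the $\Q$-dual of Proposition~\ref{prop:OS exact}; your explicit check of the stalk at $\bottom$ is a harmless elaboration). Where you genuinely diverge is the flasqueness step. The paper extends a section $a\in\FS^i(U)$ across one minimal element $x$ of $\L\setminus U$ at a time: it writes each stalk $\FS^i(x_j)$ at the elements $x_1,\dots,x_k$ covered by $x$ via the Brieskorn decomposition, observes that the components $(a_{x_j})_y$ agree whenever defined, and assembles $a_x$ componentwise. You instead promote the Brieskorn decomposition to a splitting of sheaves $\FS^i\cong\bigoplus_{y\in\L_i}\FF_y$ (legitimate, since the restriction maps are coordinate projections) and show each summand is flasque by computing $\FF_y(U)$ over an arbitrary open $U$; the key observation that a section of $\FF_y$ is constant on $U\cap\L_{\geq y}$ because that set is closed under meets is correct and is exactly where the meet-semilattice hypothesis enters. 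Your route is somewhat more structural --- it identifies each Brieskorn summand as a flasque sheaf in its own right and yields the sections over every open at once --- at the cost of having to justify the sheaf-level splitting, whereas the paper's extension argument works directly with the stalks and never needs sections over non-principal opens.
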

\begin{proof}
The fact that $\delta$ is compatible with restrictions is dual to 
Remark~\ref{rem:partial}.  Likewise, the exactness of \eqref{eq:flag sheaf}
is obtained from Proposition~\ref{prop:OS exact} by taking $\Q$-duals.

We check each $\FS^i$ is flasque directly.  Suppose 
$a\in \FS^i(U)\subseteq\bigoplus_{y\in U}\Fl^i(\L_{\leq y})$
is a section over a downward-closed set $U\subseteq\L$, and suppose
$x$ is a minimal element of $\L-U$.  Then $x$ covers some elements
$x_1,\ldots,x_k\in U$.  We show we can extend $a$ to $x$ by
considering two cases.  If $i\geq \rank(x)$, then $i>\rank(x_j)$ for each
$1\leq j\leq k$, so $\FS^i(x_j)=0$ for each $j$, and we let $a_x=0$.
Otherwise, we write each
\[
\FS^i(x_j)=\bigoplus_{\substack{y\leq x_j\colon \\ y\in \L_i}} \Fl^i(\L_{\leq y})
\]
using Lemma \ref{lem:flag brieskorn}\eqref{eq:flag brieskorn}.  Since it
is a section,  $a\in \FS^i(U)$ has the 
property that $(a_{x_j})_y=(a_{x_{j'}})_y$ whenever $y\leq x_j$ and
$y\leq x_{j'}$.  So for every $y\in\L_i$ with $y\leq x$, 
let $(a_x)_y=(a_{x_j})_y$ if $y\leq x_j\leq x$, and $0$ otherwise, which
determines an element $a_x\in \Fl^i(\L_{\leq x})$ that restricts to each
$a_{x_j}$.
\end{proof}

\begin{corollary}\label{cor:Fl-global}
As graded vector spaces, $\Fl(\L)\cong \Gamma(\FS(\L))$.
\end{corollary}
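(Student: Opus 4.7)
The plan is to unwind the sheaf-theoretic definition of $\Gamma(\FS(\L))$ and collapse it to the right-hand side using the Brieskorn decomposition (Lemma~\ref{lem:flag brieskorn}) together with the coordinate-projection description of the restriction maps recorded in Definition~\ref{def:flag sheaf}.

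Because $\FS$ is a sheaf on $\L$ with the order-ideal topology, and because principal ideals form a basis, global sections in each degree $i \geq 0$ are given by an inverse limit:
\[
\Gamma(\FS^i(\L)) \;=\; \varprojlim_{x \in \L} \Fl^i(\L_{\leq x}),
\]
with structure maps $\rho_{y,x}\colon \Fl^i(\L_{\leq y}) \to \Fl^i(\L_{\leq x})$ for all $x \leq y$. I would then apply Lemma~\ref{lem:flag brieskorn} termwise to write
\[
\Fl^i(\L_{\leq x}) \;=\; \bigoplus_{\substack{z \leq x\\ z \in \L_i}} \Fl^i(\L_{\leq z}),
\]
and invoke the observation in Definition~\ref{def:flag sheaf} that $\rho_{y,x}$ is the coordinate projection retaining only the summands indexed by rank-$i$ elements $z$ with $z \leq x$.

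Given $a = (a_x)_{x \in \L} \in \Gamma(\FS^i(\L))$, decompose $a_x = \sum_{z} a_{x,z}$ with $a_{x,z} \in \Fl^i(\L_{\leq z})$ and $z$ ranging over rank-$i$ elements below $x$. For each such $z$, the only rank-$i$ element of $\L_{\leq z}$ is $z$ itself, so $\Fl^i(\L_{\leq z})$ consists of a single Brieskorn summand. The compatibility relation $\rho_{x,z}(a_x) = a_z$ therefore reads $a_{x,z} = a_z$, forcing the $z$-component to be independent of $x$. Conversely, any family $(\alpha_z)_{z \in \L_i}$ with $\alpha_z \in \Fl^i(\L_{\leq z})$ patches to a global section via $a_x := \sum_{z \leq x,\, z \in \L_i} \alpha_z$, which is visibly compatible with restriction.

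Consequently, $\Gamma(\FS^i(\L)) \cong \bigoplus_{z \in \L_i} \Fl^i(\L_{\leq z})$, and this equals $\Fl^i(\L)$ by Lemma~\ref{lem:flag brieskorn} applied once more, now at the top level. Summing over $i$ yields the graded isomorphism. Since every ingredient — the inverse-limit interpretation of global sections, the Brieskorn decomposition, and the coordinate-projection form of the restrictions — is already in place, the argument is essentially formal; the only mild bookkeeping is recognizing that the compatibility condition at a rank-$i$ element $z$ collapses to the statement that the $z$-component is a global constant, and there is no genuine obstacle.
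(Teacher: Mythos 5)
Your proposal is correct and follows essentially the same route as the paper: both arguments rest on the Brieskorn decomposition of Lemma~\ref{lem:flag brieskorn} and the fact that the restriction maps $\rho_{y,x}$ are coordinate projections, the only difference being that you unwind the inverse limit directly while the paper constructs the map $\theta$ and its inverse explicitly. No gap.
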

\begin{proof}
We fix $i\geq0$ and use Brieskorn's Lemma~\ref{lem:flag brieskorn} to 
construct a map $\theta\colon \Fl(\L)\to \Gamma(\FS(\L))$ using maps
$\theta_x\colon \Fl(\L)\to \FS(\L)(x)$ for each $x\in \L$: we let
\[
\theta_x\colon \Fl^i(\L)\cong\bigoplus_{y\in\L_i} \Fl^i(\L_{\leq y})
\to \Fl^i(\L_{\leq x})\cong\bigoplus_{
\substack{y\in\L_i\\
y\leq x}} \Fl^i(\L_{\leq y})
\]
be the obvious coordinate projection.  Clearly for all $x\geq z$, we have
$\rho_{x,z}\circ\theta_x=\theta_z$, so this induces a map $\theta\colon
\Fl(\L)\to \Gamma(\FS(\L))$.  The inverse of $\theta$ is given
for each $x\in\L$ by a corresponding coordinate inclusion.

\end{proof}

\subsection{Blowups and the Orlik--Solomon algebra}\label{ss:blowup OS}
In this section, we examine the effect of a single blowup on Orlik--Solomon 
algebras of semilattices.  

\begin{theorem}\label{thm:OS_phi}
Suppose $\H$ and $\H'=\H\cup\set{p}$ are 
partial building sets associated to a building set $\G$ in a geometric lattice
$\LM$, and consider the blown-up semilattices $\L=\L(\LM,\H)$ and
$\L'=\Bl_p(\L)=\L(\LM,\H')$.   

The homomorphism of exterior algebras
$\phi_E\colon E(\L)\to E(\L')$, defined by letting
\begin{equation}\label{eq:OS_phi}
\phi_E(e_g) = \begin{cases} e_g & \text{ if } g\not\leq p\\ e_g+e_p & \text{ if
} g\leq p \end{cases} 
\end{equation}
for each $g\in\atoms(\L)$,
induces an injective  homomorphism $\phi_{\OS}\colon \OS(\L)\to \OS(\L')$.
\end{theorem}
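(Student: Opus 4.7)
My plan is to prove the theorem in two stages: first verifying that $\phi_E$ descends to a well-defined homomorphism on the quotients, then proving injectivity. The central combinatorial tool for both stages is the following simple observation arising directly from the order relations (i)--(iii) in Definition~\ref{def:blowup}: a subset $J \subseteq \atoms(\L)$ has a common upper bound in $\L' = \Bl_p(\L)$ if and only if it has an upper bound $y \in \L$ with $y \not\geq p$.

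For descent, I would check $\phi_E(I_1(\L)+I_2(\L)) \subseteq I_1(\L')+I_2(\L')$ on generators. Expand $\phi_E(e_J) = e_J + \sum_{g \in J, g \leq p} \pm e_{(J \setminus g) \cup \{p\}}$. If $J \notin \Ka(\L)$, the observation gives $J \notin \Ka(\L')$, so $e_J \in I_2(\L')$; moreover if $(p,y)$ were an upper bound in $\L'$ of any extra term $(J \setminus g) \cup \{p\}$, then $p \vee y \in \L$ would satisfy $p \vee y \geq g$ (since $g \leq p$) and $p \vee y \geq J \setminus g$, so $p \vee y \geq J$, contradicting $J \notin \Ka(\L)$. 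Thus every extra monomial is also in $I_2(\L')$. For a circuit generator $\partial(e_C)$ with $y = \bigvee C$: when $y \geq p$, the circuit property $\bigvee(C \setminus g_i) = y$ combined with the same argument places $\partial(e_C)$ and every extra term into $I_2(\L')$. When $y \not\geq p$, the interval $[\bottom,y]$ is unchanged in $\L'$, so $C$ is still a circuit there and $\partial(e_C) \in I_1(\L')$; the extra $e_p$-terms are then reorganized via the Leibniz-type identity
\[
\partial(e_{(C \setminus g_k) \cup \{p\}}) = \pm \sum_{i \neq k} e_{(C \setminus \{g_i,g_k\}) \cup \{p\}} \pm e_{C \setminus g_k},
\]
and handled either by $I_2(\L')$-relations (when $p \vee y$ does not exist in $\L$, so $(p,y) \notin \L'$ and the exceptional monomials lack upper bounds) or by circuits in the local matroid $\M(\L'_{\leq(p,y)})$ (when $(p,y) \in \L'$).

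For injectivity, I would use the nbc basis of Theorem~\ref{thm:general nbc}. Extending the compatible order $\prec$ on $\H$ to $\H' = \H \cup \{p\}$ gives a graded lexicographic term order on $E(\L')$. For $J \in \nbc(\L)$ with $\bigvee J \not\geq p$ in $\L$, the same local matroid argument ensures $J$ remains nbc in $\L'$, and the leading monomial of $\phi_{\OS}(e_J)$ is $e_J$ itself. For $J \in \nbc(\L)$ with $\bigvee J \geq p$, the $e_J$-term vanishes in $\OS(\L')$, but one can choose $g \in J$ with $g \leq p$ such that $\bigvee(J \setminus g) \not\geq p$ (since $J$ is nbc, hence ``minimal'' with respect to its closure); the corresponding monomial $e_{(J \setminus g) \cup \{p\}}$ is then nbc in $\L'$ and serves as the leading monomial of $\phi_{\OS}(e_J)$, uniquely recovering $J$. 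This upper-triangular structure of $\phi_{\OS}$ relative to the two nbc bases yields injectivity.

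The hardest step is the descent argument for circuit relations in the mixed case, where $C$ contains atoms both $\leq p$ and $\not\leq p$ and $\bigvee C \not\geq p$: the extra terms $\phi_E(\partial(e_C)) - \partial(e_C)$ form a combination of monomials $e_J e_p$ that are individually not in $I(\L')$. Untangling them requires the Leibniz identity above together with a careful case split on the existence of $(p,y) \in \L'$ and the new circuits appearing in the local matroid $\M(\L'_{\leq (p,y)})$, all of which must be controlled using the building-set hypothesis on $\H'$.
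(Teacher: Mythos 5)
Your high-level architecture matches the paper's (well-definedness on relations followed by an injectivity argument via a leading-term analysis on the nbc basis), and your ``central combinatorial observation'' about upper bounds in $\Bl_p(\L)$ is correct and useful. However, there is a genuine gap in the hardest step you flag, and your diagnosis of that step is in fact wrong. In the mixed case (circuit $C$ with atoms both $\leq p$ and $\not\leq p$, and $x = \bigvee C$ incomparable to $p$), you assert that the extra monomials in $\phi_E(\partial(e_C)) - \partial(e_C)$ are ``individually not in $I(\L')$,'' and then propose to untangle them with a Leibniz-type identity and a case split on whether $(p,x)\in\L'$. This is incorrect: each extra monomial is $\pm\, e_p\, e_{C\setminus\{g,h\}}$ with $g\leq p$, $h\not\leq p$, and each one \emph{is} individually in $I_2(\L')$. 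The reason is Proposition~\ref{prop:BICO}\eqref{prop:BICO2}: since $g\leq p\wedge x$ and $g>\bottom$, we have $p\wedge x\neq\bottom$, so if $p\vee x$ existed in $\LM$ it would lie in $\H^\prime\setminus\{p\}=\H^\circ$, which means $p\vee x$ does not exist in $\L$ (it was blown up). Hence $\{p\}\cup(C\setminus\{g,h\})$ has no upper bound in $\L'$, because $p\vee\bigvee(C\setminus\{g,h\}) = p\vee\bigvee(C\setminus\{h\}) = p\vee x$ (using $g\leq p$ and the circuit property). In particular, the case ``$(p,x)\in\L'$'' that you propose to handle separately never occurs, so the Leibniz-based argument is both unnecessary and unsupported.

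Two smaller issues. First, the paper organizes the circuit case using the structural fact that the closure of a circuit is irreducible, hence $\bigvee C\not> p$ by Proposition~\ref{prop:BICO}\eqref{prop:BICO1}; you never invoke this, which makes your case ``$\bigvee C\geq p$'' look broader than it is (it collapses to $\bigvee C=p$). Second, in the injectivity argument, the assertion that for nbc $J$ with $\bigvee J\geq p$ ``one can choose $g\in J$ with $g\leq p$ such that $\bigvee(J\setminus g)\not\geq p$'' is left at the level of intuition (``$J$ is nbc, hence minimal''). The paper instead proves, for \emph{every} $h\in J_{<p}$, that $J\cup\{p\}\setminus\{h\}\in\Ka(\L')$ by exhibiting the explicit upper bound $(p, x\vee y)$ with $x=\bigvee J_{\not<p}$, $y=\bigvee(J_{<p}\setminus\{h\})$, again using Proposition~\ref{prop:BICO}\eqref{prop:BICO2} to see $x\wedge p=\bottom$ and hence $x\vee y\not\geq p$; the leading monomial is then $e_p\,e_{J\setminus g}$ with $g=\min_\prec J_{<p}=\min_\prec\atoms(\L_{\leq p})$, a fixed atom independent of $J$, which is what makes the recovery of $J$ from its leading term genuinely unambiguous.
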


We break up the proof of Theorem \ref{thm:OS_phi} into two pieces: Lemma
\ref{lem:OS_phi} (which shows that $\phi_{\OS}$ is well-defined) and Lemma
\ref{lem:OS injective} (which shows that $\phi_{\OS}$ is injective).
We will often write $\phi$ in place of both $\phi_E$ and $\phi_{\OS}$.
We start with an elementary observation.
\begin{lemma}\label{lem:OSreln}
If $C=\set{g_1,\ldots,g_k}$ and $g_1\prec \cdots \prec g_k$, then
\begin{equation}\label{eq:OSreln}
\partial(e_C)=(e_{g_2}-e_{g_1})\cdots(e_{g_i}-e_{g_{i-1}})\cdots (e_{g_k}-
e_{g_{k-1}}).
\end{equation}
\end{lemma}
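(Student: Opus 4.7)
The plan is to perform a triangular change of variables in the exterior algebra that simultaneously diagonalizes the action of $\partial$. Set $f_1:=e_{g_1}$ and $f_i:=e_{g_i}-e_{g_{i-1}}$ for $2\leq i\leq k$, so that $e_{g_i}=f_1+f_2+\cdots+f_i$. Since $\partial$ is a $\k$-linear derivation with $\partial(e_g)=1$ on every atom, one reads off $\partial(f_1)=1$ and $\partial(f_i)=0$ for $i\geq 2$. With these, the right-hand side of \eqref{eq:OSreln} is precisely $f_2f_3\cdots f_k$.

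The key step will be to rewrite the left-hand side in the new generators. Expand
\[
e_C \;=\; \prod_{i=1}^{k}\bigl(f_1+f_2+\cdots+f_i\bigr)
\]
as a sum over functions $\sigma\colon \{1,\ldots,k\}\to\{1,\ldots,k\}$ satisfying $\sigma(i)\leq i$. Because $f_j^2=0$, any non-injective $\sigma$ contributes zero; but an injection obeying $\sigma(i)\leq i$ for all $i$ must be the identity. So a single monomial survives and $e_C=f_1f_2\cdots f_k$.

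It then remains only to apply $\partial$ as a graded derivation to this product. For $i\geq 2$ the term $f_1\cdots\partial(f_i)\cdots f_k$ vanishes, leaving
\[
\partial(e_C) \;=\; \partial(f_1)\,f_2f_3\cdots f_k \;=\; f_2f_3\cdots f_k,
\]
which on unfolding $f_i=e_{g_i}-e_{g_{i-1}}$ is exactly \eqref{eq:OSreln}. The only point requiring care is the sign bookkeeping in the derivation expansion, but these signs never appear because only the $\partial(f_1)$ summand survives. An equally viable alternative, if one prefers to avoid the change of basis, is a short induction on $k$: writing $e_C=e_{C'}\wedge e_{g_k}$ with $C'=\{g_1,\ldots,g_{k-1}\}$, the Leibniz rule produces $\omega_{k-1}e_{g_k}+(-1)^{k-1}e_{C'}$, and a direct expansion of $\omega_{k-1}e_{g_{k-1}}$ (where only the all-$(-e_{g_{i-1}})$ monomial avoids repeats) shows $\omega_{k-1}e_{g_{k-1}}=(-1)^{k}e_{C'}$, which matches $\omega_{k-1}(e_{g_k}-e_{g_{k-1}})$ with the Leibniz output.
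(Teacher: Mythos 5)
Your argument is correct: the triangular change of variables $f_1=e_{g_1}$, $f_i=e_{g_i}-e_{g_{i-1}}$ makes $\partial$ vanish on all generators except $f_1$, the injectivity/triangularity observation correctly identifies $e_C=f_1f_2\cdots f_k$, and the single surviving Leibniz term carries sign $(-1)^0=+1$, giving exactly \eqref{eq:OSreln}. The paper treats this as an elementary observation and supplies no proof, so there is nothing to contrast with; your verification (and the inductive alternative, whose sign check $\omega_{k-1}e_{g_{k-1}}=(-1)^{k}e_{C'}$ is also right) is a standard and complete way to establish it.
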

This leads to the first main lemma:
\begin{lemma}\label{lem:OS_phi}
The map $\phi_E$ induces a well-defined homomorphism 
$\phi_{\OS}:\OS(\L)\to\OS(\L')$.
\end{lemma}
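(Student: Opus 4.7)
The plan is to verify $\phi_E(I_1(\L)+I_2(\L))\subseteq I_1(\L')+I_2(\L')$ on generators, which is enough to ensure $\phi_E$ descends to $\phi_{\OS}$ since it is an algebra homomorphism.  My first step is to factor the image on monomials and circuit boundaries.  For any $J\subseteq\atoms(\L)=\H$, set $J_1=J\cap\L_{\leq p}$ and $J_2=J\setminus J_1$; since $e_p^2=0$, a direct expansion gives
\[
\phi_E(e_J)=\prod_{h\in J_1}(e_h+e_p)\cdot e_{J_2}=e_J+e_p\cdot\partial(e_{J_1})\cdot e_{J_2}.
\]
For a circuit $C=C_1\sqcup C_2$ in $\M(\L_{\leq x})$, ordering the atoms so that $C_1$ precedes $C_2$, Lemma~\ref{lem:OSreln} together with the observation that $\phi_E(e_{c_i})-\phi_E(e_{c_{i-1}})$ picks up an extra $-e_p$ exactly at the jump $i=|C_1|+1$ yields
\[
\phi_E(\partial(e_C))=\partial(e_C)-\partial(e_{C_1})\cdot e_p\cdot\partial(e_{C_2})
\]
when both $C_i$ are nonempty, and $\phi_E(\partial(e_C))=\partial(e_C)$ otherwise.

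Next I would handle the leading summands.  For $J\notin\Ka(\L)$: any upper bound of $J$ in $\L'$ is either in $\L_{\not\geq p}$ (giving an upper bound in $\L$, a contradiction) or of the form $(p,x)$, which forces $x\geq\bigvee J$ in $\L$ (the same contradiction); so $J\notin\Ka(\L')$ and $e_J\in I_2(\L')$.  For the circuit term $\partial(e_C)$: if $\bigvee C\not\geq p$ in $\L$, the interval $[\bottom,\bigvee C]$ is preserved by the blowup, so $C$ remains a circuit in $\M(\L'_{\leq\bigvee C})$ and $\partial(e_C)\in I_1(\L')$; if $\bigvee C\geq p$, then for each basis $C-c_i$ we have $\bigvee(C-c_i)=\bigvee C\geq p$, so by the same upper-bound argument each monomial $e_{C-c_i}$ has no join in $\L'$ and $\partial(e_C)\in I_2(\L')$.

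The hard part will be showing that the correction terms $e_p\cdot\partial(e_{J_1})\cdot e_{J_2}$ and $\partial(e_{C_1})\cdot e_p\cdot\partial(e_{C_2})$ lie in $I_1(\L')+I_2(\L')$.  Each expands into an alternating sum of monomials $e_K$ with $K=\{p\}\cup K'$, and I would analyze these case by case: if $K\notin\Ka(\L')$ then $e_K\in I_2(\L')$, while if $K\in\Ka(\L')$ then $\bigvee K$ takes the form $(p,y)$ where $y=\bigvee K'$ in $\L$ satisfies $y\not\geq p$ and $p\vee y$ exists in $\L$.  Because $K'$ contains at least one atom $\leq p$ (inherited from $J_1$ or $C_1$), we have $y\wedge p\neq\bottom$, and then Proposition~\ref{prop:BICO}\eqref{prop:BICO2} forces $p\vee y\in\H'$.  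Combined with the geometric structure of the interval $[\bottom,(p,y)]_{\L'}$, this lets me identify a circuit in $\L'$ containing $p$ whose $\partial$-relation---bundled with the alternating sign pattern inherited from the expansion---absorbs the remaining monomials into $I_1(\L')$.
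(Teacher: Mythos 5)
Your skeleton is the same as the paper's: reduce to generators, split $\phi_E(e_J)$ and $\phi_E(\partial(e_C))$ into a leading term plus a correction divisible by $e_p$, and your treatment of the leading terms ($J\notin\Ka(\L)\Rightarrow J\notin\Ka(\L')$; circuits either survive as circuits or become Stanley--Reisner relations) is correct. The gap is exactly where you flag ``the hard part.'' Your plan for the case $K\in\Ka(\L')$ --- ``identify a circuit in $\L'$ containing $p$ whose $\partial$-relation\dots absorbs the remaining monomials into $I_1(\L')$'' --- is an unproved assertion, and it points in the wrong direction: no circuit relations of $\L'$ are needed, because that case is \emph{vacuous}. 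Every correction monomial individually lies in $I_2(\L')$. Your own reasoning almost shows this: if $(p,y)$ with $y=\bigvee K'$ were an element of $\L'=\Bl_p(\L)$, then by definition of the blowup $p\vee y$ must exist in $\L$ and $y\not\geq p$; but once Proposition~\ref{prop:BICO}\eqref{prop:BICO2} gives $p\vee y\in\H'$ with $p\vee y>p$, we get $p\vee y\in\H^\circ$, so $p\vee y$ was already blown up in forming $\L$ and does not exist there. Instead of drawing this contradiction and concluding $K\notin\Ka(\L')$, you pivot to the unproved circuit claim.

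There is a second defect in the same step: your premise that $K'$ contains an atom $\leq p$ fails when $|C_1|=1$ (resp.\ $|J_1|=1$), since $K'$ omits one element of $C_1$ (resp.\ $J_1$), so $y\wedge p\neq\bottom$ is not established, and you also need $y\not\leq p$ for $p\vee y>p$. The robust argument (the paper's) applies Proposition~\ref{prop:BICO}\eqref{prop:BICO2} not to $y$ but to $x=\bigvee C$: since $g\leq p$ and $C$ is a circuit, $p\vee\bigvee(C-\{g,h\})=p\vee\bigvee(C-\{h\})=p\vee x$, with $x\wedge p\geq g>\bottom$ and $x\not\geq p$, so $p\vee x\in\H$ and hence does not exist in $\L$; thus $\{p\}\cup(C-\{g,h\})\notin\Ka(\L')$ in all sub-cases. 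For the Stanley--Reisner corrections $J_g=\{p\}\cup(J-\{g\})$ the clean route is via the blow-down: $J\subseteq\Pi(J_g)$, so $J_g\in\Ka(\L')$ would force $J\in\Ka(\L)$ by Lemma~\ref{lem:Pi:a} (equivalently, any upper bound $(p,w)$ of $J_g$ yields $p\vee w\geq g\vee\bigvee(J-\{g\})=\bigvee J$), contradicting $e_J\in I_2(\L)$.
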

\begin{proof}
We show that $\phi$
sends relations to relations by considering the two cases.

If $\partial(e_C)\in I_1(\L)$ for a circuit $C$, let 
$x=\bigvee C\in\L$.  The closure of a circuit is irreducible, so $x\not\geq p$
by Proposition~\ref{prop:BICO}\eqref{prop:BICO1}.  
If $x<p$, then $g<p$ for each $g\in C$.  Using the expression in Lemma
\ref{lem:OSreln}\eqref{eq:OSreln}, we find $\phi(\partial(e_C))=
\partial(e_C)
\in I_1(\L')$, since $C$ remains a circuit in $\L'$.  If $x=p$, again
$\phi(\partial(e_C))=\partial(e_C)$, which is a signed sum of monomials 
$e_{C-{g_i}}$ for $1\leq i\leq k$.  Since, for each $i$,
$\bigvee(C-\set{g_i})=p$ in  $\L$, the set $C-\set{g_i}$ has no upper bound in
$\L'$.  It follows that each term $e_{C-{g_i}}\in I_2(\L')$.

Last, if $x$ and $p$ are incomparable, reorder $C$ to assume $g_i\leq p$
for $1\leq i\leq r$, and $g_i\not\leq p$ for $r+1\leq i\leq k$.  We may
assume $r\geq1$; if not, again $\phi(\partial(e_C))=\partial(e_C)\in 
I_1(\L')$.  Then by Lemma~\ref{lem:OSreln}, 
\begin{align*}
\partial(e_C)= & (e_{g_2}-e_{g_1})\cdots(e_{g_{r+1}}-e_{g_{r}})\cdots (e_{g_k}-
e_{g_{k-1}}),\text{~so}\\
\phi(\partial(e_C)) = & \partial(e_C)+
(e_{g_2}-e_{g_1})\cdots(e_{g_r}-e_{g_{r-1}})e_p(e_{g_{r+2}}-e_{g_{r+1}})\cdots
(e_{g_k}-e_{g_{k-1}}).
\end{align*}
Since $x\in\L'$, $C$ remains a circuit and $\partial(e_C)\in I_1(\L')$.
Each monomial in the right summand is indexed by a set 
$\set{p}\cup (C-\set{g,h})$, where $g\leq p$ and $h\not\leq p$ in $\L$.  Since
$C$ is a circuit, in $\L$ we have 
\begin{align*}
p\vee\bigvee(C-\set{g,h}) &= p\vee \bigvee(C-\set{h})\\
&=p \vee x,
\end{align*}
if these upper bounds exist.  But $p\wedge x\neq\bottom$, so by 
Proposition~\ref{prop:BICO}\eqref{prop:BICO2}, $p\vee x\not\in\L$.
So each $\set{p}\cup(C-\set{g,h})\not\in
\Ka(\L')$, and the remaining monomials are in $I_2(\L')$.

Thus, for circuits $C$ of $\L$ we have $\partial(e_C)\in I_1(\L')+I_2(\L')$.

If $e_J\in I_2(\L)$, monomials in $\phi(e_J)$ are indexed by the sets $J$ and
$J_g:=J\cup\set{p}-\set{g}$ for each $g\in J$ with $g\leq p$.  If 
$J_g\in \Ka(\L')$, then $\Pi(J_g)\in\Ka(\L)$, by Lemma~\ref{lem:Pi:a}.
But $J\subseteq \Pi(J_g)$, so $J\in\Ka(\L)$, a contradiction.
\end{proof}

We use the deg-lex monomial order on $\OS(\L)$, with the order on generators
induced by the order on $\G$. 
For an element $f$ in an Orlik--Solomon algebra, let $\In(f)$ denote its initial
monomial when written in terms of the monomial basis from Theorem
\ref{thm:general nbc}.

\begin{lemma}\label{lem:OS lead of phi}
For a monomial $e_J$ where $J\in\nbc(\L)$, we have
\[
\In(\phi(e_J)) = \begin{cases}
e_J & \text{ if } \bigvee J_{<p}\neq p\\
e_{J-\set{g}}e_p & \text{ if }\bigvee J_{<p}= p
\end{cases}
\]
where $g=\min_\prec J_{<p}$ and, as usual, $J_{<p}:=\{h\in J: h<p\}$.
\end{lemma}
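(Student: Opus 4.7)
My plan is to compute $\phi(e_J)$ directly in $E(\L')$ via the formula for $\phi_E$ and the exterior relation $e_p^2=0$. Every monomial in the expansion that substitutes $e_p$ for two or more generators vanishes, leaving
\[
\phi(e_J) \;=\; e_J + \sum_{g\in J_{<p}}\epsilon(g)\,e_{(J-\{g\})\cup\{p\}},
\]
where $\epsilon(g)\in\{\pm 1\}$ is the shuffle sign from inserting $e_p$ in its $\prec$-sorted position. The problem then reduces to checking which of these monomials represent nonzero $\nbc(\L')$-basis elements, and to comparing the surviving monomials in the graded lexicographic order.

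For each $g\in J_{<p}$, the independence of $J\in\nbc(\L)$ implies $g\notin\overline{J-\{g\}}$, so $\bigvee(J-\{g\})\not\geq p$; this supplies the upper bound $(p,\bigvee(J-\{g\}))\in\L'$ and places $(J-\{g\})\cup\{p\}$ in $\Ka(\L')$. From the product description of the interval $[\bottom,(p,y)]_{\L'}$ implicit in Proposition~\ref{prop:semilattice}, the new atom $p$ is a coloop, so circuits of $\L'$ never involve $p$; broken circuits inside $(J-\{g\})\cup\{p\}$ therefore reduce to broken circuits inside $J-\{g\}\subseteq J$ in $\L$ and are absent, giving $(J-\{g\})\cup\{p\}\in\nbc(\L')$. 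The same argument yields $J\in\nbc(\L')$ whenever $J\in\Ka(\L')$. In Case~2 ($\bigvee J_{<p}=p$), independence forces $J_{<p}$ to be a basis of $[\bottom,p]$, so $\bigvee J\geq p$, whence $J\notin\Ka(\L')$ and $e_J=0$ in $\OS(\L')$. In Case~1 ($\bigvee J_{<p}\neq p$), I would use that $\H'=\H\cup\{p\}$ is a partial building set: Proposition~\ref{prop:BICO}\eqref{prop:BICO1} forces $\L_{>p}\cap\G=\emptyset$, and in combination with Proposition~\ref{prop:join decomp} and the no-broken-circuit property this rules out $\bigvee J>p$ in $\L$, so $J\in\Ka(\L')$ and $e_J\in\nbc(\L')$ survives. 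This matroid-theoretic verification is where I expect the main subtlety.

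For the lex comparison, let $r$ be the position of $p$ in the $\prec$-sorted merge of $J\cup\{p\}$; since $g\leq p$ forces $p\prec g$, each $g\in J_{<p}$ equals some $j_i$ with $i\geq r$. The sorted form of $(J-\{j_i\})\cup\{p\}$ is $(j_1,\ldots,j_{r-1},p,j_r,\ldots,\widehat{j_i},\ldots,j_k)$, which first differs from the sorted $J=(j_1,\ldots,j_k)$ at position $r$, where $j_r\succ p$. In the graded lex order of Theorem~\ref{thm:general nbc}, under which the broken circuit is the leading term of $\partial(e_C)$, this makes $e_J$ lex-larger than every $e_{(J-\{g\})\cup\{p\}}$, giving $\In(\phi(e_J))=e_J$ in Case~1. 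Comparing the $e_{(J-\{j_i\})\cup\{p\}}$ among themselves at position $i+1$ shows the lex-largest occurs at the smallest admissible $i$, i.e.\ $g=\min_\prec J_{<p}$, yielding $\In(\phi(e_J))=e_{J-\{g\}}e_p$ in Case~2.
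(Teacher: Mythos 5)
Your argument follows the same route as the paper's proof: expand $\phi(e_J)$ into $e_J$ plus the terms $e_{(J-\{h\})\cup\{p\}}$ for $h\in J_{<p}$, decide which of these survive in $\OS(\L')$, and compare the survivors lexicographically; both case conclusions are correct, and your extra check that the surviving sets are actually in $\nbc(\L')$ (via $p$ being a coloop in every $\M(\L'_{\leq (p,z)})$) is a point the paper leaves implicit. Two small caveats. First, for $(p,\bigvee(J-\{g\}))$ to be an element of $\Bl_p(\L)$ you need $\bigvee(J-\{g\})\in\L_{(p)}$, i.e.\ not only $\bigvee(J-\{g\})\not\geq p$ but also that $p\vee\bigvee(J-\{g\})$ exists in $\L$; in Case~2 this is automatic because $p$ and $\bigvee(J-\{g\})$ both lie below $\bigvee J$ (this is exactly the step the paper verifies by noting $x\vee y\vee p=\bigvee J\in\L$), while in Case~1 it can fail — harmlessly, since there the lead term is $e_J$ regardless of whether the other terms vanish. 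Second, your Case~1 claim that $\bigvee J\not>p$ is only sketched, but the ingredients you name are the right ones: Proposition~\ref{prop:BICO}\eqref{prop:BICO1} together with Proposition~\ref{prop:join decomp} forces $p\in F(\L,\G;\bigvee J)$ whenever $\bigvee J\geq p$, and the resulting product decomposition of $[\bottom,\bigvee J]$ plus independence of $J$ forces $J_{<p}$ to span $[\bottom,p]$; the paper asserts this equivalence without proof.
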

\begin{proof}
If $J\in\nbc(\L)$, then 
\[
\phi(e_J)=e_J+e_p\sum_{h\in J_{\leq p}}\pm e_{J-\set{h}}.
\]
Since $e_p\prec e_h$ for each $h\leq p$,  the leading term in $\phi(e_J)$
is $e_J$, provided that $J\in \Ka(\L')$.  
Since $J\in\Ka(\L')$ if and only if $\bigvee J_{<p}\neq p$, we are done with
the first case.

Suppose $J\not\in\Ka(\L')$, then, which equivalently means $\bigvee J_{<p}=p$.
Let $J_h:=J\cup\set{p}-\set{h}$ for each $h\in J_{<p}$.  
Since $J$ is independent, $\bigvee(J-\{h\})_{<p} \lneq \bigvee J_{<p} = p$ and
hence $J_h\in\Ka(\L')$ for each $h\in J_{<p}$. This implies that the lead term
of $\phi(e_J)$ is $\pm e_{J_g}$, where $g=\min_\prec J_{<p}$.
\end{proof}

\begin{lemma}\label{lem:OS injective}
The homomorphism $\phi_{\OS}:\OS(\L)\to \OS(\L')$ is injective.
\end{lemma}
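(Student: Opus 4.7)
The plan is a leading-term argument using the $\nbc$-basis from Theorem~\ref{thm:general nbc} together with the explicit description of $\In(\phi(e_J))$ given in Lemma~\ref{lem:OS lead of phi}. Concretely, I want to show that $\phi_{\OS}$ sends the $\nbc$-basis $\{e_J : J\in\nbc(\L)\}$ of $\OS(\L)$ to a set of elements whose initial monomials are \emph{distinct} $\nbc$-basis elements of $\OS(\L')$. Once this is established, the triangularity of leading terms forces the image to be linearly independent in $\OS(\L')$, and injectivity of $\phi_{\OS}$ follows at once.

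To make this precise, define an auxiliary map $\psi\colon \nbc(\L)\to \Ka(\L')$ by
\[
\psi(J)=\begin{cases} J & \text{if }\bigvee J_{<p}\neq p,\\ (J\setminus\{g\})\cup\{p\} & \text{if }\bigvee J_{<p}=p, \text{ with } g=\min_\prec J_{<p},\end{cases}
\]
so that $e_{\psi(J)}=\In(\phi(e_J))$ by Lemma~\ref{lem:OS lead of phi}. The proof then reduces to two claims: (a) $\psi$ is injective, and (b) $\psi(J)\in\nbc(\L')$ for every $J\in\nbc(\L)$.

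Claim~(a) is a short case analysis: the two branches of $\psi$ produce monomials that either contain the factor $e_p$ or do not, so the branches cannot collide; within the first branch $\psi$ is the identity, while within the second branch $J$ is recovered from $\psi(J)$ by setting $g=\min_\prec\psi(J)_{<p}$ and then $J=(\psi(J)\setminus\{p\})\cup\{g\}$. Claim~(b) is the main technical step. A potential broken circuit in $\psi(J)$ inside $\L'$ is either (i) one in which $p$ does not appear, which translates back to a broken circuit of $J$ in $\L$ via the natural comparison of matroids $\M(\L_{\leq x})$ and $\M(\L'_{\leq x'})$, contradicting $J\in\nbc(\L)$; or (ii) one in which $p$ appears, which can be analyzed using Proposition~\ref{prop:BICO}\eqref{prop:BICO2} together with the characterization in Lemma~\ref{lem:OSreln} to express circuits through $p$ in terms of dependencies among the atoms $\leq p$. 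In the second branch of $\psi$, the hypothesis $\bigvee J_{<p}=p$ and the minimality of $g=\min_\prec J_{<p}$ are exactly what is needed to rule out such broken circuits in $(J\setminus\{g\})\cup\{p\}$.

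Granting~(a) and~(b), the set $\{e_{\psi(J)}\}_{J\in\nbc(\L)}$ is a collection of distinct $\nbc$-monomials in $\OS(\L')$, hence linearly independent by Theorem~\ref{thm:general nbc}. By triangularity with respect to the graded lex order, $\{\phi_{\OS}(e_J)\}_{J\in\nbc(\L)}$ is linearly independent in $\OS(\L')$, so $\phi_{\OS}$ carries a basis to a linearly independent set and is therefore injective. The main obstacle I expect is the verification of case~(ii) in claim~(b): one needs a careful bookkeeping argument, based on how circuits in $\M(\L'_{\leq x'})$ involve $p$, to confirm that no new broken circuit is introduced by replacing $g$ with $p$.
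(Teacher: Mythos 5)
Your overall strategy --- distinct initial monomials via Lemma~\ref{lem:OS lead of phi} --- is exactly the paper's, but your justification of claim~(a) breaks down at the one point where the lemma actually has content. In the second branch you propose to recover $g$ from $\psi(J)=(J\setminus\{g\})\cup\{p\}$ as $g=\min_\prec\psi(J)_{<p}$. This is false: $g$ has been \emph{removed} from $\psi(J)$, so $\psi(J)_{<p}=J_{<p}\setminus\{g\}$ and its $\prec$-minimum is the second-smallest element of $J_{<p}$, not $g$. As written, your argument does not rule out two distinct $\nbc$ sets $J=K\cup\{g\}$ and $J'=K\cup\{g'\}$ with $g\neq g'$ having the same image $K\cup\{p\}$. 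The correct repair (and the entire content of the paper's proof) is that the $\nbc$ property pins $g$ down canonically: since $\bigvee J_{<p}=p$, the set $J_{<p}$ is a basis of $\M(\L_{\leq p})$, and if $g=\min_\prec J_{<p}$ were not the globally $\prec$-least atom of $\L_{\leq p}$, then adjoining that least atom $g_0\prec g$ would produce a circuit $C$ with $C\setminus\{g_0\}\subseteq J_{<p}$ a broken circuit, contradicting $J\in\nbc(\L)$. Hence $g=\min_\prec\atoms(\L_{\leq p})$ is the same element for every $J$ in the second branch, and $J$ is recovered as $(\psi(J)\setminus\{p\})\cup\{\min_\prec\atoms(\L_{\leq p})\}$.

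On claim~(b): this is not a step you need to supply. The paper defines $\In(f)$ as the largest monomial appearing when $f$ is written in the monomial basis of Theorem~\ref{thm:general nbc}, so the statement of Lemma~\ref{lem:OS lead of phi} already asserts that $e_J$ (resp.\ $e_{J\setminus\{g\}}e_p$) is an $\nbc$ monomial of $\L'$; there is no residual broken-circuit analysis to perform, and the ``main obstacle'' you anticipate is vacuous. Once (a) is fixed as above, distinctness of the initial monomials plus triangularity gives injectivity, exactly as you conclude.
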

\begin{proof}
Finally, if $J\in\nbc(\L)$ with $\bigvee J_{<p} =p$, and $g=\min_{\prec}J_{\leq
p}$, then  the $\nbc$ property implies $g=\min_\prec \atoms(\L_{\leq p})$ as
well.
It follows that for no two $\nbc$ sets $J$ are the
lead terms of $\phi(e_J)$ the same.  We conclude that $\phi$ is injective.
\end{proof}


\section{A combinatorial model for the closed strata}
\label{sec:DP}
At this point we recall a second algebra which is also given by a 
geometric lattice and partial building set.  
We will show that, like the Orlik--Solomon algebras in \S\ref{sec:OS}, its local
versions form a sheaf, and we study how the algebra behaves under a blowup.

In the case where the geometric lattice is the intersection lattice of a complex
hyperplane arrangement, this is the cohomology ring of the De Concini--Procesi 
compactification from \cite{DP95}.  The form of the presentation here 
follows \cite{FY04} in the case where $\H=\G$, a full building set.  The
cohomology of a partial blowup is understood in the same way by the work
of Dupont~\cite{du15}.  Beyond the realizable setting, it is also the Chow ring 
of a smooth toric variety associated with a subfan of the Bergman fan, 
an observation that 
has its origins with Feichtner and Yuzvinsky~\cite{FY04}.  It is not clear 
that the Chow ring interpretation is important to us here, but we mention 
it because of the central role it plays for $\H\subset\G$ and $\G=\LM_+$ 
in the recent paper of Adiprasito, Huh and Katz~\cite{AHK15}.

\subsection{The De Concini--Procesi algebra for a partial building set}
\label{ss:DP}

We begin with an algebra presentation.
Suppose that $\H$ is a partial building set for a geometric lattice $\LM$.
For each $g\in \H$, we define an element
\begin{equation}\label{eq:cg}
c^{\H}_g = \sum_{\substack{ h\in \H\colon\\ g\leq h}} x_h\in \Q[x_g\colon
g\in\H],,  
\end{equation}
which we will abbreviate by $c_g$ when the choice of partial building set is
clear.

\begin{definition}[The algebra $\Dp$]\label{def:FYalg}
For a partial building set $\H\subseteq\G\subseteq\LM_+$, we define 
\[\Dp(\LM,\H):=\Q[x_g\colon g\in \H]/J(\H),\] where the ideal 
$J(\H)$ is generated by:
\begin{enumerate}[(i)]
\item $x_T$ 
whenever $T\not\in \N(\LM,\H)$;
\label{eq:FYalg:i}
\item $c^{\H}_g$ for each $g\in\atoms(\LM)$.
\label{eq:FYalg:ii}
\end{enumerate}
We will write $\Dp(\H)$ in place of $\Dp(\LM,\H)$ when the geometric 
lattice $\LM$ is understood.
The algebra $\Dp(\H)$ is graded by assigning degree $2$ to each variable $x_g$.
\end{definition}

For every element of the semilattice $\L(\LM,\H)$, we define a quotient 
of $\Dp(\LM,\H)$ as follows.
\begin{definition}\label{def:DPy}
Let $y\in\L(\LM,\H)$, and suppose $y=\bigvee S$ for a nested set
$S\in\N(\LM,\H)$. Define
\[\Dp_y(\LM,\H):=\Q[x_g\colon g\in \H]/J_y(\H),\] where the ideal 
$J_y(\H)$ is generated by:
\begin{enumerate}[(i)]
\item $x_T$ 
whenever $S\cup T\not\in \N(\LM,\H)$;
\label{eq:DPy:i}
\item $c^{\H}_g$ for each $g\in\atoms(\LM)$.
\label{eq:DPy:ii}
\end{enumerate}
\end{definition}
We remark that the definition is independent of the choice of $S$, since
if $\bigvee S=\bigvee S'$, then $S\cup T\in \N(\LM,\H)$ if and only if
$S'\cup T\in \N(\LM,\H)$ (in view of Theorem~\ref{thm:nested}).
We also note that the monomials $x_T$ in \eqref{eq:DPy:i} are just the
Stanley-Reisner relations for $\clst_{\N(\LM,\H)}(S)$.

Just as for Orlik--Solomon algebras (in Definition \ref{def:OS sheaf}), we
obtain a sheaf, but this time on $\L(\LM,\H)^{\op}$.  We note that, if $y\leq z$
in $\L(\LM,\H)$, then $J_y(\H)\subseteq
J_z(\H)$, so there is an obvious surjection $\Dp_y(\LM,\H)\to\Dp_z(\LM,\H)$.
\begin{definition}[The sheaf $\DP$]\label{def:Dsheaf}
Let $\H$ be a partial building set in a geometric lattice $\LM$, and let
$\L=\L(\LM,\H)$. Define a sheaf of graded rings 
$\DP(\LM,\H)$ on $\L^{\op}$ by setting, for each $y\in\L^{\op}$,
$\DP(\LM,\H)(y)=\Dp^\cdot_y(\LM,\H)$, with restrictions
given by the obvious maps.
\end{definition}

\begin{remark}\label{rem:forget atoms}
In Definition \ref{def:FYalg}, we may
use the relations \eqref{eq:FYalg:ii} to eliminate the 
variables $x_h$ for $h\in \atoms(\LM)$.  Then $\Dp(\LM,\H)\cong 
\Q[x_g\colon g\in \H^\circ]/J(\H^\circ)$, where the ideal 
$J(\H^\circ)$ is generated by 
(non-monomial) elements of type \eqref{eq:FYalg:i}.  
\end{remark}

\begin{remark}\label{rmk:D-basis}
Later, in
Corollary \ref{cor:D-basis}, we will see that the set of monomials $x_T^b$ for
which $T\in\N(\LM,\H)$, and $0<b(g)<d\left(\bigvee_{f\in T_{<g}} f,g\right)$ for
$g\in T$, is a monomial basis for $\Dp(\LM,\H)$. This is an easy generalization
of the monomial basis for full building sets given in \cite{FY04}.
In fact, we have everything we need to prove this now, but we defer the proof
until we are in a more general setting in \S\ref{sec:B}.
We will, however, use this monomial basis in the proof of Theorem \ref{thm:D(X)}
next.
\end{remark}

The algebras $\Dp_y(\LM,\H)$ can be 
further decomposed as a tensor product of the algebras derived from intervals
in the geometric lattice $\LM$.  
In the geometric setting, this corresponds to the fact that the closed strata in
the wonderful compactification are themselves products of wonderful
compactifications.
We will be interested in $y\in \L$ 
whose blow-down $\pi(y)\neq\onehat$, and we refer back to the definitions and
notation introduced in  \S\ref{ss:H decomp}, namely
Definitions \ref{def:factors plus} and \ref{def:restricted H}.
We need one more lemma before proceeding to the tensor decomposition in Theorem
\ref{thm:D(X)}.

\begin{lemma}\label{lem:Dpy relations}
Let $y\in\L(\LM,\H)$, and recall $\zeta=\zeta_{y,\H}$ from Proposition
\ref{prop:restrict H}. Suppose that $a,b\in\H$ such that $\zeta(a)=\zeta(b)$.
\begin{enumerate}
\item\label{eq:ca=cb}
If $a\in\atoms(\LM)$, then $c_a^\H=c_b^\H$ in $\Dp_y(\LM,\H)$.
\item\label{eq:xa=xb}
If $a,b\in\atoms(\LM)$, then $x_a=x_b$ in $\Dp_y(\LM,\H)$.
\end{enumerate}
\end{lemma}
\begin{proof}
We first claim that for
$p\in\clst_{\N(\LM,\H)}(\supp(y))_0$, if $p>a$ then $p\geq b$.
By our hypotheses, $b\leq b\vee z_y(\hat{b}) = a\vee z_y(\hat{b}) \leq p\vee
z_y(\hat{b})$. 
Similar to the proof of Proposition \ref{prop:restrict H}, we have
$F(\LM,\G;p\vee z_y(\hat{b})) = \{p\}\cup F(\LM,\G;z_y(\hat{b}))_{\not\leq p}$,
and so minimality of $\hat{b}$ implies $b\leq p$.

It follows that $c_a^\H=c_b^\H$ in $\Dp_y(\LM,\H)$. Moreover, if
$a,b\in\atoms(\LM)$, then $c_a^\H-x_a = c_b^\H-x_b$ and hence
$x_a=x_b$ in $\Dp_y(\LM,\H)$.
\end{proof}

\begin{theorem}[Local tensor decompositions]\label{thm:D(X)}
Let $\H$ be a partial building set for a geometric lattice $\LM$, and let
$y\in\L(\LM,\H)$ such that $\pi(y)\neq\onehat$ in $\LM$.  As graded
algebras,
\[
\Dp^\cdot_{y}(\LM,\H)\cong \bigotimes_{g\in F^+(y)}\Dp^\cdot(\LM_{y,g},\H_{y,g})
\cong \bigotimes_{g\in F^+(y)\cap\H^\circ}\Dp^\cdot(\LM_{y,g},\H_{y,g}).
\]
\end{theorem}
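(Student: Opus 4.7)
The reduction from $\bigotimes_{g\in F^+(y)}$ to $\bigotimes_{g\in F^+(y)\cap\H^\circ}$ drops trivial factors: for $g\in F^+(y)\setminus\H^\circ$, the fact that $\H^\circ$ is up-closed in $\G$ forces $\H_{y,g}^\circ=\emptyset$, leaving $\H_{y,g}=\atoms(\LM_{y,g})$. The $c^{\H_{y,g}}_a$-relations then kill every variable, giving $\Dp(\LM_{y,g},\H_{y,g})=\Q$. So this second isomorphism is automatic once the first is established.

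For the main isomorphism, my plan is to define $\psi_g$ by the prescription in the statement, $\psi_g(x_{p\vee z(g)})=x_p$, which is unambiguous because $\zeta_{y,\H}|_{\H^\circ}$ is injective by Proposition~\ref{prop:restrict H}. I would extend to the atom variables $x_a\in\Dp(\LM_{y,g},\H_{y,g})$ via the $c^{\H_{y,g}}_a$-relation (as in Remark~\ref{rem:forget atoms}), which fully determines $\psi_g$. Assembling these into $\psi=\bigotimes_g\psi_g$ gives a candidate algebra map to $\Dp_y(\LM,\H)$.

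Well-definedness reduces to checking the nested-set relations and the atom relations on each factor. For $T'\not\in\N(\LM_{y,g},\H_{y,g})$, pulling back via $\zeta_{y,\H}^{-1}$ and invoking Proposition~\ref{prop:N(H) restricts} produces a set which, together with $\supp(y)$, is not in $\N(\LM,\H)$; hence $\psi(x_{T'})$ matches a nested-set relation in $\Dp_y(\LM,\H)$. The compatibility of atom relations uses the join decomposition~\eqref{eq:join decomp}: each atom $b\in\atoms(\LM)$ below $\pi(y)$ sits in a unique factor $g=\hat{b}\in F^+(y)$, so the sum defining $c^\H_b$ decomposes factor-wise, matching the sums defining $c^{\H_{y,g}}_{b\vee z(g)}$.

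To conclude $\psi$ is an isomorphism, I would compare Hilbert series using the monomial basis promised in Remark~\ref{rmk:D-basis}. A basis element $x_T^b$ in $\Dp_y(\LM,\H)$ has $\supp(y)\cup T\in\N(\LM,\H)$ and exponents satisfying $0<b(h)<d(\bigvee T_{<h},h)$. Partitioning $T$ via $p\mapsto\hat{p}$ gives, for each $g\in F^+(y)\cap\H^\circ$, a nested set in $\N(\LM_{y,g},\H_{y,g})$ (by $\zeta$ and Proposition~\ref{prop:N(H) restricts}), with exponent bounds splitting factor-wise because the rank function $d$ is additive across the join decomposition. This bijection between monomial bases confirms the isomorphism. \textbf{The main obstacle} is verifying atom-relation compatibility, since it requires a careful organization of atoms of $\LM$ below $y$ according to the factor decomposition, together with the fine structure of $\zeta_{y,\H}$; the nested-set relations, by contrast, are handled cleanly by Proposition~\ref{prop:N(H) restricts}.
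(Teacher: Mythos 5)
Your proposal is correct and follows essentially the same route as the paper: define $\psi_g$ on the $\H^\circ_{y,g}$-variables (well-defined by the injectivity of $\zeta_{y,\H}$), verify the Stanley--Reisner relations via Proposition~\ref{prop:N(H) restricts}, and establish bijectivity by matching the monomial bases of Corollary~\ref{cor:D-basis} factor-by-factor. The one step you compress most is actually where the paper does its real work --- showing the exponent bounds agree, i.e.\ that $\bigvee(S\cup T)_{<h}$ and $z(g)\vee\bigvee\zeta^{-1}(T_g)_{<h}$ coincide so that the two values of $d(-,h)$ match (equation~\eqref{eq:tensor} in the paper) --- whereas the atom relations you single out as the main obstacle are dispatched in the paper simply by eliminating the atom variables as in Remark~\ref{rem:forget atoms}.
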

\begin{proof}
First, note that the two tensor decompositions are equivalent, since for $g\in
F^+(y)\setminus\H^\circ$ we have $\H_{y,g}^\circ=\emptyset$ and hence
$\Dp^\cdot(\LM_{y,g},\H_{y,g})\cong\Q$.

We will define a map $\psi_g:\Dp(\LM_{y,g},\H_{y,g})\to\Dp_y(\LM,\H)$ for each
$g\in F^+(y)$. Then we will prove that $\psi:=\bigotimes\psi_g$ is
an isomorphism.

Write $S=\supp(y)$ and recall $\zeta=\zeta_{y,\H}$ from Proposition
\ref{prop:restrict H}.
Fix a section \[\sigma:\bigsqcup_{g\in F^+(y)}\H_{y,g}\to \H\] of $\zeta$ such that
$\sigma(q)\in\max\zeta^{-1}(q)$.
Then for $g\in F^+(y)$ and $q\in\H_{y,g}$, define 
\[\psi_g(x_q) = \begin{cases}
x_{\sigma(q)} & \text{if } q\neq g\\
\sum_{h\geq g} x_h & \text{if } q=g.
\end{cases}\]

To see that $\psi_g$ does not depend on the choice of $\sigma$, consider
$q\in\H_{y,g}$. By Proposition \ref{prop:restrict H}, the choice of $\sigma(q)$
is unique unless $\zeta^{-1}(q) \subseteq\atoms(\LM)$. In this case,
the fact that $\psi_g(x_q)$ does not depend on choice follows from Lemma
\ref{lem:Dpy relations}\eqref{eq:xa=xb}. 
Proposition \ref{prop:local N(H)} implies that $\psi_g$ is well-defined on the
monomial relations. For the linear relations, let $q\in\atoms(\LM_{y,g})$ and
pick $a\in\atoms(\LM)$ such that $\zeta(a)=q$. Then by Lemma \ref{lem:Dpy
relations}\eqref{eq:ca=cb},
$\psi_g(c_q^{\H_{y,g}}) = c^\H_{\sigma(q)} = c^\H_a$ in $\Dp_y(\LM,\H)$.
Therefore, $\psi_g$ is well-defined.

Now, to show that $\psi=\otimes\psi_g$ is an isomorphism, we first check
surjectivity.
Here, it suffices to show that $x_h\in\im(\psi)$ for all
$h\in\H^\circ\cap\clst_{\N(\LM,\H)}(S)_0$.
If $h\in\H^\circ\setminus F^+(y)$ then $\psi(x_{\zeta(h)})=x_h$. 
Note that $\psi(x_{\onehat})=x_{\onehat}$, and for the remaining
$g\in\H^\circ\cap F^+(y)$, we proceed inductively.
If $x_h\in\im(\psi)$ for
each $h>g$, then $x_g=\psi(x_g)-\sum_{h>g} x_h\in\im(\psi)$.

It remains to show that $\psi$ is injective, which we do next by analyzing
basic monomials.
Suppose that, for each $g\in F^+(y)$, $x_{T_g}^{b_g}$ is a basic
monomial in
$\Dp(\LM_{y,g},\H_{y,g})$. This means, by Corollary \ref{cor:D-basis}
(see also Remark \ref{rmk:D-basis}), that
$T_g\subseteq \H^\circ_{y,g}$ and $0<b_g(p)<d(\bigvee(T_g)_{<p},p)$ for
each $p\in T_g$. Let $T=\cup_{g}\sigma(T_g)\subseteq\H^\circ$
and $b(h)=b_{\hat{h}}(\zeta(h))$ for $h\in T$; we
prove that $x_T^{b}$ is a basic monomial in
$\Dp_{y}(\LM,\H)$. For this, we claim that 
for each $g\in F^+(y)$ and $h\in \sigma(T_g)$,
\begin{equation}\label{eq:tensor}
z\vee \bigvee_{\substack{
f\in S\cup T\colon \\
f<h}} f = z\vee \bigvee_{
\substack{
f\in \sigma(T_g)\colon \\
f<h}} f
\end{equation}
where $S=\supp_{\N(\LM,\H)}(y)$ and 
$z=z_y(g)$.
This then implies
\begin{align*}
 d(\bigvee_{\substack{f\in S\cup T\colon\\ f<h}} f, h)
&\geq d(\bigvee_{\substack{f\in S\cup T\colon\\ f<h}} f\vee z, h\vee z)
\,=\, d(\bigvee_{\substack{f\in \sigma(T_g)\colon\\ f<h}} f\vee z, h\vee z)\\
&= d(\bigvee_{\substack{f\in \sigma(T_g)\colon\\ f<h}}
\zeta(f),\zeta(h))
\,=\, d(\bigvee_{\substack{f\in T_g\colon\\ \sigma(f)<h}} f,\zeta(h))\\
&= d(\bigvee_{\substack{f\in T_g\colon\\ f<\zeta(h)}} f,\zeta(h)),\\
\end{align*}
so that for every $h\in T$, $0<b(h)<d(\bigvee(S\cup T)_{<h},h)$.
Since $S\cup T$ is $\H$-nested by Proposition \ref{prop:local N(H)}, the monomial
$x_T^{b}$ is basic in $\Dp_y(\LM,\H)$.
Since $\sigma$ is injective, the assignment $\otimes x_{T_g}^{b_g}\mapsto x_T^b$
is an injection on basic monomials.

Now to show \eqref{eq:tensor}, we first observe that each join factor on the
right is clearly also a factor on the left. In the other direction, there
are two cases to consider.  If $f\in S$, then $f<h\leq g$, so $f\leq z$.  
Otherwise, $\zeta(f)\in T_{g'}$ for some $g'\in S$. This means, in particular,
that $g'=\hat{f}$, the minimum element of $F^+(y)$ above $f$ (Proposition
\ref{prop:restrict H}). Since $f<h\leq g$, this implies $g'\leq g$.
If $g'=g$, then $f$ is also a factor on the right; otherwise, $g'<g$ implies
that $f\leq g'\leq z$.
\end{proof}

\subsection{Blowups and the De Concini--Procesi algebra}\label{ss:blowup DP}
The effect of a blowup on the algebras $\Dp(\LM,\H)$
is analogous to what happens
for Orlik--Solomon algebras.  
\begin{proposition}\label{prop:D_phi}
Suppose $\H$ and $\H'=\H\cup\{p\}$ are partial building sets associated to a
building set $\G$ in a geometric lattice $\LM$. Then
there is a well-defined ring homomorphism $\phi_{\Dp}\colon \Dp(\LM,\H)\to 
\Dp(\LM,\H')$ defined by letting
\begin{equation}\label{eq:D_phi}
\phi_{\Dp}(x_g)=\begin{cases}
x_g & \text{if $g\not\leq p$;}\\
x_g+x_p & \text{if $g\leq p$,}
\end{cases}
\end{equation}
for each $g\in \H$.
\end{proposition}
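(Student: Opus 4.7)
The plan is to check that $\phi_{\Dp}$ sends each generator of $J(\H)$ into $J(\H')$, treating the linear relations $c^\H_g$ and the monomial relations $x_T$ separately. In both cases the key structural fact I will invoke is the order filter property of $\H^\circ$ inside $\G$: since $p\in\G$ but $p\notin\H^\circ$, no element of $\H^\circ$ can lie below $p$, so $\H\cap[\bottom,p]=\atoms(\LM)\cap[\bottom,p]=\supp(p)$.

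For a linear relation $c^\H_g$ with $g\in\atoms(\LM)$, I split on whether $g\leq p$. If $g\not\leq p$, every $h\in\H$ with $g\leq h$ also satisfies $h\not\leq p$ (else $g\leq h\leq p$), so $\phi$ fixes each summand and $\phi(c^\H_g)=c^\H_g=c^{\H'}_g$ (no $x_p$ appears in $c^{\H'}_g$ either). If $g\leq p$, the order filter observation forces $\H\cap[g,p]=\{g\}$, so only the single summand $\phi(x_g)=x_g+x_p$ contributes an extra $x_p$, yielding $\phi(c^\H_g)=c^\H_g+x_p=c^{\H'}_g$.

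For a monomial relation $x_T$ with $T\notin\N(\LM,\H)$, expanding
\[
\phi(x_T)=\prod_{h\in T_{\not\leq p}}x_h\cdot\prod_{h\in T_{\leq p}}(x_h+x_p)
\]
produces a sum of monomials of the form $x_p^k\,x_{T'}$, where $T'\supseteq T_{\not\leq p}$ and $k=|T_{\leq p}\setminus T'|$. The $k=0$ term $x_T$ is in $J(\H')$ because any antichain of $T$ witnessing non-nestedness in $\H$ still has its join in $\H\subseteq\H'$. For $k\geq 1$, I rewrite $x_p^k\,x_{T'}=x_p^{k-1}\,x_{T''}$ with $T''=T'\cup\{p\}$ and aim to show $T''\notin\N(\LM,\H')$ via the blow-down map $\Pi^{\H'}_\H$ of Lemma~\ref{lem:Pi:a}. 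Applying the order filter remark again, $T_{\leq p}\subseteq\supp(p)$, so
\[
\Pi^{\H'}_\H(T'')=(T''\setminus\{p\})\cup\supp(p)=T'\cup\supp(p)\supseteq T_{\not\leq p}\cup T_{\leq p}=T.
\]
Were $T''$ nested in $\H'$, Lemma~\ref{lem:Pi:a} would force $\Pi^{\H'}_\H(T'')$ to be $\H$-nested; but nested sets form a simplicial complex, so its subset $T$ would be $\H$-nested too, contradicting the hypothesis. Hence $T''\notin\N(\LM,\H')$ and $x_p^k\,x_{T'}\in J(\H')$. The main content of the proof is thus the twin use of the order filter property—once to collapse the correction term in each $c^\H_g$, and once to force $T''$ to be non-nested—after which the expansion and blow-down bookkeeping is routine.
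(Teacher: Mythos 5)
Your proof is correct and follows the same strategy as the paper's: the paper's argument for the linear relations $c_g$ is identical to yours (the order-filter property of $\H^\circ$ in $\G$ forces every non-atom $h\in\H$ above $g$ to satisfy $h\not\leq p$, so only the $x_g$ summand can acquire an $x_p$), and for the monomial relations the paper simply defers to the Orlik--Solomon case of Lemma~\ref{lem:OS_phi}. You correctly fill in the one detail that cross-reference elides: in the exterior algebra the relation $e_p^2=0$ forces a single-element exchange, producing only the sets $J_g=J\cup\set{p}\setminus\set{g}$, whereas in the polynomial ring the expansion of $\phi(x_T)$ produces monomials $x_p^k x_{T'}$ with $k\geq 1$, and your factoring out one copy of $x_p$ and applying $\Pi^{\H'}_\H$ to $T'\cup\set{p}$ (using Lemma~\ref{lem:Pi:a} and $T\subseteq\Pi^{\H'}_\H(T'\cup\set{p})$) handles this cleanly.
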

\begin{proof}
The argument for relations
of type \eqref{eq:FYalg:i} is the same as the monomial relations for the
Orlik--Solomon algebra in the proof of Lemma \ref{lem:OS_phi}. 
Now let $g\in\atoms(\LM)$ and consider $c_g^{\H}$ in \eqref{eq:cg}.
If $h\in\H$ and $h>g$, then $h\in\H^\circ$ and hence $h\not\leq p$ (since
$\H^\circ$ is an order filter in $\G$). This implies that $c_g^{\H}$ picks up an
$x_p$ exactly when $g\leq p$, thus $\phi_{\Dp}(c^{\H}_g)=c^{\H'}_g$. 
\end{proof}

\begin{remark}
By eliminating the generators $x_h$ for $h\in\atoms(\LM)$, as in Remark
\ref{rem:forget atoms}, the map $\phi_{\Dp}$ has the simple formula
$\phi_{\Dp}(x_g) = x_g$ for $g\in\H^\circ$. This is because $g\in\H^\circ$
implies that $g\not\leq p$ in $\L(\LM,\H)$.
\end{remark}

We will now show that
this map extends to sheaves, and then fit the sheaves in a short exact
sequence.
For ease of notation, we will write $\Dp$ in place of $\Dp(\LM,\H)$ and 
$\Dp'$ for $\Dp(\LM,\H')$ as well as $\L:=\L(\LM,\H)$ and $\L':=\L(\LM,\H')$.

\begin{lemma}\label{lem:local phi}
For each $y\in \L'$, the composite of $\phi_{\Dp}$ with the restriction
$\Dp'\to \Dp'_y$ factors through the restriction $\Dp\to \Dp_{\pi(y)}$:
\[
\begin{tikzcd}
\Dp\ar[r,"\phi_{\Dp}"]\ar[d] & \Dp'\ar[d] &\\
\Dp_{\pi(y)}\ar[r,dashed] & \Dp'_y. \\
\end{tikzcd}
\]
\end{lemma}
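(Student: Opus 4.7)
The plan is to recast the asserted factorization as a kernel containment. Since both quotient maps $\Dp\to\Dp_{\pi(y)}$ and $\Dp'\to\Dp'_y$ are surjective, the existence of the dashed arrow is equivalent to the inclusion $\phi_{\Dp}(J_{\pi(y)}(\H))\subseteq J_y(\H')$. Proposition~\ref{prop:D_phi} already yields $\phi_{\Dp}(J(\H))\subseteq J(\H')$ and sends each linear generator $c_g^{\H}$ to $c_g^{\H'}$, so it remains to show $\phi_{\Dp}(x_T)\in J_y(\H')$ for every monomial generator $x_T$; that is, for every $T\subseteq\H$ with $S\cup T\notin\N(\LM,\H)$, where $S:=\supp_{\L}(\pi(y))$.

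A direct expansion gives
\[
\phi_{\Dp}(x_T)\;=\;\sum_{A\subseteq T_{\leq p}} x_p^{|A|}\, x_{T\setminus A},
\]
each summand being a single monomial whose set-theoretic support $U\subseteq\H'$ is $T$ if $A=\emptyset$ and $\{p\}\cup(T\setminus A)$ otherwise. Choosing $S':=\supp_{\L'}(y)$ as a representative nested set with $\bigvee S'=y$, the ideal $J_y(\H')$ contains every $x_V$ for which $S'\cup V\notin\N(\LM,\H')$, so by Theorem~\ref{thm:nested} it is enough to show that $S'\cup U$ has no upper bound in $\L'$.

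Assume for contradiction that $w\in\L'$ dominates $S'\cup U$. The definition of $\Bl_p(\L)$ forces $w$ to be either an element of $\L$ with $w\not\geq p$, or of the form $w=(p,w')$ for some $w'\in\L_{(p)}$. In the first case $p\notin S'\cup U$, which restricts us to $\pi(y)=y\not\geq p$ (so $S'=S$) together with $A=\emptyset$ (so $U=T$), and then $w\geq S\cup T$ in $\L$ at once. In the second case, the blowup order relations give $w'\geq a$ in $\L$ for every atom $a\in(S'\cup U)\setminus\{p\}$; inspecting the three possible configurations of $y$—namely $y\in\L$ with $y\not\geq p$, or $y=(p,x)$ with $x\neq\bottom$, or $y=p$—one checks in each case that $p\vee w'\geq\pi(y)$, hence $p\vee w'\geq S$. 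Combined with $T\setminus A\leq w'$ and $A\subseteq T_{\leq p}\subseteq\supp_{\L}(p)\leq p$ (the middle inclusion holding because $\H^\circ$ is an order filter of $\G$ with $p\notin\H^\circ$, so every $g\in\H$ satisfying $g\leq p$ must be an atom of $\LM$), we conclude $p\vee w'\geq S\cup T$ in $\L$, contradicting the hypothesis $S\cup T\notin\N(\LM,\H)$.

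The main obstacle is threading the three structural shapes of $y\in\L'$ through the expansion of $\phi_{\Dp}(x_T)$; the key combinatorial ingredient is the inclusion $T_{\leq p}\subseteq\supp_{\L}(p)$, which is what recovers the ``missing'' atoms $A$ when reassembling $T$ from $T\setminus A$ and $\supp_{\L}(p)$, and with it the uniform equality $p\vee w'\geq\pi(y)$, which is what lifts the upper bound in $\L'$ back to one in $\L$.
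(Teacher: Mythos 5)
Your proof is correct, and it takes a genuinely different (and more self-contained) route than the paper's. The paper's proof for the Stanley--Reisner relations consists of a single appeal to Lemma~\ref{lem:Pi:a}: one observes that $T\in\st_{\N(\LM,\H')}(S)$ implies $\Pi(S\cup T)\in\N(\LM,\H)$, and hence (taking the contrapositive, together with the observation that each summand $x_U$ of $\phi_{\Dp}(x_T)$ has $\Pi(U)\supseteq T$) each such $x_U$ vanishes in $\Dp'_y$. You instead re-derive the needed instance of Lemma~\ref{lem:Pi:a} from scratch by a direct case analysis on the shape of a hypothetical upper bound $w\in\Bl_p(\L)$ and of $y$ itself, using the explicit order relations in Definition~\ref{def:blowup} to pull the upper bound of $S'\cup U$ in $\L'$ back to an upper bound of $S\cup T$ in $\L$. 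The two approaches hinge on the same combinatorial fact --- the inclusion $T_{\leq p}\subseteq\supp_{\LM}(p)$, which follows from $\H^\circ$ being an order filter --- but the paper's version is more modular, deferring the blowup combinatorics to the earlier lemma on $\Pi$ (whose proof goes through Proposition~\ref{prop:BICO}(b) rather than the concrete blowup description), while yours is more elementary and makes the expansion of $\phi_{\Dp}(x_T)$ and the verification on each summand fully explicit, something the paper leaves implicit.
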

We let $\phi_{\Dp,y}$ denote the map $\Dp_{\pi(y)}\to \Dp'_y$.
\begin{proof}
We lift the generators of $\Dp_{\pi(y)}$ to $\Dp$ and check that the
relations map to zero in $\Dp'_y$.  We saw in Proposition~\ref{prop:D_phi}
that $\phi_{\Dp}(c_g^{\H})=c_g^{\H'}$ for all $g\in\atoms(\LM)$, 
so it remains to check the Stanley-Reisner relations.  For this, we express
$y=\bigvee S$ for a minimal nested set $S\in \N(\LM,\H')$.  Let
$\Pi=\Pi_{\H}^{\H'}$.  Then
$\pi(y)=\bigvee\Pi(S)$ (see, e.g., \eqref{eq:onestep Pi}), so we show
simplices of $\st_{\N(\LM,\H')}(S)$ map to simplices of
$\st_{\N(\LM,\H)}(\Pi(S))$.  So suppose $T\in\st_{\N(\LM,\H')}(S)$.  Then 
\begin{align*}
S\cup T\in \N(\LM,\H') & \Rightarrow \Pi(S\cup T)\in \N(\LM,\H) \quad
\text{by Lemma~\ref{lem:Pi:a}};\\
& \Leftrightarrow \Pi(S)\cup\Pi(T)\in \N(\LM,\H) \\
& \Leftrightarrow \Pi(T)\in \st_{\N(\LM,\H)}(\Pi(S)),
\end{align*}
as required.
\end{proof}
\begin{lemma}\label{lem:D sheaf}
The map $\phi_{\Dp}$ induces a map of sheaves $\pi^*\DP(\L)\to \DP(\L')$
on $(\L')^{\op}$.
\end{lemma}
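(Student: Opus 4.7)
The plan is to define the sheaf morphism componentwise using the local maps $\phi_{\Dp,y}$ from Lemma~\ref{lem:local phi} and then verify naturality. Under the convention of \S\ref{ss:basic sheaves}, a sheaf on $(\L')^{\op}$ is the same as a covariant functor from $\L'$ to graded rings. In this language $\DP(\L')$ sends $y$ to $\Dp'_y$ and $(y\leq_{\L'}z)$ to the canonical surjection $\Dp'_y\to\Dp'_z$, while $\pi^*\DP(\L)$ sends $y$ to $\Dp_{\pi(y)}$ and $(y\leq_{\L'}z)$ to $\Dp_{\pi(y)}\to\Dp_{\pi(z)}$, which is well-defined because $\pi$ is order-preserving. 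A morphism of sheaves is therefore the data of a ring map $\Dp_{\pi(y)}\to \Dp'_y$ for each $y\in \L'$, natural in~$y$.

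For the first step I would set the component at $y$ to be $\phi_{\Dp,y}$ from Lemma~\ref{lem:local phi}. For the second step, naturality amounts to commutativity of the square
\[
\begin{tikzcd}
\Dp_{\pi(y)} \ar[r,"\phi_{\Dp,y}"] \ar[d] & \Dp'_y \ar[d] \\
\Dp_{\pi(z)} \ar[r,"\phi_{\Dp,z}"] & \Dp'_z
\end{tikzcd}
\]
for each $y\leq_{\L'} z$, where the vertical arrows are the canonical quotient maps. To check this, I would lift to the universal level: by Lemma~\ref{lem:local phi}, each $\phi_{\Dp,y}$ is characterized as the unique factorization of $\Dp\xrightarrow{\phi_{\Dp}}\Dp'\to \Dp'_y$ through the surjection $\Dp\to \Dp_{\pi(y)}$. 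Precomposing either leg of the square with $\Dp\to \Dp_{\pi(y)}$ produces the composite $\Dp\xrightarrow{\phi_{\Dp}}\Dp'\to \Dp'_z$, and surjectivity of $\Dp\to \Dp_{\pi(y)}$ then forces the two legs to agree.

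I do not anticipate a real obstacle; all the essential content is already packaged in Lemma~\ref{lem:local phi}. The main thing requiring care is keeping straight the direction of the sheaf restriction maps on the opposite posets $(\L')^{\op}$ and $\L^{\op}$, and the fact that the pullback $\pi^*$ in this setting is computed simply by precomposition with $\pi$, so that stalks of $\pi^*\DP(\L)$ at $y$ are literally $\Dp_{\pi(y)}(\LM,\H)$.
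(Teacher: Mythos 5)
Your proposal is correct and follows essentially the same route as the paper: the paper likewise reduces naturality to commutativity of the square of local maps and deduces it from Lemma~\ref{lem:local phi} together with surjectivity of $\Dp\to\Dp_{\pi(y)}$. Your write-up merely spells out the diagram chase in slightly more detail.
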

\begin{proof}
The claim amounts to showing that, for any $x\leq y$ in $\L'$, the square
\[
\begin{tikzcd}
\Dp_{\pi(x)}\ar[r,"\phi_{\Dp,x}"]\ar[d] & \Dp'_x\ar[d] &\\
\Dp_{\pi(y)}\ar[r,"\phi_{\Dp,y}"] & \Dp'_y. \\
\end{tikzcd}
\]
commutes.  For $x=\bottom$, this is Lemma~\ref{lem:local phi}.  In general,
it follows from Lemma~\ref{lem:local phi} because the map $\Dp\to\Dp_{\pi(x)}$
is surjective.
\end{proof}

Recall the notation $\L_{(p)}$ from Definition \ref{def:blowup}. 
We define a sheaf $\QS$ on $\L^{\op}_{(p)}$ as follows: for $y\in\L_{(p)}$,
$\QS(y)$ is the ideal generated by $x$ in the ring 
$\Q[x]/(x^d)$, where $d=d(z_y(\hat{p}),z_y(\hat{p})\vee p)$ and $\hat{p}$ is as
in Proposition \ref{prop:restrict H}.
We recall from Lemma \ref{lem:alpha}\eqref{eq:def_alpha}
the embedding $\alpha\colon \L_{(p)}^{\op}\times\set{0<1}\to
\L'^{\op}$.
\begin{theorem}\label{thm:sheaf keel}
If $\L'=\Bl_p(\L)$, there is a short exact sequence of sheaves on $(\L')^{\op}$
\[
\begin{tikzcd}
0\ar[r] & \pi^*\DP(\L)\ar[r,"\phi_{\Dp}"] & \DP(\L')\ar[r] & \alpha_!
(\QS\boxtimes \Q)\ar[r] & 0,
\end{tikzcd}
\]
where $\Q$ denotes the constant sheaf on the two-element poset $\set{0<1}$.
\end{theorem}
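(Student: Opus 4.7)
The plan is to construct the right-hand morphism explicitly and then verify exactness stalkwise. Using the atom-eliminated presentation of Remark \ref{rem:forget atoms}, $\phi_{\Dp}$ acts as the identity on $x_g$ for each $g \in \H^\circ$, so the image of $\phi_{\Dp, y}$ at each stalk is the subring of $\Dp'_y$ generated by $\{x_g : g \in \H^\circ\}$. For $y = \alpha(x,i) \in \im(\alpha)$, I would define $\psi_y \colon \Dp'_y \to \QS(x)$ on generators by $x_p \mapsto x$ and $x_g \mapsto 0$ for $g \in \H^\circ$, so that $\psi_y$ annihilates every monomial containing an $\H^\circ$-variable; for $y \notin \im(\alpha)$, set $\psi_y = 0$. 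After checking that $\psi_y$ respects the defining relations of $\Dp'_y$ and that the collection $\{\psi_y\}$ commutes with the restriction morphisms of both sheaves, one obtains a morphism of sheaves whose composition with $\phi_{\Dp}$ vanishes.

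Exactness splits into cases following Lemma \ref{lem:alpha}. When $y \notin \im(\alpha)$, the join $p \vee \pi(y)$ does not exist in $\L$, so $\{p\} \cup \supp_{\N(\LM,\H')}(y)$ has no upper bound in $\L'$, which forces $x_p \in J_y(\H')$. The remaining presentation of $\Dp'_y$ then coincides with that of $\Dp_{\pi(y)}$, making $\phi_{\Dp,y}$ an isomorphism and the target stalk zero. When $y = \alpha(x,i) \in \im(\alpha)$, injectivity of $\phi_{\Dp,y}$ follows from the monomial basis of Corollary \ref{cor:D-basis} (anticipated in Remark \ref{rmk:D-basis}): a basic monomial of $\Dp_{\pi(y)}$ involves only $\H^\circ$-variables, and it remains basic in $\Dp'_y$ since both the nested-set condition (via Lemma \ref{lem:Pi:a} and Theorem \ref{thm:nested}) and the rank bounds are unaffected by adjoining the new atom $p$.

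For exactness in the middle, the monomial basis partitions $\Dp'_y$ into monomials not involving $x_p$ (which span $\im \phi_{\Dp,y}$ and are killed by $\psi_y$) and monomials involving $x_p$; among the latter, $x_p^k x_T^b$ with $T \neq \emptyset$ maps to zero, while pure powers $x_p^k$ are carried to $x^k \in \QS(x)$. The principal obstacle is matching the allowed range of $k$: the $\Dp'_y$-basis constraint gives $1 \leq k < d(\bigvee \supp_{\N(\LM,\H')}(y)_{<p}, p)$, while $\QS(x)$ permits $1 \leq k < d = d(z_x(\hat{p}), z_x(\hat{p}) \vee p)$. Equality of these ranks is an application of the building-set join decomposition of Definition \ref{def:buildingsets} inside the geometric interval $[\bottom, \hat{p}] \subseteq \LM$, together with modularity of rank: the join-factors of $\pi(y)$ strictly below $\hat{p}$ generate $z_x(\hat{p})$, while the atoms of $\supp_{\N(\LM,\H')}(y)_{<p}$ generate the same element under the isomorphism $[\bottom, \hat{p}] \cong \prod_{g \in F(\LM, \G; \hat{p})}[\bottom, g]$. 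Once this identification is in hand, $\psi_y$ induces the desired isomorphism from the cokernel of $\phi_{\Dp, y}$ onto $\QS(x) = (\alpha_!(\QS \boxtimes \Q))(y)$, completing the proof.
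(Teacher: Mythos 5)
Your strategy is genuinely different from the paper's: you propose to build the cokernel map explicitly and verify exactness stalkwise by bookkeeping with the monomial basis of Corollary \ref{cor:D-basis}, whereas the paper runs each stalk map through the tensor decomposition of Theorem \ref{thm:D(X)} together with Proposition \ref{prop:blowup local H}, so that only the single tensor factor indexed by $\hat{p}$ changes and the cokernel is read off factor by factor. The difference matters, because your basis accounting has a genuine gap at the step where you assert that the basic monomials not involving $x_p$ span $\im\phi_{\Dp,y}$ and that the mixed basic monomials $x_p^k x_T^b$ with $T\neq\emptyset$ can be discarded.

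Two things go wrong there. First, $\im\phi_{\Dp,y}$ is the subalgebra of $\Dp'_y$ generated by $\set{x_g\colon g\in\H^\circ}$, which is \emph{not} the span of the basic monomials avoiding $x_p$: a non-basic monomial in the $\H^\circ$-variables (e.g.\ $x_g^{d}$ with $d=d(z,g)$ and $g>p$) reduces via the relations \eqref{def:B:iii'} to a combination of basic monomials that does involve $x_p$. Second, and fatally, mixed basic monomials really occur and represent nonzero classes of the cokernel that your $\psi_y$ annihilates. Concretely, let $\LM$ be the partition lattice of $[6]$, $\G=\LM_{\irr}$, $\H^\circ=\set{12345|6,\ 12346|5,\ \onehat}$, $p=1234|5|6$, and $y=\bottom$. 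Then $\set{p,\onehat}$ is a nested chain with $d(\bottom,p)=3$ and $d(p,\onehat)=2$, so $x_px_{\onehat}$ and $x_p^2x_{\onehat}$ are basic monomials of $\Dp(\LM,\H')$; counting basic monomials gives $\dim\Dp(\LM,\H)=11$ and $\dim\Dp(\LM,\H')=15$ (geometrically, $\P^4$ blown up at two points and then along the line through them), so $\coker\phi_{\Dp,\bottom}$ is $4$-dimensional, while your $\psi_\bottom$ lands in the at most $2$-dimensional space spanned by $x,x^2$. Hence $\ker\psi_\bottom\supsetneq\im\phi_{\Dp,\bottom}$ and middle exactness fails for your map. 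The cokernel stalk is in general not the span of the pure powers of $x_p$ but the ideal $(x_p)\subseteq\Q[x_p]/(x_p^d)$ tensored with the local De Concini--Procesi algebras of the factors on which $\phi$ is an isomorphism (in the example, $H^\cdot(\P^1)\otimes x\Q[x]/(x^3)$); this is what the paper's tensor-decomposition argument produces, and any direct monomial-basis argument has to match the full set of basic monomials containing $x_p$ against a basis of that larger target rather than against pure powers alone.
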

\begin{proof}
Lemma \ref{lem:D sheaf} says that $\phi_{\Dp}$ is a map of sheaves, and so we
check that it is injective and has the stated cokernel. Let us examine this on
the stalk at $y\in\L'$, $\phi_{\Dp,y}\colon\Dp_{\pi(y)}\to \Dp'_y$, 
using our tensor decomposition from Theorem \ref{thm:D(X)}:
\[
\Dp_{\pi(y)}\cong \bigotimes_{f\in F^+(\pi(y))} 
\Dp(\LM_{\pi(y),f},\H_{\pi(y),f}) 
\to \bigotimes_{f\in F^+(y)} \Dp(\LM_{y,f},\H'_{y,f})\cong \Dp'_y.
\]
If $y\notin \L'_{(p)}$, then by Proposition \ref{prop:blowup local
H}, the local building sets are unchanged by the blowup and hence $\phi_{\Dp,y}$
is an isomorphism with 
$\coker(\phi_{\Dp,y}) = 0 = (\alpha_!(\QS\boxtimes\Q))_y$.

Now suppose that $y\in\L'_{(p)}$, and  let $\hat{p}=\min\{g\in
F^+(y)\colon g\leq p\}$ as in Proposition \ref{prop:restrict H}.
By Proposition \ref{prop:blowup
local H}, the map $\phi_{\Dp,y}$ is an isomorphism on each tensor factor of
$\Dp_{\pi(y)}$ except the one indexed by $\hat{p}$. Furthermore, the tensor
factors of $\Dp_{y}$ indexed by $f\in F^+(x)_{<p}$ are simply
$\Dp(\LM_{y,f},\H'_{y,f})\cong\Q$ since $(\H'_{y,f})^\circ=\emptyset$.
So it suffices to understand the map
$\Dp(\LM_{\pi(y),\hat{p}},\H_{\pi(y),\hat{p}})\to\Dp(\LM_{y,\hat{p}},\H'_{y,\hat{p}})$.
When $y=(p,x)$ for some $x\in\L_{(p)}$, we have $\LM_{y,p}=[z_y(p), p]$ with
$\H_{y,p}^\circ=\emptyset$ and $\LM_{\pi(y),p}=[\bottom,p]$
with $(\H'_{\pi(y),p})^\circ = \{p\}$. In this case, the map in question is the
inclusion of $\Q$ into $\Q[x_p]/(x_p^d)$, where $d=d(z_y(p),z_y(p)\vee p)$, with
cokernel $(\alpha_!(\QS\otimes\Q))_y$ as desired. The case $y\in\L_{(p)}$ is
similar.
\end{proof}


\section{Combining the two models}
\label{sec:B}
In this section, we blend together the Orlik--Solomon and De Concini--Procesi
algebras into a \cdga. The ultimate goal is to show that this is a model for the
Orlik--Solomon algebra, agreeing with the Leray model when realizable.

Throughout this section, we continue to let $\LM$ be a geometric lattice 
containing a fixed, building set $\G$.  Suppose 
$\H\subseteq \G$ is a partial building set, and $\L(\LM,\H)$ is the 
semilattice obtained from $\LM$ by blowing up $\H^\circ$.

\subsection{Defining the algebra}
Let $R(\H)=\Q[e_g,x_g\colon g\in \H]$ be the graded-commutative algebra
with generators $e_g$ in bidegree $(0,1)$ and $x_g$ in bidegree $(2,0)$.  
The algebra $R(\H)$ is equipped with a differential $\dd$ of bidegree $(2,-1)$,
defined on generators by $\dd(e_g)=x_g$ and $\dd(x_g)=0$, giving it the 
structure of a \cdga.  
Fixing a (reverse) linear extension of the order on $\H$ as in Definition
\ref{def:partialblowups} gives an order among the
$e$ variables and among the $x$ variables; we also require $x_g\prec e_h$ for
 each $g,h$. 

$R(\H)$ has a monomial basis which we denote by
\[
e_Sx_T^b := e_{g_1}\dots e_{g_s} x_{h_1}^{b_1} \dots x_{h_t}^{b_t}
\]
where $S=\{g_1,\dots,g_s\}$ with $g_1\prec\dots\prec g_s$ and 
$T=\{h_1,\dots,h_t\}$ with $h_1\prec \dots \prec h_t$.
Recall the derivation $\partial$ from Definition~\ref{def:OS}:
\[
\partial e_S=\sum_{j=1}^k (-1)^{j-1} e_{g_1}\dots \hat{e}_{g_j}\dots e_{g_k}.
\]
Also recall the following defined in \eqref{eq:cg}:
\[
c_g = \sum_{\substack{h\in\H\colon\\g\leq h}} x_h.
\]

\begin{definition}[The Leray model of a matroid]
\label{def:B}
The algebra $\hB(\LM,\H)$ is defined as the quotient of $R(\H)$ 
by the ideal $I(\LM,\H)$
generated by:
\begin{enumerate}[(i)]
\item $e_Sx_T$ whenever $S\cup T\notin \N(\LM,\H)$,\label{def:B:i}
\item 
$\partial e_S$ whenever $S\in \N(\LM,\H)$ is a circuit, \label{def:B:ii}
\item $c_g$ for each $g\in \atoms(\LM)$.
\label{def:B:iii}
\end{enumerate}
If $\onehat\in\H$, we also define $B(\LM,\H):=\hB(\LM,\H)/(e_{\onehat})$.  The
algebras $\hB(\LM,\H)$ and $B(\LM,\H)$ 
inherit the grading and differential $\dd$ from $R(\H)$.
\end{definition}

Relation \eqref{def:B:i} is a Stanley-Reisner relation for our simplicial
complex, and relation  \eqref{def:B:ii} is an Orlik--Solomon relation associated
to $\L_{\leq \vee S}$.
Note that, if $\LM$ is realizable and $\G=\H$ is a full building set, 
this agrees with the presentation of the Morgan model given in 
\cite[\S 5]{DP95}.  It follows from Proposition~\ref{prop:onehat} that
the generator $e_{\onehat}$ does not appear in any of the relations
defining $I(\LM,\H)$; therefore 
$\hB(\LM,\H)\cong B(\LM,\H)\otimes \Q[e_{\onehat}]$.  The choice of 
whether or not to include $e_{\onehat}$ models the difference between
blowing up a central arrangement versus a projective arrangement, and
we will allow both possibilities: see Remark~\ref{rmk:proj OS}.

We note that the bigraded algebra $\hB(\LM,\H)$ contains the two algebras
of \S\ref{sec:OS} and \S\ref{sec:DP}.  
Because of this, much of the work done here is an extension of that in
\S\ref{sec:OS}, \S\ref{sec:DP}, and \cite{FY04}. Explicitly, we have:
\begin{proposition}\label{prop:subalgs}
Let $\H$ be a partial building set in a geometric lattice $\LM$.
\begin{enumerate}[(a)]
\item
The subalgebra of $\hB(\LM,\H)$ in degree $(0,-)$ is
$\OS(\L(\LM,\H))$.
\item
The subalgebra of $\hB(\LM,\H)$ in degree $(-,0)$ is $\Dp(\LM,\H)$.
\item
$(\hB(\LM,\H),\dd)$ is a chain complex of $\Dp(\LM,\H)$-modules.
\label{eq:subalgs3}
\end{enumerate}
\end{proposition}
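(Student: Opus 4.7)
My approach uses bihomogeneity to reduce parts (a) and (b) to mechanical checks, then tackles part (c) by verifying that $\dd$ is stable on the ideal $I(\LM,\H)$, generator by generator. Every generator of $I(\LM,\H)$ is bihomogeneous---type (i) in bidegree $(2|T|,|S|)$, type (ii) in bidegree $(0,|S|-1)$, and type (iii) in bidegree $(2,0)$---so $I(\LM,\H)$ is bihomogeneous and $\hB(\LM,\H)^{(p,q)} = R(\H)^{(p,q)}/I(\LM,\H)^{(p,q)}$.

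For part (a), the $(0,\cdot)$ subalgebra of $R(\H)$ is the exterior algebra $E(\L(\LM,\H))$. By bihomogeneity, $I(\LM,\H)\cap E(\L(\LM,\H))$ is generated by the generators of $I(\LM,\H)$ that already lie in pure exterior degree, namely the monomials $e_S$ for $S\notin\N(\LM,\H)$ (type (i) with $T=\emptyset$) and the $\partial e_S$ for nested circuits $S$ (type (ii)). Theorem~\ref{thm:nested} identifies the first family with $I_2(\L(\LM,\H))$. For the second, every circuit $C$ of $\M(\L(\LM,\H)_{\leq x})$ is contained in $\atoms(\L(\LM,\H)_{\leq x})\subseteq \Ka(\L(\LM,\H))=\N(\LM,\H)$, so nestedness is automatic and the type (ii) generators exhaust $I_1(\L(\LM,\H))$. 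Hence $\hB(\LM,\H)^{(0,\cdot)}=\OS(\L(\LM,\H))$. Part (b) follows by the analogous argument in bidegree $(\cdot,0)$: only the type (i) generators with $S=\emptyset$ (Stanley--Reisner monomials) and the type (iii) generators $c_g$ contribute, and these are exactly the defining relations of $\Dp(\LM,\H)$ from Definition~\ref{def:FYalg}.

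For part (c), three claims are needed. First, $\dd(I(\LM,\H))\subseteq I(\LM,\H)$: type (iii) is trivial since $\dd(x_h)=0$; type (i) holds because $\dd(e_Sx_T)=\sum_i\pm x_{g_i}e_{S-g_i}x_T$ and each summand is again a type (i) generator, its exterior and polynomial supports together forming the non-nested set $S\cup T$; type (ii) is the subtle case, which I would handle using that $\dd\partial+\partial\dd$ is a derivation vanishing on all generators of $R(\H)$, hence zero, giving $\dd\partial e_S=-\partial\dd e_S=-\sum_j(-1)^{j-1}x_{g_j}\partial e_{S-g_j}$. Substituting $x_{g_j}\equiv-\sum_{h\in\H^\circ,\,h>g_j}x_h\pmod{I(\LM,\H)}$ from the type (iii) relations and using the circuit identity $\bigvee(S-g_j)=\bigvee S$ should allow one to reduce the surviving monomials to zero via the type (i) relations. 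Second, $\dd^2=0$ on the quotient is immediate from $\dd^2=0$ on generators of $R(\H)$ together with the derivation property. Third, $\dd$ is $\Dp(\LM,\H)$-linear: since $\dd(x_g)=0$ and $x_g$ has even total degree, the graded Leibniz rule gives $\dd(x_gb)=x_g\dd(b)$ for all $b\in\hB(\LM,\H)$, which extends by linearity once part (b) identifies $\Dp(\LM,\H)$ with the subalgebra generated by the $x_g$.

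The \emph{main obstacle} is the type (ii) step of the ideal-preservation check: showing that $\dd\partial e_S$ vanishes in $\hB(\LM,\H)$ requires balancing the linear $c_g$ relations against the Stanley--Reisner monomials along the circuit, and the cancellations must be tracked carefully using the symmetry $\bigvee(S-g_j)=\bigvee S$ shared by every $j$. In the realizable case this is encoded in the classical consistency of Morgan's \cdga\ for the arrangement complement; extracting a purely matroid-theoretic verification valid without a realization is the new ingredient.
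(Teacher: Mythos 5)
Parts (a) and (b), and the $\Dp(\LM,\H)$-linearity in part (c), are exactly the paper's argument: the proof given there is just ``compare the relations with \eqref{eq:OSrelations}, \eqref{eq:extSRrelns} and with Definition~\ref{def:FYalg}'' plus the Leibniz-rule observation $\dd(x_g f)=x_g\,\dd(f)$. Your bihomogeneity bookkeeping makes the comparison precise (in particular, a bihomogeneous generator of bidegree $(p,q)$ with $p>0$ cannot contribute to $I(\LM,\H)^{(0,\cdot)}$, and the identification of the type (i), $T=\emptyset$ generators with $I_2$ goes through Theorem~\ref{thm:nested}), and your remark that every circuit is automatically nested is correct. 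So (a), (b) and the module statement are fine.

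The issue is the extra claim you fold into (c), that $\dd(I(\LM,\H))\subseteq I(\LM,\H)$. The paper does not prove this in Proposition~\ref{prop:subalgs} (it is silently asserted in Definition~\ref{def:B} when $\hB$ is said to ``inherit'' $\dd$), and your sketch of it stops short at the decisive point. Two things are missing. First, the substitution $x_{g_j}\equiv-\sum_{h\in\H^\circ,\,h>g_j}x_h$ uses the relation $c_{g_j}$, which is only imposed for $g_j\in\atoms(\LM)$; you need to know that a circuit of $\L(\LM,\H)$ consists of $\LM$-atoms. (This is true --- the closure of a circuit is irreducible, and by Proposition~\ref{prop:BICO}\eqref{prop:BICO1} no irreducible element lies above a blown-up atom $(p,\bottom)$ --- but it must be said.) Second, after substituting and regrouping $\dd\partial e_S\equiv\pm\sum_{h\in\H^\circ}x_h\bigl(\sum_{i:\,g_i<h}(-1)^{i-1}\partial e_{S\setminus g_i}\bigr)$, the terms with $h\geq\bigvee S$ cancel because $\partial^2 e_S=0$ (this cancellation, not the circuit identity $\bigvee(S\setminus g_j)=\bigvee S$ alone, is what kills them), and for the remaining $h$ one must show that each surviving monomial $x_h e_{S\setminus\{g_i,g_j\}}$ with $g_i<h$ is a Stanley--Reisner relation, i.e.\ that $\{h\}\cup(S\setminus\{g_i,g_j\})$ is not nested. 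That is a genuine nested-set argument of the kind used in Lemma~\ref{lem:OS_phi} and Theorem~\ref{thm:B2} (via Proposition~\ref{prop:BICO}\eqref{prop:BICO2} and Proposition~\ref{prop:factors2}), not a formal consequence of the circuit identity. As written, ``should allow one to reduce the surviving monomials to zero'' is the gap.
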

\begin{proof}
The first statement is a matter of comparing
relations in $B(\LM,\H)$ with \eqref{eq:OSrelations}, \eqref{eq:extSRrelns}.
Similarly, the relations in the $x$ variables agree with 
Definition \ref{def:FYalg}, establishing the second statement.
The last follows by checking $\dd(x_gf)=x_g\cdot \dd(f)$ by the
Leibniz rule for each $x_g$, since $\dd(x_g)=0$, so $\dd$ is a 
$\Dp(\LM,\H)$-module homomorphism.
\end{proof}

It should be clear that the last two statements hold for $B(\LM,\H)$ as
well.  The analogue of the first statement is the following.

\begin{proposition}\label{prop:OSisB}
Suppose $\LM$ is a geometric lattice.  Let $\H=\atoms(\LM)$ and $\H'=\H\cup
\set{\onehat}$.  Then $\OS(\LM)\cong \hB(\LM,\H)$
via the map $e_g\mapsto e_g$
for each $g\in \atoms(\LM)$, and 
\[
\POS(\LM)\cong \ker d\colon B^{0,\cdot}(\LM,\H')\to\ B^{2,\cdot}(\LM,\H').
\]
\end{proposition}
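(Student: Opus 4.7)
For the first isomorphism, the strategy is to unwind the relations in Definition~\ref{def:B}. When $\H=\atoms(\LM)$, Proposition~\ref{prop:N=K} together with Example~\ref{ex:Ka of lattice} give $\N(\LM,\H)$ as the full simplex on $\atoms(\LM)$, so the Stanley--Reisner relations~(i) are vacuous. Relation~(iii) reduces to $c_g=x_g$ for each atom $g$, killing every polynomial generator, and what survives in $\hB(\LM,\H)$ is exactly the presentation of $\OS(\LM)$ from Definition~\ref{def:OS}.

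For the second statement, I plan to construct the isomorphism as the restriction of the algebra map $\psi\colon\OS(\LM)\to B^{0,\cdot}(\LM,\H')$ sending $e_g\mapsto e_g$. In $B(\LM,\H')$ the relations $c_g=x_g+x_{\onehat}=0$ let me substitute $x_g=-x_{\onehat}$ for each $g\in\atoms(\LM)$; write $x=x_{\onehat}$. The first preparatory step is to identify $B^{0,\cdot}(\LM,\H')\cong \OS(\LM)/J$, with $J$ the ideal generated by $\{e_B\colon B\text{ a basis of }\M(\LM)\}$: the Orlik--Solomon relations from non-spanning circuits of $\M(\LM)$ descend directly through the intervals $\LM_{\leq\bigvee C}\subseteq\L(\LM,\H')$; for spanning circuits the relation $\partial e_C=0$ is automatic because each $e_{C\setminus g_i}$ has rank $r=\rank\LM$ and is killed by Stanley--Reisner; and a short rank computation on the new intervals $[\bottom,(\onehat,y)]\subset\L(\LM,\H')$ shows that their matroid circuits coincide with those of $\M(\LM_{\leq y})$, so no further Orlik--Solomon relations appear. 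Since $J$ is generated in degree $r$, which equals the top degree of $\OS(\LM)$, we have $J=\OS^r(\LM)$: thus $B^{0,k}\cong\OS^k(\LM)$ for $k<r$ and $B^{0,r}=0$. By Leibniz applied to $d(e_g)=-x$, one then computes $d(e_S)=-x\cdot\partial(e_S)$ for every monomial.

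The key technical step will be showing that multiplication by $x$ is injective from $B^{0,m}$ into $B^{2,m}$ in the range $m\leq r-2$. For independent $S$ of size $m\leq r-2$, $\rank(\bigvee S)\leq r-2$, so no atom $g$ can make $S\cup\{g\}$ spanning; hence the Stanley--Reisner monomial relations $e_Sx_g=0$ (which after substitution become $\pm x\,e_S=0$) impose no constraint beyond the relation $e_S=0$ already operative in $B^{0,m}$. The only remaining relations in $B^{2,m}$ are the multiplied Orlik--Solomon relations $x\cdot\partial(e_C)=0$, so the assignment $xe_S\leftrightarrow e_S$ identifies $B^{2,m}\cong \OS^m(\LM)$, with multiplication by $x$ becoming the identity. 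Consequently, for $1\leq k\leq r-1$, $\ker d\cap B^{0,k}$ equals $\{\tilde\beta\in\OS^k\colon \partial\tilde\beta=0\}=\POS^k$, using Definition~\ref{def:proj OS} and the splitting $\OS(\LM)\cong\POS(\LM)\oplus e_0\POS(\LM)$ given therein to identify $\ker\partial$ with $\POS$ (for $a+e_0b$ with $a,b\in\POS\subseteq\ker\partial$, the computation $\partial(a+e_0b)=b$ forces $b=0$). The cases $k=0$ and $k\geq r$ are immediate, and $\psi|_{\POS}$ is injective because $\ker\psi=\OS^r(\LM)$ while $\POS^r=0$.

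The main obstacle is establishing the injectivity of multiplication by $x$ in the range $m\leq r-2$, which requires a careful enumeration of which Stanley--Reisner monomial relations actually contribute at bidegree $(2,m)$ after the elimination $x_g=-x$; the argument breaks at $m=r-1$, and this is precisely why the claim is framed as an identification of $\ker d$ on $B^{0,\cdot}$ rather than a bidegree-wise isomorphism with all of $B^{0,\cdot}$.
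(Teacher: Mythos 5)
Your proof is correct, and for the second statement it is substantially more detailed than the paper's, which disposes of it in a single sentence (``the kernel of the differential can be seen to be generated by differences $e_g-e_h$ for atoms $g,h$, and the result follows from Definition~\ref{def:proj OS}''). What you supply is essentially a proof of that assertion via the bigraded structure: you identify $B^{0,\cdot}\cong\OS(\LM)/\OS^r(\LM)$ (equivalently $\OS(\LM_{\leq r-1})$), compute $d(e_S)=-x_{\onehat}\partial(e_S)$, and check that $x_{\onehat}\cdot$ is injective on $B^{0,m}$ for $m\leq r-2$, which is the full range one ever needs since $B^{0,r}=0$. This is in the same spirit as the paper's own computation in the base case of Theorem~\ref{thm:model}, where the cleaner identification $B^{2i,j}\cong\OS^j(\LM_{\leq r-i})$ is obtained directly from the monomial basis of Corollary~\ref{cor:basis}; you could streamline the ``careful enumeration of Stanley--Reisner relations'' by invoking that basis explicitly, since the nbc sets of size $m$ with $\rank(\bigvee S)\leq r-2$ are exactly the monomials $e_Sx_{\onehat}$ surviving in bidegree $(2,m)$.

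Two small remarks. First, the sentence ``the argument breaks at $m=r-1$'' is a red herring: the relevant map is $x_{\onehat}\cdot\colon B^{0,k-1}\to B^{2,k-1}$ applied to $\partial\beta$ for $\beta\in B^{0,k}$, so $m=k-1$ never exceeds $r-2$ because $B^{0,r}=0$; there is no case that ``breaks.'' Second, your verification that the intervals $[\bottom,(\onehat,y)]$ contribute no new circuits is a worthwhile detail to include (the iso $[\bottom,(\onehat,y)]\cong[\bottom,y]\times B_1$, making $\onehat$ a coloop), since it is what justifies identifying the Orlik--Solomon ideal in $B^{0,\cdot}$ with the usual one. For the first isomorphism your argument matches the paper's exactly.
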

\begin{proof}
For the first statement, we see from \eqref{eq:cg} 
that $c_g=x_g$, so the defining ideal $I(\LM,\H)$
is generated by Orlik--Solomon relations 
and the variables $x_g$ for each $g\in\H$.  For the second statement, 
the kernel of the differential can be seen to be generated by differences
$e_g-e_h$ for atoms $g,h$, and the result follows from 
Definition~\ref{def:proj OS}.
\end{proof}

We conclude this section by noting
 that the algebra $B(\LM,\H)$ has a fine grading
indexed by the semilattice $\L(\LM,\H)$ which is induced by Brieskorn's
decomposition of the Orlik--Solomon algebra (Corollary~\ref{cor:Brieskorn}).
In the geometric setting, the summands are indexed by strata and obtained
as tensor products of the cohomology of the divisor complement near the stratum
(an Orlik--Solomon algebra) and the cohomology of the stratum itself (a
De Concini--Procesi algebra): see \cite[\S5.2]{DP95} as well as \cite{du15,Bi16}.

\begin{proposition}\label{prop:decomp}
Let $\H\subseteq\LM_+$ be a partial building set, and let $\L=\L(\LM,\H)$.
We have a decomposition of $B(\LM,\H)$ as $\Dp(\LM,\H)$-modules: 
for all $i,j\geq 0$, 
\begin{equation}\label{eq:B-decomp}
\hB^{ij}(\LM,\H)\cong\bigoplus_{y\in\L_j}\OS^j(\L_{\leq y})\otimes_\Q 
\Dp^i_y(\LM,\H),
\end{equation}
where $\Dp^\cdot_y(\LM,\H)$ is defined in Definition~\ref{def:DPy}.
Similarly,
\begin{equation}\label{eq:oB-decomp}
B^{ij}(\LM,\H)\cong\bigoplus_{
\substack{y\in\L_j\colon\\ 
y\not\geq (\hat{1},\hat{0})}}
\OS^j(\L_{\leq y})\otimes_\Q \Dp^i_y(\LM,\H).
\end{equation}

\end{proposition}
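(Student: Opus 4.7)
My plan is to construct the isomorphism \eqref{eq:B-decomp} via multiplication in $\hB(\LM,\H)$, separating the monomial basis of $\hB^{ij}$ according to the join of its exterior part. The $B$-version \eqref{eq:oB-decomp} will then follow by tracking the effect of killing $e_{\onehat}$.

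First, for each $y\in\L_j$ I will define a $\Q$-linear map
\[
\psi_y\colon \OS^j(\L_{\leq y})\otimes_{\Q}\Dp^i_y(\LM,\H)\to \hB^{ij}(\LM,\H)
\]
by sending $e_S\otimes [f]\mapsto e_S\tilde{f}$, where $\tilde{f}\in R(\H)$ is any lift of $f$. Well-definedness amounts to checking that three families of relations in the source vanish in $\hB$. The Orlik--Solomon relations for the sub-lattice $\L_{\leq y}$ lie among those defining $\OS(\L)$; the generators $c_g$ of $J_y(\H)$ for $g\in\atoms(\LM)$ already vanish in $\hB$ by Definition~\ref{def:B}\eqref{def:B:iii}; and a Stanley--Reisner generator $x_T$ of $J_y(\H)$, which by definition corresponds to $S_0\cup T\notin\N(\LM,\H)$ for one (hence, any) nested $S_0$ with $\bigvee S_0=y$, annihilates $e_S$ in $\hB$, because by Theorem~\ref{thm:nested} both $S_0\cup T\notin\N$ and $S\cup T\notin\N$ express the same condition: non-existence of $y\vee\bigvee T$ in $\L$. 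Assembling $\psi=\bigoplus_y\psi_y$ yields a $\Dp(\LM,\H)$-module map that is surjective, since every generating monomial $e_Sx_T^b$ of $\hB^{ij}$ lies in the image of the summand indexed by $y=\bigvee S$.

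For injectivity I will appeal to the monomial basis of $\hB^{ij}(\LM,\H)$ obtained later in \S\ref{sec:B} via the Gr\"obner basis for $I(\LM,\H)$: every such basis element has the shape $e_Sx_T^b$ with $S\in\nbc(\L)$ and $(T,b)$ satisfying the $\Dp_y$-basis conditions of Corollary~\ref{cor:D-basis} and Remark~\ref{rmk:D-basis} at $y=\bigvee S$. Sorting these monomials by $y$, the Brieskorn decomposition (Corollary~\ref{cor:Brieskorn}) identifies the $e$-parts with the top-degree nbc basis of $\OS^j(\L_{\leq y})$, and the $(T,b)$-parts furnish exactly the monomial basis of $\Dp^i_y(\LM,\H)$. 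Hence $\psi$ carries a $\Q$-basis bijectively to a $\Q$-basis, establishing \eqref{eq:B-decomp}.

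The decomposition \eqref{eq:oB-decomp} is then immediate. Because $\onehat$ comes first in the compatible linear order on $\H$, an nbc monomial $e_S$ with $\bigvee S=y$ is divisible by $e_{\onehat}$ precisely when $\onehat\in S$, and this happens precisely when $y\geq(\onehat,\bottom)$ in $\L$; passing to the quotient $B(\LM,\H)=\hB(\LM,\H)/(e_{\onehat})$ therefore annihilates exactly the summands indexed by such $y$. The hard part of this program is the injectivity step: it relies on the Gr\"obner basis for $I(\LM,\H)$ developed later in this section, and requires verifying that the resulting monomial basis of $\hB$ refines cleanly along $y=\bigvee S$, with its $(T,b)$-indices coinciding with the monomial basis of $\Dp^i_y$ from Remark~\ref{rmk:D-basis}.
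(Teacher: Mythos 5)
Your proposal takes the same broad route as the paper — exploit the homogeneity of the presentation together with Brieskorn's decomposition — but arrives at the conclusion by a different mechanism. The paper's proof is shorter and more abstract: it notes that the relations of Definition~\ref{def:B} are homogeneous with respect to the Brieskorn grading, so $\hB^{ij}$ splits as $\bigoplus_{y,S} e_S\,\Dp^i_S$ for \emph{some} $\Dp$-modules $\Dp_S$; then it identifies $\Dp_S$ with $\Dp_y(\LM,\H)$ directly by comparing presentations, observing that the Stanley--Reisner relations $e_S x_T$ and the $c_g$ relations depend only on $y = \bigvee S$, not on the choice of independent $S$. Crucially, this argument never touches the Gr\"obner basis, which appears only afterward in \S\ref{sec:B}. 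Your proof, by contrast, builds an explicit map $\psi_y$ by multiplication, verifies well-definedness and surjectivity cleanly, and then defers injectivity to a comparison of monomial bases derived from Theorem~\ref{thm:grobner}. That is a valid, more concrete strategy, and since the Gr\"obner-basis theorems do not themselves depend on Proposition~\ref{prop:decomp}, there is no circularity in the forward reference.

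The one genuine gap is exactly the point you flag at the end. Injectivity via the monomial basis requires that the set of admissible $(T,b)$ in the $\hB$-basis of Corollary~\ref{cor:basis} be independent of the particular nbc basis $S$ chosen with $\bigvee S = y$, i.e., that the degree bound $d\bigl(\bigvee(S\cup T)_{<g},\,g\bigr)$ depend only on $y$, $T$, and $g$. This is true but not automatic: the set $(S\cup T)_{<g}$ visibly changes with $S$, and one must argue (e.g.\ via the nested-set factorization and Proposition~\ref{prop:nestedfactors}, or by observing that $\bigvee_{\LM} S_{<g}$ is determined by $\pi(y)$ and the factors below $g$) that the \emph{join} in $\LM$ is nevertheless the same. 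You also cite ``the monomial basis of $\Dp^i_y$ from Remark~\ref{rmk:D-basis},'' but that remark only describes a basis of the global algebra $\Dp(\LM,\H)$; obtaining the basis of the local quotient $\Dp_y$ requires a further step (via Theorem~\ref{thm:D(X)}, as the paper notes after Corollary~\ref{cor:D-basis}). So while the structure of your argument is sound, the injectivity step needs these two verifications spelled out. The paper's presentational comparison avoids both of them, which is why it can place the proposition before the Gr\"obner-basis material.
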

\begin{proof}
Since the relations defining $\hB(\H)$ are homogeneous in both $e$ and $x$ 
variables, Brieskorn's Lemma~\ref{cor:Brieskorn} gives a decomposition
\[
\hB^{ij}(\LM,\H)\cong\bigoplus_{y\in\L_j}\bigoplus_{
\substack{S\in\nbc(\L)\colon\\
\bigvee S=y}}e_S \Dp^i_S,
\]
for some $\Dp(\LM,\H)$-module $\Dp^\cdot_S$.  We note that if $S$ and $S'$
are independent sets with $\bigvee S=\bigvee S'$, the relations of 
Definition~\ref{def:B} are unchanged by replacing $S$ by $S'$, so 
$\Dp^\cdot_S=\Dp^\cdot_{S'}$.  Using the homogeneity of the presentation,
we compare it with Definition~\ref{def:DPy} and find $\Dp^\cdot_S\cong
\Dp^\cdot_y(\LM,\H)$, where $y=\bigvee S$.
The version \eqref{eq:oB-decomp} is analogous.
\end{proof}

\subsection{An equivalent presentation}

Just as in \cite[Thm.~1]{FY04}, we must add to relation \eqref{def:B:iii} from
Definition \ref{def:B} in order to obtain a Gr\"obner basis for the ideal
$I(\LM,\H)$ and hence a monomial basis for our algebra $B(\LM,\H)$.

\begin{theorem}
\label{thm:B2}
The ideal $I(\LM,\H)$ in $R(\H)$, from 
Definition \ref{def:B}, is equal to the ideal generated by:
\begin{enumerate}
\item[\eqref{def:B:i}] $e_Sx_T$ whenever $S\cup T\notin \N(\LM,\H)$,
\item[\eqref{def:B:ii}] 
$\partial e_S$ whenever $S\in \N(\LM,\H)$ is a circuit, and
\setcounter{enumi}{2}
\item[\mylabel{def:B:iii'}{iii$^\prime$}]
$e_Sx_Tc_g^d$ whenever $S\cup T\in\N(\LM,\H)$ and $g\in\H$ for
which: $S\cap T=\emptyset$, $S\cup T$ is an antichain, 
$\bigvee(S\cup T)< g$, and $d=d(\bigvee(S\cup T),g)$.
\end{enumerate}
\end{theorem}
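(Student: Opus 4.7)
The containment of the ideal generated by (i), (ii), (iii) into the one generated by (i), (ii), (iii$'$) is immediate, since (iii) is the special case of (iii$'$) with $S=T=\emptyset$ and $g\in\atoms(\LM)$: then $\bigvee(S\cup T)=\bottom<g$ and $d=d(\bottom,g)=1$, so the element $c_g$ is precisely of type (iii$'$). I would dispatch this in one sentence at the start.

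The substantive direction is showing that each element $e_Sx_Tc_g^d$ of type (iii$'$) lies in the ideal generated by (i), (ii), (iii). Since this element is the product of $e_S$ with the polynomial $x_Tc_g^d$, and relations of type (ii) involve no $x$-variables, I would first reduce to the purely polynomial claim in $\Q[x_h:h\in\H]$: \emph{$x_Tc_g^d$ lies in the ideal generated by the monomials $x_U$ for $U\notin\N(\LM,\H)$ together with $\{c_h:h\in\atoms(\LM)\}$.} Multiplying back by $e_S$ then lands in $I(\LM,\H)$, noting that products $e_S\cdot x_U$ with $S\cup U\notin\N(\LM,\H)$ are already relations of type (i).

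The polynomial claim generalizes \cite[Thm.~1]{FY04} from full to partial building sets, and I would argue it by induction on $d=d(\bigvee(S\cup T),g)$. The base case $d=1$ amounts to observing that, picking any atom $h_0\leq g$,
\[
c_g = c_{h_0} - \sum_{\substack{h\in\H\colon h\geq h_0\\ h\not\geq g}} x_h,
\]
so modulo $c_{h_0}$ the element $x_Tc_g$ becomes a signed sum of monomials $x_Tx_h$ with $h\geq h_0$, $h\not\geq g$; each such $T\cup\{h\}$ can be shown non-$\H$-nested by applying Proposition~\ref{prop:factors2} (using that $S\cup T$ is a $\H$-nested antichain with $\bigvee(S\cup T)\geq h_0$ and that $g$ dominates $S\cup T$). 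For the inductive step, expanding $c_g^d=(c_{h_0}-\sum x_h)^d$ modulo $c_{h_0}$ yields either non-nested monomial factors or products of the form $x_{T'}c_{g'}^{d'}$ with $d'<d$ coming from a sub-interval of $[\bigvee(S\cup T),g]$, which close by the inductive hypothesis.

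The main obstacle is the combinatorial bookkeeping in the inductive step: I must verify that whenever the expansion produces a monomial that is \emph{not} cancelled by a non-nested relation, the remaining indexed set does factor through a genuine instance of (iii$'$) at a strictly lower rank, so the induction actually decreases. This relies on the building-set structure of $\H$, specifically Proposition~\ref{prop:BICO}\eqref{prop:BICO2} (to control when adjoined elements force new joins into $\H$) and the factor-decomposition Proposition~\ref{prop:nestedfactors}, to recognize the appropriate sub-antichain and sub-interval at each step.
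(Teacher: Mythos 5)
Your opening observation that (iii) is the special case of (iii$'$) with $S=T=\emptyset$ and $g\in\atoms(\LM)$ is correct and matches the paper. The high-level inductive structure you describe --- induct on $d$, expand $c_g$ by subtracting a suitable $c_{h_0}$, discard terms by non-nestedness, recurse --- also tracks the paper's argument. But two steps in your plan are wrong as stated, and together they constitute a genuine gap.

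First, the reduction to a ``purely polynomial claim'' fails: the assertion that $x_Tc_g^d$ lies in the ideal of $\Q[x_h\colon h\in\H]$ generated by the non-nested monomials $x_U$ and the $c_h$ for $h\in\atoms(\LM)$ is false in general. The exponent $d=d(\bigvee(S\cup T),g)$ is computed from $\bigvee(S\cup T)$, not $\bigvee T$; when $S\neq\emptyset$ this can make $d$ strictly smaller than $d(\bigvee T, g)$, and $x_Tc_g^d$ is then a nonzero element of $\Dp(\LM,\H)$. Concretely, let $\LM$ be the rank-two uniform matroid on atoms $\{1,2,3\}$ with $\H=\G=\{1,2,3,\onehat\}$, and take $S=\{1\}$, $T=\emptyset$, $g=\onehat$. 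Then $d=d(1,\onehat)=1$ and $x_Tc_g^d=c_{\onehat}=x_{\onehat}$, which survives as a nonzero basis element of $\Dp(\LM,\H)\cong\Q[x_{\onehat}]/(x_{\onehat}^2)$. Nonetheless $e_1x_{\onehat}$ does lie in $I(\LM,\H)$ (it equals $e_1c_2-e_1x_2$, where $e_1x_2$ is a type (i) relation since $\{1,2\}\notin\N(\LM,\H)$), which shows the $e_S$ factor cannot be stripped: the relevant monomial relations are $e_Sx_U$ with $S\cup U\notin\N(\LM,\H)$, and these depend essentially on $S$. Your closing sentence of that paragraph --- noting that $e_Sx_U$ with $S\cup U$ non-nested is a type (i) relation --- is exactly the observation that makes the argument work, but it contradicts the claim that the argument can be carried out inside $\Q[x]$.

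Second, in the base case you write ``picking any atom $h_0\leq g$'' and then justify the non-nestedness via ``$\bigvee(S\cup T)\geq h_0$.'' This is backwards. The atom must be chosen with $p\vee h_0=g$, where $p=\bigvee(S\cup T)$; equivalently $h_0\leq g$ but $h_0\not\leq p$. (Such an atom exists precisely because $d(p,g)=1$.) In the example above, choosing $h_0=1\leq p$ gives $c_{\onehat}=c_1-x_1$, and $S\cup T\cup\{1\}=\{1\}$ is nested, so $e_1x_1$ is not killed by a type (i) relation; whereas with $h_0=2$ one gets $c_{\onehat}=c_2-x_2$, and $\{1,2\}$ is non-nested. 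The invocation of Proposition~\ref{prop:factors2} needs $\bigvee(S\cup T\cup\{y\})\geq g$, which follows from $p\vee y\geq p\vee h_0=g$, not from $p\geq h_0$. The same issue propagates into your inductive step: the atom there must satisfy $p<p\vee h_0<g$, and the residual terms are handled after raising $g$ to $y\vee g$ (not by passing to a sub-interval of $[p,g]$, as you suggest).
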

\begin{proof}
Let $I'$ denote the ideal generated by relations \eqref{def:B:i},
\eqref{def:B:ii}, \eqref{def:B:iii'}.
Every relation of type \eqref{def:B:iii} is also of type \eqref{def:B:iii'},
since for $g\in\atoms(\LM)$, we can take $S=T=\emptyset$ and have
$d(\bottom,g)=1$. To show $I'=I(\LM,\H)$, then it remains to show 
every relation of type \eqref{def:B:iii'} is in $I(\LM,\H)$, which we do by
induction on $d$. 

\underline{\textit{Case} $d=1$:}
Consider an element $e_Sx_Tc_g$ of type \eqref{def:B:iii'}, and let
$p:=\bigvee(S\cup T)$. Then $S\cup T$ is a nested set, 
$g\in\H$, $p< g$, and $d(p,g)=1$.
Since $d(p,g)=1$, we can pick $h\in \atoms(\LM)$ for which $p\vee h = g$.
We want to show that if $y\in \H$ has $y\geq h$ but $y\not\geq g$, 
then
$\set{y}\cup S\cup T\notin \N(\LM,\H)$, 
because this would imply that, modulo type \eqref{def:B:i}
relations, we have $e_Sx_Tc_g \equiv e_Sx_Tc_h\in I(\LM,\H)$. 

Accordingly, 
assume that $y\geq h$ and $\set{y}\cup S\cup T\in\N(\LM,\H)$, 
and we will show that
$y\geq g$.  By construction, $h\not\leq p$, so $y\not\leq z$ for any 
$z\in S\cup T$.
Let $Z\subseteq S\cup T$ consist of those elements of $S\cup T$
which are incomparable with $y$. 
If $Z\neq\emptyset$, then $Z\cup\set{y}$ is a
nontrivial antichain that does not contain $g$, and $g>z$ for all $z\in Z$.
Since
\[
\bigvee(Z\cup\set{y}) = \bigvee(S\cup T\cup\set{y}) = p\vee y \geq p\vee h = g,
\] 
Proposition \ref{prop:factors2}, shows that 
$Z\cup\set{y}\notin \N(\LM,\H)$. However,
we assumed $S\cup T\cup\{y\}\in \N(\LM,\H)$ and
$\N(\LM,\H)$ is a simplicial complex, which yields a contradiction.
Thus, $Z=\emptyset$, and we must have $y\geq z$ for all $z\in S\cup T$. 
This gives us our desired conclusion that $y\geq p\vee h = g$.

\underline{\textit{Case} $d>1$:}
Take $e_Sx_Tc_g^d$ of type \eqref{def:B:iii'} with $d>1$, and let
$p:=\bigvee(S\cup T)$ once again. Pick $h\in\atoms(\LM)$ so that $p<p\vee h<g$.
We want to show that if $y\geq h$ but $y\not\geq g$, we have
$e_Sx_Tx_yc_g^{d-1}\in I(\LM,\H)$: then, modulo $I(\LM,\H)$, we have
\[
e_Sx_Tc_g^d \equiv e_Sx_Tc_hc_g^{d-1},
\]
which is an element of $I(\LM,\H)$ since $c_h$ is of type \eqref{def:B:iii}.

Assume that $y\geq h$ but $y\not\geq g$. 
We may assume that $\set{y}\cup S\cup T\in
\N(\LM,\H)$, since otherwise we'd be done by using type \eqref{def:B:i} relations.
Once again, our choice of $h$ implies $y\not\leq z$ for any $z\in S\cup T$.
Consider the (possibly empty) subset 
$Z\subseteq S\cup T$ consisting of elements which are
incomparable to $y$. 
We will first show that $e_{S\cap Z}x_{T\cap Z}x_yc_{y\vee g}^{d-1}$ is in the
ideal $I(\LM,\H)$ using induction, and then show that it is equal to $e_{S\cap
Z}x_{T\cap Z}x_yc_g^{d-1}$ modulo $I(\LM,\H)$, which will conclude the proof.

Let $\tilde{y}=\bigvee(\{y\}\cup Z) = y\vee p$. Note that $\tilde{y}\vee g =
y\vee g$. Also, since $\bottom<h\leq y\wedge g$, we have $y\vee g\in\H$ by
Proposition \ref{prop:BICO}\eqref{prop:BICO2}. 
Moreover,
\[ 
d(\tilde{y},\tilde{y}\vee g) \leq d(\tilde{y}\wedge g,g) \leq d(p\vee h,g) < d
\]
where the first inequality follows by \cite[\S3(iv)]{FY04}, and the last
two by \cite[\S3(i)]{FY04}, together with the observation that
$\tilde{y}\wedge g\geq (y\wedge g)\vee (p\wedge g) \geq h\vee p$.
Thus, our first claim follows from the induction hypothesis.

Now we argue that if $q\geq g$ but $q\not\geq y\vee g$, we have $\set{q,y}
\notin\N(\LM,\H)$, so that relations of type \eqref{def:B:i} give us our last claim.  
So assume that
$q\geq g$ and $\{q,y\}\in\N(\LM,\H)$, and we will show that this implies $q\geq
y\vee g$. If $q\leq y$, then $g\leq y$, contradicting our choice of $y$. If $q$
and $y$ are incomparable, then by nestedness we have $q\vee y\notin \H$. But
since $h\leq g\leq q$ and $h\leq y$, we obtain
$\bottom<h\leq q\wedge y$, from which Proposition
\ref{prop:BICO}\eqref{prop:BICO2} yields a contradiction.
Thus, $q\geq y$, which implies that $q\geq y\vee g$.
\end{proof}

\begin{remark}\label{rmk:iii'}
Observe that the relation $e_Sx_Tc_g^d$ holds in $\Dp(\LM,\H)$ even without the
hypotheses $S\cap T=\emptyset$ and $S\cup T$ an antichain.  Indeed, if $S\cup
T\in\N(\LM,\H)$ and $g\in\H$ for which $\bigvee(S\cup T)<g$ and
$d=d(\bigvee(S\cup T),g)$, then $e_Sx_Tc_g^d$ is divisible by the type
\eqref{def:B:iii'} relation $e_{S'}x_{T'}c_g^d$ where $S'=(S\setminus
T)\cap\max(S\cup T)$ and $T'=T\cap\max(S\cup T)$.
\end{remark}

\subsection{A Gr\"obner basis}\label{ss:groebner}

The construction we describe next 
is modelled after the Gr\"obner basis \cite[Thm.~2]{FY04}.  In fact,
in the case where $\H=\G$, that Gr\"obner basis is the subset of ours
obtained by restricting to the $x$ variables.
On the other hand, by restricting to the $e$ variables, 
we recover the $\nbc$ basis for
$\OS(\L(\LM,\H))$ from Theorem~\ref{thm:general nbc}.

The corresponding additive basis for $B(\LM,\H)$ will play an essential
role in our proof that blowups induce injective quasi-isomorphisms of \cdga s
(Theorem~\ref{thm:injective} and Theorem~\ref{thm:model}.)

\begin{theorem}
\label{thm:grobner}
Recall that a fixed linear extension of the order on $\H$ induces an order
among the $e$ variables and among the $x$ variables, and we further require 
$x_g\prec e_h$ for each $g,h$.
With this, consider the deg-lex monomial order on $R(\H)$.

The relations \eqref{def:B:i}, \eqref{def:B:ii}, and \eqref{def:B:iii'}
from Theorem \ref{thm:B2} form a Gr\"obner basis for the ideal $I(\LM,\H)$ in
$R(\H)$.
\end{theorem}
\begin{proof}
This proof is not very enlightening: we follow the method used by Feichtner and
Yuzvinsky and explicitly compute syzygies. We have several cases, depending on
the different types of relations.

\underline{\textit{Case} \eqref{def:B:i}--\eqref{def:B:i}:}
Since type \eqref{def:B:i} relations are monomial, the syzygy for two
of these will be zero.

\underline{\textit{Case} \eqref{def:B:ii}--\eqref{def:B:ii}:}
The syzygy for two type \eqref{def:B:ii} relations is zero by 
Theorem \ref{thm:general nbc}. 

\underline{\textit{Case} \eqref{def:B:i}--\eqref{def:B:ii}:}
If we have $e_Rx_S$ of type \eqref{def:B:i} and $\partial e_T$ of type
\eqref{def:B:ii}, 
we note that $\bigvee (T-\set{g})=\bigvee T$ for each $g\in T$, since $T$
is a circuit.  Since $R\cup S$ and hence $R\cup S\cup T$ is not nested, 
then neither is $R\cup S\cup T-\set{g}$ for any $g\in T$.  
It follows that each monomial in the syzygy between 
$e_Rx_S$ and $\partial e_T$ is a relation of type \eqref{def:B:i}

\underline{\textit{Case} \eqref{def:B:i}--\eqref{def:B:iii'}:}
Now consider $e_Sx_T$ of type \eqref{def:B:i} and $e_Ax_Bc_g^d$ of type
\eqref{def:B:iii'}. 
Let $U=S\cup A$ and $V=B\cup T-\{g\}$, so that the syzygy is
\[
z = e_{U}x_{V}(x_g^d-c_g^d).
\]
If $g\notin T$, then the syzygy $z$ is divisible by the type \eqref{def:B:i}
relation $e_Sx_T$. So assume that $g\in T$. 
Since $S\cup T$ is not nested, then neither is $U\cup V\cup\{g\}$.
If $U\cup V$ were not nested, then the syzygy $z$ would be divisible by
the type \eqref{def:B:i} relation $e_{U}x_{V}$, so assume that $U\cup V$ is nested.

Modulo $e_{U}x_{V\cup\{g\}}$, then
\[z \equiv e_{U}x_{V}\Big(\sum_{f>g}x_f\Big)^d.\]
Since $U\cup V\cup\{g\}$ is not nested, it contains a nontrivial antichain $Y$
whose join $y=\bigvee Y$ is in $\H$. Moreover, $Y$ must contain $g$ since $U\cup
V$ is nested; let $y'=\bigvee(Y-\{g\})$. Since 
\[d=d(\bigvee_{\substack{f\in A\cup B\\f<g}} f,g)
\geq d(\bigvee_{\substack{f\in A\cup B\\f<g}} f \vee y',g\vee y')
\geq d(\bigvee_{\substack{f\in U\cup V\\f<y}} f, y),\]
we have that $e_Ux_{V}c_y^d$ is divisible by a type \eqref{def:B:iii'} relation.
We claim that modulo type \eqref{def:B:i} relations,
\[ z\equiv e_Ux_Vc_y^d,\]
 which, along with Remark \ref{rmk:iii'}, will finish the proof of this case.

To prove this last claim, we will show that if $f\in\H$ with $f>g$ and
$f\not\geq y$, then $U\cup V\cup\{f\}$ is not nested.
Suppose that $f>g$ and $f\leq y$. Then  $y'\vee f = y'\vee g = y\in\H$, 
which implies that $U\cup V\cup\{f\}$ is not nested.
Now suppose that $f>g$ such that $f$ and $y$ are incomparable.
Then $f\vee y'\geq g\vee y'=y$, and by Proposition \ref{prop:factors2}, this
means that $U\cup V\cup\{f\}$ is not nested.

\underline{\textit{Case} \eqref{def:B:ii}--\eqref{def:B:iii'}:}
Suppose that we have $\partial e_S$ of type \eqref{def:B:ii} and
$e_Ax_Bc_g^d$ of type \eqref{def:B:iii'}.  Let $h=\min_{\prec} S$, and
$S'=S-\set{h}$. Let $L:=e^{}_{S'\cup A}x^{}_B x_g^d$, the lead monomial in the
syzygy.  Cancelling $L$, we obtain
\begin{align*}
& e_{A-S'}x_Bx_g^d(\partial e_S-e_{S'})-e_{S'\cup A}x_B(c_g^d-x_g^d)  
\equiv \\
& \equiv \quad  e_{A-S'}x_Bx_g^d(\partial e_S-e_{S'})
+ e_{A-S'}(\partial e_S-e_{S'})x_B(c_g^d-x_g^d)\\
\intertext{by adding a multiple of $\partial e_S$ with initial term less than
$L$;}
& = e_{A-S'}(\partial e_S-e_{S'})x_B c_g^d,\\
& \equiv \sum_{k\in S'} \pm e^{}_{A\cup S-\set{k}} x_B c_g^d.
\end{align*}
If $k\notin A$, then $d(\bigvee(A\cup B\cup S-\{k\}),g)\leq d(\bigvee(A\cup
B),g)$ and hence this summand is divisible by a relation of type
\eqref{def:B:iii'}. Now assume $k\in A$, which implies that $k\leq g$. Since $S$
is a circuit, we have $v:=\bigvee S\in \G$. 
Then for any $f\in\H$ with $f\geq g$, we have $\bottom<k\leq f\wedge v$ and
hence $f\vee v\in\H$ by Proposition
\ref{prop:BICO}\eqref{prop:BICO2}. This means that if $f\not\geq v$, the
set $S\cup\{f\}\setminus\{k\}$ is not nested, and
type \eqref{def:B:ii} relations then reduce the above expression to
\[\equiv\sum_{k\in A\cap S'}\pm e_{A\cup S-\{k\}} x_B c_{g\vee v}^{d}.\]
Since $d(\bigvee(A\cup B)\vee v,g\vee v)\leq
d(\bigvee(A\cup B),g)$, this is divisible by a type \eqref{def:B:iii'} relation,
which completes the argument.

\underline{\textit{Case} \eqref{def:B:iii'}--\eqref{def:B:iii'}:}
Let $e_Sx_Tc_g^d$ and $e_Ax_Bc_h^f$ be two
\eqref{def:B:iii'} relations. We have different scenarios here:

First, if $g=h$ and $d\leq f$, then the syzygy is
\[
e_{S\cup A}x_{T\cup B}x_h^fc_g^d - e_{S\cup A}x_{T\cup B} x_g^dc_h^f = 
e_{S\cup A}x_{T\cup B}x_g^dc_g^d(x_g^{f-d}-c_g^{f-d}),
\]
which is divisible by the type \eqref{def:B:iii'} relation $e_Sx_Tc_g^d$.

Second, 
if $g\neq h$, $g\notin B$, and $h\notin T$, then assume (without loss of
generality) that $g\succ h$. The syzygy is then
\[
z := e_{S\cup A}x_{T\cup B}(x_h^fc_g^d - x_g^dc_h^f)
\]
Let $y = e_{S\cup A}x_{T\cup B}c_g^d(c_h^f-x_h^f)$, which is divisible by the
type \eqref{def:B:iii'} relation $e_Sx_Tc_g^d$ and satisfies $\In(y)\leq
\In(z)$. 
It suffices to check that $z+y$ reduces to zero, and it does since 
\[
z+y = e_{S\cup A}x_{T\cup B}(c_g^d-x_g^d)c_h^f
\]
is divisible by the type \eqref{def:B:iii'} relation $e_Ax_Bc_h^f$.

Finally, assume  $g\neq h$ and $g\in B$, 
and note that we must also have $g<h$ (so $h\prec
g$) and $h\notin T$. Let $U=S\cup A$ and $V=T\cup B\setminus\{g\}$. The syzygy
is then
\[
z := e_Ux_V(x_h^fc_g^d-x_g^dc_h^f).
\]
Let $y = e_Ux_Vc_g^d(c_h^f-x_h^f)$, which is divisible by the
type \eqref{def:B:iii'} relation $e_Sx_Tc_g^d$ and has a 
leading term smaller than or equal to that of $z$. 
It suffices to check that 
\[
z+y = e_Ux_Vc_h^f(c_g^d-x_g^d)
\]
reduces to zero. First, through division by the type \eqref{def:B:iii'}
relation $e_Ax_Bc_h^f$, since $g\in B$, we obtain
\[
z+y \equiv e_Ux_V\left(\sum_{k>g}x_k\right)^dc_h^f.
\]
Then, for $g<k<h$, since $d(\bigvee(U\cup V\cup\{k\}),h)\leq f$, we may divide
by $e_Ux_Vx_kc_h^f$ from which we obtain
\[ 
z+y \equiv e_Ux_V\left(\sum_{\substack{k>g\\k\not<h}} x_k\right)^dc_h^f
\]
Then, since $d(\bigvee(U\cup V),h)\leq d+f$, we may divide by $e_Ux_Vc_h^{d+f}$.
This leaves us with a sum of monomials, each of which is divisible by some
$e_Ux_Vx_kc_h^f$ where $k>g$ and $k$ is incomparable to $h$.
Thus, it remains to show that when $U\cup V\cup\{k\}\in \N(\LM,\H)$, 
$k>g$, and $k$ incomparable to $h$, we get $e_Ux_Vx_kc_h^f\equiv 0$.

For this, we claim that modulo type \eqref{def:B:i} relations, 
\[e_Ux_Vx_kc_h^f \equiv e_Ux_Vx_kc_{h\vee k}^f\]
and that the right hand side is divisible by a type \eqref{def:B:iii'} relation.
The latter claim follows since $h\wedge k\geq g$ implies $h\vee k\in\H$, and
also $d(\bigvee(U\cup V\cup\{k\}),h\vee k)\leq f$. For the first claim, which
will finish our proof, we show that if $p\geq h$ but $p\not\geq h\vee k$ then
$\{p,k\}$ is not nested so that $x_kx_p$ is a type \eqref{def:B:i} relation.
Now, if $p\geq h$ with $p\not\geq h\vee k$, then $p$ and $k$ are incomparable. 
In this case, we would also have $p\wedge k \geq g$ implying $p\vee k\in\H$ by
Proposition
\ref{prop:BICO}\eqref{prop:BICO2}. Therefore, $\{p,k\}\notin\N(\LM,\H)$.
\end{proof}

Since a monomial basis of the quotient $R(\H)/I(\LM,\H)$ 
is given by the monomials
which are not divisible by initial monomials of elements of the Gr\"obner basis,
and since $\hB(\LM,\H)\cong B(\LM,\H)\otimes\Q[e_{\onehat}]$,
we obtain the following corollary. 

\begin{corollary}
\label{cor:basis}
The algebra $\hB(\LM,\H)$ has an additive basis given by the monomials $e_Sx_T^b$ 
for which $S\cup T\in \N(\LM,\H)$, $S\in\nbc(\L(\LM,\H))$, and for each $g\in T$
we have $0<b(g)<d\left(\bigvee{(S\cup T)_{<g}} , g\right)$.
If $\onehat\in\H$, then the subset of monomials of this form which do not
contain $e_{\onehat}$ is an additive basis for the algebra $B(\LM,\H)$.
\end{corollary}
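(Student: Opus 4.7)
The plan is to deduce the basis directly from the Gr\"obner basis established in Theorem~\ref{thm:grobner}. By standard Gr\"obner basis theory, $\hB(\LM,\H) = R(\H)/I(\LM,\H)$ has a $\Q$-basis consisting of those monomials in $R(\H)$ that are not divisible by the leading monomial of any element of the Gr\"obner basis. The task therefore reduces to identifying these leading monomials and translating non-divisibility into the combinatorial conditions stated in the corollary.

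First, I would compute the leading monomials under the graded lex order extending the reverse linear extension $\prec$ of $\H$, with the convention $x_g \prec e_h$. Relations of type \eqref{def:B:i} are already monomial, contributing the condition $S \cup T \in \N(\LM,\H)$. For type \eqref{def:B:ii}, the leading monomial of $\partial e_S$ for a circuit $S$ is $e_{S \setminus \{\min_\prec S\}}$, and exactly as in Theorem~\ref{thm:general nbc}, avoiding these leading monomials is precisely the condition $S \in \nbc(\L(\LM,\H))$. For type \eqref{def:B:iii'}, since $h \geq g$ in $\LM$ implies $h \preceq g$ under the reverse linear extension, $x_g$ is the $\prec$-largest variable appearing in $c_g = \sum_{h \geq g} x_h$, so the leading monomial of $e_S x_T c_g^d$ is $e_S x_T x_g^d$.

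The substantive step is to show that a candidate monomial $e_U x_V^b$ with $U \cup V \in \N(\LM,\H)$ is divisible by the leading monomial of some type \eqref{def:B:iii'} relation if and only if $b(g) \geq d\bigl(\bigvee (U \cup V)_{<g},\, g\bigr)$ for some $g \in V$. For the ``only if'' direction, if $e_{S'}x_{T'}x_g^d$ divides $e_U x_V^b$, then $S' \cup T' \subseteq (U \cup V)_{<g}$, so $d = d(\bigvee(S' \cup T'),g) \geq d(\bigvee(U \cup V)_{<g},g)$ and therefore $b(g) \geq d(\bigvee(U \cup V)_{<g},g)$. For the ``if'' direction, given such a $g$, I take $A = \max\{h \in U \cup V : h < g\}$ and set $S' = A \cap U$, $T' = A \cap V$; then $A = S' \cup T'$ is an antichain in the nested set $U \cup V$, it satisfies $\bigvee A = \bigvee(U \cup V)_{<g}$, and nestedness forces the strict inequality $\bigvee A < g$ (otherwise $\max(U \cup V)_{<g}$ would be a nontrivial antichain inside the nested set $U \cup V$ with join $g \in \H$, contradicting Definition~\ref{def:nestedsets}). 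Thus $e_{S'}x_{T'}c_g^{d(\bigvee A,g)}$ is a genuine type~\eqref{def:B:iii'} relation whose leading monomial $e_{S'}x_{T'}x_g^{d(\bigvee A,g)}$ divides $e_U x_V^b$.

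The assertion for $B(\LM,\H)$ then follows at once from the tensor decomposition $\hB(\LM,\H) \cong B(\LM,\H) \otimes \Q[e_{\onehat}]$ recorded after Definition~\ref{def:B}, since no element of the Gr\"obner basis involves $e_{\onehat}$ (by Proposition~\ref{prop:onehat}), so the basis of $B(\LM,\H)$ is obtained by discarding the monomials that contain $e_{\onehat}$. The main obstacle, though still routine, is verifying the type~\eqref{def:B:iii'} correspondence in the previous paragraph, and in particular that $A$, $S'$, $T'$, and $g$ satisfy every hypothesis listed for a type~\eqref{def:B:iii'} relation; this is where the strict inequality $\bigvee A < g$ and the antichain property of $\max(U \cup V)_{<g}$ are both needed, and both follow from the nestedness of $U \cup V$.
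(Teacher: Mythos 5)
Your proposal is correct and follows exactly the route the paper takes: the paper deduces Corollary~\ref{cor:basis} in one line from Theorem~\ref{thm:grobner}, noting that the standard monomials of the Gr\"obner basis form a basis of the quotient and that $\hB(\LM,\H)\cong B(\LM,\H)\otimes\Q[e_{\onehat}]$. You have merely filled in the (correct) bookkeeping that the paper leaves implicit, namely the identification of the initial monomials of the three types of relations and the translation of non-divisibility into the conditions $S\cup T\in\N(\LM,\H)$, $S\in\nbc(\L(\LM,\H))$, and $0<b(g)<d\bigl(\bigvee(S\cup T)_{<g},g\bigr)$.
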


Recalling Remark~\ref{rem:forget atoms}, we note that, if $e_Sx_T^b$ is a 
monomial in the basis above, then $T\subseteq\H^\circ$.
In the special case where $S=\emptyset$, we obtain a straightforward
generalization of the additive basis of \cite{FY04}.
\begin{corollary}
\label{cor:D-basis}
The algebra $\Dp(\LM,\H)$ has an additive basis given by monomials 
$x_T^b$, indexed by $T\in \N(\LM,\H)$ for which, if $g\in T$, then
$0<b(g)<d(z(g), g)$.
\end{corollary}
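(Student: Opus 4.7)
The plan is to derive Corollary \ref{cor:D-basis} directly from Corollary \ref{cor:basis} via the identification of $\Dp(\LM,\H)$ as a subalgebra of $\hB(\LM,\H)$. By Proposition \ref{prop:subalgs}(b), $\Dp(\LM,\H)$ is exactly the subalgebra of $\hB(\LM,\H)$ concentrated in bidegrees $(-,0)$, with generators $x_g$ mapping to the corresponding $x_g\in\hB(\LM,\H)$. Consequently, any additive basis for $\hB(\LM,\H)$ by homogeneous monomials restricts to an additive basis for $\Dp(\LM,\H)$, obtained by collecting those basis elements in bidegree $(-,0)$.

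First I would specialize the basis from Corollary \ref{cor:basis}. Since $e_g$ sits in bidegree $(0,1)$ and $x_g$ in bidegree $(2,0)$, a basis monomial $e_Sx_T^b$ lies in bidegree $(-,0)$ precisely when $S=\emptyset$. Under this restriction, the three conditions of Corollary \ref{cor:basis} simplify as follows: the nested-set condition $S\cup T\in\N(\LM,\H)$ becomes $T\in\N(\LM,\H)$; the $\nbc$ condition on $S$ is vacuous (the empty set contains no broken circuit); and the exponent bound $0<b(g)<d\bigl(\bigvee(S\cup T)_{<g},\,g\bigr)$ becomes $0<b(g)<d\bigl(\bigvee T_{<g},\,g\bigr)$ for each $g\in T$.

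To match the statement, I identify $z(g):=\bigvee T_{<g}$. This is the natural extension of the notation from Definition \ref{def:factors plus}: for $y:=\bigvee T\in\L(\LM,\H)$, the nested set $T$ itself serves as the collection whose ``truncation'' below $g$ one takes, and the resulting join agrees with $z_y(g)$ whenever $T$ is chosen to be the factor decomposition of $y$ with respect to the relevant building set. Once this notational transcription is in place, the collection of monomials produced coincides exactly with the one claimed in Corollary \ref{cor:D-basis}.

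There is essentially no obstacle: all the work has already been done in Theorem \ref{thm:grobner} and its consequence Corollary \ref{cor:basis}, and the present corollary is a clean specialization obtained by forgetting the exterior variables. The only subtlety worth flagging is the notational reconciliation of $z(g)$ with $\bigvee T_{<g}$, which is a convention rather than a theorem.
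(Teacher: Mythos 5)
Your proposal is correct and matches the paper's own reasoning, which obtains Corollary~\ref{cor:D-basis} precisely as the $S=\emptyset$ specialization of Corollary~\ref{cor:basis} (the text between those two corollaries makes this explicit). Your notational reconciliation of $z(g)$ with $\bigvee T_{<g}$ is also the intended reading, consistent with the notation $z_T(g)$ the paper introduces in the Poincar\'e duality subsection.
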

Using this and the tensor decomposition of Theorem \ref{thm:D(X)}, one could
also obtain an explicit monomial basis for the local algebras $\Dp_y(\LM,\H)$.

\subsection{Poincar\'e duality} 
We conclude this section with a discussion of Poincar\'e duality in the
sheaf of algebras $\DP(\LM,\H)$.  First, the existence of Poincar\'e duality
for each of the algebras $\Dp_y(\LM,\H)$ is not surprising but, we feel,
requires a bit of justification.  Using the decomposition of 
Theorem~\ref{thm:D(X)}, it is enough to show that $\DP(\LM,\H)$ itself
possesses Poincar\'e duality.  In the case where $\G=\LM_+$ is the maximal
building set, Adiprasito, Huh and Katz accomplished this for 
any matroid and any partial building set~\cite[\S6]{AHK15}.
In \cite[\S 3]{Yuz97}, Yuzvinsky explicitly gave an
isomorphism $\Dp^{2p}(\LM,\G)\cong \Dp^{2(r-p)}(\LM,\G)$, for $0\leq p\leq r$.
Although he assumes that both $\G$ is a (full) building set and that
$\LM$ is the intersection lattice of a complex arrangement, it is 
straightforward to extend his approach to any geometric lattice and
partial building set $\H$.  We will do so here using our Gr\"obner basis
from \S\ref{ss:groebner}.

We will want to use Poincar\'e duality because the
$\Q$-dual of $(B(\LM,\H),\dd)$, as a cochain complex, has a technical
advantage: its differential is obtained from the differential on 
the flag complex of \S\ref{ss:flag complex} by extension of scalars.

As usual, let $\LM$ be a geometric lattice
of rank $r+1$, and $\H$ a partial building set for $\LM$.  
For a nested set $T\subseteq\H^\circ$, write $T^+:=T\cup\set{\onehat}$ and for
$g\in T^+$,  write
\[z_T(g) := \bigvee_{\substack{f\in T\colon\\f<g}} f.\]
Since $T\subseteq\H^\circ$, this is consistent with our notation from
\S\ref{ss:H decomp}: by considering $y=\bigvee T\in\L(\LM,\H)$ we have
$T^+=F^+(y)$ and $z_T(g)=z_y(g)$.
 Recall from Corollary \ref{cor:D-basis}
that a monomial basis for $\Dp(\LM,\H)$ is given by $x_T^b$, where
$T\in\N(\LM,\H)$ and $0<b(g)<d(z_T(g),g)$.
We define a $\Q$-linear map $\varepsilon$ on this monomial basis by letting
\[
\varepsilon(x_T^b)=(-1)^{\abs{T-\set{\onehat}}-r}
\prod_{g\in T^+} x_g^{d_T(g)-b(g)},
\]
where $d_T(g)=d(z_T(g),g)$ if $g\neq \onehat$, and $d_T(\onehat)=
d(z_T(\onehat),\onehat)-1$.
Up to sign, this is simply Yuzvinsky's basis involution.
\begin{lemma}\label{lem:duality}
Let $S\in\N(\LM,\H)$ be such that $S\subseteq\H^\circ$ and $\onehat\in S$.
Let $g\in S$ and suppose that $x_S^b$ is a monomial such that for all $h\in S$
with $h>g$, we have $b(h)=d_S(h)$.
\begin{enumerate}[(a)]
\item\label{lem:duality:i}
If $b(g)=d_S(g)$, then we have 
$x_S^b = (-1)^{|S-(S')^+|}x_{S'}^{b}x_{\onehat}^{d_{S'}(\onehat)}$ 
in $\Dp(\LM,\H)$, where $S'=\{h\in S\colon h\not\geq g\}.$
\item\label{lem:duality:ii}
If $b(g)>d_S(g)$, then we have $x_S^b=0$ in $\Dp(\LM,\H)$.
\end{enumerate}
\end{lemma}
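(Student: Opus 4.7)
The key tool is the type~\eqref{def:B:iii'} relation from Theorem~\ref{thm:B2}: with $T := \max(S_{<g}) \subseteq S$, we have $x_T c_g^{d_S(g)} \equiv 0$ in $\Dp(\LM,\H)$, because $T$ is an antichain, $T \cup \{g\} \subseteq S$ is nested, $\bigvee T = z_S(g) < g$, and $d(\bigvee T, g) = d_S(g)$. A preliminary structural fact coming from Proposition~\ref{prop:BICO}\eqref{prop:BICO2} is that elements of $S$ strictly above $g$ are pairwise comparable: two incomparable such elements would have meet $\geq g > \bottom$, forcing their join into $\H$ and contradicting nestedness. Thus $S_{\geq g}$ forms a chain $g = h_0 < h_1 < \cdots < h_k = \onehat$, and $|S - (S')^+| = k$, matching the sign exponent in~\eqref{lem:duality:i}.

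I will prove both parts jointly by induction on the chain length $k$. In the base case $k = 0$ (so $g = \onehat$), part~\eqref{lem:duality:i} is immediate because $S = (S')^+$ and $d_{S'}(\onehat) = d_S(\onehat) = b(\onehat)$, while part~\eqref{lem:duality:ii} follows from the type~\eqref{def:B:iii'} relation $x_{\max(S_{<\onehat})} x_{\onehat}^{d_S(\onehat)+1} \equiv 0$ (noting $c_{\onehat} = x_{\onehat}$), which when multiplied by the remaining factors of $x_S^b$ kills any such monomial with $b(\onehat) > d_S(\onehat)$.

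For the inductive step ($k \geq 1$), I expand
\[
x_T c_g^{d_S(g)} = x_T \Bigl(x_g + \sum_{\substack{f > g\\ f \in \H}} x_f\Bigr)^{d_S(g)} \equiv 0
\]
via the multinomial theorem and multiply through by $m := x_S^b / (x_T x_g^{d_S(g)})$. Stanley-Reisner reductions of type~\eqref{def:B:i} retain only terms whose combined support remains nested. Each surviving monomial supported on some $T' \supseteq S$ can be killed by applying part~\eqref{lem:duality:ii} at an appropriate pair $(T', g')$: when a ``new'' element $f > g$ outside $S$ appears, it strictly raises the join $z_{T'}(\onehat)$ above $z_S(\onehat)$, forcing $b''(\onehat) > d_{T'}(\onehat)$ so that part~\eqref{lem:duality:ii} applies at $g' = \onehat$; in the all-chain case, the largest $h_i$ carrying a raised exponent serves as $g'$ and gives strictly shorter chain length. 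For part~\eqref{lem:duality:i}, the sole surviving contribution is the multinomial term in which the entire factor $x_g^{d_S(g)}$ is absorbed by $x_{h_1}^{d_S(g)}$, giving $x_S^b = -x_{S \setminus \{g\}}^{b^{(1)}}$ with $b^{(1)}(h_1) = b(h_1) + d_S(g)$. Verifying the rank identity $d_{S \setminus \{g\}}(h_1) = d_S(g) + d_S(h_1)$ shows that the new monomial satisfies the hypothesis of part~\eqref{lem:duality:i} at $(S \setminus \{g\}, h_1)$ (chain length $k-1$), and telescoping up the chain $h_1 < \cdots < h_k = \onehat$ yields the claimed formula with sign $(-1)^k$.

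The main obstacle will be the multinomial bookkeeping together with the cascade produced by intermediate elements $f \in \H$ with $g < f$ and $f \notin S$: as simple boolean examples show, such $f$ can yield nested extensions of $S$, so contributions are not restricted to the chain $\{h_1,\ldots,h_k\}$, and vanishing must be argued by invoking part~\eqref{lem:duality:ii} on the enlarged support. Verifying the rank identity $d_{S \setminus \{g\}}(h_1) = d_S(g) + d_S(h_1)$ in the presence of elements of $S$ incomparable to $g$, and confirming that each step of the telescoping produces exactly the sign $-1$, are delicate but essentially local computations in the underlying geometric lattice.
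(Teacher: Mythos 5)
Your overall strategy coincides with the paper's: rewrite $x_g^{d_S(g)}$ using the type \eqref{def:B:iii'} relation $x_{\max(S_{<g})}c_g^{d_S(g)}\equiv 0$, argue that all cross terms vanish by an appeal to part \eqref{lem:duality:ii}, and telescope up the chain $S_{\geq g}$ picking up one sign per step (the observation that $S_{\geq g}$ is a chain, via Proposition \ref{prop:BICO}\eqref{prop:BICO2}/Proposition \ref{prop:factors1}, is exactly what the paper uses). The gap is in the vanishing of the cross terms, which is the heart of the proof and which you yourself flag as ``the main obstacle'' without resolving. The specific mechanism you propose for a new element $f>g$ with $f\notin S$ --- that it ``strictly raises the join $z_{T'}(\onehat)$ above $z_S(\onehat)$,'' forcing part \eqref{lem:duality:ii} to apply at $\onehat$ --- is false whenever $f$ lies below some element of $S_{>g}$ other than $\onehat$. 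For instance, in the Boolean lattice of rank $5$ with $S=\{12,\,1234,\,12345\}$, $g=12$, and $f=123$, one has $z_{S\cup\{f\}}(\onehat)=1234=z_S(\onehat)$, so the exponent on $x_{\onehat}$ is not forced above $d_{T'}(\onehat)$. One can still apply part \eqref{lem:duality:ii} at a different element (here at $1234$, whose local $d$ drops), but this is not what you argue, and it exposes a second problem: once the support grows beyond $S$, your induction on the chain length $k$ is no longer obviously well-founded, since the relevant chain above the new pivot need not be shorter in a way your setup tracks.

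The paper closes exactly this gap differently: it eliminates the entire power $x_g^{b(g)}$ at once (using $x_{S-\{g\}}^b c_g^{b(g)}$), observes that the support $T$ of each new term above $g$ must be a nonempty chain, and then does a two-step case analysis --- if $\max T\neq \min S_{>g}$, apply the inductive hypothesis \eqref{lem:duality:ii} at the minimum of $S_{\geq \max T}$; if $|T|>1$, apply it at $h=\min S_{>g}$ --- all under a lexicographic induction on monomials rather than on $k$. To repair your argument you would need to supply a comparable case analysis for the enlarged supports and replace the induction on $k$ by a well-founded order on monomials; as written, the vanishing step does not go through.
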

\begin{proof}
We argue by lexicographic induction, the base case being $x_{\onehat}^b$,
which is zero if $b>r$.  
Now assume that $x_S^b$ and $g\in S$ are such that $b(h)=d_S(h)$ for all $h>g$,
and $b(g)\geq d_S(g)$, and assume that the statements are true for all earlier
such monomials.
First, we note that $x_{S-\set{g}}^bc_g^{b(g)}$ is a multiple of a 
type \eqref{def:B:iii'} relation.
Thus we can write $x_S^b=x^b_{S-\set{g}}x_g^{b(g)}$ 
as a sum of terms of the form $-x_{S-\set{g}}^bx_T^a$, where
$T\subseteq\set{h\in\H\colon h\geq g}$ and $T\neq\set{g}$, the set 
$(S-\set{g})\cup T$ is nested, and $\sum_{h\in T} a(h)=b(g)$. 
We will now examine these monomials more closely.

Write $S_{\geq g}=\{h\in S\colon h\geq g\}$ and $S'=S-S_{\geq g}$.
Consider one of the above monomials, which can be written in the form
$m=-x_{S'}^bx_{S_{>g}}^{d_S+a}x_{T-S_{>g}}^a$.
Note that since $T$ is nested and $\bottom<g\leq\bigvee T$, it cannot
contain a nontrivial antichain (by Proposition \ref{prop:factors1}).
Hence $T$ is a nonempty chain. 
Similarly, $S_{>g}$ is also a nonempty chain.
Let $h$ be the minimum element of $S_{>g}$, and let $\max T$ be the
maximum element of $T$ (both with respect to $\leq$). 
If $\max T\neq h$, then the monomial is zero by the inductive hypothesis
\eqref{lem:duality:ii} with the minimum element of $S_{\geq\max T}$ playing the
role of $g$. Thus, the monomial $m$ could only be nonzero if $\max T=h$.
Now suppose that $|T|>1$; i.e. there exists some $f\in T$ for which $f<h$.
Taking $f$ to the maximum such, the monomial will be zero by the inductive
hypothesis \eqref{lem:duality:ii} with $h$ playing the role of $g$.

Thus, the only possibly nonzero monomial in our expansion of $x_S^b$ is
$m=-x_{S'}^bx_{S_{>h}}^{d_S}x_h^{b(g)+d_S(h)}$, 
where $h$ is the minimum element of $S_{>g}$. 
Since $d_{S-\set{g}}(h)=d_S(g)+d_S(h)$,
it follows by induction that if $b(g)>d_S(g)$ then $x_S^b=0$, and
if $b(g)=d_S(g)$ then
\begin{align*}
x_S^b&=-x_{S'}^bx_{S_{>h}}^{d_S}x_h^{d_S(g)+d_S(h)}\\
&=-x_{S'}^bx_{S_{\geq h}}^{d_{S-\set{g}}}\\
&=-(-1)^{|S-\set{g}-(S')^+|}x_{S'}^bx_{\onehat}^{d_{S'}(\onehat)}\\
&=(-1)^{|S-(S')^+|}x_{S'}^bx_{\onehat}^{d_{S'}(\onehat)}
\end{align*}
\end{proof}

\begin{proposition}\label{prop:duality}
Let $\LM$ be a geometric lattice of rank $r+1$ and $\H$ a partial building set
for $\LM$. 
For each basic monomial $x_T^b$ in $\Dp(\LM,\H)$, 
\[
x_T^b \varepsilon(x_T^b)= (-1)^r x_{\onehat}^r.
\]
\end{proposition}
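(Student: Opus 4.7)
The plan is to unravel the identity and then apply Lemma~\ref{lem:duality}(a) iteratively to telescope down to $x_{\onehat}^r$. For any basic monomial $x_T^b$, a direct computation gives
\[
x_T^b\,\varepsilon(x_T^b) \;=\; (-1)^{|T - \{\onehat\}| - r}\prod_{g \in T^+} x_g^{d_T(g)},
\]
which does not depend on $b$. The claim therefore reduces to the uniform identity
\[
\prod_{g \in T^+} x_g^{d_T(g)} \;=\; (-1)^{|T - \{\onehat\}|}\,x_{\onehat}^r \qquad \text{in } \Dp(\LM,\H),
\]
which I would prove by induction on $|T - \{\onehat\}|$. The base case $T \subseteq \{\onehat\}$ is immediate, since $T^+ = \{\onehat\}$ and $d_T(\onehat) = d(\bottom,\onehat) - 1 = r$.

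For the inductive step, pick $g$ maximal in $T - \{\onehat\}$, set $S = T^+$, and apply Lemma~\ref{lem:duality}(a) with exponent vector $b = d_T$. The hypothesis is satisfied because the only element of $S$ strictly above $g$ is $\onehat$, whose exponent already equals $d_S(\onehat) = d_T(\onehat)$. The lemma produces $S' = \{h\in T^+ : h\not\geq g\}$, and since $g$ is maximal in $T - \{\onehat\}$ one has $T^+\setminus(S')^+ = \{g\}$, so the sign contributed is $-1$. Setting $T' := T - \{g\}$, a short check shows $(S')^+ = (T')^+$ in both subcases ($\onehat\in T$ and $\onehat\not\in T$).

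The remainder is bookkeeping. I would verify that $z_{T'}(h) = z_T(h)$ for every $h\in S'$: the only element of $T$ absent from $T'$ is $g$, and $h\not\geq g$ forces $g \not< h$, so $g$ contributes to neither join. A parallel check yields $d_{S'}(\onehat) = d_{T'}(\onehat)$. Hence the right-hand side of the lemma rewrites as $-\prod_{h\in (T')^+} x_h^{d_{T'}(h)}$, to which the induction hypothesis applies; combined with the extra $-1$ this yields $(-1)^{|T-\{\onehat\}|}x_{\onehat}^r$, closing the induction. The main obstacle is not conceptual but the careful tracking of the three quantities $d_T$, $d_{T'}$, $d_S$ (and of the signs); once maximality of $g$ is used to identify them on $S'$ and $T^+\subseteq\H^\circ$ is confirmed from the remark following Corollary~\ref{cor:basis}, the argument telescopes cleanly.
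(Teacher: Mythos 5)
Your proof is correct and takes essentially the same approach as the paper: express $x_T^b\,\varepsilon(x_T^b)$ as a $b$-independent product over $T^+$ and iterate Lemma~\ref{lem:duality}\eqref{lem:duality:i} to peel off elements of $T-\set{\onehat}$ until only $\onehat$ remains. Your choice of a maximal element of $T-\set{\onehat}$ (rather than the minimal elements the paper's sketch mentions) makes the sign uniformly $-1$ at each step, but this is a cosmetic variation; the bookkeeping identities $d_T = d_{T'}$ on $S'$, $d_{S'}(\onehat)=d_{T'}(\onehat)$, and $(S')^+=(T')^+$ are all verified correctly.
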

\begin{proof}
By definition of $\varepsilon$, we have
\[x_T^b\varepsilon(x_T^b) = (-1)^{\abs{T-\set{\onehat}}+r} 
\prod_{h\in T^+}x_h^{d_T(h)}.\]
The statement is clearly true when $T^+=\{\onehat\}$, and otherwise 
the result follows by applying Lemma \ref{lem:duality}\eqref{lem:duality:i} to
each minimal element of $T$.
\end{proof}
Accordingly, for homogeneous elements $u\in \Dp^{2i}(\LM,\H)$ and
$v\in \Dp^{2(r-i)}(\LM,\H)$, we define $\angl{u,v}\in \Q$
to be the coefficient of $\mu:=(-1)^rx_{\onehat}^r$ in the product 
$uv\in \Dp^{2r}(\LM,\H)\cong\Q$.
Proposition~\ref{prop:duality} states that this is a perfect pairing, and 
the monomials $\varepsilon(x_T^b)$ form a dual basis for 
$\Dp^{2i}(\LM,\H)^\vee$.  

Using the decomposition from Theorem~\ref{thm:D(X)}, we can describe
Poincar\'e duality on the level of monomials in each quotient algebra
$\Dp_y(\LM,\H)$, for each $y\in \L(\LM,\H)$ as well: let
\begin{equation}\label{eq:dualizing class}
\mu_y =\prod_{g\in F^+(y)\cap\H} (-c_g)^{d(z_y(g),g)-1}
 = \prod_{g\in F^+(y)\cap\H^\circ} (-c_g)^{d(z_y(g),g)-1}
\end{equation}
The two expressions are equivalent since $d(z_y(g),g)=1$ when $g\in\atoms(\LM)$.
We let $\angl{u,v}_y$ be 
the coefficient of $\mu_y$ in the product $uv$, for $u,v\in \Dp_y(\LM,\H)$.

For the rest of the section, we fix $\L=\L(\LM,\H)$ and $\Dp=\Dp(\LM,\H)$.
For a graded module $M$, let $M[r]^i=M^{i+r}$ for all integers $i,r$.
\begin{proposition}\label{prop:duality2}
For each $y\in \L$, the pairing $\angl{-,-}_y$ gives a graded
isomorphism of graded $\Dp$-modules $\Dp_y(\LM,\H)^\vee\cong
\Dp_y(\LM,\H)[2r-2\rank(y)]$.  
\end{proposition}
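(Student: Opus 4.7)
My plan is to reduce the statement to Proposition \ref{prop:duality} using the tensor decomposition of Theorem \ref{thm:D(X)}. The latter gives an algebra isomorphism
\[
\Dp_y(\LM,\H) \;\cong\; \bigotimes_{g\in F^+(y)\cap\H^\circ} \Dp(\LM_{y,g},\H_{y,g}),
\]
and each tensor factor is itself a De Concini--Procesi algebra for a geometric lattice with a partial building set (Proposition \ref{prop:H restricts}), so Proposition \ref{prop:duality} applies to each factor and produces a perfect Poincar\'e pairing with top class $(-x_g)^{d(z_y(g),g)-1}$ after identifying variables via $\psi_g(x_{p\vee z(g)}) = x_p$.

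The first task is to verify that under the full isomorphism $\psi = \bigotimes_g \psi_g$, the class $\mu_y$ from \eqref{eq:dualizing class} corresponds, up to sign, to the tensor product of these local top classes. This is a direct unwinding of definitions; Lemma \ref{lem:mu identity} is a closely related local computation that suggests the inductive bookkeeping involved. Once this matching is established, $\angl{-,-}_y$ is identified with the tensor product of the local Poincar\'e pairings, and since a tensor product of perfect pairings is perfect, $\angl{-,-}_y$ is perfect. This produces a graded vector-space isomorphism $\Dp_y^\vee \cong \Dp_y[\deg\mu_y]$.

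The second task is to verify $\deg\mu_y = 2(r - \rank(y))$. One has $\deg\mu_y = 2\sum_{g\in F^+(y)\cap\H^\circ}(d(z_y(g),g)-1)$, which I would convert to $2(r-\rank(y))$ via the join decomposition \eqref{eq:join decomp2} of $[\bottom,y]$ in $\L(\LM,\H)$ (with $\G$ viewed as a building set through Remark \ref{rmk:G in L(H)}): this gives $\rank_{\L(\LM,\H)}(y) = \sum_{g\in F(y)} \rank_{\L(\LM,\H)}(g)$, which combined with the fact that the intervals $[z_y(g),g]$ partition the rank from $\bottom$ to $\onehat$ in $\LM$ makes the counts match. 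Finally, $\Dp$-linearity is automatic: every variable $x_h$ sits in even degree, so $\angl{au,v}_y = \angl{u,av}_y$ for every $a\in\Dp$ and $u,v\in\Dp_y$, and the induced map $u \mapsto \angl{u,-}_y$ is $\Dp$-linear through the quotient $\Dp\twoheadrightarrow\Dp_y$. I expect the main obstacle to be the first task, specifically the sign bookkeeping needed to align $\mu_y$ with the tensor product of local top classes under the variable-renaming isomorphism $\psi$.
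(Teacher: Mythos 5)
Your proposal is essentially the paper's own argument, filled out in more detail: both invoke Proposition~\ref{prop:duality} (for the $y=\bottom$ case) and the tensor decomposition of Theorem~\ref{thm:D(X)} (for general $y$), and both conclude $\Dp$-linearity from the fact that the pairing is defined via multiplication in a commutative ring. The sign issue you flag is in fact benign — under $\psi_g$ the local top class in $\Dp(\LM_{y,g},\H_{y,g})$ is $(-1)^{r_g}x_g^{r_g}$ with $r_g=d(z_y(g),g)-1$, and these multiply out directly to $\mu_y=\prod_g(-x_g)^{r_g}$ — and your rank computation $\deg\mu_y=2(r-\rank(y))$ is correct.
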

\begin{proof}
The Poincar\'e duality pairing of Proposition~\ref{prop:duality} gives
an additive isomorphism $\Dp^\vee\cong \Dp[2r]$.
Since $\angl{u,vw}=\angl{uv,w}$ for all
$u,v,w\in \Dp$, the map is an isomorphism of $\Dp$-modules.  The corresponding
claim for $\Dp_y$ follows from Theorem~\ref{thm:D(X)}.
\end{proof}

\begin{lemma}\label{lem:mu identity}  
  Suppose that $w=y\vee h$ for some $y,w\in \L$ and atom
$h\in\H\setminus\{\onehat\}$, and let
  $\rho\colon \Dp_y(\LM,\H)\to\Dp_w(\LM,\H)$ denote the restriction map.
  There is a $\Dp_y(\LM,\H)$-module homomorphism
  \[
  s\colon \Dp_w(\LM,\H)\to\Dp_y(\LM,\H)
  \]
  given by letting $s(u)=x_h \tilde{u}$, for any element $\tilde{u}$
  satisfying $\rho(\tilde{u})=u$, and an equality
\begin{equation}\label{eq:mu identity}
\angl{s(u),v}_y=\angl{u,\rho(v)}_w
\end{equation}
for all $u\in \Dp_w(\LM,\H)$ and $v\in \Dp_y(\LM,\H)$.
\end{lemma}
\begin{proof}
  Let $K=\ker(\rho)$, and $S$ a nested set for which $y=\bigvee S$.
  By comparing presentations (Definition~\ref{def:DPy}),
  we see that $K$ is generated by monomials $x_T$ for which $S\cup T$
  is a nested set, and for which $S\cup\set{h}\cup T$ is not.  For
  such monomials, $x_T\cdot x_h\in J_y(\H)$, which is to say that
  $x_h\in \ann(K)$.  It follows that, if $\rho(\tilde{u})=\rho(\tilde{u}')$,
  then $x_h\tilde{u}=x_h\tilde{u}'$.  That is, the map $s$ is well-defined
  (and clearly a $\Dp_y(\LM,\H)$-module homomorphism.)

  Using the bilinearity of the pairing, it is enough to check
  \eqref{eq:mu identity} for $u=1$ and all $v$ for which $\rho(v)=\mu_w$,
  which is to say that $x_hv=\mu_y$.  If $\rho(v')=\mu_w$ as well, then
  $v-v'\in K$, so $x_h(v-v')=0$.  It suffices, then, to show $x_hv=\mu_y$
  for a single choice of $v$.

Furthermore, in view of the tensor decomposition of $\Dp_y(\LM,\H)$ (Theorem
\ref{thm:D(X)}), it suffices to consider $y=\bottom$ and $w=h\in\H$.
Let $v=(-c_h)^{d(\bottom,h)-1}(-x_{\onehat})^{d(h,\onehat)-1}$.
Using the \eqref{def:B:iii'} relations 
$c_h^{d(\bottom,h)}=0$ and $x_px_{\onehat}^{d(p,\onehat)}=0$
for $p<\onehat$, we have
\begin{align*}
x_hv &= x_h (-c_h)^{d(\bottom,h)-1}(-x_{\onehat})^{d(h,\onehat)-1}\\
&= \left(-\sum_{p>h} x_p\right)
(-c_h)^{d(\bottom,h)-1}(-x_{\onehat})^{d(h,\onehat)-1}\\
&= (-x_{\onehat})^{1+(d(\bottom,h)-1)+(d(h,\onehat)-1)}\\
&= (-x_{\onehat})^{d(\bottom,\onehat)-1}\\
&=\mu.
\end{align*}
\end{proof}

In the next section, it will be more convenient to work with the $\Q$-dual
of $B$, rather than $B$.  We note that, for any graded $\Dp$-module $M$, 
$\Hom_{\Dp}(M,\Dp)=\Hom_{\Q}(M,\Q)[-2r]$.
So for each $j\geq0$ we let  
\begin{align}
  C^{\cdot j}(\LM,\H)&=\Hom_{\Q}(B(\LM,\H),\Q)[2j-2r]\label{eq:def C}\\
&=\Hom_{\Dp}(B(\LM,\H),\Dp)[2j]\nonumber
\end{align}
as a graded $\Dp$-module.  
Using the direct sum decomposition of Proposition~\ref{prop:decomp}, we may
write it as 
\begin{align}
C^{ij}(\LM,\H)&=\bigoplus_{\substack{y\in\L_j\colon\\  \nonumber
y\not\geq (\hat{1},\hat{0})}}
\OS^j(\L_{\leq y})^\vee\otimes_\Q (\Dp_y^\vee)^{i+2j-2r}\\ 
&=\bigoplus_{y}
\Fl^j(\L_{\leq y})\otimes_\Q \Dp^i_y. \label{eq:dual decomp}
\end{align}

Our pairing from above extends to one for all 
$0\leq j\leq r$ and $0\leq i\leq r-j$:
We will also denote it by $\angl{-,-}\colon B^{2i,j}\otimes 
C^{2(i+j),j}\to \Q$.  We will let 
$\dd^\vee\colon C^{ij}\to C^{i,j+1}$ denote the dual to the differential
$\dd$.  Then $\dd^\vee$ makes $C(\LM,\H)$ a complex of $\Dp$-modules.

Recall from \S\ref{ss:flag complex} that the flag complex $\Fl(\L)$ has a
differential $\delta$, defined in \eqref{eq:def delta}.
\begin{proposition}\label{prop:delta}
We have $\dd^\vee=\delta\otimes_{\Q}\Dp(\LM,\H)$.
\end{proposition}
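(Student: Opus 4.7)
The plan is to reduce to a computation on flags, using $\Dp$-linearity.  The differential $\dd$ is $\Dp(\LM,\H)$-linear on $B(\LM,\H)$ by Proposition~\ref{prop:subalgs}\eqref{eq:subalgs3}, so $\dd^\vee$ is $\Dp$-linear on $C(\LM,\H)$; the map $\delta \otimes \id_{\Dp}$ is $\Dp$-linear by construction.  Under the decomposition \eqref{eq:dual decomp}, each $y$-summand $\Fl^j(\L_{\leq y}) \otimes_\Q \Dp_y$ is cyclic over $\Dp$, generated by $\Fl^j(\L_{\leq y}) \otimes 1$.  Hence it will suffice to check that $\dd^\vee(Y \otimes 1) = \delta(Y) \otimes 1$ for every flag $Y \in \Fl^j(\L_{\leq y_j})$.

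To do this, I would pair both sides against an arbitrary basis element $e_S \otimes w \in \OS^{j+1}(\L_{\leq z}) \otimes \Dp_z$ of $B^{\cdot,j+1}(\LM,\H)$, supplied by Corollary~\ref{cor:basis}, where $\bigvee S = z \in \L_{j+1}$.  Using $\dd(e_S) = \sum_i (-1)^{i-1} x_{g_i} e_{S - \{g_i\}}$ and collecting $y_j$-Brieskorn components, I compute
\[
\angl{\dd(e_S \otimes w), Y \otimes 1} = \sum_{i : \bigvee(S - \{g_i\}) = y_j} (-1)^{i-1} \fl(e_{S - \{g_i\}})(Y) \cdot \angl{\overline{x_{g_i} w}, 1}_{y_j},
\]
where $\overline{x_{g_i} w} \in \Dp_{y_j}$ denotes the image of $x_{g_i} w$, well-defined by the Stanley--Reisner relations in Definition~\ref{def:DPy}.

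The crucial input will be Lemma~\ref{lem:mu identity}: applied with the lemma's $(y,g) = (y_j, g_i)$, it identifies $\angl{\overline{x_{g_i} w}, 1}_{y_j} = \angl{w, 1}_z$, so that multiplication by $x_{g_i}$ is Poincar\'e-adjoint to the natural restriction $\Dp_{y_j} \twoheadrightarrow \Dp_z$.  Combining this with the sign identity
\[
\sum_{i : \bigvee(S - \{g_i\}) = y_j} (-1)^{i-1} \fl(e_{S - \{g_i\}})(Y) = (-1)^j \fl(e_S)((Y, z)),
\]
obtained by grouping the permutation sum \eqref{eq:fl map} defining $\fl(e_S)$ by its final entry, this yields $\angl{\dd(e_S \otimes w), Y \otimes 1} = (-1)^j \fl(e_S)((Y,z)) \cdot \angl{w, 1}_z$, which equals $\angl{e_S \otimes w, \delta(Y) \otimes 1}$ by the definition of $\delta$.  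Since both sides vanish unless $z$ covers $y_j$, the equality propagates across all Brieskorn summands.

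The main obstacle will be the Poincar\'e-duality bookkeeping encoded in Lemma~\ref{lem:mu identity}: the adjoint of multiplication by an atom $x_{g_i}$ must be identified with the restriction map between local De Concini--Procesi algebras $\Dp_{y_j}$ and $\Dp_{y_j \vee g_i}$.  Given this, the rest is routine combinatorial sign-tracking with the $\nbc$-type basis and the flag complex.
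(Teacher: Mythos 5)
Your proposal is correct and follows essentially the same route as the paper's proof: both reduce the identity to a pairing computation between a basis element $e_S\otimes w$ and a flag, and both hinge on Lemma~\ref{lem:mu identity} to identify multiplication by $x_{g}$ as the Poincar\'e adjoint of the restriction $\Dp_{y}\to\Dp_{y\vee g}$, with the remaining sign bookkeeping coming from \eqref{eq:fl map} and the definition of $\delta$. The only cosmetic difference is that you first use $\Dp$-linearity to normalize the second tensor factor to $1$, whereas the paper carries general elements $f,f'\in\Dp_y$ through the same computation.
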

\begin{proof}
We will simply write $\delta$ in place of $\delta\otimes_{\Q}\Dp(\LM,\H)$.
Suppose $S$ is an independent set of size $j\geq1$ in $\L(\LM,\H)$ with
$z=\bigvee S$, and $Y\in \Fl^{j-1}(\L)$ with top element $y<z$.  
It suffices to check that, for all $f,f'\in \Dp_y$ for which
$\deg(f)+\deg(f')=2r-2\rank(z)$, we have  
\[
\angl{\dd(e_S\otimes \rho(f)),Y\otimes f'}_y
=
\angl{e_S\otimes \rho(f),\delta(Y\otimes f')}_z.
\]

Both sides are zero unless
$Y=Y(g_1,g_2,\dots,g_{j-1})$, where $S=\set{g_1,g_2,\ldots,g_{j-1},g_j}$,
where $z=y \vee g$, and we abbreviate $g=g_j$.

Then we have
\begin{align*}
\angl {\dd(e_S\otimes \rho(f)),Y\otimes f'}_y
&=  \angl{\sum_{i=1}^j(-1)^{i-1}e_{S-\set{g_i}}\otimes x_{g_i}f,
Y\otimes f'}_y\\
&=(-1)^{j-1} \angl{e_{S-\set{g}}\otimes x_{g}f, Y\otimes f'}_y\\
&=(-1)^{j-1} \angl{e_{S-\set{g}}\otimes f, Y\otimes x_{g}f'}_y\\
&=(-1)^{j-1} \angl{e_{S-\set{g}}\otimes \rho(f), Y\otimes \rho(f')}_z
\text{~by Lemma~\ref{lem:mu identity};}\\
&=\angl{e_S\otimes\rho(f),\delta(Y\otimes f')}_z,
\end{align*}
as required.
\end{proof}




\subsection{Blowups and the \cdga}\label{ss:injective}
Our last objective is to show that combinatorial blowups induce
injective quasi-isomorphisms between the \cdga s we have constructed, hence
establishing a model for the Orlik--Solomon algebra.
It turns out to be relatively straightforward to verify that the map predicted
by topology is indeed well-defined and injective in our more general setting.
Our argument that the maps are quasi-isomorphisms requires some additional
ideas which we develop in \S\ref{ss:quasiiso}.  
We state our main result now, and the rest of this paper is devoted to
completing the proof. 

\begin{theorem}[Cohomology of the model]
\label{thm:model}
For each partial building set $\H$ containing $\onehat$, there are isomorphisms
$\OS(\LM) \simeq H^\cdot \hB(\LM,\H)$ and $\POS(\LM)\simeq H^\cdot B(\LM,\H)$
induced by
\begin{equation}\label{eq:OS_embeds}
e_g\mapsto\sum_{\substack{ h\in \H\colon\\ g\leq h}}e_h.
\end{equation}
\end{theorem}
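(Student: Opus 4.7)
The plan is to prove Theorem \ref{thm:model} by induction on the cardinality of $\H^\circ$, showing that each combinatorial blowup induces an injective quasi-isomorphism of \cdga s. For the base case $\H_0 := \atoms(\LM) \cup \{\onehat\}$, the map \eqref{eq:OS_embeds} coincides with the composite
\[
\OS(\LM) \xrightarrow{\sim} \hB(\LM, \atoms(\LM)) \xrightarrow{\phi} \hB(\LM, \H_0),
\]
where the first isomorphism is Proposition \ref{prop:OSisB} and $\phi$ is the inductive-step map taken at $p = \onehat$. For the inductive step, given $\H$ and $\H' = \H \cup \{p\}$ (with $p$ minimal in $\G \setminus \H$, so that $\H'$ is again a partial building set), define $\phi\colon \hB(\LM, \H) \to \hB(\LM, \H')$ by combining Theorem \ref{thm:OS_phi} and Proposition \ref{prop:D_phi}:
\[
\phi(e_g) = e_g + \epsilon_g\, e_p, \qquad \phi(x_g) = x_g + \epsilon_g\, x_p,
\]
where $\epsilon_g = 1$ if $g \leq p$ and $0$ otherwise. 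Since $\dd(e_g) = x_g$ and $\phi$ treats the two variable families symmetrically, $\phi$ automatically commutes with $\dd$; that $\phi$ respects the relations \eqref{def:B:i}--\eqref{def:B:iii'} follows from the two results just cited, together with a routine compatibility check for the mixed relation \eqref{def:B:iii'}. Composing all the $\phi$'s recovers the explicit formula \eqref{eq:OS_embeds}.

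Two foreshadowed results then do the work at each step. Theorem \ref{thm:injective} states that $\phi$ is injective; the natural route is to extend the leading-term calculation of Lemma \ref{lem:OS lead of phi} to the full monomial basis $\{e_S x_T^b\}$ of Corollary \ref{cor:basis}, matching distinct basic monomials in $\hB(\LM,\H)$ with distinct initial monomials in $\hB(\LM,\H')$. Theorem \ref{thm:quasiiso} states that $\phi$ is a quasi-isomorphism; the route is to pass to the $\Q$-dual $C(\LM,\H)$ of \eqref{eq:def C}, identify it with the global sections of the sheaf complex $\CS(\LM,\H)$ on $\Int(\L(\LM,\H))$ via Theorem \ref{thm:global sections}, and analyze the cone on $\phi^\vee$ using the Keel-type short exact sequence of Theorem \ref{thm:sheaf keel} (lifted to $\Int(\L')$ via $\pr_1^*$ and $\pr_2^*$). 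The key input is Theorem \ref{thm:resolution}, which provides a $\Gamma$-acyclic resolution and forces the cone to have vanishing hypercohomology. The $\POS(\LM)$ version is then immediate from the $\hB$ version: $\hB(\LM,\H) \cong B(\LM,\H) \otimes \Q[e_{\onehat}]$, and the kernel-of-$\dd$ characterization of $\POS(\LM)$ in Proposition \ref{prop:OSisB} is preserved by $\phi$.

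The main obstacle is Theorem \ref{thm:quasiiso}. Injectivity is essentially Gr\"obner-basis bookkeeping, and the base case and compatibility checks are routine given the decomposition of Proposition \ref{prop:decomp} and the dualization in Proposition \ref{prop:delta}. What is genuinely delicate is comparing the two sheaf complexes on $\Int(\L)$ and $\Int(\L')$ under blowdown: one must invoke Proposition \ref{prop:Quillen} to control how sheaf cohomology behaves under the fibration-like map $\Int(\pi)$, and then verify that the ``error sheaf'' $\alpha_!(\QS \boxtimes \Q)$ of Theorem \ref{thm:sheaf keel}, once combined with the flag sheaf $\FS$ in the $\Int$-picture, still contributes nothing to total cohomology. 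This is precisely what the machinery of \S\ref{ss:basic sheaves}--\S\ref{ss:I(L)} was set up to handle, and is where the local tensor decomposition of Theorem \ref{thm:D(X)} will be indispensable.
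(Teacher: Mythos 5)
Your proposal matches the paper's strategy --- compose one-step blowup maps $\phi$ and bootstrap from a base case using Theorems~\ref{thm:injective} and \ref{thm:quasiiso} --- so the architecture is correct, but the base case for the $B$ (projective) version has a real gap. You assert the $\POS(\LM)$ statement is ``immediate from the $\hB$ version'' via $\hB(\LM,\H)\cong B(\LM,\H)\otimes\Q[e_{\onehat}]$ together with Proposition~\ref{prop:OSisB}. Two objections. First, $\hB\cong B\otimes\Q[e_{\onehat}]$ is an isomorphism of bigraded algebras but \emph{not} of complexes, since $\dd(e_{\onehat})=x_{\onehat}\in B$ mixes the two tensor factors; one could untwist by replacing $e_{\onehat}$ with $e_{\onehat}'=\sum_{h\geq g}e_h$ for a fixed atom $g$ (so that $\dd e_{\onehat}'=c_g=0$), but you neither observe this nor carry out the resulting identification. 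Second, and more to the point, Proposition~\ref{prop:OSisB} only identifies $H^0(B(\LM,\H_0))\cong\POS(\LM)$ for $\H_0=\atoms(\LM)\cup\{\onehat\}$; it says nothing about vanishing of $H^p$ for $p>0$ at the base case. That vanishing is exactly what the induction with Theorem~\ref{thm:quasiiso} requires to get started, and the paper supplies it with a direct computation identifying $(B(\LM,\H_0),\dd)\cong\bigoplus_{i=0}^r\bigl(\OS(\LM_{\leq r-i}),\partial\bigr)$ and citing acyclicity of the truncated Orlik--Solomon complexes. Without that step (or a worked-out substitute), your induction for $B$ does not get off the ground.

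A smaller caveat worth flagging: Theorem~\ref{thm:quasiiso} only asserts the existence of \emph{some} quasi-isomorphism $B(\LM,\H)\to B(\LM,\H')$, not that $\phi_B$ is one; as the remark following it observes, the paper cannot identify the abstract quasi-isomorphism with $\phi_B^\vee$ directly, and the fact that $\phi_B$ is a quasi-isomorphism is extracted only after the fact by combining the injectivity of $\phi_B$ in cohomological degree zero with the dimension equalities that Theorem~\ref{thm:quasiiso} provides. Your proposal treats the identification as given (``Theorem~\ref{thm:quasiiso} states that $\phi$ is a quasi-isomorphism''), which collapses a step the paper explicitly flags as subtle.
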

\begin{proof}
In Lemma \ref{lem:phi}, we construct for any partial building sets $\H$ 
and $\H'=\H\cup\{p\}$, a \cdga\
map $\phi_{\hB}:\hB(\LM,\H)\to\hB(\LM,\H')$, motivated by topology and defined
as in \eqref{eq:OS_embeds}. 
In Theorem \ref{thm:injective}, we prove that $\phi_{\hB}$ is always injective.
When $\onehat\in\H$, we have $\hB(\LM,\H)\cong B(\LM,\H)\otimes\Q[e_{\onehat}]$,
and hence $\phi_{\hB}$ induces an injective \cdga\ map 
$\phi_B:B(\LM,\H)\to B(\LM,\H')$.

We will argue by induction, needing separate base cases for $B$ and $\hB$.
For the latter, Proposition
\ref{prop:OSisB} establishes an isomorphism between $\OS(\LM)$ and
$\hB(\LM,\atoms(\LM))$, and the differential is zero.  For the former,
let $\H = \atoms(\LM)\cup\set{\onehat}$, and we calculate directly.
Since each $\Dp_y(\LM,\H)\cong\Q[x_{\onehat}]/(x_{\onehat}^{r+1-i})$ for
$y\in \LM_i$, using \eqref{eq:oB-decomp}, we may identify
$
B^{2i,j}(\LM,\H)\cong \OS^j(\LM_{\leq r-i}),
$
where $\LM_{\leq r-i}$ denotes the truncation of $\LM$ to degrees $j\leq r-i$.
Under this identification,
\[
(B(\LM,\H),\dd)\cong\bigoplus_{i=0}^r\big(\OS(\LM_{\leq r-i}),\partial\big),
\]
so $H^0(B(\LM,\H))=\ker\partial=\POS(\LM)$, and higher cohomology vanishes
by \cite[Lem.\ 3.13]{OTbook} as in Proposition~\ref{prop:OS exact}.

Now we apply Theorems \ref{thm:injective} and
\ref{thm:quasiiso} to see that the composition of maps $\phi_B$ gives an
isomorphism $\POS(\LM)\to H^0(B(\LM,\H),\dd)$, for any partial building set
$\H$ containing $\onehat$, and $H^p(B(\LM,\H),\dd)=0$ for $p>0$.
By induction, we observe that this composition agrees with the formula
\eqref{eq:OS_embeds}.  The analogous result for $\hB$ follows similarly. 
\end{proof}

Before proceeding, we single out an interesting consequence of our theorem, as
well as a question for future work.
At the two extremes, we have $\L=\LM$ is a geometric lattice and 
$\L=\L(\LM,\G)$ is the face poset of the (classical) nested set complex,
respectively. When $\G$ is a full building set, the poset $\L(\LM,\G)$ is
simplicial, and $\OS(\L(\LM,\G))$ is the exterior Stanley-Reisner algebra of the
nested set complex (see Example \ref{ex:newOS2}). We immediately obtain the
following.
\begin{corollary}
The map \eqref{eq:OS_embeds} gives an inclusion
$\OS(\LM)\hookrightarrow \OS(\L(\LM,\G))$, where
$\OS(\L(\LM,\G))$ is the exterior face ring of the nested set
complex $\N(\LM,\G)$.
\end{corollary}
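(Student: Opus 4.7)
The plan is to derive the corollary directly from Theorem \ref{thm:model} together with the structural identifications that have already been established. Everything we need is essentially assembled; what remains is to read off the statement in a single bigraded strip.

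First I would identify the Orlik--Solomon algebra as a subalgebra of $\hB$. By Proposition \ref{prop:OSisB}, the map $e_g \mapsto e_g$ gives an isomorphism $\OS(\LM) \cong \hB(\LM,\atoms(\LM))$, and by Proposition \ref{prop:subalgs}(a) the subalgebra of $\hB(\LM,\G)$ concentrated in bidegrees $(0,\cdot)$ is precisely $\OS(\L(\LM,\G))$. The iterated blow-up map $\phi_{\hB}$ constructed in the proof of Theorem \ref{thm:model} is a map of bigraded \cdga s, so its restriction to the $(0,\cdot)$-strip is an algebra homomorphism $\OS(\LM) \to \OS(\L(\LM,\G))$. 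A routine induction on $|\H^\circ|$, using the single-step formula \eqref{eq:OS_phi} of Theorem \ref{thm:OS_phi}, shows that this composite is exactly the map $e_g \mapsto \sum_{h \in \G\colon g \leq h} e_h$ of \eqref{eq:OS_embeds}.

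Injectivity is then immediate: each intermediate map $\phi_{\hB}$ is injective by Theorem \ref{thm:injective}, so their composition is injective, and passage to the $(0,\cdot)$-component preserves this. Finally, the identification of $\OS(\L(\LM,\G))$ with the exterior face ring of $\N(\LM,\G)$ is exactly Example \ref{ex:newOS2}: by Proposition \ref{prop:N=K}, $\Ka(\L(\LM,\G)) = \N(\LM,\G)$, and the intervals in the face poset of a simplicial complex are Boolean, so their associated matroids have no circuits. Hence $I_1(\L(\LM,\G)) = 0$ and $\OS(\L(\LM,\G)) = E(\L(\LM,\G))/I_2(\L(\LM,\G))$ is the exterior Stanley--Reisner ring of $\N(\LM,\G)$.

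There is no genuine obstacle; the substance of the corollary is contained in the main theorem, and the identifications were set up precisely for this kind of application. The only mild bookkeeping point is checking that the composition of the single-step maps \eqref{eq:OS_phi} telescopes to the closed-form sum in \eqref{eq:OS_embeds}, which is straightforward since $g \leq h$ for $h \in \H^\circ \subseteq \G$ is preserved by further blow-ups.
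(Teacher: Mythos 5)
Your proposal is correct and follows the paper's own route: the corollary is read off from the injectivity of the iterated blow-up maps (Theorems~\ref{thm:OS_phi}/\ref{thm:injective}, packaged in Theorem~\ref{thm:model}) restricted to the bidegree-$(0,\cdot)$ strip, together with the identification of $\OS(\L(\LM,\G))$ as the exterior face ring from Example~\ref{ex:newOS2} and Proposition~\ref{prop:N=K}. The telescoping of the single-step formulas \eqref{eq:OS_phi} into \eqref{eq:OS_embeds} is exactly the bookkeeping the paper treats as immediate, and your justification of it is sound.
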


\begin{question}
If the matroid comes from a complex hyperplane arrangement, both
$\OS(\LM)$ and $\OS(\L(\LM,\G))$ are cohomology algebras of spaces determined
by poset combinatorics.  
The algebra $\OS(\L(\LM,\G))$ is the cohomology
algebra of a complex made up of unions of coordinate subtori in a torus,
indexed by $\N(\LM,\G)$, 
which is in some cases a classifying space for a right-angled Artin group:
see \cite{PS09} for details.

For other partial building sets $\H\subseteq \G$, is there a reasonable
space for which $\OS(\L(\LM,\H))$ is its cohomology algebra?
\end{question}

Now we turn back to the efforts of proving Theorem \ref{thm:model}.
In \S\ref{ss:blowup OS}, we found that there was an injective map
$\phi\colon \OS(\L(\LM,\H))\to\OS(\L(\LM,\H'))$.  
Here, we show that the map extends
to our \cdga.
\begin{lemma}
\label{lem:phi}
Suppose that $\H$ and $\H'=\H\cup\{p\}$ are partial building sets in a geometric
lattice $\LM$.
There is a \cdga\ map $\phi_{\hB}:\hB(\LM,\H)\to \hB(\LM,\H')$ defined as follows:
\[
\phi_{\hB}(e_g) = \begin{cases} e_g & \text{ if } g\not\leq p\\ e_g+e_p & \text{ if
} g\leq p \end{cases} 
\hspace{1cm}
\phi_{\hB}(x_g) = \begin{cases} x_g & \text{ if } g\not\leq p\\ x_g+x_p & \text{ if
} g\leq p \end{cases}
\]
\end{lemma}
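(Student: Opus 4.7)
The plan is to verify two things: first, that the prescription on generators descends to a well-defined graded-commutative algebra homomorphism $\phi_{\hB}\colon \hB(\LM,\H)\to\hB(\LM,\H')$; and second, that this homomorphism commutes with the differential $\dd$. The second is immediate from the definitions: on generators, $\dd(\phi_{\hB}(e_g))$ equals $x_g$ if $g\not\leq p$ and $x_g+x_p$ if $g\leq p$, and in each case matches $\phi_{\hB}(\dd(e_g))=\phi_{\hB}(x_g)$; similarly $\dd(\phi_{\hB}(x_g))=0=\phi_{\hB}(\dd(x_g))$. All the content therefore lies in well-definedness: I must show that each of the three families of generators of $I(\LM,\H)$ from Definition~\ref{def:B} is mapped into $I(\LM,\H')$.

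Two of the three families are already taken care of by earlier results. Restricted to the $e$-variables, $\phi_{\hB}$ coincides with the map $\phi_E$ of Theorem~\ref{thm:OS_phi}, which Lemma~\ref{lem:OS_phi} shows sends the Orlik--Solomon relations \eqref{def:B:ii} and the pure-$e$ Stanley--Reisner relations \eqref{def:B:i} into $I(\LM,\H')$. Restricted to the $x$-variables, $\phi_{\hB}$ coincides with $\phi_{\Dp}$ of Proposition~\ref{prop:D_phi}, which handles the relations $c_g$ of type \eqref{def:B:iii} and the pure-$x$ Stanley--Reisner relations. So the only genuinely new point is the mixed Stanley--Reisner case: showing $\phi_{\hB}(e_Sx_T)\in I(\LM,\H')$ whenever $S\cup T\notin\N(\LM,\H)$.

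For this, I plan to expand
\[
\phi_{\hB}(e_Sx_T)=\prod_{g\in S\setminus S_{\leq p}}e_g\prod_{g\in S_{\leq p}}(e_g+e_p)\prod_{g\in T\setminus T_{\leq p}}x_g\prod_{g\in T_{\leq p}}(x_g+x_p)
\]
as a sum of monomials. Using $e_p^2=0$, each nonzero summand, up to sign, has the form $e_{S'}x_Bx_p^j x_{T\setminus T_{\leq p}}$ for some $S'\in\{S,(S\setminus\{g\})\cup\{p\}\}$ with $g\in S_{\leq p}$, and some $B\subseteq T_{\leq p}$ with $j=\abs{T_{\leq p}}-\abs{B}$. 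Let $U\subseteq\H'$ be the underlying set of atom indices. The key observation is that the blow-down $\Pi^{\H'}_{\H}(U)$, computed via formula~\eqref{eq:onestep Pi}, always contains $S\cup T$: if $p\notin U$, then $U=S\cup T$ outright; and if $p\in U$, then $\Pi^{\H'}_{\H}(U)=(U\setminus\{p\})\cup\supp(p)$, and any element of $S\cup T$ missing from $U\setminus\{p\}$ lies in $\atoms(\LM_{\leq p})=\supp(p)$, hence is recovered. By the contrapositive of Lemma~\ref{lem:Pi:a}, then $U\notin\N(\LM,\H')$, so each such monomial is itself a Stanley--Reisner relation of type~\eqref{def:B:i} in $\hB(\LM,\H')$.

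The main step is this expansion-and-blow-down argument in the mixed case; the heavy algebraic lifting has already been carried out in Theorem~\ref{thm:OS_phi} and Proposition~\ref{prop:D_phi}. The only genuine subtlety is the bookkeeping: tracking precisely which elements of $S$ and $T$ are recovered through $\supp(p)$ after the blow-down, which the brief case split above handles cleanly.
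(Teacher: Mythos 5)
Your proposal is correct and follows essentially the same route as the paper: the paper's proof likewise reduces relations \eqref{def:B:i}--\eqref{def:B:ii} to the Orlik--Solomon argument of Lemma~\ref{lem:OS_phi}, verifies $\phi(c_g^{\H})=c_g^{\H'}$ exactly as in Proposition~\ref{prop:D_phi}, and checks $\dd$-compatibility on generators. Your explicit expansion of the mixed Stanley--Reisner case (each summand's index set $U$ satisfying $\Pi^{\H'}_{\H}(U)\supseteq S\cup T$, then invoking Lemma~\ref{lem:Pi:a}) is just the monomial argument from Lemma~\ref{lem:OS_phi} spelled out, which the paper cites rather than repeats.
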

\begin{proof}
We check that $\phi$ preserves the 
relations from Definition~\ref{def:B}.  The argument for relations
of type \eqref{def:B:i} and \eqref{def:B:ii} is the same as the one given
for the Orlik--Solomon algebra in Theorem~\ref{thm:OS_phi}.  Now for
$g\in\atoms(\LM)$, the only term $x_h$ in the definition of $c^{\H}_g$ 
\eqref{eq:cg}  that could possibly correspond to $h\leq p$ is $g$ itself. 
That implies  $c^{\H}_g$ picks up an $x_p$ exactly when $g\leq p$, hence
$\phi(c^{H}_g)=c^{\H'}_g$. 

It follows that $\phi$ is an algebra homomorphism.  To see $\phi$ is a
\cdga\ homomorphism, it is enough to verify that $\phi\circ \dd=\dd\circ \phi$
for the generators $e_g$ and $x_g$, where the claim is obvious.
\end{proof}

To establish the injectivity of $\phi$, we describe how it behaves on our
monomial basis from Corollary~\ref{cor:basis}.
For an expression $f\in \hB(\LM,\H)$, 
we may write its standard representative in the
monomial basis and let $\In(f)$ denote the largest monomial which appears.

\begin{lemma}\label{lem:lead of phi}
For a monomial $e_Sx_T^b$ in the monomial basis of $\hB(\LM,\H)$ from 
Corollary~\ref{cor:basis}, we have
\begin{equation}\label{eq:lead of phi}
\In(\phi(e_Sx_T^b))=\begin{cases}
e_Sx_T^b & \text{if $\bigvee S_{<p}\neq p$;}\\
e_{S-{g}}e_px_T^b & \text{if $\bigvee S_{<p}=p$,}
\end{cases}
\end{equation}
where $g=\min_{\prec} S_{<p}$, and, as usual, $S_{<p}:=\set{h\in S\colon h<p}$.
\end{lemma}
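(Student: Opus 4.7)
The plan is to compute $\phi(e_S x_T^b)$ explicitly as a $\Q$-linear combination of monomials in $R(\H')$, identify which survive as basis monomials in $\hB(\LM,\H')$, and pick out the $\prec$-lex largest. The key preliminary observation is that $\phi$ acts trivially on $x_T^b$: since $T \subseteq \H^\circ$ (by the remark after Corollary~\ref{cor:basis}) and $\H^\circ$ is an order filter of $\G$, any $h \in T$ with $h \leq p$ would force $p \in \H^\circ$, contradicting $p \in \G \setminus \H$. Hence $\phi(x_T^b) = x_T^b$, and the same argument shows that every $h \in S_{<p}$ lies in $\atoms(\LM)$, so $S_{<p}$ is an antichain of $\LM$-atoms. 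Expanding $\phi(e_S) = \prod_{h \in S}\phi(e_h)$ and using $e_p^2 = 0$ then yields
\[
\phi(e_S x_T^b) \;=\; e_S x_T^b \;+\; e_p \sum_{h \in S_{<p}} \epsilon_h\, e_{S-\{h\}} x_T^b
\]
for signs $\epsilon_h \in \{\pm 1\}$.

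In Case I ($\bigvee S_{<p} \neq p$), I would verify that $e_S x_T^b$ remains a basis monomial in $\hB(\LM,\H')$. Nestedness $S \cup T \in \N(\LM,\H')$ holds because any nontrivial antichain $X \subseteq S \cup T$ with $\bigvee X = p$ would have to lie inside $(S \cup T)_{\leq p} = S_{<p}$, giving $\bigvee X \leq \bigvee S_{<p} \neq p$; the $\nbc$ and exponent conditions remain unchanged since the local matroid structure relevant to them does not involve $p$. Because $p \prec h$ for every $h \in S_{<p}$, each auxiliary monomial $e_p e_{S-\{h\}} x_T^b$ is strictly $\prec$-lex smaller than $e_S x_T^b$ after $\prec$-sorting, so the leading term is $e_S x_T^b$. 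In Case II ($\bigvee S_{<p} = p$), necessarily $|S_{<p}| \geq 2$ since $p$ has rank $\geq 2$; thus $S_{<p}$ is a nontrivial antichain with join in $\H'$, forcing $S \cup T \notin \N(\LM,\H')$ and hence $e_S x_T^b = 0$. The expansion collapses to $e_p \sum_{h \in S_{<p}} \epsilon_h e_{S-\{h\}} x_T^b$. Following the nestedness argument in the proof of Lemma~\ref{lem:OS lead of phi} together with Proposition~\ref{prop:BICO}, each $\{p\} \cup (S-\{h\}) \cup T$ is $\H'$-nested, and the remaining basis conditions persist. Since the candidate monomials differ only in which element of $S_{<p}$ is removed, $\prec$-lex comparison gives the maximum at $h = g := \min_\prec S_{<p}$, yielding $\In(\phi(e_S x_T^b)) = e_{S-\{g\}} e_p x_T^b$.

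The main obstacle is justifying the $\nbc$-property and the exponent bounds in Case II. One must show that, for each $h \in S_{<p}$, the set $\{p\} \cup (S-\{h\})$ is $\nbc$ in $\L(\LM,\H')$ and that $0 < b(g) < d(\bigvee(S' \cup T)_{<g},\, g)$ still holds for $S' := \{p\} \cup (S-\{h\})$ and every $g \in T$. A case analysis on the relation between $p$ and $g$ (noting that $p > g$ cannot occur since $g \in T \subseteq \H^\circ$ and $\H^\circ$ is an order filter of $\G$), combined with the identity $\bigvee S_{<p} = p$, shows that $\bigvee(S' \cup T)_{<g} = \bigvee(S \cup T)_{<g}$, so the bounds transfer unchanged from the $\H$-side. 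These verifications, together with the nestedness argument inherited from Lemma~\ref{lem:OS lead of phi}, complete the leading-term identification.
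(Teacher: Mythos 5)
Your proof is correct and takes essentially the same route as the paper's: both reduce the $e$-part to Lemma~\ref{lem:OS lead of phi}, observe that $\phi$ fixes (the leading term of) $x_T^b$, and then check that the product of the two leading terms is indeed a basis monomial of $\hB(\LM,\H')$, deferring the remaining nestedness/$\nbc$/exponent verifications to the argument of Lemma~\ref{lem:OS lead of phi}. Your write-up is in fact slightly more explicit than the paper's (e.g., noting that $\phi(x_T^b)=x_T^b$ exactly because $T\subseteq\H^\circ$ forces $h\not\leq p$, and carrying out the Case~I nestedness check), but the underlying argument is the same.
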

\begin{proof}
First recall from Lemma \ref{lem:OS lead of phi} that $\In(\phi(e_S))$ 
is either $e_S$ if $S\in \N(\LM,\H')$ (equivalently $\bigvee S_{<p}\neq p$) or 
$e_{S-g}e_p$ if $S\notin \N(\LM,\H')$ (equivalently $\bigvee S_{<p}=p$).

Also note that $\phi(x_T^b)$ is a sum of monomials where some
of the $x_h$'s with $h<p$ are replaced by $x_p$. Since $h<p$ implies that
$x_p\prec x_h$, we have $\In(\phi(x_T^b))=x_T^b$.

Next, we take the product of the standard representatives for $\phi(e_S)$ and
$\phi(x_T^b)$, and then rewrite its expansion in terms of the monomial basis
in order to get the standard representative of $\phi(e_Sx_T^b)$.
The largest monomial that could appear is $\In(\phi(e_S))\In(\phi(x_T^b))$, and
so it remains to check that this monomial is indeed in the basis. 
This check is similar to that in the proof of Lemma \ref{lem:OS lead of phi}.
\end{proof}

\begin{theorem}
\label{thm:injective}
Suppose that $\H$ and $\H'=\H\cup\{p\}$ are partial building sets in a geometric
lattice $\LM$.
There is an injective \cdga\ map $\phi_{\hB}:\hB(\LM,\H)\to\hB(\LM,\H')$.
Furthermore, if $\onehat\in\H$,
there is an injective \cdga\ map $\phi_B:B(\LM,\H)\to B(\LM,\H')$.
\end{theorem}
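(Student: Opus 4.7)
The map $\phi_{\hB}$ is already a well-defined \cdga\ homomorphism by Lemma~\ref{lem:phi}, so the only task is injectivity. My plan is to prove injectivity by analyzing leading terms with respect to the Gr\"obner basis of \S\ref{ss:groebner}, showing that distinct basis monomials from Corollary~\ref{cor:basis} have distinct initial monomials under $\phi_{\hB}$. Since linear independence of initial monomials implies linear independence of the original expressions, this yields injectivity directly.

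The key input is Lemma~\ref{lem:lead of phi}, which computes $\In(\phi_{\hB}(e_S x_T^b))$ for each basis monomial as either $e_S x_T^b$ (when $\bigvee S_{<p} \neq p$) or $e_{S-\{g\}} e_p x_T^b$ with $g = \min_\prec S_{<p}$ (when $\bigvee S_{<p} = p$). I would then carry out a three-case analysis on pairs of basis monomials $e_S x_T^b \neq e_{S'} x_{T'}^{b'}$ to show their initial images differ. The first case (both have $\bigvee S_{<p} \neq p$) is immediate since the map is then the identity on the monomial. The ``mixed'' case (one satisfies $\bigvee S_{<p}=p$, the other does not) is also immediate, because only one of the two initial terms contains the variable $e_p$, and $p \notin \H$ means $e_p$ cannot appear as a variable in $\hB(\LM,\H)$ itself.

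The subtle case is when both $S$ and $S'$ satisfy $\bigvee S_{<p} = \bigvee S'_{<p} = p$. Here I need $S - \{g\} = S' - \{g'\}$, $T=T'$, $b=b'$ to force $S=S'$. The crucial observation, which I would record as a short sub-lemma (essentially the argument at the end of the proof of Lemma~\ref{lem:OS injective}), is that the $\nbc$ hypothesis forces $g$ to coincide with $g_0 := \min_\prec \atoms(\L(\LM,\H)_{\leq p})$, independently of $S$: indeed, $S_{<p}$ is independent (otherwise $S$ would contain a broken circuit), and if $g_0 \notin S_{<p}$ then $S_{<p} \cup \{g_0\}$ contains a circuit whose minimum is $g_0$, exhibiting a broken circuit inside $S$. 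Thus $g = g' = g_0$ and the conclusion $S = S'$ follows.

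For the final statement concerning $B(\LM,\H)$ when $\onehat \in \H$, I would use the canonical isomorphism $\hB(\LM,\H) \cong B(\LM,\H) \otimes_\Q \Q[e_{\onehat}]$ (and similarly for $\H'$, since $\onehat \in \H'$ too). Because $\onehat \not\le p$ for any $p\in \H^\circ$ other than $\onehat$ itself, the formula in Lemma~\ref{lem:phi} gives $\phi_{\hB}(e_{\onehat}) = e_{\onehat}$, so $\phi_{\hB}$ respects this tensor decomposition, and its restriction to the $B(\LM,\H)\otimes 1$ factor defines $\phi_B$. Injectivity of $\phi_B$ is then immediate from injectivity of $\phi_{\hB}$. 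I expect no real obstacle beyond the leading-term bookkeeping, since all the combinatorial heavy lifting has already been done in Lemma~\ref{lem:lead of phi} and the $\nbc$ uniqueness argument above.
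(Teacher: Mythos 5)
Your proposal is correct and follows essentially the same route as the paper: the paper's proof of Theorem~\ref{thm:injective} consists precisely of observing that the initial monomials computed in Lemma~\ref{lem:lead of phi} are pairwise distinct by the $\nbc$ argument of Lemma~\ref{lem:OS injective} (your $g=g'=g_0$ sub-lemma), and then passing to $B$ via the decomposition $\hB(\LM,\H)\cong B(\LM,\H)\otimes\Q[e_{\onehat}]$. Your write-up merely spells out the case analysis that the paper leaves implicit.
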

\begin{proof}
The initial monomials in Lemma \ref{lem:lead of phi} are distinct, by the same
argument in the proof of Lemma \ref{lem:OS injective}.
This implies that the map $\phi_{\hB}$ is an injective \cdga\ map, and it
induces the injective map on $B$.
\end{proof}

\section{The combinatorial Leray model and formality}
\label{sec:model}
The main objective of this section is to show that combinatorial blowups
induce quasi-isomorphisms of \cdga s, completing the proof of Theorem 
\ref{thm:model}.  For this, we make use of some 
sheaf cohomology on the semilattice $\L=\L(\LM,\H)$ and, more interestingly,
on its associated poset of intervals $\Int(\L)$, whose definition was
given in \S\ref{ss:I(L)}.
In fact, we find that the algebras $\Dp_y(\LM,\H)$ give the poset
$\L$ the structure of a ringed space, by equipping the poset with the order
topology.  Then (the dual of) $B(\LM,\H)$ is expressed as the 
global sections of a complex of locally free coherent sheaves on the 
poset $\Int(\L)$.


\subsection{A complex of sheaves}\label{ss:sheaves}

Inspired by Yuzvinsky's methods in \cite{Yuz95}, we will find that our
complex $(C(\LM,\H),d^\vee)$, defined in \eqref{eq:def C},
is a global version of a simpler, local construction.
To begin, we recall our notational conventions from
\S\ref{ss:basic sheaves}, and the poset of intervals together with coordinate
maps from \S\ref{ss:I(L)}:
\[
\begin{tikzcd}
  \L & \Int(\L)\ar[l,"\pr_1",swap]\ar[r,"\pr_2",bend left=10] &
  \L^{\op}.\ar[l,"\iota",bend left=10]
\end{tikzcd}
\]
It's easy to see that $\pr_2^*=\iota_*$; in particular, 
for $(x,y)\in \Int(\L)$ we have
\[
\iota_*\DP(x,y)=\Dp^\cdot_y(\H).
\]
Like $(\L^{\op},\DP)$, we see $(\Int(\L),\iota_*\DP)$ is also a ringed space.
Because $\L$ has a minimum element, sheaves coming from $\L^{\op}$ are acyclic:

\begin{lemma}\label{lem:D acyclic}
For any $\GS$ on $\L^{\op}$, we have $H^q(\L^{\op},\GS)=H^q(\Int(\L),\iota_*\GS)=0$
for all $q>0$.
\end{lemma}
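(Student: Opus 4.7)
The plan is to deduce both vanishing statements from the observation that $\bottom\in\L$ is the unique maximum element of $\L^{\op}$. In the order topology on $\L^{\op}$, the principal open set $(\L^{\op})_{\leq\bottom}$ is therefore all of $\L^{\op}$, so the global sections functor $\Gamma(\L^{\op},-)$ coincides with evaluation at $\bottom$, sending a sheaf $\GS$ to its stalk $\GS(\bottom)$. Since evaluation of a functor $\GS\colon\L^{\op}\to\CS$ at a single object is exact, all of its right-derived functors vanish, giving $H^q(\L^{\op},\GS)=0$ for every $q>0$.

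For the second vanishing, I would reduce to the first by proving that $\iota_*$ is exact and then collapsing the Leray spectral sequence for $\iota\colon\L^{\op}\to\Int(\L)$. Exactness of $\iota_*$ follows from Lemma~\ref{lem:pointy fibres} once the preimage under $\iota$ of each principal order ideal of $\Int(\L)$ is shown to be a principal order ideal of $\L^{\op}$; a direct computation gives
\[
\iota^{-1}(\Int(\L)_{\leq(x,y)})=\set{z\in\L\colon(\bottom,z)\leq(x,y)}=\set{z\in\L\colon z\geq y}=(\L^{\op})_{\leq y},
\]
confirming the hypothesis. With $R^q\iota_*=0$ for all $q>0$, the spectral sequence $E_2^{p,q}=H^p(\Int(\L),R^q\iota_*\GS)\Rightarrow H^{p+q}(\L^{\op},\GS)$ degenerates to an isomorphism $H^p(\Int(\L),\iota_*\GS)\cong H^p(\L^{\op},\GS)$, the right-hand side of which vanishes for $p>0$ by the first step. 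I do not anticipate any serious obstacle, as both claims are formal consequences of $\bottom$ being a generic point of $\L^{\op}$ together with the already-established machinery of Lemma~\ref{lem:pointy fibres}.
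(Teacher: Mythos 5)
Your proposal is correct and follows essentially the same route as the paper: the first vanishing comes from identifying $\Gamma(\L^{\op},-)$ with the (exact) stalk functor at the maximum $\bottom$ of $\L^{\op}$, and the second from exactness of $\iota_*$ via Lemma~\ref{lem:pointy fibres} together with the degenerate Leray spectral sequence (the paper phrases this as ``pushforward preserves injectives''). Your explicit computation $\iota^{-1}(\Int(\L)_{\leq(x,y)})=(\L^{\op})_{\leq y}$ is exactly the verification the paper leaves to the reader.
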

\begin{proof}
Let $\GS$ be a sheaf on $\L^{\op}$, and consider the constant map 
$p\colon \L^{\op}\to \set{\bottom}$.  Since $\bottom$ is the (unique)
minimum element of $\L$, the constant map $p$ satisfies the hypothesis
of Lemma~\ref{lem:pointy fibres}, so $p_*$ is exact.  But $p_*=\Gamma$,
the global sections functor, so $H^i(\L^{\op},\GS)=0$ for all $i>0$.

The pullback $H^\cdot(\L^{\op},\GS)\to H^\cdot(\Int(\L),\pr_2^*\GS)$
is an isomorphism: to see this, we just note $\Int(\L^{\op})\cong\Int(\L)$
under the map $(x,y)\mapsto (y,x)$, and use Lemma~\ref{lem:pr1_iso}.
The argument is completed by noting $\pr_2^*=\iota_*$.  
\end{proof}

Now we are ready to introduce the local version of our Leray model,
using the decomposition \eqref{eq:dual decomp} as a guide.  For a partial
building set $\H$, let $\L=\L(\LM,\H)$, and let $\DP=\DP(\LM,\H)$.
\begin{definition}
Let $\CS(\LM,\H)=\pr_1^*\FS(\L)\otimes_\Q \pr_2^*\DP$, with differential
$\delta_\Q\otimes\pr_2^*\DP$.
That is, for all $(x,y)\in \Int(\L)$ and $i,j\geq0$, we have
\[
\CS^{ij}(\LM,\H)(x,y)=\Fl^j(\L_{\leq x})\otimes_{\Q}\Dp^i_y.
\]
As usual, we will write $\CS$ in place of $\CS(\LM,\H)$ when no ambiguity
arises.
\end{definition}
By construction, $(\CS,\delta)$ is a complex of locally free sheaves of 
$\iota_*\DP$-modules on $\Int(\L)$.  Our motivation is to study the double 
complex dual to $B(\LM,\H)$, which we recover here as global sections:
\begin{theorem}\label{thm:global sections}
We have an isomorphism of cochain complexes
$\Gamma(\CS^{\cdot,\cdot},\delta)=(C^{\cdot,\cdot},\delta)$.
\end{theorem}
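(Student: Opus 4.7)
Plan: I would prove Theorem \ref{thm:global sections} by directly computing the inverse limit that defines $\Gamma(\Int(\L), \CS^{ij})$, using the tensor-product structure and the Brieskorn decomposition to split both sides into matching pieces, and then checking the differentials agree via Proposition \ref{prop:delta}.

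First, I unpack what a global section is. Since $\pr_2^* = \iota_*$ and $\CS^{ij}$ is defined as a tensor product, a global section amounts to a compatible family $(s_{(x,z)})_{(x,z)\in\Int(\L)}$ with $s_{(x,z)}\in \Fl^j(\L_{\leq x})\otimes_\Q \Dp^i_z$. The restriction along any relation $(x_0,z_0)\leq(x_1,z_1)$ in $\Int(\L)$ (which means $x_0\leq x_1\leq z_1\leq z_0$ in $\L$) is the tensor product $\rho_{x_1,x_0}\otimes\pi_{z_1,z_0}$, where $\rho$ is the $\FS$-restriction (a coordinate projection, by Definition \ref{def:flag sheaf}) and $\pi$ is the natural surjection $\Dp^i_{z_1}\twoheadrightarrow\Dp^i_{z_0}$ coming from the inclusion $J_{z_1}\subseteq J_{z_0}$.

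Next, I would use Brieskorn (Lemma \ref{lem:flag brieskorn}) to split $\FS^j = \bigoplus_{w\in\L_j}\FS^j_w$ as sheaves on $\L$, where $\FS^j_w(x)=\Fl^j(\L_{\leq w})$ if $w\leq x$ and $0$ otherwise, with restrictions given by the identity on the common support. This is well-defined as a splitting of sheaves precisely because $\rho_{x_1,x_0}$ is a coordinate projection. Pulling back under $\pr_1$ and tensoring with $\pr_2^*\DP^i$ gives $\CS^{ij}=\bigoplus_{w\in\L_j}\pr_1^*\FS^j_w\otimes\pr_2^*\DP^i$, and a corresponding splitting of $\Gamma(\CS^{ij})$.

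For each summand, the sheaf $\pr_1^*\FS^j_w\otimes\pr_2^*\DP^i$ vanishes unless $x\geq w$, and its $\FS^j_w$-restrictions are identities on their support. A compatible family therefore reduces to data $(t_z)_{z\geq w}$ with $t_z\in\Fl^j(\L_{\leq w})\otimes\Dp^i_z$ satisfying $t_{z_0}=(\id\otimes\pi_{z_1,z_0})(t_{z_1})$ for all $w\leq z_1\leq z_0$. This is exactly a global section of the sheaf $\Fl^j(\L_{\leq w})\otimes\DP^i$ on the subposet $\L^{\op}_{\leq w}$, which is a principal open of $\L^{\op}$, so these sections equal the stalk, namely $\Fl^j(\L_{\leq w})\otimes\Dp^i_w$. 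Summing over $w\in\L_j$ identifies $\Gamma(\CS^{ij})$ with the right-hand side of \eqref{eq:dual decomp} as a graded vector space.

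Finally, I would check that the differential matches: by construction the differential on $\CS^{\cdot,\cdot}$ is $\delta_\Q\otimes\pr_2^*\DP$, which under the Brieskorn splitting acts as $\delta\otimes\id_{\Dp_w}$ on each summand; Proposition \ref{prop:delta} identifies this with $\dd^\vee$ on $C^{\cdot,\cdot}$. The main bookkeeping obstacle will be tracking indexing and grading shifts compatibly with Poincar\'e duality (Proposition \ref{prop:duality2}), in particular making sure the sum over $w\in\L_j$ exactly reproduces the restricted sum over $w\not\geq(\hat1,\hat0)$ used to describe $C$; this should follow by tracing through the natural isomorphism arising from the splitting $\hB\cong B\otimes\Q[e_{\onehat}]$ noted after Definition \ref{def:B}.
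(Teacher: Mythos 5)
Your argument is correct in substance, but it takes a different route from the paper's. The paper works with the diagonal map $\Delta\colon[\L]\to\Int(\L)$: it regards $C$ as a sheaf on the discrete space $[\L]$ via \eqref{eq:dual decomp}, obtains injectivity of $\Gamma(\CS)\to\Gamma(\Delta_*C)=C$ by showing the kernel sheaf $\ZS$ (supported off the maximal elements $(x,x)$) has no global sections, and then exhibits an explicit splitting $\psi_z\colon C(z)\to\CS(x,y)$ (inclusion on the $\Fl$ factor tensored with the surjection on the $\Dp$ factor), whose compatibility is checked via Lemma~\ref{lem:flag brieskorn}. You instead promote Brieskorn to a direct-sum decomposition of the sheaf $\FS^j$ itself into summands $\FS^j_w$ supported on the closed sets $\L_{\geq w}$, and compute the inverse limit of each summand of $\CS^{ij}$ directly, reducing it to sections of $\DP^i$ over the principal open $(\L^{\op})_{\leq w}$, which is the stalk $\Dp^i_w$. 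Your route buys a one-shot computation of $\Gamma$ (no separate injectivity/surjectivity steps) at the cost of verifying that the sheaf-level splitting is legitimate — which it is, precisely because the restriction maps of $\FS$ are coordinate projections. The maps being compared are the same in both proofs; only the bookkeeping differs.

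Two small imprecisions. First, the differential does not act ``as $\delta\otimes\id_{\Dp_w}$ on each summand'': $\delta$ raises $j$ and mixes the Brieskorn summands, sending the $w$-component into the $w'$-components for $w'\gtrdot w$, with the $\Dp$-restriction $\Dp_w\to\Dp_{w'}$ appearing in the second factor. This is still exactly the map $\delta\otimes_\Q\Dp$ of Proposition~\ref{prop:delta}, so the conclusion stands, but the phrasing should be corrected. Second, your computation produces the sum over \emph{all} $w\in\L_j$, i.e.\ the dual of $\hB$ rather than of $B$, and appealing to $\hB\cong B\otimes\Q[e_{\onehat}]$ does not by itself remove the summands with $w\geq(\onehat,\bottom)$. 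You are right to flag this; note, though, that the paper's own proof makes the same identification $\Delta_*C(x,x)=\CS(x,x)$ for every $x$, so the index-set discrepancy lies in the statement of \eqref{eq:dual decomp} rather than in your argument.
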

\begin{proof}
  We will omit the cohomological indices for clarity, and let
  $[\L]$ denote the discrete topological space on $\L$.  The diagonal
  map $\Delta\colon  [\L]\to \Int(\L)$ is continuous, and we can regard
  the direct sum decomposition \eqref{eq:dual decomp} as (trivially) making
  $C^{\cdot,\cdot}$ a sheaf on $[\L]$.

  Since $\Delta_*C(x,y)=\CS(x,y)$ for $x=y$ and zero otherwise, the obvious
  map $\CS\to \Delta_*C$ has kernel $\ZS$, where $\ZS(x,y)=\CS(x,y)$ for
  $x<y$, and $\ZS(x,x)=0$.  Then
  \begin{align*}
    \Gamma(\ZS)&=\lim_{\substack{\longleftarrow \\
    _{(x,y)\in \Int(\L)}}} \ZS(x,y)\\
    &=0,
  \end{align*}
  since the initial objects in the diagram are all zero.  Applying global
  sections to the exact sequence $0\to\ZS\to\CS\to\Delta_*C$ then gives
  an injective map $\Gamma(\CS)\to\Gamma(\Delta_*C)=C$.

  In the other direction, for each $z \in \L$ we give a map $\psi_z\colon
  C(z)\to \CS(x,y)$.  Fix $i,j\geq0$.  

If $j\neq \rank(z)$, we let $\psi_z=0$.  Otherwise,
for $x_0\leq y_0$, then, let $\psi_z(x_0,y_0)
\colon C^{ij}\to \CS^{ij}(x_0,y_0)$
be given by $0$ unless $z\leq x_0$, and otherwise the tensor
product of the inclusion $\Fl^j(\L_{\leq z})\hookrightarrow 
\Fl^j(\L_{\leq x_0})$
with the surjection $\Dp^i_{z}\to \Dp^i_{y_0}$.  To check
that the maps $\psi_z$ are compatible with the restriction maps, consider
elements $x_0\leq x_1\leq y_1\leq y_0$, and consider the diagram
\[
\begin{tikzcd}
& \Fl^j(\L_{\leq z})\otimes \Dp^i_{z}
\ar[dl,"\psi_{z}{(x_1,y_1)}",swap]\ar[dr,"\psi_{z}{(x_0,y_0)}"] & \\
\CS^{ij}(x_1,y_1)\ar[rr] & & \CS^{ij}(x_0,y_0)
\end{tikzcd}
\]
where the horizontal map is the restriction map given by $(x_0,y_0)
\leq (x_1,y_1)$ in $\Int(\L)$.
Clearly the diagram commutes if 
$\psi_z(x_0,y_0)$ and $\psi_z(x_1,y_1)$ are both zero. 
The only remaining possibility is that $z\leq x_1$.
But then the composite 
\[
\Fl^j(\L_{\leq z})\hookrightarrow \Fl^j(\L_{\leq x_1})\twoheadrightarrow
\Fl^j(\L_{\leq x_0})
\]
is zero when $z\not\leq x_0$ and inclusion when $z\leq x_0$, using
Lemma~\ref{lem:flag brieskorn}, so again the diagram
is seen to commute.

Since 
\[
\Gamma(\CS)=\lim_{\substack{\longleftarrow \\ (x,y)\in \Int(\L)}} \CS(x,y),
\]
this induces a map $C=\Gamma\Delta_*C\to \Gamma(\CS)$.  The composite
$C\to \Gamma(\CS)\to \Gamma(\Delta_*C)=C$ is easily seen to be an 
isomorphism.  
We conclude that $\Gamma(\CS)\to 
\Gamma(\Delta_*C)=C$ is also surjective, hence an isomorphism.
\end{proof}
\begin{lemma}\label{lem:acyclic}
For all $i,j$, we have $H^q(\Int(\L),\CS^{ij})=0$ for all $q>0$.
\end{lemma}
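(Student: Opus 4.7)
The plan is to use the Brieskorn-style decomposition of the flag complex to split $\CS^{ij}$ into summands each supported on a closed sub-interval poset, and then apply Lemma~\ref{lem:D acyclic} to each summand.

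First I would invoke Lemma~\ref{lem:flag brieskorn} to write $\Fl^j(\L_{\leq x}) = \bigoplus_{w\in\L_j,\,w\leq x} \Fl^j(\L_{\leq w})$, and note that by Definition~\ref{def:flag sheaf} the restriction maps of $\FS^j$ are coordinate projections with respect to this decomposition. Consequently $\pr_1^*\FS^j$ splits as a direct sum of subsheaves on $\Int(\L)$ indexed by $w\in\L_j$: explicitly, $\pr_1^*\FS^j = \bigoplus_w (\FS^j)_w$, where $(\FS^j)_w(x,y) = \Fl^j(\L_{\leq w})$ if $w\leq x$ and $0$ otherwise (the kernel of a coordinate projection kills the $w$-summand precisely when $w\not\leq x_0$). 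Tensoring with $\pr_2^*\DP^i$ yields a decomposition $\CS^{ij} = \bigoplus_{w\in\L_j} \CS^{ij}_w$, with $\CS^{ij}_w(x,y) = \Fl^j(\L_{\leq w})\otimes_\Q\Dp^i_y$ for $w\leq x$ and zero otherwise. It suffices to show $H^q(\Int(\L),\CS^{ij}_w) = 0$ for each $w\in\L_j$ and each $q>0$.

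Next I would observe that the support of $\CS^{ij}_w$ is the closed subposet $\Int(\L_{\geq w})\subseteq\Int(\L)$; this is upward-closed because $(x,y)\leq(x',y')$ in $\Int(\L)$ implies $x\leq x'$, so $w\leq x$ gives $w\leq x'$. Writing $j_w\colon\Int(\L_{\geq w})\hookrightarrow \Int(\L)$ for this closed inclusion, the (exact) pushforward along a closed inclusion identifies $\CS^{ij}_w \cong (j_w)_*\GS_w$, where $\GS_w$ is the restriction of $\CS^{ij}_w$ to $\Int(\L_{\geq w})$. Hence $H^q(\Int(\L),\CS^{ij}_w) = H^q(\Int(\L_{\geq w}),\GS_w)$. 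On $\Int(\L_{\geq w})$, the sheaf $\GS_w$ is the tensor product of the constant $\Q$-vector space $\Fl^j(\L_{\leq w})$ with $\iota'_*\bigl(\DP^i|_{(\L_{\geq w})^{\op}}\bigr)$, where $\iota'\colon (\L_{\geq w})^{\op}\to\Int(\L_{\geq w})$ is the natural inclusion.

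Since $\L_{\geq w}$ is itself a meet-semilattice with minimum element $w$, Lemma~\ref{lem:D acyclic} (whose proof only uses the existence of a minimum) applies verbatim to $\L_{\geq w}$, giving $H^q(\Int(\L_{\geq w}),\iota'_*\GS) = 0$ for any sheaf $\GS$ on $(\L_{\geq w})^{\op}$ and any $q>0$. Tensoring with the finite-dimensional $\Q$-vector space $\Fl^j(\L_{\leq w})$ preserves this vanishing, and summing over $w\in\L_j$ completes the argument. The only technical point that needs careful verification is the compatibility of the Brieskorn decomposition with the sheaf structure on $\Int(\L)$ — namely, that the coordinate-projection restriction maps of $\FS^j$ induce a direct sum splitting of $\pr_1^*\FS^j$ into the support-restricted subsheaves $(\FS^j)_w$. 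Everything else reduces to a clean application of Lemma~\ref{lem:D acyclic} in the sub-semilattice $\L_{\geq w}$.
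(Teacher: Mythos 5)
Your proof is correct, but it follows a genuinely different route from the paper's. The paper argues that each tensor factor of $\CS^{ij}=\pr_1^*\FS^j\otimes\pr_2^*\DP^i$ is separately acyclic --- $\pr_2^*\DP^i=\iota_*\DP^i$ by Lemma~\ref{lem:D acyclic}, and $\pr_1^*\FS^j$ by combining the flasqueness of $\FS$ (Proposition~\ref{prop:flag sheaf complex}) with Lemma~\ref{lem:pr1_iso} --- and then invokes the K\"unneth formula. You instead push the Brieskorn decomposition of Lemma~\ref{lem:flag brieskorn} through to the sheaf level, splitting $\CS^{ij}$ into summands $\CS^{ij}_w$ supported on the closed subposets $\Int(\L_{\geq w})$, on each of which the flag factor becomes a constant finite-dimensional vector space and only Lemma~\ref{lem:D acyclic} (applied to the poset $\L_{\geq w}$, which has minimum $w$) is needed. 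All the steps you flag do check out: the coordinate-projection restriction maps of Definition~\ref{def:flag sheaf} make $\FS^j\cong\bigoplus_w(\FS^j)_w$ a genuine direct sum of sheaves; the support $\set{(x,y)\colon w\leq x\leq y}$ is upward-closed, hence closed, so extension by zero from it is exact and cohomology-preserving; and the proof of Lemma~\ref{lem:D acyclic} uses only the existence of a minimum element. Your approach costs a little more bookkeeping but buys something real: it bypasses both Lemma~\ref{lem:pr1_iso} (hence Proposition~\ref{prop:Quillen}) and the K\"unneth step, the latter of which the paper applies to a tensor product of sheaves on $\Int(\L)$ rather than to an external product on a product poset, so your argument is the more self-contained of the two at that point.
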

\begin{proof}
In Lemma~\ref{lem:D acyclic} we saw $\pr_2^*\DP=\iota_*\DP(\L)$ 
is acyclic.  In Proposition~\ref{prop:flag sheaf complex}, we saw $\FS(\L)$ is 
flasque, hence acyclic.  By Lemma~\ref{lem:pr1_iso}, this implies
$\pr_1^*(\FS(\L))$ is also acyclic, and the result follows by the K\"unneth
formula.
\end{proof}

\begin{theorem}\label{thm:resolution}
The cochain complex
\[
\begin{tikzcd}
0\ar[r] & \iota_!\DP(\LM,\H)\ar[r] & \CS^{\cdot,0}\ar[r,"\delta"] & \cdots\ar[r,"\delta"] & \CS^{\cdot,i}
\ar[r,"\delta"] & \cdots\ar[r,"\delta"] & \CS^{\cdot,r}\ar[r] & 0
\end{tikzcd}
\]
is a $\Gamma$-acyclic resolution of $\iota_!\DP(\LM,\H)$.
\end{theorem}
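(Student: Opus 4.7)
The plan is to verify the two conditions that define a $\Gamma$-acyclic resolution: that each $\CS^{\cdot,j}$ is $\Gamma$-acyclic, and that the augmented complex is exact. The first condition is immediate from Lemma~\ref{lem:acyclic}, which already gives $H^q(\Int(\L),\CS^{ij})=0$ for every $q>0$ and all $i,j$, so the substance of the proof lies in verifying exactness.

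For exactness I would check on stalks at each $(x,y)\in\Int(\L)$. Since $\iota\colon \L^{\op}\to\Int(\L)$ is an open embedding (its image $\set{(\bottom,y)\colon y\in\L}$ is readily seen to be downward-closed in $\Int(\L)$), the stalk of $\iota_!\DP$ at $(x,y)$ equals $\Dp_y$ when $x=\bottom$ and vanishes otherwise. Equivalently, one has a natural identification $\iota_!\DP\cong\pr_1^*\KS\otimes_\Q\pr_2^*\DP$, where $\KS$ is the skyscraper sheaf from Proposition~\ref{prop:flag sheaf complex}; under this identification the augmentation $\iota_!\DP\to\CS^{\cdot,0}$ becomes the $\pr_1^*$-pullback of the canonical map $\KS\to\FS^0$ tensored with $\pr_2^*\DP$.

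With this interpretation, the stalk of the augmented complex at $(x,y)$ takes the form
\[
\Bigl(0\to\KS(x)\to\Fl^0(\L_{\leq x})\to\Fl^1(\L_{\leq x})\to\cdots\Bigr)\otimes_\Q \Dp^i_y,
\]
whose bracketed factor is exact at every $x\in\L$ by Proposition~\ref{prop:flag sheaf complex}. Since $\Dp^i_y$ is a $\Q$-vector space, tensoring over $\Q$ with it preserves exactness, so the stalk complex is exact at every $(x,y)$ and hence the augmented sheaf complex on $\Int(\L)$ is exact. I do not expect a genuine obstacle here, as the argument is essentially bookkeeping built on preceding results; the only mildly technical point is confirming the identification of $\iota_!\DP$ together with the augmentation map, but this is a routine stalk computation using that $\iota$ is an open embedding.
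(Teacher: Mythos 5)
Your proposal is correct and takes essentially the same approach as the paper: both deduce $\Gamma$-acyclicity from Lemma~\ref{lem:acyclic} and then reduce exactness to Proposition~\ref{prop:flag sheaf complex} for the flag complex, using flatness over $\Q$ and the identification $\iota_!\DP\cong\pr_1^*\KS\otimes_\Q\pr_2^*\DP$. The paper phrases the exactness step in terms of $\pr_1^*$ and tensoring with $\pr_2^*\DP$ preserving exactness on the nose, whereas you unwind it stalkwise, but the content is identical.
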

\begin{proof}
  Lemma~\ref{lem:acyclic} showed that the sheaves in the complex are
  $\Gamma$-acyclic.  To see the complex is exact, we start with the
  exact complex $0\to\KS\to\FS^0\to\FS^1\cdots$ on $\L$ from 
Proposition~\ref{prop:flag sheaf complex}.
Applying $\pr_1^*$ preserves exactness.  
The sheaf $\pr_2^*\DP$ is free over $\Q$, hence flat.  We conclude
$\CS=\pr_1^*\FS\otimes\pr_2^*\DP$ has cohomology concentrated in degree 
zero, and
\begin{align*}
\ker(\delta^0) &= \KS\otimes \pr_2^*\DP\\
&=\KS\otimes \iota_*\DP\\
&=\iota_!\DP.
\end{align*}
\end{proof}

\subsection{Blowups induce quasi-isomorphisms}
\label{ss:quasiiso}
Now we combine the pieces above to prove the following theorem.
As usual, let $\L=\L(\LM,\H)$ and $\L'=\Bl_p(\L)$ be locally 
geometric semilattices as above, and $\pi\colon \L'\to \L$ the blow-down map.

\begin{theorem}\label{thm:quasiiso}
  For each partial building set $\H$ containing $\onehat$,
  there is a quasi-isomorphism $B(\LM,\H)\to B(\LM,\H')$.
\end{theorem}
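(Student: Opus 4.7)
The strategy is to pass to $\Q$-duals and reduce to a statement about sheaf cohomology on the posets of intervals, exploiting the resolution built in \S\ref{ss:sheaves}.  Combining Theorems~\ref{thm:global sections} and \ref{thm:resolution}, we have
\[
H^q(B(\LM,\H))^\vee\cong H^{q+2r}\bigl(\Int(\L),\iota_!\DP(\LM,\H)\bigr),
\]
and similarly for $\H'$, so proving that $\phi_B$ is a quasi-isomorphism is equivalent to showing that the induced map $H^\cdot(\Int(\L'),\iota'_!\DP(\LM,\H'))\to H^\cdot(\Int(\L),\iota_!\DP(\LM,\H))$ is an isomorphism in every degree.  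Two preliminary observations set the stage: first, $\iota$ (and $\iota'$) is an open embedding because its image $\{(\bottom,y)\}$ is downward-closed in $\Int(\L)$, so $\iota_!$ is exact; second, since $\pi^{-1}(\bottom)=\{\bottom\}$, the naturality square of Proposition~\ref{prop:iota_natural} for $\pi$ is cartesian, and base change for open embeddings yields the canonical isomorphism $\Int(\pi)^*\iota_!\DP(\LM,\H)\cong \iota'_!\pi^*\DP(\LM,\H)$.

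The argument then combines two tools.  Applying the exact functor $\iota'_!$ to the short exact sequence of Theorem~\ref{thm:sheaf keel} produces on $\Int(\L')$ the short exact sequence
\[
0\to \Int(\pi)^*\iota_!\DP(\LM,\H)\to \iota'_!\DP(\LM,\H')\to \iota'_!\alpha_!(\QS\boxtimes\Q)\to 0,
\]
whose long exact sequence in cohomology expresses the map of interest in terms of the cohomology of the cokernel sheaf.  In parallel, I would apply Proposition~\ref{prop:Quillen} to $\Int(\pi)\colon\Int(\L')\to\Int(\L)$ to identify $H^\cdot(\Int(\L),\iota_!\DP(\LM,\H))\cong H^\cdot(\Int(\L'),\Int(\pi)^*\iota_!\DP(\LM,\H))$.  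Verifying the hypotheses of Proposition~\ref{prop:Quillen}—surjectivity of $\Int(\pi)$, connectedness and initiality of each fibre, and contractibility of the order complex of each $\Int(\pi)^{-1}(\Int(\L)_{\geq(x,y)})$—reduces via the coordinate-wise description of $\Int(\pi)$ to analogous properties of $\pi$, which follow from Proposition~\ref{prop:pi surjective} and the explicit description of the combinatorial blowup, where the blown-up atom provides natural cone vertices (compare Example~\ref{ex:one blowup} and Proposition~\ref{prop:onehat}).

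The main obstacle is controlling the contribution of the cokernel sheaf $\iota'_!\alpha_!(\QS\boxtimes\Q)$ to the long exact sequence.  The key structural input is the K\"unneth form of $\QS\boxtimes\Q$ and the contractibility of the two-element poset $\{0<1\}$, whose order complex is a single edge; combinatorially, this sheaf models the ``exceptional divisor'' introduced by the combinatorial blowup of $p$.  I would attack the comparison by using the K\"unneth decomposition along the $\{0<1\}$-factor together with Lemma~\ref{lem:D acyclic} applied to $\QS$ on $\L_{(p)}^{\op}$, which will collapse the cohomology of the cokernel into a form whose connecting homomorphisms in the long exact sequence exactly match the change between $\iota_!\DP(\LM,\H)$ and $\iota'_!\DP(\LM,\H')$.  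A concluding compatibility check identifies the resulting isomorphism with the one induced by $\phi_B$: the sheaf map $\pi^*\DP(\LM,\H)\to \DP(\LM,\H')$ from Lemma~\ref{lem:D sheaf} recovers $\phi_{\Dp}$ on global sections by the naturality of the adjunction in Lemma~\ref{lem:adjiso} (which applies since $\pi$ has connected, initial fibres), and the interaction with the flag-complex side $\FS$ is governed by the Brieskorn decomposition (Corollary~\ref{cor:Brieskorn}), so that the sheaf-level map indeed restricts to $\phi_C$ on $\Gamma(\CS)=C$.
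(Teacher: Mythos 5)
Your overall architecture is the same as the paper's: dualize to $(C,\delta)$, identify $H^\cdot(C)$ with $H^\cdot(\Int(\L),\iota_!\DP)$ via Theorems~\ref{thm:global sections} and~\ref{thm:resolution}, apply $\iota_!$ to the short exact sequence of Theorem~\ref{thm:sheaf keel}, and compare the two sides through a Quillen-type pullback isomorphism for $\Int(\pi)$ (the paper's Proposition~\ref{prop:H_pullback}, whose fibre hypotheses occupy Lemmas~\ref{lem:pi connected} and~\ref{lem:pi fibre} and a genuine three-case analysis that you only gesture at). Your base-change derivation of $\Int(\pi)^*\iota_!\DP\cong\iota_!\pi^*\DP$ is a legitimate alternative to the paper's direct stalk computation (Lemma~\ref{lem:iota_vs_pi}).

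The genuine gap is in your treatment of the cokernel $\iota_!\alpha_!(\QS\boxtimes\Q)$. What makes the long exact sequence close up is that the cohomology of this sheaf vanishes \emph{identically}, including in degree zero: one writes $\iota_!\alpha_!=\Int(\alpha)_*\iota_!$ (since $\Int(\alpha)$ is a closed embedding), uses that $\Int(-)$ commutes with products, and then the whole computation reduces by K\"unneth to the single fact $H^\cdot(\Int(\set{0<1}),\iota_!\Q)=0$ --- the vanishing comes from the $\set{0<1}$ factor, not from $\QS$. Your proposed mechanism cannot deliver this: Lemma~\ref{lem:D acyclic} concerns $\iota_*\GS=\pr_2^*\GS$, not the extension by zero $\iota_!$, and in any case only kills cohomology in positive degrees, whereas here $H^0$ must also vanish. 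Moreover your plan to have the cokernel's cohomology ``match the connecting homomorphisms'' presupposes a nonzero contribution that must cancel, which is both unnecessary and much harder than the actual situation. Separately, your concluding claim that the resulting isomorphism is induced by $\phi_B$ is asserted via naturality of adjunction but not justified; the authors explicitly state they are unable to establish this directly and only recover it a posteriori from Theorem~\ref{thm:model}. Fortunately that identification is not needed for the statement as given, which only asserts the existence of a quasi-isomorphism.
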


The proof will make use of some preparatory results.  The first two lemmas are
of central importance.  Initial quotients were defined in
Definition~\ref{def:initial-quotient}.

\begin{lemma}\label{lem:pi connected}
  The map $\pi\colon \L'\to\L$ is an initial quotient with connected fibres.
\end{lemma}
\begin{proof}
By the construction of $\L'=\Bl_p(\L)$, we note that 
$\pi^{-1}(y)=\set{y}$ if $y\not\in \L_{\geq p}$.  On the other hand, 
if $y\geq p$, we have
\[
\pi^{-1}(y)=\set{(p,x)\colon p\vee x=y}.
\]
We claim that this set has a unique minimal element $(p,z_y)$.  If $y=p$,
clearly $z_y=\bottom$.  If $y>p$, by Proposition~\ref{prop:BICO}\eqref{prop:BICO1}, $y$ is
not irreducible, and $y=p\vee g_1\vee \cdots\vee g_k$ where
$F(\L,\G;y)=\{p,g_1,\dots,g_k\}$. Let $z_y=g_1\vee \cdots \vee g_k$, so that
$(p,z_y)\in \pi^{-1}(y)$.  Now consider any $(p,x)\in \pi^{-1}(y)$.
The join decomposition \eqref{eq:join decomp} gives
\[
[\bottom,y]\cong [\bottom,p]\times\prod_{i=1}^k[\bottom,g_i].
\]
Since $x\leq p\vee x=y$, the image of $x$ under this isomorphism has the
form $(q,g_1,\ldots,g_k)$ for some $q\in [\bottom,p]$.  It follows
$x\geq z_y$, and $(p,x)\geq (p,z_y)$, so $(p,z_y)$ is a cone vertex
in $\pi^{-1}(y)$, which proves the fibres are connected.

To show $\pi$ is an initial quotient map, we suppose
$\pi(x)\leq y$ and look for $x'\geq x$
for which $\pi(x')=y$.  Again if $y\not\geq p$, we may take $x'=y$.

Otherwise $y\geq p$, and we let $z_y$ be as above.
If $x\in\L_{\not\geq p}$, then take $x'=(p,(x\wedge p)\vee
z_y)\in\pi^{-1}(y)$.  In this case, the fact that $x=\pi(x)\leq y$ along with
the join decomposition above imply that $x=(x\wedge p)\vee(x\wedge z_y)\leq
(x\wedge p)\vee z_y$ and hence $x\leq x'$. Similarly, if $x=(p,w)$ for some
$w\in\L_{(p)}$, then take $x'=(p,(w\wedge p)\vee z_y)$.
\end{proof}

\begin{lemma}\label{lem:pi fibre}
  For any closed interval $[y_0, y_1]\subseteq \L$, the order complexes
  $\abs{\pi^{-1}(\L_{\geq y_0})}$ and $\abs{\pi^{-1}([y_0,y_1])}$
  are contractible.
\end{lemma}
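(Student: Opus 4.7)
The plan is to establish the second claim by a three-way case split on the positions of $y_0$ and $y_1$ relative to $p$, then deduce the first via Quillen's Theorem~A with the second supplying its hypotheses. When $y_1 \not\geq p$, no element of $[y_0, y_1]$ lies above $p$, so $\pi^{-1}([y_0, y_1]) = [y_0, y_1]$ as posets and $y_0$ is a minimum, giving a conical order complex. When $y_1 \geq p$ but $y_0 \not\geq p$, the retraction $r\colon \pi^{-1}([y_0, y_1]) \to \pi^{-1}([y_0, y_1])$ sending $w \mapsto (p, w)$ for $w \in [y_0, y_1]$ with $w \not\geq p$, and fixing every pair $(p, x)$, will be shown to be an order-preserving idempotent with $r \geq \id$, yielding a strong deformation retraction onto $\pi^{-1}([p \vee y_0, y_1])$ and reducing to the next case.

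The technical heart is the case $y_0 \geq p$, in which every element of $[y_0, y_1]$ lies above $p$ and so $\pi^{-1}([y_0, y_1])$ consists entirely of pairs $(p, x)$. I will show that $(p, z_{y_0})$ is the unique minimum, where $z_{y_0}$ is the cone vertex of $\pi^{-1}(y_0)$ from Lemma~\ref{lem:pi connected} (equal to $\bottom$ when $y_0 = p$). Given $(p, x)$ with $z := p \vee x$, Lemma~\ref{lem:pi connected} gives $x \geq z_z$, so it remains to prove $z_z \geq z_{y_0}$. Proposition~\ref{prop:BICO}\eqref{prop:BICO1} forbids any element of $\G$ strictly above $p$ in $\L$, which forces $p$ itself to be a factor of $z$; and for each $g \in F(y_0) \setminus \{p\}$, the unique factor of $z$ above $g$ furnished by Proposition~\ref{prop:join decomp} cannot equal $p$, since $g \not\leq p$ by transversality in the join decomposition of $y_0$, so $g \leq z_z$. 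Joining over $g$ gives $z_{y_0} \leq z_z$, as needed.

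For the first claim, I will apply Quillen's Theorem~A to the restriction $\pi\colon \pi^{-1}(\L_{\geq y_0}) \to \L_{\geq y_0}$: the preimage of each principal down-set $[y_0, q]$ is $\pi^{-1}([y_0, q])$, contractible by the second claim (or by Lemma~\ref{lem:pi connected} when $q = y_0$). This yields a weak homotopy equivalence onto $|\L_{\geq y_0}|$, which is contractible via its minimum $y_0$. The main obstacle throughout is the factor-comparison step $z_z \geq z_{y_0}$: it is where the constraint from Proposition~\ref{prop:BICO} on the position of $p$ among elements of $\G$ becomes essential, and where the full building-set structure must be used.
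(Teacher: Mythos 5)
Your proof is correct, but it is organized quite differently from the paper's. The paper establishes contractibility of $\abs{\pi^{-1}(\L_{\geq y_0})}$ first and directly, splitting on whether $p\vee y_0$ fails to exist, $p\leq y_0$, or $p\vee y_0$ exists with $p\not\leq y_0$; in the last case it writes the preimage as a union of two principal filters $\L'_{\geq y_0}\cup\L'_{\geq(p,z_{y_0\vee p})}$ glued along the cone $\L'_{\geq(p,y_0)}$, and then obtains the interval statement by intersecting everything with the open set $U=\pi^{-1}(\L_{\leq y_1})$ and tracking which cone points survive. You reverse the logical order: you prove the interval case first, with a different trichotomy ($y_1\not\geq p$; $y_0\not\geq p\leq y_1$; $p\leq y_0$), handling the middle case by the closure operator $w\mapsto(p,w)$ (well-defined because $y_1$ bounds $p$ and $w$, so $p\vee w$ exists and lies in $[y_0,y_1]$) that retracts onto $\pi^{-1}([p\vee y_0,y_1])$, and the last case by exhibiting $(p,z_{y_0})$ as a genuine minimum via the comparison $x\geq z_z\geq z_{y_0}$ of factor complements; you then deduce the filter case from the interval case by Quillen's fiber lemma. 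Your approach trades the paper's union-of-cones gluing and cone-point bookkeeping for two standard tools (the closure-operator deformation retraction and Quillen's Theorem~A, which the paper itself invokes in Remark~\ref{rem:he semilattices}), and your verification that $z_z\geq z_{y_0}$ — using Proposition~\ref{prop:BICO} to force $p\in F(\L,\G;z)$ and Proposition~\ref{prop:join decomp} to place each $g\in F(\L,\G;y_0)\setminus\{p\}$ under a factor of $z$ other than $p$ — makes explicit a detail the paper leaves implicit in its case $p\leq y_0$. One small imprecision: your parenthetical appeal to Lemma~\ref{lem:pi connected} for contractibility of $\pi^{-1}(y_0)$ is not quite what that lemma states (connected and initial is weaker than contractible), but its proof does exhibit a cone vertex, and in any case your interval argument with $y_1=y_0$ already covers this.
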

\begin{proof}
  We fix a closed interval $[y_0,y_1]$.  Let $U=\pi^{-1}(\L_{\leq y_1})$,
  and consider the set
  $\pi^{-1}(\L_{\geq y_0})$.
  \begin{enumerate}
    \item[Case 1:]
  If $p\vee y_0$
does not exist, then $\L_{\geq y_0} \cap \L_{\geq p}=\emptyset$, and
once again, $\pi^{-1}(\L_{\geq y_0})\cong\L_{\geq y_0}$.  This order complex
has a cone point, $y_0$, and so does its intersection with the open set
$U$.  In this case, then, both order complexes are contractible.
\item[Case 2:]
Next suppose $p\leq y_0$. This time, there is a unique minimal element in
$\pi^{-1}(y_0)$, denoted $(p,z_{y_0})$ in the proof of
Lemma~\ref{lem:pi connected}.  It is a cone point in both order complexes.
\item[Case 3:]
Finally, if $p\vee y_0$ exists but $p\not\leq y_0$, 
it is easy to check that
\[
\pi^{-1}(\L_{\geq y_0})=\L'_{\geq y_0}
\cup \L'_{\geq (p,z_{y_0\vee p})}.
\]
Since $y_0\vee(p,z_{y_0\vee p})=(p,y_0)$, we have
$\L'_{\geq y_0}\cap \L'_{\geq(p,z_{y_0\vee p})}=\L'_{\geq(p,y_0)}$.

Thus 
$\pi^{-1}(\L_{\geq y_0})$ is a union of contractible simplicial complexes
which intersect along a contractible subcomplex, so again 
$\abs{\pi^{-1}(\L_{\geq y_0})}$ is contractible.  To conclude the
proof we look at the intersection of each of these three conical
sets with $U$.

Since $\pi(y_0)=y_0$ and $y_0\leq y_1$, we see
$\L'_{\geq y_0}\cap U$ is nonempty.  Now $\pi(p,z_{y_0\vee p})=y_0\vee p$.  So
if $y_0\vee p\leq y_1$, then all three cone points are in $U$, and
$\abs{\pi^{-1}([y_0,y_1])}$ is contractible by the same argument.  If
$y_0\vee p\not\leq y_1$, then $\L'_{(p,z_{y_0\vee p})}\cap U$ is empty,
and the preimage of $[y_0,y_1]$ equals $\L'_{\geq y_0}\cap U$, a cone.
\end{enumerate}
\end{proof}
\begin{remark}\label{rem:he semilattices}
  By Quillen's fibre lemma (see, e.g., \cite[p.\ 272]{Koz08}),
  the result above implies that the combinatorial blowdown $\pi\colon
  \L'_+\to\L_+$ induces a homotopy equivalence of order complexes.  If
  we compose these equivalences over the whole building set $\G$, we
  recover the matroidal case of the main result of \cite{FM05}.  When
  $\G=L_+$, this was also noted in \cite[Rem. 6.5]{AHK15}.
\end{remark}

By Proposition~\ref{prop:iota_natural}, we have a commuting square:
\[
\begin{tikzcd}
  (\L')^{\op}\ar[r,"\pi"]\ar[d,"\iota"] & \L^{\op}\ar[d,"\iota"] \\
  \Int(\L') \ar[r,"\Int(\pi)"] & \Int(\L)
\end{tikzcd}
\]

\begin{lemma}\label{lem:iota_vs_pi}
  For any sheaf $\GS$ on $\L^{\op}$, it is the case that 
  $\iota_!\pi^*\GS = \Int(\pi)^*\iota_!\GS$.  
\end{lemma}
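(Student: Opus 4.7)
The plan is to verify the identity stalk-wise, using the fact that $\iota$ is an open embedding so that $\iota_!$ acts by extension by zero. First I would check that $\iota\colon \L^{\op}\to\Int(\L)$ is indeed an open embedding: its image $\{(\bottom,y)\colon y\in\L\}$ is a downward-closed subset of $\Int(\L)$, because if $(x,y')\leq(\bottom,y)$ then the defining order relation forces $x\leq\bottom$, whence $x=\bottom$. Consequently, for any sheaf $\HS$ on $\L^{\op}$, extension by zero gives the stalk formula
\[
(\iota_!\HS)(x,y)=\begin{cases}\HS(y)& \text{if } x=\bottom,\\ 0&\text{otherwise,}\end{cases}
\]
with the evident restriction maps. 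The same description applies to $\iota\colon(\L')^{\op}\to\Int(\L')$.

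Next I would compute both sides of the asserted identity as functors on $\Int(\L')$. For the left-hand side, the formula above yields $(\iota_!\pi^*\GS)(x,y)=(\pi^*\GS)(y)=\GS(\pi(y))$ when $x=\bottom$ in $\L'$, and $0$ otherwise. For the right-hand side, unpacking the pullback along $\Int(\pi)$ and then applying the extension-by-zero formula on $\Int(\L)$ gives $(\Int(\pi)^*\iota_!\GS)(x,y)=(\iota_!\GS)(\pi(x),\pi(y))$, which equals $\GS(\pi(y))$ when $\pi(x)=\bottom$ in $\L$, and $0$ otherwise.

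The crux is therefore the equivalence $x=\bottom$ in $\L'$ if and only if $\pi(x)=\bottom$ in $\L$. The forward implication is immediate since $\pi$ is order-preserving and $\bottom\in\L'$ maps to $\bottom\in\L$. For the converse I would invoke the explicit description $\L'=\Bl_p(\L)=\L_{\not\geq p}\cup\{(p,z)\colon z\in\L_{(p)}\}$ from Definition~\ref{def:blowup}: if $x\in\L_{\not\geq p}$ then $\pi(x)=x$, so $\pi(x)=\bottom$ forces $x=\bottom$; if instead $x=(p,z)$, then $\pi(x)=p\vee z\geq p>\bottom$, so $\pi(x)\neq\bottom$.

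Finally I would check that the restriction maps agree, which promotes the stalk-wise equality to an equality of sheaves. For $(x_1,y_1)\leq(x_0,y_0)$ in $\Int(\L')$, both restriction maps are zero unless $x_0=x_1=\bottom$ (equivalently $\pi(x_0)=\pi(x_1)=\bottom$, by the equivalence just established), in which case both reduce to the map $\GS(\pi(y_1))\to\GS(\pi(y_0))$ induced by $\pi(y_0)\leq\pi(y_1)$ in $\L$. There is no real obstacle here: the argument is a direct unpacking of definitions, and the whole content of the lemma is the bijection between $\bottom$-fibres on the two sides, ensuring that the pullback square in the statement is in fact Cartesian.
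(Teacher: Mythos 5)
Your proposal is correct and takes essentially the same route as the paper: both compute the stalks of the two sheaves at a point $(x,y)\in\Int(\L')$ via the extension-by-zero description of $\iota_!$, and both hinge on the observation that $\pi(x)=\bottom$ if and only if $x=\bottom$. Your version simply supplies more of the routine details (the open-embedding check for $\iota$ and the compatibility of restriction maps) that the paper leaves implicit.
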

\begin{proof}
  For any $(x,y)\in \Int(\L)$, we compute the stalk
  \begin{align*}
    (\Int(\pi)^*\iota_!\GS)(x,y)  &= (\iota_!\GS)(\pi(x),\pi(y))\\
    &=\begin{cases}
    \GS(\pi(y)) & \text{if $\pi(x)=\bottom$;}\\
    0 & \text{otherwise.}
    \end{cases}
  \end{align*}
  Noting that $\pi(x)=\bottom$ if and only if $x=\bottom$, we see that
  this is exactly the stalk $(\iota_!\pi^*\GS)(x,y)$.
\end{proof}

\begin{proposition}\label{prop:H_pullback}
  The pullback map $\Int(\pi)^*\colon H^\cdot(\Int(\L),\iota_!\DP)\to
  H^\cdot(\Int(\L'),\Int(\pi)^*\iota_!\DP)$ is an isomorphism.
\end{proposition}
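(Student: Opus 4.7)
The plan is to derive Proposition~\ref{prop:H_pullback} by verifying the hypotheses of Proposition~\ref{prop:Quillen} for the order-preserving map $\Int(\pi)\colon\Int(\L')\to\Int(\L)$. There are three conditions to check: surjectivity of $\Int(\pi)$, contractibility of $\abs{\Int(\pi)^{-1}(\Int(\L)_{\geq(x,y)})}$ for every $(x,y)\in\Int(\L)$, and the connectivity plus initiality of each fibre $\Int(\pi)^{-1}(x,y)$. Once all three hold, the cohomology pullback is an isomorphism by direct appeal to Proposition~\ref{prop:Quillen}.

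First I would verify surjectivity. Given $(x,y)\in\Int(\L)$, Proposition~\ref{prop:pi surjective} yields $x'\in\pi^{-1}(x)$. Since $\pi(x')=x\leq y$, the initial property of $\pi^{-1}(y)$ guaranteed by Lemma~\ref{lem:pi connected} produces $y'\geq x'$ in $\L'$ with $\pi(y')=y$, so $(x',y')\in\Int(\L')$ maps to $(x,y)$. Next, for the contractibility hypothesis, I would establish the natural identification
\[
\Int(\pi)^{-1}\!\bigl(\Int(\L)_{\geq(x,y)}\bigr) \;=\; \Int\bigl(\pi^{-1}([x,y])\bigr),
\]
which holds because $(x',y')\in\Int(\L')$ with $x\leq\pi(x')\leq\pi(y')\leq y$ is the same as asking $x',y'\in\pi^{-1}([x,y])$ and $x'\leq y'$. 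Applying \cite[Thm.~4.1]{Wal88}, the order complex on the right is homotopy equivalent to $\abs{\pi^{-1}([x,y])}$, which is contractible by Lemma~\ref{lem:pi fibre}.

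The main obstacle is the third hypothesis: showing that the fibre $\Int(\pi)^{-1}(x,y)$ is connected and initial. For connectedness, Lemma~\ref{lem:pi connected} produces canonical minimum elements $x_{\min}\in\pi^{-1}(x)$ and $y_{\min}\in\pi^{-1}(y)$ in $\L'$; using the initial property of $\pi^{-1}(y)$ to propagate $x_{\min}$ upward gives a cone vertex in the fibre, so its order complex is contractible, hence connected. For initiality, I must show that any $(x_0',y_0')\in\Int(\L')$ with $\pi(x_0')\leq x$ and $\pi(y_0')\geq y$ admits some $(x',y')\in\Int(\pi)^{-1}(x,y)$ satisfying $x_0'\leq x'\leq y'\leq y_0'$ in $\L'$. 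The first lift $x'\geq x_0'$ with $\pi(x')=x$ is provided by the initial property of $\pi^{-1}(x)$; the harder step is producing $y'\in\pi^{-1}(y)$ with $x'\leq y'\leq y_0'$. Here I would argue case-by-case on whether $x,y\geq p$ and whether $y_0'$ is of the form $(p,w)$, exploiting the explicit description of $\Bl_p(\L)$ and invoking Proposition~\ref{prop:BICO}\eqref{prop:BICO2}: any element $(p,w)\in\L'$ whose image $p\vee w$ dominates some $y\in\L$ must in fact satisfy $w\geq y$ in the relevant factor, since otherwise the required upper bound $y\vee p$ would lie in $\H$ and hence would already have been blown away in $\L$, contradicting the existence of $p\vee w$ in $\L$.

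With all three hypotheses verified, Proposition~\ref{prop:Quillen} applied to $\Int(\pi)$ and the sheaf $\iota_!\DP(\LM,\H)$ delivers the desired cohomology isomorphism. The conclusion then completes the passage from the blowup $\pi\colon\L'\to\L$ on the level of semilattices to an isomorphism of sheaf cohomologies on their interval posets, which is precisely what the subsequent results in \S\ref{ss:quasiiso} require to promote $\phi_B$ to a quasi-isomorphism.
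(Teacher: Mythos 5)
Your overall strategy is exactly the paper's: apply Proposition~\ref{prop:Quillen} to $\Int(\pi)$, identify $\Int(\pi)^{-1}(\Int(\L)_{\geq(x,y)})$ with $\Int(\pi^{-1}([x,y]))$, and use Walker's theorem together with Lemma~\ref{lem:pi fibre} for contractibility. The problem is your treatment of connectedness of the fibres. The claim that propagating $x_{\min}$ upward via initiality of $\pi^{-1}(y)$ ``gives a cone vertex in the fibre'' fails whenever $p\leq x$. A pair $(x_{\min},y')$ is comparable in $\Int(\L')$ to another fibre element $(x_{\min},y'')$ only if $y'$ and $y''$ are comparable in $\L'$; but $\pi^{-1}(y)$ has a unique \emph{minimum}, not a maximum, so no single choice of $y'$ works. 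Concretely, if $p$ has rank $2$ with exactly two atoms $a,b$ below it, the fibre of $\Int(\pi)$ over $(p,p)$ is $\Int(\{\bottom,a,b\})$, whose Hasse diagram is the path $(a,a)-(\bottom,a)-(\bottom,\bottom)-(\bottom,b)-(b,b)$: connected, but with no element comparable to all others. The paper handles this case by exhibiting an isomorphism of the fibre over $(y_0,y_1)$ (when $p\leq y_0$) with $\Int([\bottom,p))$ and then invoking the homeomorphism $\abs{\Int(P)}\cong\abs{P}$ of \cite[Thm.\ 4.1]{Wal88} to reduce to the contractibility of the cone $\abs{[\bottom,p)}$. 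Your argument does work in the other two cases (fibre a singleton when $y\not\geq p$; fibre isomorphic to $\pi^{-1}(y)$, which has a genuine cone vertex, when $x\not\geq p\leq y$), so the missing piece is precisely the case $p\leq x$.

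Separately, your sketch of initiality is not yet a proof, and the justification you offer is confused: you argue that if $p\vee w\geq y$ failed to force $w\geq y$ then ``$y\vee p$ would lie in $\H$ and hence would already have been blown away in $\L$, contradicting the existence of $p\vee w$ in $\L$'' --- but $p$ has not been blown up in $\L$, and joins with $p$ certainly exist there, so this contradiction does not arise. What is actually needed (and what the paper supplies) is an explicit lift: given $\mathbf{x}=(x_0',y_0')$ with $\Int(\pi)(\mathbf{x})\leq(y_0,y_1)$, one writes down $\mathbf{x}'=\bigl((p,(x_0\wedge p)\vee z_{y_0}),(p,(x_0\wedge p)\vee z_{y_1})\bigr)$, using the canonical minimal elements $z_{y_i}$ from the join decomposition, and verifies $\mathbf{x}'\geq\mathbf{x}$ and $\Int(\pi)(\mathbf{x}')=(y_0,y_1)$. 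You should replace both the cone-vertex claim and the hand-waved initiality step with these explicit computations.
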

\begin{proof}  
  Since $\pi$ is an initial
  quotient with connected fibres, by Lemma~\ref{lem:pi connected},
  we see $\Int(\pi)$ is a quotient, using
  Proposition~\ref{prop:initial-quotient}.  For any interval
  $y_0\leq y_1$ in $\L$, by inspection,
  \[
  \Int(\pi)^{-1}\big(\Int(\L)_{\geq (y_0,y_1)}\big) =
  \Int\big(\pi^{-1}([y_0,y_1]\big)\big). 
  \]
  The order complex of the latter is homeomorphic to
  $\abs{\pi^{-1}([y_0,y_1])}$ by \cite[Thm.\ 4.1]{Wal88}, which is
  contractible by Lemma~\ref{lem:pi fibre}.

  Now we check that each fibre $\Int(\pi)^{-1}([y_0,y_1])$ is connected.
  If neither $y_0\geq p$ nor $y_1\geq p$, then the fibre is just
  $\set{[y_0,y_1]}$.  If $p\leq y_0\leq y_1$, then using the join decomposition
  as in Lemma~\ref{lem:pi connected}, there are minimal elements $z_{y_0}\leq
  z_{y_1}$ for which $p\vee z_{y_i}=y_i$ for $i=0$ and $1$.  Then if
  $\Int(\pi)[(p,x),(p,y)]=[y_0,y_1]$ for some $x\leq y$ in $\L$, we check
  \[
    [x,y]  \geq [z_{y_0},y] \leq [z_{y_0},z_{y_1}],
  \]
  so every point in the fibre is connected by a path
  to $[(p,z_{y_0}),(p,z_{y_1})]$.

  Finally, suppose $y_1\geq p$ but $y_0\not\geq p$.  Then $y_0$ has a unique
  preimage.  This preimage can be written $y_0=u \vee v$,
  where $u\leq p$ and $v\leq z_{y_1}$, using the join decomposition of $y_1$.

  Then
  \[
  \Int(\pi)^{-1}([y_0,y_1]) = \set{[y_0,(p,x)]\colon
    p\vee x = y_1\text{~and~} y_0\leq x}.
  \]
  That set contains $[y_0,(p, u\vee z_{y_1})]$,
  since $p\vee u\vee z_{y_1} = y_1$,
  and $y_0=u\vee v \leq u \vee z_{y_1}$.  In fact, for any interval
  $[y_0,(p,x)]$ in the fibre, we have $x\geq z_{y_1}$
  (Lemma~\ref{lem:pi connected}) and $x\geq u$ because $x\geq y_0$, using
  the join decomposition again.  So the fibre contains a unique maximum,
  $[y_0,(p, u\vee z_{y_1})]$, and again we conclude that it is connected.
  
  The claim now follows by Proposition~\ref{prop:Quillen}.
\end{proof}

\begin{proof}[Proof of Theorem~\ref{thm:quasiiso}:]
Taking $\Q$-duals, instead we produce a quasi-isomorphism
$(C(\L'),\delta)\to (C(\L),\delta)$.
We will do so as follows, letting $\DP=\DP(\L)$ and $\DP'=\DP(\L')$ again.

We apply the exact functor $\iota_!$ to the sequence from
Theorem~\ref{thm:sheaf keel}, obtaining
a short exact sequence of sheaves on $\Int(\L')$:
\begin{equation}\label{eq:quasiiso}
\begin{tikzcd}
0\ar[r] & \iota_!\pi^*\DP\ar[r,"\iota_!\phi_{\Dp}"] & \iota_!\DP'\ar[r] & 
\iota_!\alpha_!(\QS\boxtimes \Q)\ar[r] & 0.
\end{tikzcd}
\end{equation}
Let us show the cohomology of
$\iota_!\alpha_!(\QS\boxtimes\Q)$ vanishes identically.  First, it is
easy to check that, since $\alpha$ is an open embedding (on $\L$), then
 $\Int(\alpha)$ is a closed embedding.  So we see
 $\iota_!\alpha_!=\Int(\alpha)_!\iota_!=\Int(\alpha)_*\iota_!$.
It is also easy to check that the functor $\Int(-)$ commutes with products.
Then
\begin{align*}
H^\cdot(\Int(\L'),\iota_!\alpha_!(\QS\boxtimes \Q)) &\cong
H^\cdot(\Int(\L'),\Int(\alpha)_*\iota_!(\QS\boxtimes\Q)) \\
&\cong H^\cdot(\Int(\L_{(p)})\times \Int(\set{0<1}),\iota_!\QS\boxtimes
\iota_!\Q).
\end{align*}
A routine calculation shows that 
$H^\cdot(\Int(\set{0<1}),\iota_!\Q)=0$ (e.g., as in \cite{Ba75}).
It follows by the K\"unneth formula that 
the cohomology of $\iota_!\alpha_!(\QS\boxtimes\Q)$ vanishes identically.
Applying the cohomology long exact sequence to \eqref{eq:quasiiso}, we
see $H^\cdot(\iota_!\phi_{\Dp})$ is an isomorphism.
\begin{figure}  
  \begin{tikzpicture}
    \node (TL) at (-1,0) {$(0,0):\, \Q$};
    \node (TR) at (1,0) {$(1,1):\, 0$};
    \node (B) at (0,-1) {$(0,1):\, \Q$};
    \draw[-stealth] (TL) -- (B);
    \draw[-stealth] (TR) -- (B);
  \end{tikzpicture}
  \caption{$\iota_!\Q$ on $\Int(\set{0<1})$}
\end{figure}

By Theorem~\ref{thm:resolution}, the cohomology of the cochain
complex $(C(\L'),\delta)$ is isomorphic to
\begin{align*}
 H^\cdot(\Int(\L'),\iota_!\DP') &\cong H^\cdot(\Int(\L'),\iota_!\pi^*\DP)\text{~by the above;}\\
&=H^\cdot(\Int(\L'),\Int(\pi)^*\iota_!\DP) \text{~by Lemma~\ref{lem:iota_vs_pi};}\\
 &\cong H^\cdot(\Int(\L),\iota_!\DP)\text{~by Proposition~\ref{prop:H_pullback},}\\
 &= H^\cdot(C(\L),\delta) \text{~by Theorem~\ref{thm:resolution} again.}
\end{align*}
\end{proof}
\begin{remark}
  We are unable to show directly in the argument above that the
  quasi-isomorphism above is given by
  the dual of the \cdga\ map $\phi_B\colon B(\L)\to B(\L')$.  Nevertheless,
  we can conclude this after the fact using Theorem~\ref{thm:model},
  since we showed
  there that $\phi_B$ induces a cohomology isomorphism in degree zero, and
  (using the result above) that the higher cohomology vanishes.
\end{remark}

\begin{ack}
  The second author would like to thank Udit Mavinkurve for pointing out an
  error in an earlier version of Proposition~\ref{prop:Quillen}.
The authors thank an anonymous referee for a careful reading and many valuable
comments.
\end{ack}

\bibliographystyle{amsalpha}
\bibliography{leray}
\end{document}